\newtheorem{theorem}[equation]{Theorem}
\newtheorem{lemma}[equation]{Lemma}
\newtheorem{prop}[equation]{Proposition}
\newtheorem{cor}[equation]{Corollary}
\newtheorem{corollary}[equation]{Corollary}
\newtheorem{definition}[equation]{Definition}
\theoremstyle{remark}
\newtheorem{notation}[equation]{Notation}
\newtheorem{assumption}[equation]{Assumption}
\numberwithin{equation}{section}
\newcommand{\R}{\mathbb{R}}
\newcommand{\N}{\mathbb{N}}
\newcommand{\B}{\mathbb{B}}
\newcommand{\K}{\mathbb{K}}
\newcommand{\HH}{\R_+\times\R}
\newcommand{\Ktop}{{\mathbb{K}'_{\top}}}
\newcommand{\KBp}{{\mathbb{K}_{\B}'}}
\newcommand{\betah}{\widehat{\beta}}
\newcommand{\D}{\mathbb{D}}
\newcommand{\Z}{\mathbb{Z}}
\newcommand{\Sph}{\mathbb{S}}
\newcommand{\sgn}{\operatorname{sgn}}
\newcommand{\Grp}{\mathscr{G}}
\newcommand{\Ccal}{\mathcal{C}}
\newcommand{\Acal}{\mathcal{A}}
\newcommand{\sigmaunder}{{\underline{\sigma}}}
\newcommand{\Wcal}{\mathcal{W}}
\newcommand{\rr}{\ensuremath{\mathrm{r}}}
\newcommand{\rot}{\mathsf{R}}
\newcommand{\refl}{{\underline{\mathsf{R}}}}
\newcommand{\abs}[1]{\left\lvert#1\right\rvert}
\newcommand{\norm}[1]{\left\|#1\right\|}
\newcommand{\skernel}{\mathscr{K}}
\newcommand{\sech}{\operatorname{sech}}
\newcommand{\arcosh}{\mathrm{arcosh}}
\newcommand{\supp}{\mathrm{supp}}
\newcommand{\cateConf}{\Wcal[\betah  ; \sigmaunder ]}
\newcommand{\Conf}{\mathcal{W}[\underline{\sigma} {:} k]}
\newcommand{\zConf}{\mathcal{W}[0 {:} k]}
\newcommand{\Confarg}{[\underline{\sigma} {:} k]}
\newcommand{\zConfarg}{[0 {:} k]}
\newcommand{\iniDsingsur}{\Sigma}
\newcommand{\preiniDsingsur}{\Sigma}
\newcommand{\Scherk}{\Sigma(\alpha^+)}
\newcommand{\Scherkin}{\Sigma(\alpha_i^+(\xi))}
\newcommand{\Dsingarg}{[\underline{\alpha},\beta,\underline{\phi},\tau]}
\newcommand{\apDsingarg}{[\underline{\alpha},\underline{\phi}]}
\newcommand{\abDsingarg}{[\underline{\alpha},\beta]}
\newcommand{\aDsingarg}{[\underline{\alpha}]}
\newcommand{\inDsingarg}{[\underline{\alpha}_i(\xi),\beta_i(\xi),\underline{\tilde{\phi}}_i(\xi),\tau]}
\newcommand{\iinDsingarg}{[\underline{\alpha}_{i+1}(\xi),\beta_{i+1}(\xi),\underline{\tilde{\phi}}_{i+1}(\xi),\tau]}
\newcommand{\finDsingarg}{[\underline{\alpha}_1(\xi),\beta_1(\xi),\underline{\tilde{\phi}}_1(\xi),\tau]}
\newcommand{\linDsingarg}{[\underline{\alpha}_k(\xi),\beta_k(\xi),\underline{\tilde{\phi}}_k(\xi),\tau]}
\newcommand{\insurfarg}{[\underline{\sigma},\underline{\varphi}]}
\newcommand{\wrap}{\mathcal{Z}}
\newcommand{\bend}{Z_y[\alpha^+-\alpha^-]}
\newcommand{\barw}{\overline{w}}
\newcommand{\baru}{\overline{u}}
\newcommand{\betain}{{\beta_{in}}}
\newcommand{\alphain}{{\alpha_{in}}}
\newcommand{\pin}{{p_{in}}}
\newcommand{\betaex}{{\beta_{ex}}}
\newcommand{\alphaex}{{\alpha_{ex}}}
\newcommand{\betainK}{{\beta_{in}^\K}}
\newcommand{\alphainK}{{\alpha_{in}^\K}}
\newcommand{\pinK}{{p_{in}^\K}}
\newcommand{\betaexK}{{\beta_{ex}^\K}}
\newcommand{\alphaexK}{{\alpha_{ex}^\K}}
\newcommand{\pexK}{{p_{ex}^\K}}
\newcommand{\pnorth}{{p_{north}}}
\newcommand{\psouth}{{p_{south}}}
\newcommand{\Mmin}{{\mathbf{M}}}
\begin{document}

\title[Free Boundary]{Free Boundary Minimal surfaces in the Euclidean Three-Ball close to the boundary}  

\author[N.~Kapouleas]{Nikolaos~Kapouleas}
\address{Department of Mathematics, Brown University, Providence, RI 02912} 
\email{nicolaos\_kapouleas@brown.edu}

\author[J.~Zou]{Jiahua~Zou} 
\address{Department of Mathematics, Brown University, Providence, RI 02912} 
\email{jiahua\_zou@brown.edu}




\date{\today}


\keywords{Differential geometry, minimal surfaces,
partial differential equations, perturbation methods}

\begin{abstract}
We construct free boundary minimal surfaces (FBMS) embedded in the unit ball in the Euclidean three-space 
which are compact, lie arbitrarily close to the boundary unit sphere, are of genus zero, and their boundary has an arbitrarily large number of connected boundary components. 
The construction is by PDE gluing methods and the surfaces are desingularizations of unions of many catenoidal annuli and two flat discs. 
The union of the boundaries of the catenoidal annuli and discs is the union of a large finite number of parallel circles contained in the unit sphere, 
with each parallel circle contained in the boundary of exactly two of the catenoidal annuli and discs. 
\end{abstract}

\maketitle
\section{Introduction}
\label{Sintro}
\nopagebreak

\subsection*{Brief discussion of the results}
$\phantom{ab}$
\nopagebreak

In recent years there has been much interest on free boundary minimal surfaces (FBMS) in the unit ball $\B^3\subset\R^3$, 
as for example in 
\cite{FS2016,FS2012,FS2011,M2020,GP2010,petrides2014existence,petrides2018,petrides2019maximizing,matthiesen2020free,karpukhin2020minmax,karpukhin,GL2021,GKL2021,zolotareva,kapouleas:li,kapouleas:wiygul}.
In this article we construct by PDE gluing methods compact embedded FBMS in the unit ball $\B^3$ which lie arbitrarily close to the boundary $\Sph^2=\partial\B^3$.   
The construction uses desingularization ideas from \cite{kapouleas:finite},  FBMS construction ideas from \cite{kapouleas:li,kapouleas:wiygul}, 
and some auxiliary ideas from \cite{kapouleas:ii:mcgrath,LDg}. 
Note in particular that although the desingularization approach of \cite{kapouleas:finite} has been used in various constructions before, for example in 
\cite{nguyenIII,kapouleas:kleene:moller,kapouleas:li}, 
this is the first time since \cite{kapouleas:finite} it is used without imposing simplifying extra symmetries. 

The first step of our construction is to determine for each $k\in \N\setminus\{1,2\}$ a 
\emph{family of configurations $\Wcal[\sigmaunder {:} k ]$} parametrized by $\sigmaunder$ (see \ref{lem::}). 
This step resembles the construction of RLD's for $O(2)\times\Z_2$-symmetric backgrounds \cite{kapouleas:ii:mcgrath,LDg}, 
but the minimality ODE is nonlinear resulting in a different proof and construction.  
Each $\Wcal[\sigmaunder {:} k ]$ we construct is $O(2)\times\Z_2$-symmetric and consists of two flat discs (at the top and bottom of the configuration) and $k-1$ catenoidal annuli.  
Here $O(2)$ acts on $\R^3$ by rotations and reflections fixing the $y$-axis pointwise, 
and the generator of $\Z_2$ by reflection with respect to the $xz$-plane (Definition \ref{D:Grp}). 
The boundaries of the discs and the catenoidal annuli are contained in the union of $k$ parallel circles on $\Sph^2$,  
with each of the circles contained in the boundary of exactly two of the discs and annuli. 
Moreover $\Wcal[0{:}k]$ is balanced in the sense that the angles at a common boundary circle the catenoidal annuli (or disc) make with $\Sph^2$ are equal. 
Small \emph{unbalancing} (in the sense of slightly violating the equality) is prescribed continuously by $\sigmaunder$ in order to determine $\Wcal[\sigmaunder {:} k ]$.     
Finally for $k$ large enough the configurations $\Wcal[\sigmaunder {:} k ]$    
are arbitrarily close to $\Sph^2$, the catenoidal annuli arbitrarily narrow, and the discs arbitrarily small  
(Proposition \ref{propconvconf}).  

Moving forward (leading to the construction of our FBMS),  
we impose the symmetries of a finite subgroup of $O(2)\times\Z_2$, namely $D_{2m}\times\Z_2$, where $D_{2m}$ is a dihedral group of order $2m$ (Definition \ref{D:Grp}). 
Imposing such a dihedral subgroup of $O(2)$ symmetries is common for constructions where the background is rotationally symmetric but the surfaces constructed are not, 
for example \cite{kapouleas:finite}, \cite{kapouleas:ii:mcgrath}, or \cite[Parts II and III]{LDg}. 
The $\Z_2$ symmetry is imposed to simplify the presentation and can be removed as for example in \cite{kapouleas:zou}. 
Another simplifying assumption we adopt in this article is using exactly $m$ ``necks'' in the desingularizations of each circle 
(unlike in \cite{kapouleas:finite,kapouleas:david1} for example where arbitrary multiples of $m$ depending on the circle are used). 
As a result we produce FBMS's of genus zero with exactly $km$ connected boundary components. 

In the second step we construct the initial surfaces. 
Their construction is based on appropriately modifying the singly periodic Scherk surfaces so that they can ``desingularize'' the boundary circles of the configurations.  
As in \cite{kapouleas:finite}, the initial surfaces consist of three regions smoothly joined together. 
The first region consists of the modified \emph{cores} of \emph{half} singly periodic Scherk surfaces.  
Here the Scherk surfaces are subdivided in two halves by a symmetry plane through their axis (the $yz$-plane in \ref{D:scherk}). 
The ``waists'' of the Scherk surfaces on this plane are modified in the construction to form the boundary of the initial surfaces. 
Particular care is taken to ensure as in \cite{kapouleas:wiygul} that the initial surfaces are orthogonal to $\Sph^2$ at the boundary.  

The second region consists of graphical regions over catenoidal annuli imitating the graphical property of the wings of the Scherk surfaces over their asymptotic half-planes. 
``Bending'' the asymptotic half-planes to catenoids instead of cones is an important feature of the construction and as in \cite{kapouleas:finite} ensures good estimates for the mean curvature 
of the initial surfaces. 
The second region consists of \emph{two} (unlike in \cite{kapouleas:finite} with \emph{four})  
annular regions (or \emph{wings}) for each desingularizing modified half Scherk surface. 
The third region consists of catenoidal annuli close to the ones of the configuration being desingularized. 

In the third step we estimate the mean curvature $H$ of the initial surfaces. 
In Proposition \ref{propmeancurvdesing} $H$ is effectively decomposed into two parts: 
a part in the \emph{extended substitute kernel} $\skernel$, 
which is desirable and amounts more or less to the linearized change of the mean curvature under the action of the unbalancing parameters, 
and an error part which has to be corrected by solving the PDE.  
The error part is the sum of the gluing error, which is of order $\tau=1/m$, 
and the unbalancing higher order terms, which are  controlled by the squares of the unbalancing parameters 
(which in Proposition \ref{propmeancurvdesing} are $\alpha^+-\alpha^-$, $\phi^+$, and $\phi^-$). 

Note that the action of the unbalancing parameters is inspired by the \emph{geometric principle} (see \cite{kapouleas:clay,kapouleas:rs} for a general discussion)  
and relocates the various regions of the initial surfaces relative to each other as in \cite{kapouleas:finite,kapouleas:general}.   
There are \emph{three} unbalancing parameters for each desingularizing surface (equivalently per neck modulo symmetries), unlike \cite{kapouleas:finite} where there are \emph{six}, 
or \cite{kapouleas:general} where there are \emph{seven} (see \cite[page 519]{kapouleas:clay} or \cite[section 5.3]{kapouleas:rs}). 
One of them corresponds to the translation consistent with the construction and unbalances the wings creating substitute kernel (see \ref{defw}), 
and the other two reposition the two wings relative to the core creating extended substitute kernel (see \ref{defubar}) used to ensure decay along the wings. 

In the fourth step we solve the linearized PDE globally on the initial surfaces (Proposition \ref{propLEM}).
By solving the linearized PDE modulo $\skernel$ we can make the inhomogeneous term orthogonal to the corresponding eigenfunctions (Lemma \ref{lemsubkernel}\ref{itemsubkernelproj}) 
so we can find a solution and moreover (using the ``extended'' part of $\skernel$) we can ensure exponential decay of the solution along the wings (Lemma \ref{lemextker}\ref{itemextkerucomp}).  
The proof of the main Proposition \ref{propLEM}) is in the style of (for example) \cite{kapouleas:equator,kapouleas:li} rather than \cite{kapouleas:finite}: 
We first solve the linearized equation on the ``standard models'', that is on the half Scherk surfaces (Proposition \ref{propLES} and catenoidal annuli or discs (Lemma \ref{lemLEN}). 
By comparing the operators on the standard models and the initial surfaces (Lemma \ref{lemopcompare}), 
we can transfer the results on the standard models to the initial surfaces, and combine them to prove the main Proposition \ref{propLEM} using an iteration to correct small errors. 

In the fifth and final step we first estimate the nonlinear terms of the mean curvature (Proposition \ref{propnonlinear}) of graphs over the initial surfaces.
Note that following \cite{kapouleas:wiygul}, we use a simple auxiliary metric, which is a product metric,  to define these graphs; 
this ensures a simple description of the free boundary condition. 
We then apply Schauder's fixed point theorem to prove 
the main theorem of this article which less formally is as follows.

\begin{theorem}[Theorem \ref{thm}]
\label{thmA}
Given $k\in \N$ large enough in absolute terms and $m\in\N$ large enough in terms of $k$, 
there is a compact embedded two-sided free boundary minimal smooth surface (FBMS) $\Mmin_{k,m}$ of genus zero and $km$ connected boundary components 
(with $m$ of them in the vicinity of each of the $k$ boundary circles of $\Wcal\zConfarg$), 
which is symmetric under the action of $\Grp=D_{2m}\times\Z_2$ (Definition \ref{D:Grp}) and is the graph of a small function $\tilde{v}$ 
in an auxiliary ambient metric $g_A$ (Definition \ref{defga}) over a smooth initial surface $M[\xi_0]$ (see \ref{defini} and \ref{propini}).  
Moreover $\Mmin_{k,m}$ converges on compact subsets of the interior of $\B^3$ in all norms to the configuration $\Wcal\zConfarg$ as $m\to\infty$; 
the Hausdorff distance of $\Mmin_{k,m} $ from $\Wcal\zConfarg$ tends to $0$ as $m\to\infty$; 
and in turn the Hausdorff distance of $\Wcal\zConfarg$ from $\Sph^2=\partial\B^3$ tends to $0$ as $k\to\infty$.  
Finally $\lim_{k\to\infty}\lim_{m\to\infty} \Mmin_{k,m} = \Sph^2$ in the varifold sense. 
\end{theorem}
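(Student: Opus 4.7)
The plan is to follow the standard gluing paradigm (configuration $\to$ initial surfaces $\to$ linear theory $\to$ fixed point), adapted to the free boundary setting. Given $k$ and $m$ as in the statement, I would first invoke the configuration $\Wcal\Confarg$ constructed in the first step, select the parameter $\tau=1/m$, and interpret the theorem as producing a fixed point of a nonlinear map $\xi \mapsto \xi_0 + v(\xi)$ on a finite-dimensional subspace of unbalancing parameters $\xi = (\alpha^\pm,\phi^\pm)$, whose image will furnish a normal graph $\tilde v$ over the resulting initial surface $M[\xi_0]$ whose mean curvature vanishes and whose boundary meets $\Sph^2$ orthogonally.

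For the initial surfaces $M[\xi]$, I would build them out of three regions as indicated in the excerpt: desingularizing cores cut from half singly periodic Scherk surfaces (Definition \ref{D:scherk}) glued in along each of the $k$ parallel circles with $m$ necks per circle, bent wings that interpolate between Scherk ends and nearby catenoidal annuli (the ``bending'' ensuring $H$-estimates of the right order in $\tau$), and truncated catenoidal annuli (and discs at the top and bottom) modeled on $\Wcal\Confarg$ but perturbed by the unbalancing parameters. The free boundary condition forces a careful modification of the Scherk waists on the reflection plane, following \cite{kapouleas:wiygul}, so that the boundary circles lie on $\Sph^2$ and meet it orthogonally; I would verify this orthogonality and the $\Grp=D_{2m}\times\Z_2$ symmetry at the level of the construction itself, so that both are automatic for any subsequent $\Grp$-equivariant perturbation defined via the auxiliary product metric $g_A$ (Definition \ref{defga}).

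The analytic heart of the argument is the linear theory combined with the decomposition of $H$. Using Proposition \ref{propmeancurvdesing}, the mean curvature of $M[\xi]$ splits into (i) a piece in the extended substitute kernel $\skernel$ depending linearly and invertibly on $\xi$, (ii) a gluing error of size $O(\tau)$, and (iii) a quadratic unbalancing error of size $O(|\xi|^2)$. Proposition \ref{propLEM} then provides a bounded right inverse for the Jacobi operator modulo $\skernel$ with exponential decay along the wings, obtained by first solving on the standard models (half Scherk surfaces in Proposition \ref{propLES} and catenoidal annuli/discs in Lemma \ref{lemLEN}), then transferring via the operator comparison of Lemma \ref{lemopcompare} and iterating to absorb the small cross errors. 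Composing this inverse with the nonlinear Hamiltonian of Proposition \ref{propnonlinear} produces a continuous, $\Grp$-equivariant map into a compact convex subset of the Banach space of normal graphs, to which I would apply Schauder's fixed point theorem; the unbalancing component of the fixed-point equation determines $\xi_0$ so that the $\skernel$ projection of the residual $H$ vanishes. The free boundary condition is preserved along the iteration thanks to the choice of $g_A$.

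The main obstacle I expect is the quantitative coupling between the two limits $m\to\infty$ (at fixed $k$) and $k\to\infty$. The linear estimates on the Scherk cores degenerate as $k$ increases because the catenoidal annuli become narrower and the configuration approaches $\Sph^2$, so I would need to make all constants in Propositions \ref{propLEM}--\ref{propnonlinear} explicit in their $k$-dependence and then choose $m$ large enough in terms of $k$ so that $\tau=1/m$ beats any polynomial loss in $k$; only after this can the Schauder map be shown to preserve a small ball. Once the FBMS $\Mmin_{k,m}$ is obtained, the stated convergence statements follow cheaply: smooth convergence on interior compact subsets from the exponential decay along the wings plus the convergence $M[\xi_0]\to\Wcal\zConfarg$ in $C^\infty_{\mathrm{loc}}$, the Hausdorff convergence to $\Wcal\zConfarg$ from the global size estimates on $\tilde v$, and the varifold limit $\lim_k\lim_m\Mmin_{k,m}=\Sph^2$ from Proposition \ref{propconvconf}, since each catenoidal annulus and disc of $\Wcal\zConfarg$ limits onto a portion of $\Sph^2$ with unit density in the iterated limit.
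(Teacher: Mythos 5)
Your proposal follows essentially the same route as the paper's proof: the decomposition of $H$ via \ref{propmeancurvdesing} and \ref{corLEmeancurv}, the right inverse modulo the extended substitute kernel from \ref{propLEM}, the quadratic estimate \ref{propnonlinear}, and a Schauder fixed point on the product of the parameter space with a space of graph functions (the paper gains compactness by working in $C^{2,\alpha'}$ with $\alpha'<\alpha$ on the fixed surface $M[0]$ via diffeomorphisms $D_\xi$), with the kernel components killed by adjusting $\xi_0$ and the free boundary condition handled through $g_A$ and the condition $(\partial_\rho+1)u|_{\partial M}=0$. The only point where you diverge is the perceived obstacle about tracking explicit $k$-dependence of constants: this is unnecessary, since the paper fixes $k$ first, lets every constant depend on $k$, and only then takes $\tau=1/m$ small, exactly matching the quantifier order in the statement.
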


The construction here can be generalized to produce FBMS close to the boundary in rotationally symmetric convex domains in {E}uclidean three-space \cite{kapouleas:zou}.   
We remark also that 
very recently Karpukhin-Stern proved that the FBMS in the unit ball $\B^3$, 
maximizing the normalized first Steklov eigenvalue among oriented surfaces of genus zero and $k_\partial$ boundary components,   
converge as $k_\partial\to\infty$ to $\Sph^2=\partial\B^3$ in the sense of varifolds 
\cite[Corollary 1.4]{karpukhin}.    
(Fraser-Schoen had already proved that such maximizers exist and can be realized as compact embedded smooth FBMS in the unit ball $\B^3$ \cite{FS2016}.)  
At the moment however little is known about the geometry of these eigenvalue maximizing surfaces, 
unlike the ones constructed here which are small perturbations of explicit \emph{initial surfaces}.  
It seems unlikely that the surfaces constructed here are the maximizers of the normalized first Steklov eigenvalue, 
but it may be possible to construct some of the maximizers by enhancing our approach.

\subsection*{General notation and conventions}
$\phantom{ab}$
\nopagebreak

\begin{definition}
\label{D:EBS}
We identify the Euclidean space $E^3$ with $\R^3$ by using 
the standard Cartesian coordinate system $O.xyz$.  
We define the unit ball 
$\B^3:=\{(x,y,z)\in \R^3: x^2+y^2+z^2\le1\}$ and its boundary $\Sph^2:=\partial\B^3$. 
For convenience we will identify $\R^2$ with the $xy$-plane in $\R^3$ by identifying $(x,y)$ with $(x,y,0)$. 
Moreover denoting this identification by ``$\simeq$'', we introduce the notation  
\begin{equation} 
\label{R2+} 
\R^2_+:= \{(x,y,0)\in \R^3 \,:\, x\ge0\} \simeq  \{(x,y)\in \R^2 \,:\, x\ge0\}. 
\end{equation} 
Finally we take the north pole of the sphere to be the point $\pnorth:=(0,1,0)\in\R^2\subset\R^3$ and the south pole to be the point $\psouth:=(0,-1,0)\in\R^2\subset\R^3$. 
\end{definition}

\begin{definition}[Action of symmetry groups]   
\label{D:Grp} 
We let $O(2)\times\Z_2$ act on $\R^3$ (with coordinates as in \ref{D:EBS}) by assuming that $O(2)$ acts the usual way on the $xz$-plane while fixing the $y$-axis pointwise, 
and the generator of $\Z_2$ by reflection with respect to the $xz$-plane. 
Given 
$m\in \mathbb{N}\setminus\{1,2\}$, 
we let $D_{2m}$ denote the dihedral subgroup of $O(2)$ of order $2m$ 
whose action on the $xz$-plane includes reflections with respect to the lines $\{\cos{\frac{l\pi}{m}z}-\sin{\frac{l\pi}{m}x}=0\}$ for $l\in \mathbb{Z}$. 
We finally define $\Grp := D_{2m} \times \Z_2$ with action on $\R^3$ as a subgroup of $O(2)\times\Z_2$.  
\end{definition}

\begin{definition}
\label{D:newweightedHolder}
Assuming that $\Omega$ is a domain inside a manifold,
$g$ is a Riemannian metric on the manifold, 
$f:\Omega\to(0,\infty)$ are given functions, 
$k\in \N$, 
$\beta\in[0,1)$, 
$u\in C^{k,\beta}_{loc}(\Omega)$ 
or more generally $u$ is a $C^{k,\beta}_{loc}$ tensor field 
(section of a vector bundle) on $\Omega$, 
and that the injectivity radius in the manifold around each point $x$ in the metric $g$
is at least $1/10$,
$\|u: C^{k,\beta} ( \Omega,g,f)\|$ is defined by
$$
\|u: C^{k,\beta} ( \Omega,g,f)\|:=
\sup_{x\in\Omega}\frac{\,\|u:C^{k,\beta}(\Omega\cap B_x, g)\|\,}{f(x) },
$$
where $B_x$ is a geodesic ball centered at $x$ and of radius $1/100$ in the metric $g$.
For simplicity any of $\beta$ or $f$ may be omitted, 
when $\beta=0$ or $f\equiv1$, respectively.
\end{definition}

$f$ can be thought of as a ``weight'' function because $f(x)$ controls the size of $u$ in the vicinity of
the point $x$.
at the vicinity of each point $x$.
Note that from the definition it follows that 
\begin{equation}
\label{E:norm:derivative}
\| \, \nabla u: C^{k-1,\beta}(\Omega,g,f)\|
\le
\|u: C^{k,\beta}(\Omega,g,f)\|, 
\end{equation}
and the multiplicative property 
\begin{equation}
\label{E:norm:mult}
\| \, u_1 u_2 \, : C^{k,\beta}(\Omega,g,\, f_1 f_2 \, )\|
\le
C(k)\, 
\| \, u_1 \, : C^{k,\beta}(\Omega,g,\, f_1 \, )\|
\,\,
\| \, u_2 \, : C^{k,\beta}(\Omega,g,\, f_2 \, )\|.
\end{equation}

Cut-off functions will be used extensively,
and for this reason the following is adopted.

\begin{definition}
\label{DPsi} 
A smooth function $\Psi:\R\to[0,1]$ is fixed with the following properties:
\newline
(i).
$\Psi$ is nondecreasing.
\newline
(ii).
$\Psi\equiv1$ on $[1,\infty]$ and $\Psi\equiv0$ on $(-\infty,-1]$.
\newline
(iii).
$\Psi-\frac12$ is an odd function.
\end{definition}

Given now $a,b\in \R$ with $a\ne b$, the smooth function
$\psi[a,b]:\R\to[0,1]$ is defined
by
\begin{equation}
\label{Epsiab}
\psi[a,b]:=\Psi\circ L_{a,b},
\end{equation}
where $L_{a,b}:\R\to\R$ is the linear function defined by the requirements $L(a)=-3$ and $L(b)=3$.

Clearly then $\psi[a,b]$ has the following properties:
\newline
(i).
$\psi[a,b]$ is weakly monotone.
\newline
(ii).
$\psi[a,b]=1$ on a neighborhood of $b$ and 
$\psi[a,b]=0$ on a neighborhood of $a$.
\newline
(iii).
$\psi[a,b]+\psi[b,a]=1$ on $\R$.

\begin{definition}
$g$ is define to be the Euclidean metric in $E^3$ and $g_M$ is define to be the restriction of $g$ on the embedding surface $M$ in $E^3$. 
\end{definition}

\begin{definition}
For an oriented embedding surface $M$ in $E^3$, $A_{M}$ is defined to be the second fundamental form of $M$ and $\abs{A}^2_{M}$ is defined to be the square of the second fundamental form. Also the operator $\mathcal{L}_M$ is defined by
\begin{equation*}
    \mathcal{L}_M=\Delta_M+\abs{A}^2_{M},
\end{equation*}
where $\Delta_M$ is the Laplacian on $M$ defined by $g_M$.
\end{definition}

\begin{notation}[Symmetric functions] 
\label{N:G} 
Given a manifold $M$ invariant under the action of some group $G$ and a space of functions $\mathcal{X}\subset C^0(\Omega)$,  
where $\Omega$ is a subset of $M$ invariant under the action of $G$, 
we use a subscript ``$G$'' to denote the subspace $\mathcal{X}_{G}\subset\mathcal{X}$ consisting of the functions in $\mathcal{X}$ which are invariant under the action of $G$.
\end{notation}

\subsection*{Acknowledgments}
The authors would like to thank Fernando Marques and Richard Schoen for suggesting this problem to one of them during his visit to IAS in Fall 2018.

\section{The Initial Configurations}

Recall that we use the standard Cartesian coordinate system $O.xyz$ in $\R^3$ as in \ref{D:EBS}. 
In this section we concentrate on objects in $\R^3$ which are rotationally invariant around the $y$-axis (recall \ref{D:EBS}) and so are determined uniquely by 
their intersection with $\R^2_+$. 

\begin{definition}[Polar coordinates] 
	\label{D:polar} 
	We define polar coordinates $(r,\beta)$ on the $xy$-plane by 
	$(x,y)\, =\,r\, (\sin\beta,\cos\beta)$ and so $\beta=0$ at the north pole and $\beta=\pi$ at the south pole. 
Moreover given a function $\rr:B\to\R_+$ where $B\subset[0,\pi]$, we define the \emph{radial graph of $\rr$} to be the set 
$$ 
\{\, \rr(\beta)\,(\sin\beta,\cos\beta)\, : \, \beta\in B \} \subset \R^2_+. 
$$ 
\end{definition}

\subsection*{The Catenoidal Annuli}

\begin{lemma}[Catenoids around the $y$-axis] 
	\label{L:BK} 
	For a catenoid $\K\subset \R^3$ which is rotationally invariant around the $y$-axis the following hold.  
	\begin{enumerate}[label={(\roman*)},ref={(\roman*)}]
		\item 
		There are unique $a = a^\K \in\R_+$ and $b = b^\K \in\R$ such that 
		$\K':= \K\cap \R^2_+ =$ 
		\\ 
		$\phantom{kk}$ \hfill  
		$=\left\{(x,y)\in\R^2_+ : x=a\cosh(\frac{y-b}{a})\right\}$.  
		\item 
		There are exactly two straight lines through the origin which are tangent to $\K'$. 
		We call the points of tangency $p_{\top^\pm}^\K \in \K'$, chosen so that their $y$-coordinates $y_{\top^\pm}^\K\in\R$ 
		satisfy $y_{\top^-}^\K < y_{\top^+}^\K$.  
		\item 
		The arc $\Ktop$ of $\K'$ with endpoints $p_{\top^\pm}^\K$ is graphical in polar coordinates (that is $r$ is a function of $\beta$ on $\Ktop$). 
		The inward normal $\nu=\nu^\K$ and the position vector $X$ of $\K$ satisfy $\nu\cdot X>0$ on the interior of $\Ktop$ and 
		$\nu\cdot X<0$ on $\K'\setminus\Ktop$. 
		\item 
		\label{itemBKv}
		$v:=\frac{d}{d\beta}\log\frac1{r}$ is a strictly decreasing function of $\beta$ on the interior of $\Ktop$---equivalently $\log r$ is strictly convex on $\Ktop$---and 
		satisfies the first order equation 
		\begin{equation}\label{eqcatev}
			\frac{\mathrm{d}v}{\mathrm{d}\beta}=-\cot\beta v(v^2+1)-2(v^2+1).  
		\end{equation}
		\item 
		\label{iBKc}
		$\K\cap\B^3$ is connected and when nonempty   
		it is an annulus or a circle, and there exist (uniquely determined) $y_-^\K , y_+^\K \in (-1,1)$ with $y_-^\K \le y_+^\K$, 
		such that 
		\begin{enumerate}[label=\emph{(\alph*)}]
			\item 
			$\K\cap\B^3 = \{(x,y,z)\in\K:  y_-^\K  \le y \le y_+^\K \}$, 
			\item 
			$\K \cap \Sph^2 = \{ (x,y,z)\in \Sph^2 : y = y_-^\K \text{ or } y = y_+^\K \}$,    
			\item 
			\label{isign} 
			$\sgn b^\K = \sgn \, ( y_-^\K + y_+^\K ) $.    
		\end{enumerate}
	\end{enumerate}
\end{lemma}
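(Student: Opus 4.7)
The plan is to reduce every assertion to an elementary analysis of the generating curve $\K'\subset\R^2_+$ from (i): every catenoid with $y$-axis as axis is uniquely given by $x = a\cosh((y-b)/a)$ with $a>0$ the waist radius and $b\in\R$ the waist height. For (ii), the condition that the tangent to $\K'$ at $(x_0,y_0)$ pass through the origin (i.e., position vector parallel to tangent) simplifies via $dx/dy=\sinh((y-b)/a)$ to $y_0\tanh((y_0-b)/a)=a$; alternatively, since $\K'$ bounds the strictly convex region $\{x\ge a\cosh((y-b)/a)\}$ not containing the origin, there are exactly two tangent lines from the origin, giving the points $p_{\top^\pm}^\K$. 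For (iii) I parametrize $\K'$ by arclength and note that $d\beta/ds=0$ iff the tangent is radial, hence only at $p_{\top^\pm}^\K$; so $\beta$ is strictly monotone on $\Ktop$ and this arc is a polar graph. Since $\nu\cdot X$ vanishes precisely at $p_{\top^\pm}^\K$ it has constant sign on each complementary arc; at the waist $(a,b)\in\Ktop$, with the outward-from-axis normal, one computes $\nu\cdot X=a>0$, while at a large-$|y|$ point of $\K'\setminus\Ktop$ one computes it as negative.

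For (iv) the ODE \eqref{eqcatev} follows by substituting $v:=-r'/r$ into the minimality ODE for the radial graph $r=r(\beta)$ rotated about the $y$-axis; the computation is routine. For the monotonicity, the radial-tangent condition at $p_{\top^\pm}^\K$ yields $v\to+\infty$ at the smaller-$\beta$ endpoint $\beta_1$ and $v\to-\infty$ at the larger-$\beta$ endpoint $\beta_2$. At any interior critical point one has $v\cot\beta+2=0$ (i.e.\ $v=-2\tan\beta$), and differentiating the ODE gives $v''=(v^2+1)v\csc^2\beta$, so $\sgn v''=\sgn v=\sgn(-\tan\beta)$. Hence local maxima ($v''<0$) can occur only at $\beta<\pi/2$, and local minima ($v''>0$) only at $\beta>\pi/2$. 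On $(\beta_1,\pi/2)$ the impossibility of local minima rules out local maxima (any local max on a branch descending from $+\infty$ would require a prior local min), and symmetrically on $(\pi/2,\beta_2)$ the impossibility of local maxima rules out local minima (reaching $-\infty$ after a local min would demand a subsequent local max). Hence $v$ has no interior critical points and is strictly monotone decreasing on $(\beta_1,\beta_2)$.

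For (v), strict convexity of $f(y):=a^2\cosh^2((y-b)/a)+y^2$ (from $f''(y)=2\cosh(2(y-b)/a)+2>0$) forces $\{f\le 1\}$ to be a single closed (possibly degenerate) interval $[y_-^\K,y_+^\K]$; rotating about the $y$-axis gives (a) and (b) with connectedness immediate. For (c), the reflection $y\to-y$ sends $\K(a,b)$ to $\K(a,-b)$ and preserves $\B^3$, so $y^{\K(-b)}_\pm=-y^{\K(b)}_\mp$ and consequently $S(b):=y_-^\K(b)+y_+^\K(b)$ is odd in $b$ with $S(0)=0$. If $S(b)=0$ at some $b$ with non-degenerate intersection, then $y_-^\K=-y_+^\K$ and the two equations $f(\pm y_+^\K)=1$ force $|y_+^\K-b|=|y_+^\K+b|$, hence $b=0$; so $S$ has no other zeros. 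Combined with $S'(0)>0$ (by implicit differentiation of $f(y_\pm^\K)=1$ at $b=0$, where the waist $(a,0)$ lies inside the ball so numerator and denominator match in sign at each $y_\pm^\K$), continuity pins down $\sgn S(b)=\sgn b$. I expect (iv) to be the principal obstacle, as it is the only step requiring a nontrivial global ODE analysis; the vanishing of $\cot\beta$ at $\pi/2$ forces splitting the monotonicity argument into two subintervals handled separately.
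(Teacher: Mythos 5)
Your handling of (i)--(iv) and of (v)(a),(b) is essentially the paper's route. For (ii) the paper counts the roots of $y\tanh\!\big(\tfrac{y-b}{a}\big)=a$ (equivalently the critical points of $y/x$) by an explicit monotonicity analysis; your synthetic claim that an exterior point of an unbounded strictly convex region admits exactly two tangent lines needs a word about the ends of the catenary (tangent directions degenerating to horizontal), but the explicit tangency equation you also wrote down closes this in the same way the paper does. For (iii) your sign bookkeeping for $\nu\cdot X$ (vanishing exactly at the tangency points, evaluation at the waist and at large $|y|$) agrees with the paper's computation $X\cdot\nu=a-y\tanh\!\big(\tfrac{y-b}{a}\big)$; note the waist does lie on $\Ktop$ because the two tangency points straddle both $y=0$ and $y=b$, a fact worth stating. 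For (iv) the paper excludes interior critical points of $v$ on $\beta\le\pi/2$ by a first-order comparison at a putative critical point ($\cot\beta\,v=-2$) and then invokes symmetry; your variant via the identity $v''=(v^2+1)v\csc^2\beta$ at critical points, which makes every critical point in $(\beta_1,\pi/2)$ a strict local maximum and every one in $(\pi/2,\beta_2)$ a strict local minimum, is correct and arguably cleaner, and the endpoint behavior $v\to+\infty$ at the smaller-$\beta$ end and $v\to-\infty$ at the larger-$\beta$ end is exactly what the paper uses.

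The genuine departure is (v)(c), and it is also where your argument has a gap. The paper works directly on the given catenoid: comparing the $x$-coordinates of the two boundary points through $x=\sqrt{1-y^2}$ and using convexity and symmetry of $x(y)=a\cosh\!\big(\tfrac{y-b}{a}\big)$ about $y=b$ places $b$ on the correct side of $(y_-^\K+y_+^\K)/2$. You instead fix $a$, vary $b$, and propagate the sign of $S(b)=y_-^\K+y_+^\K$ from $S'(0)>0$ using oddness and absence of other zeros. For that propagation you need $S$ to be defined and continuous on the whole segment of parameters between $0$ and the given $b$, i.e.\ you must know that $\K(a,b')\cap\B^3$ remains nonempty for every intermediate $b'$ (and in particular that the $b'=0$ catenoid meets the ball at all, so that $S'(0)$ makes sense), and that degenerate single-circle intersections at $b'\ne 0$ are not zeros of $S$. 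None of this is in your write-up. It is true, and provable by one extra lemma: with $f_{b'}(y)=a^2\cosh^2\!\big(\tfrac{y-b'}{a}\big)+y^2$, the envelope computation gives $\tfrac{d}{db'}\min_y f_{b'}=2y^*(b')$ where the minimizing height $y^*(b')$ has the sign of $b'$, so $\min_y f_{b'}$ is even and strictly increasing in $|b'|$; this yields both the connectedness of the parameter interval and the sign of $S$ at degenerate parameters. Without this lemma the continuity argument does not get started; with it, your (c) is complete, though the paper's direct comparison is shorter.
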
 

\begin{proof} 
	(i) follows by the equation of catenary $\K'$.
	
	For (ii), consider the function $\frac{y}{x}$ on $\K'$. By definition, the tangency happens exactly at the critical point of the function $\frac{y}{x}$. By (i),
	\begin{align*}
		\frac{\mathrm{d}}{\mathrm{d}y}\frac{y}{x}=\frac{\mathrm{d}}{\mathrm{d}y}\frac{y}{a\cosh(\frac{y-b}{a})}=\frac{1-\frac{y}{a}\tanh(\frac{y-b}{a})}{a\cosh(\frac{y-b}{a})}.
	\end{align*}
	However, 
	\begin{align*}
		\frac{\mathrm{d}}{\mathrm{d}y}y\tanh\left(\frac{y-b}{a}\right)=\sech^2\left(\frac{y-b}{a}\right)\left(\sinh\left(\frac{y-b}{a}\right)\cosh\left(\frac{y-b}{a}\right)+\frac{y}{a}\right).
	\end{align*}
	The function $\sinh(\frac{y-b}{a})\cosh(\frac{y-b}{a})+\frac{y}{a}$ is increasing and only has one zero, say $y_0$. Therefore, the function $1-\frac{y}{a}\tanh(\frac{y-b}{a})$ has only one critical point $y_0$, which is a maximum, and it is increasing in $(-\infty,y_0)$ and decreasing in $(y_0,\infty)$. And at $y=0$, it equals $1$; moreover, $\lim_{y\to\pm\infty}1-\frac{y}{a}\tanh(\frac{y-b}{a})=\mp\infty$. Thus it has only two zeroes. Finally, these two points are the only two critical points of $\frac{y}{x}$, which correspond to $p_{\top^\pm}^\K$.
	
	The fact that the arc $\Ktop$ of $\K'$ is graphical in polar coordinates follows by (ii) because the graphical property fails only at the tangency points. A straightforward calculation shows that $X=(a\cosh(\frac{y-b}{a}),y)$ and $\nu=(\sech(\frac{y-b}{a}),-\tanh(\frac{y-b}{a}))$, and thus $X\cdot\nu=a-y\tanh(\frac{y-b}{a})$, which has the same sign as the function $\frac{\mathrm{d}}{\mathrm{d}y}\frac{y}{x}$. The rest part of (iii) then follows by the results in the proof of (ii).
	
	The catenary $\K'$ satisfies the differential equation 
	\begin{equation}\label{eqxycate}
		\frac{\mathrm{d}^2x}{\mathrm{d}y^2}=\frac{1+(\frac{\mathrm{d}x}{\mathrm{d}y})^2}{x}.
	\end{equation}
	
	From the definitions of $v,\beta$ and their relationships with $x,y$, the equation could be rewritten with the new variables. The equation \eqref{eqcatev} then follows. 
	
	Assume that $\beta\leq\pi/2$. From the equation, When $v\geq 0$, by \eqref{eqcatev}, $\frac{\mathrm{d}v}{\mathrm{d}\beta}<0$, and thus $v$ is decreasing. Now suppose that at some $\beta^1\leq\pi/2$, $\frac{\mathrm{d}v}{\mathrm{d}\beta}(\beta^1)>0$ and $v(\beta^1)<0$. By the definitions, at $p_{\top^+}^\K$, $v=+\infty$. Thus there is $\beta^0<\beta^1$, such that $\frac{\mathrm{d}v}{\mathrm{d}\beta}(\beta^0)=0$, $v(\beta^0)<0$ and $\frac{\mathrm{d}v}{\mathrm{d}\beta}(\beta)>0$ for all $\beta\in(\beta^0,\beta^1)$. Therefore, $v(\beta^0)<v(\beta^1)$. Moreover, $0=\frac{\mathrm{d}v}{\mathrm{d}\beta}(\beta^0)=-\cot\beta^0 v(\beta^0)((v(\beta^0))^2+1)-2((v(\beta^0))^2+1)$, or $\cot\beta^0 v(\beta^0)=-2$. However, as $v(\beta^0)<v(\beta^1)<0$, $\cot\beta^0>\cot\beta^1>0$
	\begin{align*}
		\frac{\mathrm{d}v}{\mathrm{d}\beta}(\beta^1)=-(\cot\beta^1v(\beta^1)+2) ((v(\beta^1))^2+1)<0,
	\end{align*}
	which contradicts the assumption. This shows that $v$ is a nonincreasing functions when $\beta\leq\pi/2$. As the equation does not have constant solution, $v$ must be strictly decreasing. The case when $\beta>\pi/2$ then simply follows by the symmetry.
	
	Finally, by the fact that the function $r^2(y):=x^2(y)+y^2$ on $\K'$ is convex, where $x(y):=a\cosh{(\frac{y-b}{a})}$ as in (i), $\K'\cap \D^2$ is connected, where $\D^2$ is the unit disk. Moreover, as $\K'$ and $\Sph^1\cap \R^2_+$ are both graphical over the $y$-axis, there exist uniquely determined $y_-^\K , y_+^\K \in (-1,1)$ with $y_-^\K \le y_+^\K$ such that 
	\begin{align*} 
		\K'\cap\D^2 =& \{(x,y)\in\K':  y_-^\K  \le y \le y_+^\K \}, 
		\\ 
		\K' \cap \Sph^1 =& \{ (x,y)\in \Sph^1 : y = y_-^\K \text{ or } y = y_+^\K \}.   
	\end{align*} 
	The first two items in (v) then follows from the relationship between $\K'$ and $\K$. Finally, suppose that $\K' \cap \Sph^1 = \{(x_-^\K, y_-^\K), (x_+^\K, y_+^\K) \}$, if $y_-^\K + y_+^\K \geq 0$, then $x_-^\K\leq x_+^\K$. By looking $x(y):=a\cosh{(\frac{y-b^\K}{a^\K})}$ as a graph over $y$-axis, the symmetry axis $y=b^\K$ is above the line $y=(y_-^\K + y_+^\K)/2$. Thus $b^\K>0$ and the last item of (v) follows by symmetry.
\end{proof}

\begin{definition} 
	\label{D:alphabeta}
	Given a catenoid $\K$ as in \ref{L:BK} with $\K\cap\B^3$ an annulus, we define the arc $\KBp:=\K\cap\R^2_+\cap\B^3$.  
	We define $\betainK,\betaexK \in(0,\pi)$ with $\betainK<\betaexK$ to be the latitudes of the endpoints of $\KBp$, 
	and hence by \ref{L:BK}\ref{iBKc} we have $\cos\betainK=y_+^\K$ and $\cos\betaexK=y_-^\K$. 
	The endpoints of $\KBp$ will be denoted by $\pinK:=( \sin\betainK , \cos\betainK \,) \in \Sph^1$ and $\pexK:=( \sin\betaexK , \cos\betaexK \,) \in \Sph^1$. 
	Finally we define $\alphainK$ ($\alphaexK$) to be the angle at $\pinK$ ($\pexK$) 
	between $\KBp$ and the arc of $\Sph^1$ with endpoints $\pinK,\,\psouth$ ($\pexK,\,\pnorth$).  
\end{definition}

\begin{lemma}     
	\label{L:Kin} 
	Given a catenoid $\K$ as in \ref{D:alphabeta} we have $\alphainK \in (0, \pi- \betainK)$. 
	Conversely given $\betain\in(0,\pi)$ and $\alphain \in (0, \pi- \betain)$ 
	there is a unique catenoid which will be denoted by $\K[\betain,\alphain]$ and is as in \ref{D:alphabeta} and satisfies 
	$\beta_{in}^{ \K[\betain,\alphain] } =\betain$ and $\alpha_{in}^{ \K[\betain,\alphain] }    =\alphain$.  
\end{lemma}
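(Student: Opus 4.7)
The plan is to translate the prescribed quantities $\beta_{in}^\K$ and $\alpha_{in}^\K$ into explicit algebraic conditions on the catenoid parameters $a^\K,b^\K$ of \ref{L:BK}, solve the resulting system, and identify the correct branch using the intersection analysis of $\K'$ with $\Sph^1$.

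First I would compute $\cos\alpha_{in}^\K$ directly. Setting $u^\K:=(y_+^\K-b^\K)/a^\K$, the tangent to $\K'$ at $p_{in}^\K$ pointing into $\KBp$ (in the direction of decreasing $y$, since $p_{in}^\K$ is the \emph{upper} endpoint) is proportional to $(-\sinh u^\K,-1)$, while the unit tangent to $\Sph^1$ at $p_{in}^\K$ toward $\psouth$ is $(\cos\beta_{in}^\K,-\sin\beta_{in}^\K)$. Taking the dot product and simplifying via $a^\K\cosh u^\K=\sin\beta_{in}^\K$ gives
\[
\cos\alpha_{in}^\K \;=\; a^\K \,-\, \cos\beta_{in}^\K\,\tanh u^\K.
\]
Combined with $a^\K\cosh u^\K=\sin\beta_{in}^\K$ and $\cosh^2-\sinh^2=1$, this reduces after rearrangement and squaring to a quadratic in $\cosh u^\K$ whose discriminant factors as $4\cos^2\beta_{in}^\K\sin^2\alpha_{in}^\K$; the two roots yield the associated parameters
\[
\cosh u^\K \;=\; \tfrac{1}{\sin(\beta_{in}^\K\pm\alpha_{in}^\K)},\quad \sinh u^\K \;=\; -\cot(\beta_{in}^\K\pm\alpha_{in}^\K),\quad a^\K \;=\; \sin\beta_{in}^\K\sin(\beta_{in}^\K\pm\alpha_{in}^\K).
\]

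To single out the correct sign I would use \ref{L:BK}(v): on $\K'$ the function $f(y):=x(y)^2+y^2-1$ is strictly convex with roots exactly $y_-^\K\le y_+^\K$, so $p_{in}^\K$ being the \emph{upper} root is equivalent to $f'(y_+^\K)\ge 0$. A short calculation gives $f'(y_+^\K)=2\sin\beta_{in}^\K\bigl[\sinh u^\K+\cot\beta_{in}^\K\bigr]$, so one needs $\sinh u^\K\ge-\cot\beta_{in}^\K$; the strict decrease of $\cot$ on $(0,\pi)$ then selects precisely the ``$+$'' branch when $\alpha_{in}^\K>0$ (the ``$-$'' branch would correspond to $(\sin\beta_{in}^\K,\cos\beta_{in}^\K)$ being $p_{ex}^\K$ rather than $p_{in}^\K$).

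Both directions now follow. For the forward implication, $a^\K>0$ and $\sin\beta_{in}^\K>0$ force $\sin(\beta_{in}^\K+\alpha_{in}^\K)>0$, hence $\alpha_{in}^\K\in(0,\pi-\beta_{in}^\K)$, the strict lower bound coming from the transversality built into the annulus hypothesis. For the converse, given $\betain\in(0,\pi)$ and $\alphain\in(0,\pi-\betain)$, set $a:=\sin\betain\sin(\betain+\alphain)>0$, determine $u$ via the ``$+$'' branch formulas, and define $b:=\cos\betain-au$; then $f(\cos\betain)=0$ and $f'(\cos\betain)=2\sin\alphain/\sin(\betain+\alphain)>0$, so strict convexity of $f$ produces a unique second root $y_-^\K\in(-1,\cos\betain)$, making $\K\cap\B^3$ a genuine annulus realizing the prescribed $(\betain,\alphain)$. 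The main obstacle is the branch selection, which hinges on distinguishing $p_{in}^\K$ from $p_{ex}^\K$ through the sign of $f'$ at the intersection point.
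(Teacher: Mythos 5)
Your argument is correct, but it runs along a genuinely different track from the paper's. The paper proves existence and uniqueness by viewing the catenary as the solution of the initial value problem for \eqref{eqxycate} with prescribed point $\pin=(\sin\betain,\cos\betain)$ and slope $dx/dy=\tan(\alphain+\betain-\pi/2)$, so uniqueness is inherited from ODE theory, and it checks that $\pin$ is the upper intersection point by the sign of $(x,y)\cdot(dx/dy,1)$ at $\pin$ — which is, up to a positive factor, exactly your quantity $f'(y_+^\K)$. You instead solve the system in closed form for the catenoid parameters: the relation $\cos\alphainK\cosh u+\cos\betainK\sinh u=\sin\betainK$ together with $\cosh^2u-\sinh^2u=1$ yields the two branches $\cosh u=1/\sin(\betainK\pm\alphainK)$, the sign of $f'(y_+^\K)$ (equivalently $\sinh u>-\cot\betainK$) selects the $+$ branch, and then $a$, $u$, $b$ are pinned down explicitly, which simultaneously gives uniqueness, the forward bound $\alphainK<\pi-\betainK$ (from $a>0$, i.e. $\cosh u\geq 1$), and, as a bonus, the parameter formulas that the paper only records later in \ref{lemcate}(i). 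What the paper's route buys is brevity and freedom from case analysis; what yours buys is explicitness and a cleaner derivation of $\alphainK\in(0,\pi-\betainK)$ than the paper's rather terse remark.

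Two small points to tighten, neither a gap in the idea: the two-root trigonometric form of your quadratic degenerates exactly when $\cos^2\alphainK=\cos^2\betainK$ (in particular $\alphainK=\pi-\betainK$, which is part of what the forward direction must exclude), so you should either dispose of that boundary case by a one-line check that its unique solution violates $\sinh u>-\cot\betainK$, or argue directly, e.g.\ note that $\sinh u>-\cot\betainK$ forces $\sin\betainK+e^{-u}\cos\betainK>0$, which is equivalent to $\cos\alphainK>-\cos\betainK$ and hence to $\alphainK<\pi-\betainK$. Likewise when $\betainK=\pi/2$ the linear relation no longer determines $\sinh u$, and it is again the condition $\sinh u>-\cot\betainK=0$ that fixes its sign; your branch-selection criterion covers this, but it deserves a word.
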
 

\begin{proof} 
	Note that the condition $\alphainK <\pi-\betainK $ amounts by Lemma \ref{L:BK} to the fact that $\KBp$ lies below $\pin$. $\pi-\betainK$ in the angle between the line segment $\{(x,y):y=\sin{\betainK},0<x\leq\cos{\betainK}\}$ and the arc of $\Sph^1$ with endpoints $\psouth$. Conversely, given $\alphain$ and $\betain$ with $0<\alphain<\pi-\betain$,
	consider the ordinary differential equation \eqref{eqxycate} with the initial conditions: $x=\sin{\betain}$, $y=\cos{\betain}$ and $\frac{\mathrm{d}x}{\mathrm{d}y}=\tan(\alphain+\betain-\frac{\pi}{2})$. The graph $(x(y),y)$ of the solution is a catenary $\K'$ going through the point $\pin:=(\sin{\betain}, \cos{\betain})$. Moreover, at $\pin$, the inner product $(x,y)\cdot(\frac{\mathrm{d}x}{\mathrm{d}y},1)=\sin\betain(\cot{\betain}-\cot(\alphain+\betain))>0$ because of the condition $\alphain+\betain<\pi$. This means that the graph is going in the circle $\Sph^1$ at $\pin$ if $y$ is decreasing. A calculation along with the definition of $\betainK$ and $\alphainK$ in \ref{D:alphabeta} shows that $\pinK=\pin$, $\betainK=\betain$ and $\alphainK=\alphain$. 
	
	On the other hand, each catenary $\K'$ corresponding to the catenoid $\K$ in the lemma must satisfy the equation and initial conditions as above, the uniqueness then follows.
\end{proof}

\begin{notation} 
	\label{N:sqbr} 
	We will usually simplify the notation related to $\K=\K[\betain,\alphain]$ (defined as in \ref{L:Kin}) by using ``$[\betain,\alphain]$'' 
	to emphasize the dependence on the parameters, 
	for example we may write $\betaex[\betain,\alphain]$ instead of $\beta_{ex}^{\K[\betain,\alphain] }$. 
\end{notation} 

\begin{lemma}     
	\label{L:Kex} 
	For $\alphain,\betain$ as in \ref{L:Kin} we have 
	$\KBp[\betain,\alphain] \subset \Ktop[\betain,\alphain]$ if and only if $\alphain <\pi/2$ and $\alphaex[\betain,\alphain]<\pi/2$.  
\end{lemma}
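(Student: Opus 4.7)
The plan is to reduce $\KBp\subset\Ktop$ to a condition on the two endpoints $\pinK,\pexK$ of $\KBp$, and then identify that endpoint condition with $\alphain,\alphaex<\pi/2$ via the sign characterization of $\Ktop$ in Lemma \ref{L:BK}(iii).

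For the endpoint-to-angle step, I would compute $\nu\cdot X$ at $\pinK$ (the argument at $\pexK$ is symmetric). Since $\pinK\in\Sph^1$ we have $|X|=1$ and the tangent to $\Sph^1$ at $\pinK$ is perpendicular to $X$. By Definition \ref{D:alphabeta}, the unit tangent $T$ to $\K'$ at $\pinK$ makes angle $\alphain$ with the $\Sph^1$-tangent, hence angle $\pi/2-\alphain$ with $X$; and since $\nu\perp T$ in the $xy$-plane, the angle between $\nu$ and $X$ is $\alphain$, yielding
\begin{equation*}
\nu\cdot X\,=\,\cos\alphain \text{ at } \pinK, \qquad \nu\cdot X\,=\,\cos\alphaex \text{ at } \pexK.
\end{equation*}
The correct sign of $\nu$ is pinned down by compatibility with the explicit formula $\nu=(\sech\tfrac{y-b}{a},-\tanh\tfrac{y-b}{a})$ derived in the proof of Lemma \ref{L:BK}, or equivalently by the limit $\alphain\to 0$ where $\K'$ is nearly tangent to $\Sph^1$ and $\nu$ is nearly outward radial. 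Combining with Lemma \ref{L:BK}(iii) then gives the equivalence $\pinK\in\operatorname{int}\Ktop\Leftrightarrow\alphain<\pi/2$, and analogously for $\pexK$ and $\alphaex$.

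To finish, I would promote the endpoint condition to the full arc condition using the $y$-parametrization. By Lemma \ref{L:BK}\ref{iBKc} and the analysis in the proof of Lemma \ref{L:BK}(iii), both $\KBp$ and $\Ktop$ are subarcs of the catenary $\K'$ parametrized by the $y$-coordinate, with $\KBp=\{(x,y)\in\K':\,y_-^\K\le y\le y_+^\K\}$ and $\Ktop=\{(x,y)\in\K':\,y_{\top^-}^\K\le y\le y_{\top^+}^\K\}$. Hence $\KBp\subset\Ktop$ is equivalent to the inclusion of $y$-intervals $[y_-^\K,y_+^\K]\subset[y_{\top^-}^\K,y_{\top^+}^\K]$, which in turn holds iff both endpoints $\pinK,\pexK\in\Ktop$. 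Combined with the previous step this yields the stated characterization.

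The only real obstacle is the first step, namely the careful orientation bookkeeping needed to confirm $\nu\cdot X=+\cos\alphain$ rather than $-\cos\alphain$; once this sign is settled, the rest of the argument is just a comparison of two $y$-intervals.
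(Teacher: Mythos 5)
Your proof is correct and takes essentially the same route as the paper's: the paper likewise observes that $\alphain<\pi/2$ and $\alphaex[\betain,\alphain]<\pi/2$ are equivalent to $X\cdot\nu>0$ at $\pinK$ and $\pexK$ and then invokes Lemma \ref{L:BK}(iii), while you simply make explicit the two steps it leaves implicit, namely the computation $\nu\cdot X=\cos\alphain$ at $\pinK$ (and $\cos\alphaex$ at $\pexK$) and the reduction of $\KBp\subset\Ktop$ to containment of $y$-intervals via the graphical parametrization of $\K'$. The only delicate point---that an endpoint of $\KBp$ sitting exactly at a tangency point gives $\nu\cdot X=0$, so the ``only if'' direction with strict inequalities needs this boundary case excluded---is glossed over in the paper's proof just as in yours, so it is not a gap specific to your argument.
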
 
\begin{proof} 
	By the definition \ref{D:alphabeta}, the conditions $\alphain <\pi/2$ and $\alphaex[\betain,\alphain]<\pi/2$ are equivalent with $X\cdot \nu>0$ at $\pinK$ and $\pexK$. The result then follows by \ref{L:BK}(iii).
\end{proof}

\begin{lemma} 
	\label{lemcate}
	The following hold for $\K= \K[\betain,\alphain]$ as in \ref{L:Kin}, $a=a[\betain,\alphain]$, $b=b[\betain,\alphain]$ as in \ref{L:BK}, $\betaex=\betaex[\betain,\alphain]$, 
	and $\alphaex=\alphaex[\betain,\alphain]$.   
	\begin{enumerate}[label={(\roman*)},ref={(\roman*)}]
		\item 
		\label{itemcatepara}
		$\phantom1\quad a=\sin(\beta_{in}) \, \sin(\alpha_{in}+\beta_{in}) = \sin(\beta_{ex}) \, \sin(-\alpha_{ex}+\beta_{ex}) $,\\
		$\phantom1\quad b=
		\cos(\beta_{in}) - \sgn(  \alpha_{in}+\beta_{in} -  \pi/2  )  \, \sin(\beta_{in})\,\sin(\alpha_{in}+\beta_{in})\,\arcosh( {1}/ {\sin(\alpha_{in}+\beta_{in})} ) $. \\ 
		Moreover when $\betaex\leq \pi/2$, \\ 
		$\phantom1 \quad b=\cos(\beta_{ex})+\sin(\beta_{ex}) \, \sin(-\alpha_{ex}+\beta_{ex}) \, \arcosh  (  {1} / {\sin(-\alpha_{ex}+\beta_{ex})} ) $.
		\item 
		\label{itemdecreasing} 
		$\quad \sgn (\alphain-\alphaex) = \sgn b = \sgn (\pi-\betain-\betaex ) $. 
		\item If $\alphain <\pi/2$ and $\alphaex[\betain,\alphain]<\pi/2$,   
		$v:=\frac{d}{d\beta}\log\frac1{r}$ is strictly decreasing on $\KBp[\betain,\alphain]$ 
		and satisfies \eqref{eqcatev} with boundary conditions  $v(\beta_{in})=\tan(\alpha_{in})$, $v(\beta_{ex})=-\tan(\alpha_{ex})$.\label{itemconcavity}
	\end{enumerate}
\end{lemma}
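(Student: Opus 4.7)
The plan is to prove the three parts in order, with (i) being essentially computational, (ii) combining the last clause of \ref{L:BK} with trig identities and a short case analysis, and (iii) reducing to \ref{L:BK}\ref{itemBKv} via polar computations at the endpoints.

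For (i), I will introduce the variable $\tau_{in}:=(\cos\betain-b)/a$; incidence of $\K'$ through $\pin$ reads $a\cosh\tau_{in}=\sin\betain$. Writing the inward unit tangent to $\KBp$ at $\pin$ as $-(\sinh\tau_{in},1)/\cosh\tau_{in}$ and the unit tangent to $\Sph^1$ toward $\psouth$ as $(\cos\betain,-\sin\betain)$, the cosine-of-angle formula yields the linear relation $a\cos\betain\sinh\tau_{in}=\sin\betain(a-\cos\alphain)$. Squaring and eliminating $\sinh\tau_{in}$ via $\sinh^2=\cosh^2-1$ and the incidence relation produces a quadratic in $a$ whose admissible root simplifies through $\sin\betain\cos\alphain\pm\cos\betain\sin\alphain=\sin(\betain\pm\alphain)$ to $a=\sin\betain\sin(\alphain+\betain)$. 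Then $\cosh\tau_{in}=1/\sin(\alphain+\betain)$ gives $|\tau_{in}|$, and a direct recomputation produces $\sinh\tau_{in}=-\sin\betain\cos(\alphain+\betain)/a$, which fixes $\sgn\tau_{in}=\sgn(\alphain+\betain-\pi/2)$; substituting into $b=\cos\betain-a\tau_{in}$ gives the stated formula. The analogous formulas at $\pex$ follow from the symmetry $y\mapsto -y$, which sends $(a,b)\mapsto(a,-b)$ and swaps $(\betain,\alphain)\leftrightarrow(\pi-\betaex,\alphaex)$; substituting into the already-derived formulas and using $\sin(\pi-x)=\sin x$ gives $a=\sin\betaex\sin(\betaex-\alphaex)$ and, when $\betaex\le\pi/2$, the stated expression for $b$ in terms of $\betaex,\alphaex$.

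For (ii), I will first use \ref{L:BK}\ref{iBKc}\ref{isign} together with $y_+^\K=\cos\betain$, $y_-^\K=\cos\betaex$ and the sum-to-product identity $\cos\betain+\cos\betaex=2\cos(\tfrac{\betain+\betaex}{2})\cos(\tfrac{\betain-\betaex}{2})$ (whose second factor is positive since $|\betain-\betaex|/2<\pi/2$) to obtain $\sgn b=\sgn(\pi-\betain-\betaex)$. For $\sgn(\alphain-\alphaex)=\sgn b$, set $A=\alphain+\betain$ and $B=\betaex-\alphaex$; the two formulas in (i) give $\sin\betain\sin A=\sin\betaex\sin B$. Since $\betain<\betaex$, elementary monotonicity of sine yields $\sin\betain<\sin\betaex$ iff $\betain+\betaex<\pi$, whence $\sin A>\sin B$ iff $b>0$. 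To deduce $\sgn(\alphain-\alphaex)=\sgn b$ from this, I will use the identity $\alphain-\alphaex=A+B-(\betain+\betaex)$, the strict ordering $\tau_{in}>\tau_{ex}$ together with the explicit sign formulas for $\tau_{in},\tau_{ex}$ from (i) to rule out the incompatible sign patterns of $(A-\pi/2,B-\pi/2)$, and a short case check.

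For (iii), the hypothesis combined with \ref{L:Kex} gives $\KBp\subset\Ktop$, so \ref{L:BK}\ref{itemBKv} provides the strict decrease of $v$ and the ODE \eqref{eqcatev}. For the boundary values I will parametrize $\KBp$ locally in polar coordinates as $r=r(\beta)$; the decomposition $d\vec{r}/d\beta=(dr/d\beta)(\sin\beta,\cos\beta)+r(\cos\beta,-\sin\beta)$ combined with orthogonality of the radial and tangential directions gives $\tan\alpha=|dr/d\beta|$ at any point where $r=1$. The sign of $dr/d\beta$ is negative at $\pin$ (the arc enters $\B^3$ as $\beta$ increases) and positive at $\pex$ (the arc exits), so $v=-(1/r)dr/d\beta$ gives $v(\betain)=\tan\alphain$ and $v(\betaex)=-\tan\alphaex$.

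The main obstacle I anticipate is the second half of (ii): direct algebra yields case work depending on whether the waist of $\K$ lies inside or outside the arc $\KBp$ (equivalently, the signs of $A-\pi/2$ and $B-\pi/2$). A cleaner alternative that avoids the case work would be a continuity/monotonicity argument along the one-parameter family of catenoids through $\pin$ indexed by $\alphain$: at the balanced value one has $b=0$ and $\alphain=\alphaex$ simultaneously, and computing the derivatives of $b$ and $\alphain-\alphaex$ with respect to the family parameter confirms they share the same sign in a neighborhood, which then extends to the full parameter range by continuity and the no-crossing of the $b=0$ locus.
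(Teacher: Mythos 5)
Parts (i) and (iii) of your plan are sound and close to the paper's: your endpoint computation in (i) reproduces the slope relation the paper reads off directly (as $dx/dy=\tan(\alphain+\betain-\pi/2)$ at $\pin$), though note that your elimination by squaring yields the two roots $a=\sin\betain\sin(\betain\pm\alphain)$, so you must justify discarding $\sin\betain\sin(\betain-\alphain)$ (e.g.\ by checking the inward tangent actually points into the disc); and your polar computation of $v(\betain)=\tan\alphain$, $v(\betaex)=-\tan\alphaex$ together with \ref{L:Kex} and \ref{L:BK}(iv) is exactly the paper's (iii). The second equality in (ii), $\sgn b=\sgn(\pi-\betain-\betaex)$, via \ref{L:BK}(v)(c) and sum-to-product, is also fine.

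The genuine gap is the first equality in (ii), $\sgn(\alphain-\alphaex)=\sgn b$, which is the real content of the lemma. Your primary plan uses only: $\sin\betain\sin A=\sin\betaex\sin B$ (with $A=\alphain+\betain$, $B=\betaex-\alphaex$), the equivalence $\sin A>\sin B\iff b>0$, the ordering $\tau_{in}>\tau_{ex}$ (which, since $\sinh\tau_{in}=-\cot A$ and $\sinh\tau_{ex}=-\cot B$, is just $A>B$), and the signs of $A-\pi/2$, $B-\pi/2$. These relations do \emph{not} determine $\sgn(\alphain-\alphaex)$: for instance $\betain=0.3$, $\betaex=1.2$, $A=1.0$, $B\approx 0.27$ (chosen so $\sin\betain\sin A=\sin\betaex\sin B$) satisfies every one of them with $\betain+\betaex<\pi$, yet $A+B\approx1.27<\betain+\betaex=1.5$, i.e.\ $\alphain<\alphaex$. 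Such a tuple is of course not realizable by an actual catenoid --- it violates the quantitative compatibility $a(\tau_{in}-\tau_{ex})=\cos\betain-\cos\betaex$ --- but nothing in a sign-pattern case check detects this, so no ``short case check'' can close the argument; some genuinely quantitative input about the catenary is unavoidable. The paper supplies it by noting that the circular chord through $\pin,\pex$ makes equal angles with $\Sph^1$ at its two endpoints, so the claim reduces to comparing the angles a secant line $x=py+q$ makes with the standard catenary $x=\cosh y$ at its two intersection points, and the decisive inequality in that computation is $\sinh(y_2-y_1)>y_2-y_1$ (convexity of the catenary against lines). Your fallback continuity argument does not repair this: knowing $b=0\Rightarrow\alphain=\alphaex$ and matching derivatives at the balanced parameter only gives the sign correspondence locally; to propagate it you would need that $\alphain=\alphaex$ forces $b=0$ along the whole family (or monotonicity of $b$ and $\alphain-\alphaex$ in the parameter), which is essentially the statement being proved and is not established.
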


\begin{proof}
	At $\pinK=( \sin\betainK , \cos\betainK \,)$, the equation of $\K'$ satisfies $\frac{\mathrm{d}x}{\mathrm{d}y}=\tan(\alphain+\betain-\frac{\pi}{2})$; while At $\pexK=( \sin\betaexK , \cos\betaexK \,)$, the equation of $\K'$ satisfies $\frac{\mathrm{d}x}{\mathrm{d}y}=\tan(-\alphaex+\betaex-\frac{\pi}{2})$. (i) then follows by the equation and definitions directly. 
	
	By the definition of $v$ in \ref{L:BK}(iv), $\frac{\mathrm{d}x}{\mathrm{d}y}=\frac{v\sin\beta-\cos\beta}{v\cos\beta+\sin\beta}$. The formulae for $v(\betain)$ and $v(\betaex)$ in (iii) then follows. The rest part of (iii) follows by \ref{L:BK}\ref{itemBKv} and \ref{L:Kex}.
	
	If $\betain+\betaex\leq \pi$, then $\pi/2>\pi/2-\betain\geq\betaex-\pi/2>-\pi/2$, thus $y_-^\K=\cos\betain=\sin(\pi/2-\betain)\geq\sin(\betaex-\pi/2)=-\cos\betaex=-y_+^\K$, or $y_-^\K+y_+^\K\geq0$. The second inequality of \ref{itemdecreasing} then follows by \ref{L:BK}\ref{iBKc}\ref{isign}.
	
	From a symmetry argument, the first equality of \ref{itemdecreasing} holds as long as $\alphain>\alphaex$ if $b>0$. After a scaling and translation, this could be reduced to the problem of the comparison of the two angles made by a line $x=py+q$ 
	and the standard catenary $x=\cosh{y}$ with the assumptions that $p>0,q>0$.
	
	Suppose the line intersects with the catenary at points $P_1(\cosh(y_1),y_1)$ and $P_2(\cosh(y_2),y_2)$, with $y_2>y_1$ and $\cosh(y_2)>\cosh(y_1)$. Then the unit tangent vectors at $P_1$, $P_2$ that direct to the positive $x$-axis could be written as $\mathbf{v}_1=\left(\frac{\sinh(y_1)}{\cosh(y_1)},\frac{1}{\cosh(y_1)}\right)$ and $\mathbf{v}_2=\left(\frac{\sinh(y_2)}{\cosh(y_2)},\frac{1}{\cosh(y_2)}\right)$. Therefore, for $i=1,2$,
	\begin{equation*}
		\abs{\overrightarrow{P_1P_2}}\cos\left(\langle\widehat{\overrightarrow{P_1P_2},\mathbf{v}_i}\rangle\right)=\overrightarrow{P_1P_2}\cdot\mathbf{v}_i=\frac{y_2-y_1}{\cosh(y_i)}+\frac{(\cosh(y_2)-\cosh(y_1))\sinh(y_i)}{\cosh(y_i)},
	\end{equation*}
	where $\langle\widehat{\overrightarrow{P_1P_2},\mathbf{v}_i}\rangle$ is the angle made by the vectors $\overrightarrow{P_1P_2}$ and $\mathbf{v}_i$; and thus
	\begin{align*}
		&\cosh(y_2)\cosh(y_1)\abs{\overrightarrow{P_1P_2}}\left(\cos\left(\langle\widehat{\overrightarrow{P_1P_2},\mathbf{v}_2}\rangle\right)-\cos\left(\widehat{\langle\overrightarrow{P_1P_2},\mathbf{v}_1}\rangle\right)\right)\\
		=&-(y_2-y_1)(\cosh(y_2)-\cosh(y_1))+(\sinh(y_2)\cosh(y_1)-\sinh(y_1)\cosh(y_2))(\cosh(y_2)-\cosh(y_1))\\
		=&(\cosh(y_2)-\cosh(y_1))(\sinh(y_2-y_1)-(y_2-y_1))>0,
	\end{align*}
	or
	\begin{equation*}
		\langle\widehat{\overrightarrow{P_1P_2},\mathbf{v}_1}\rangle>\langle\widehat{\overrightarrow{P_1P_2},\mathbf{v}_2}\rangle.
	\end{equation*}
This shows the first inequality in \ref{itemdecreasing}.
\end{proof}

\begin{prop}
	\label{propmonotone}
	If $\alphain <\pi/2$ and $\alphaex[\betain,\alphain]<\pi/2$,
	then 
	$\betaex=\betaex[\betain,\alphain]$ and $\alphaex=\alphaex[\betain,\alphain]$ are strictly increasing functions of $\beta_{in}$ and $\alpha_{in}$. 
\end{prop}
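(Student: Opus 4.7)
The plan is to study $\betaex$ and $\alphaex$ as implicit functions of the initial data via the first-order ODE \eqref{eqcatev}.

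By \ref{lemcate}\ref{itemconcavity}, $v=v(\beta;\betain,\alphain)$ is the unique solution of \eqref{eqcatev} with $v(\betain)=\tan\alphain$. Since in polar coordinates $\log r(\beta)=-\int_{\betain}^{\beta}v(\beta')\,d\beta'$ and $r(\betain)=r(\betaex)=1$, we have
\[
\Phi(\betaex;\betain,\alphain):=\int_{\betain}^{\betaex}v(\beta;\betain,\alphain)\,d\beta=0,\qquad \tan\alphaex=-v(\betaex;\betain,\alphain),
\]
and $\partial\Phi/\partial\betaex=v(\betaex)=-\tan\alphaex\neq 0$, so the implicit function theorem provides $\betaex,\alphaex$ smoothly in $(\betain,\alphain)$.

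Linearizing \eqref{eqcatev}, the partials $w_\alpha:=\partial v/\partial\alphain$ and $w_\beta:=\partial v/\partial\betain$ both satisfy the linear homogeneous ODE $w'=A(\beta)w$ with $A(\beta):=-\cot\beta(3v^2+1)-4v$, with initial values $w_\alpha(\betain)=\sec^2\alphain$ and $w_\beta(\betain)=-v'(\betain)$. Since $\KBp\subset\Ktop$ by \ref{L:Kex} and Lemma \ref{L:BK}\ref{itemBKv} gives $v'<0$ throughout $[\betain,\betaex]$, both initial values are positive and hence $w_\alpha,w_\beta>0$ on the entire interval; uniqueness for the linear IVP then forces $w_\beta\equiv c\,w_\alpha$ with $c:=2+\cot\betain\tan\alphain>0$. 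Moreover the identity $A=-\cot\beta(v^2+1)+2vv'/(v^2+1)$ integrates to yield the explicit formula
\[
w_\alpha(\beta)=(v^2(\beta)+1)\exp\!\left(-\int_{\betain}^{\beta}\cot\beta'\,(v^2(\beta')+1)\,d\beta'\right).
\]

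Implicit differentiation of $\Phi\equiv 0$ and the chain rule applied to $\tan\alphaex=-v(\betaex;\betain,\alphain)$ express all four partial derivatives of $(\betaex,\alphaex)$ with respect to $(\betain,\alphain)$ in terms of $\int_{\betain}^{\betaex}w_\alpha\,d\beta$, $v(\betaex)$, $v'(\betaex)$, $w_\alpha(\betaex)$, and $c$. Positivity of $\partial\betaex/\partial\alphain$ is immediate from $w_\alpha>0$ and $v(\betaex)<0$. For the three remaining partials I plan to exploit the reflectional symmetry $y\mapsto 2b-y$ of each catenoid, which at the level of the exit map $T:(\betain,\alphain)\mapsto(\betaex,\alphaex)$ reads $T\circ R\circ T=R$ with $R(\beta,\alpha):=(\pi-\beta,\alpha)$. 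Differentiating this identity relates $\betain$-derivatives of $T$ at a point to $\alphain$-derivatives at the reflected image, thereby reducing the remaining positivity assertions to essentially the single inequality $\partial\alphaex/\partial\alphain>0$, equivalently $-v'(\betaex)\int_{\betain}^{\betaex}w_\alpha\,d\beta>\tan\alphaex\cdot w_\alpha(\betaex)$.

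The main obstacle is verifying this last inequality. My plan is to combine the explicit formula for $w_\alpha$ above with $v'<0$ throughout and the vanishing $\int_{\betain}^{\betaex}v\,d\beta=0$: the exponential weight in $w_\alpha$ should produce enough growth of the integral relative to the endpoint value $w_\alpha(\betaex)$ to dominate the ratio $-v'(\betaex)/\tan\alphaex$, closing the estimate and establishing the remaining three monotonicities by the symmetry argument above.
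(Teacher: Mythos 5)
Your setup is sound and genuinely different from the paper's proof (the paper works with the non-crossing of integral curves of \eqref{eqcatev} and an equal-area reformulation of $\int_{\betain}^{\betaex} v\,d\beta=0$ via the inverse function $\beta(v)$, then derives a contradiction), and your computations up to the point you stop are correct: the variational equation, the initial values $w_\alpha(\betain)=\sec^2\alphain$, $w_\beta(\betain)=-v'(\betain)>0$, the proportionality $w_\beta=c\,w_\alpha$, the explicit integrating-factor formula for $w_\alpha$, and the immediate positivity of $\partial\betaex/\partial\alphain$. But the proposal is a plan, not a proof, and it has a genuine gap exactly at the decisive step: the inequality $-v'(\betaex)\int_{\betain}^{\betaex}w_\alpha\,d\beta>\tan\alphaex\,w_\alpha(\betaex)$ (equivalently $\partial\alphaex/\partial\alphain>0$) is never established. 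Writing $-v'(\betaex)=(\tan^2\alphaex+1)(2-\cot\betaex\tan\alphaex)$ and $w_\alpha(\betaex)=(\tan^2\alphaex+1)\exp\bigl(-\int_{\betain}^{\betaex}\cot\beta\,(v^2+1)\,d\beta\bigr)$, the claim is that $(2-\cot\betaex\tan\alphaex)\int w_\alpha$ dominates $\tan\alphaex$ times the exponential factor; the prefactor $2-\cot\betaex\tan\alphaex$ can be small, and nothing in the proposal quantifies the competition. ``The exponential weight should produce enough growth'' is a hope, not an argument, and this is precisely where the real content of the proposition lives.

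There is a second, structural problem: the symmetry $T\circ R\circ T=R$ does not reduce all remaining monotonicities to $\partial\alphaex/\partial\alphain>0$. Writing $DT=\begin{pmatrix}a&b\\c&d\end{pmatrix}$ at $p$ and $DT=\begin{pmatrix}a'&b'\\c'&d'\end{pmatrix}$ at the reflected point $q=R(T(p))$, the identity gives $a'b=b'd$, $c'a=d'c$, $a'a=1+b'c$, $d'd=1+c'b$. From $d,d'>0$ and $b,b'>0$ you do get $a=bd'/b'>0$ and $a'=b'd/b>0$, so $\partial\betaex/\partial\betain>0$ would indeed follow. However $c'a=d'c$ only says $c$ and $c'$ have the same sign (circular), and eliminating gives $c=(a'a-1)/b'$ with $a'a=dd'$, so $\partial\alphaex/\partial\betain>0$ is equivalent to the quantitative statement $dd'>1$, i.e.\ the product of $\partial\alphaex/\partial\alphain$ at $p$ and at its reflected image exceeds $1$ --- strictly stronger than positivity. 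So even granting your key inequality, the proof of monotonicity in $\betain$ for $\alphaex$ is missing; you would need either to prove $dd'>1$ or to estimate $-v'(\betaex)\bigl(c\int w_\alpha-\tan\alphain\bigr)>c\,\tan\alphaex\,w_\alpha(\betaex)$ directly, which is again a sharper inequality than the one you isolate. By contrast, the paper's comparison argument delivers all four monotonicities at once without any quantitative estimate.
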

\begin{proof}
	By the definition of $v$ in \ref{L:BK}(iv), $\int_{\beta_{in}}^{\beta_{ex}}v(\beta)\mathrm{d}\beta=0$. By the monotonicity in \ref{L:BK}(iv), $v(\beta)$ has a unique inverse function $\beta(v)$ and thus
	\begin{equation}\label{eq1}
		\int_0^{v(\beta_{in})}\beta(v)-\beta_{in}\mathrm{d}v=\int_{v(\beta_{ex})}^0\beta_{ex}-\beta(v)\mathrm{d}v.
	\end{equation} 
	
	From the theory of first order ordinary differential equations, 
	if $v^1$ and $v^2$ are two solutions of the equation \eqref{eqcatev}, 
	the curves of the solutions $(\beta,v^1(\beta))$ and $(\beta,v^2(\beta))$ do not intersect with each other. 
	Therefore, if $\beta^1_{in}<\beta^2_{in}$, $\alpha^{1}_{in}=\alpha^{2}_{in}$, $v^1,v^2$ are corresponding solutions, then
	\begin{equation*}
		v^1(\beta)<v^2(\beta),
	\end{equation*}
	when $\beta\in (\beta^1_{in},\beta_{ex}^1)\cap(\beta^2_{in},\beta_{ex}^2)$. Moreover, when $v^1(\beta)=v^2(\beta')$, $\beta'>\beta$. Therefore, from \eqref{eqcatev},
	\begin{equation*}
		\frac{\mathrm{d}v^1}{\mathrm{d}\beta}(\beta)<\frac{\mathrm{d}v^2}{\mathrm{d}\beta}(\beta')<0,
	\end{equation*}
	when $v^1(\beta)=v^2(\beta')>0$; and
	\begin{equation*}
		0>\frac{\mathrm{d}v^1}{\mathrm{d}\beta}(\beta)>\frac{\mathrm{d}v^2}{\mathrm{d}\beta}(\beta'),
	\end{equation*}
	when $v^1(\beta)=v^2(\beta')<0$. 
	Let $\beta^1$, $\beta^2$ denote the two inverse functions for $v^1$, $v^2$, 
this means $0>\frac{\mathrm{d}\beta^1}{\mathrm{d}v}>\frac{\mathrm{d}\beta^2}{\mathrm{d}v}$ when $v>0$ 
and $0>\frac{\mathrm{d}\beta^2}{\mathrm{d}v}>\frac{\mathrm{d}\beta^1}{\mathrm{d}v}$ when $v<0$. 
As $v^1(\beta_{in}^1)=\tan\alpha_{in}^1=\tan\alpha_{in}^2=v^2(\beta_{in}^2)$ from \ref{lemcate}\ref{itemconcavity}, 
for any $v>0$, $\beta^1(v)-\beta^1_{in}<\beta^2(v)-\beta^2_{in}$. Moreover, when $v'<v<0$, $\beta^1(v')-\beta^1(v)>\beta^2(v')-\beta^2(v)$. 
This means
	\begin{equation}\label{eq4}
		\int_0^{v^1(\beta^1_{in})}\beta^1(v)-\beta^1_{in}\mathrm{d}v<\int_0^{v^2(\beta^2_{in})}\beta^2(v)-\beta^2_{in}\mathrm{d}v
	\end{equation}
	as $v^1(\beta^1_{in})=v^2(\beta^2_{in})$. Moreover,
	\begin{equation}\label{eq5}
		\int^0_{v^2(\beta^2_{ex})}\beta^2_{ex}-\beta^2(v)\mathrm{d}v < \int^0_{v^1(\beta^1_{ex})}\beta^1_{ex}-\beta^1(v)\mathrm{d}v,
	\end{equation}
	if $\beta^2_{ex}\leq \beta^1_{ex}$ or $-v^1(\beta^1_{ex})\geq -v^2(\beta^2_{ex})$.
	
	But by the equation \eqref{eq1} for $v^1$ along with \eqref{eq4} and \eqref{eq5}, 
	\begin{align*}
		&\int_{v^2(\beta_{ex}^2)}^0\beta^2_{ex}-\beta^2(v)\mathrm{d}v<\int^0_{v^1(\beta^1_{ex})}\beta^1_{ex}-\beta^1(v)\mathrm{d}v\\
		=&\int_0^{v^1(\beta^1_{in})}\beta^1(v)-\beta^1_{in}\mathrm{d}v<\int_0^{v^2(\beta^2_{in})}\beta^2(v)-\beta^2_{in}\mathrm{d}v,
	\end{align*}
	which contradicts the equation \eqref{eq1} for $v^2$.
	
	Therefore, it must be $\beta^1_{ex}<\beta^2_{ex}$ and $\tan\alpha^{1}_{ex}=-v^1(\beta^1_{ex})<\tan \alpha^{2}_{ex}=-v^2(\beta^2_{ex})$ from \ref{lemcate}\ref{itemconcavity}, i.e. $\beta_{ex}$ and $\alpha_{ex}$ are increasing functions of $\beta_{in}$.
	
	By the same argument, $\beta_{ex}$ and $\alpha_{ex}$ are also strictly increasing functions of $\alpha_{in}$. 
\end{proof}

\begin{assumption} 
	\label{A:ba}
	We assume from now on that $\betain\in(0,\pi/2]$, $\alphain\in(0,\pi/3]$.
\end{assumption} 

\begin{cor}\label{corgraph}
	If \ref{A:ba} holds
	then 
	$\alphain <\pi/2$ and $\alphaex[\betain,\alphain]<\pi/2$,    
	and so \ref{lemcate}\ref{itemconcavity} holds. 
\end{cor}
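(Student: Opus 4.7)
The inequality $\alphain<\pi/2$ is immediate from Assumption \ref{A:ba}, since $\alphain\leq\pi/3<\pi/2$. For the inequality $\alphaex[\betain,\alphain]<\pi/2$, I would split into cases according to the sign of $b=b[\betain,\alphain]$ given by Lemma \ref{lemcate}\ref{itemcatepara}.

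If $b\geq 0$, Lemma \ref{lemcate}\ref{itemdecreasing} immediately yields $\sgn(\alphain-\alphaex)=\sgn b\geq 0$, whence $\alphaex\leq\alphain\leq\pi/3<\pi/2$. This subcase automatically covers the subregion $\alphain+\betain\leq\pi/2$, since there the $\sgn$ factor in the formula of Lemma \ref{lemcate}\ref{itemcatepara} is nonpositive and forces $b\geq\cos\betain\geq 0$.

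The main case is $b<0$, which requires $\alphain+\betain>\pi/2$. Here I would combine the strict monotonicity from Proposition \ref{propmonotone}---namely that wherever $\alphaex<\pi/2$ holds on the domain, $\alphaex$ is strictly increasing in each of $\betain$ and $\alphain$---with the continuity of $\alphaex$ in its parameters (from smooth dependence of the ODE \eqref{eqcatev} on data) and the observation that $\alphaex<\pi/2$ already holds on the $\{b\geq 0\}$ region. A connectedness argument then reduces the proof on $\mathcal{D}:=(0,\pi/2]\times(0,\pi/3]$ to a single explicit verification at the extremal corner $(\betain,\alphain)=(\pi/2,\pi/3)$. At that corner Lemma \ref{lemcate}\ref{itemcatepara} gives $a=\tfrac12$ and $b=-\tfrac12\arcosh 2$, and by Lemma \ref{L:Kex} the inequality $\alphaex<\pi/2$ becomes equivalent to $\KBp\subset\Ktop$. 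The latter is in turn verified via Lemma \ref{L:BK}: using the tangency condition for the lower tangent point $p_{\top^-}^\K$ one derives $|p_{\top^-}^\K|^2=y_{\top^-}^4/(y_{\top^-}^2-\tfrac14)$, which exceeds $1$ iff $y_{\top^-}^2\neq\tfrac12$, and this is checked directly from the transcendental tangency equation $\coth(2y_{\top^-}+\arcosh 2)=2y_{\top^-}$ (in particular $y_{\top^-}\neq\pm 1/\sqrt 2$).

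The main obstacle is making the monotonicity reduction rigorous, since Proposition \ref{propmonotone} presupposes the very inequality we are trying to prove. The cleanest approach I see is: the set $\mathcal{D}_0:=\{(\betain,\alphain)\in\mathcal{D}:\alphaex<\pi/2\}$ is open (by continuity), contains the $\{b\geq 0\}$ region (first case) and the corner $(\pi/2,\pi/3)$ (direct verification); any point of $\mathcal{D}\setminus\mathcal{D}_0$ would, along a monotone path to the corner, yield a first parameter at which $\alphaex=\pi/2$, contradicting the strict monotonicity of Proposition \ref{propmonotone} on the preceding segment where $\alphaex<\pi/2$ holds.
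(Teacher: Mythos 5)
Your proposal is correct and follows essentially the same route as the paper: both anchor the claim at the corner $(\betain,\alphain)=(\pi/2,\pi/3)$ and then propagate it over $(0,\pi/2]\times(0,\pi/3]$ by a continuity/first-failure argument that invokes Proposition \ref{propmonotone} only on the portion of the path where $\alphaex<\pi/2$ is already known, which is exactly the paper's infimum argument (run there first in $\betain$ with $\alphain=\pi/3$ fixed, then in $\alphain$). Your $b\geq 0$ case is harmless but not needed, your tangency-point computation makes explicit the ``direct calculation'' the paper leaves unstated at the corner (one still needs the small gluing step that $|p_{\top^-}^\K|>1$ together with $y_{\top^-}^\K<y_+^\K$ and Lemma \ref{L:BK}(v) forces $\KBp\subset\Ktop$); the only caution is that along a straight monotone segment you would need \ref{propmonotone} for simultaneous variation of both parameters, so either extend its ODE-comparison proof (which does go through) or take an L-shaped monotone path varying one parameter at a time, which is precisely the paper's two-stage argument.
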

\begin{proof}
	First it can be seen that the result holds if and only if $\alphaex[\betain,\alphain]< \pi/2$ under the assumptions \ref{A:ba}. A direct calculation using \ref{lemcate}\ref{itemcatepara} shows that the result is correct when $\betain=\pi/2$, $\alphain=\pi/3$. Define $\beta_{in}^0$ to be 
	\begin{equation*}
		\beta_{in}^0:=\mathrm{inf}\{\betain\in(0,\pi/2]: \text{On } \KBp[\betain,\pi/3] \text{ $r$ is a single-valued function with $\abs{\mathrm{d}r/\mathrm{d}\beta}<\infty$.} \}
	\end{equation*}
	If $\beta_{in}^0>0$, then $\alphaex[\beta_{in}^0,\pi/3]\geq\pi/2$. However, for $\betain>\beta_{in}^0$, on $\KBp[\betain,\pi/3]$ $r$ is a single-valued function. Thus \ref{propmonotone} implies that $\alphaex[\betain,\pi/3]\leq\alphaex[\pi/2,\pi/3]<\pi/2$. Therefore, $\alphaex[\beta_{in}^0,\pi/3]\leq\alphaex[\pi/2,\pi/3]<\pi/2$ by the continuity, which contradicts the assumption. And thus the result holds for $\KBp[\betain,\pi/3]$ with $\betain\in(0,\pi/2]$. A similar argument for $\alphain$ then implies the result for any $\betain,\alphain$ under the assumptions \ref{A:ba}.
\end{proof}

\subsection*{Existence and Uniqueness of the Initial Configurations}

\begin{definition}[Catenoidal configurations] 
\label{defcat} 
We define a \emph{catenoidal configuration} to be a union of a disc and catenoidal annuli $\Wcal:=\bigcup_{i=0}^{k'} \Acal_i$ 
such that $\Wcal\cap \Sph^2= \bigcup_{j=1}^{k'+1} \Ccal_j$ and the following hold. 
\begin{enumerate}[label={(\roman*)},ref={(\roman*)}] 
\item For each $i=1,\dots,k'+1$, $\mathcal{C}_i$ is a parallel circle lying on  $\mathbb{S}^2$ at latitude $\beta_i$. 
\item $0<\beta_1<\dots < \beta_{k'} < \pi/2 $. 
\item 
$\mathcal{A}_0$ is a flat disk 
and $\partial \mathcal{A}_0=\mathcal{C}_1$.     
\item 
For each $i=1,\dots,k'$ we have $\partial \mathcal{A}_i=\mathcal{C}_i\cup\mathcal{C}_{i+1}$ 
and $\mathcal{A}_i = \K[\beta_i,\alpha_i^+] \cap \B^3$ for a unique $\alpha_i^+\in(0,\pi-\beta_i)$ (recall \ref{L:Kin}).  
\item For each $i=1,\dots,k'$, $\mathcal{C}_i=\mathcal{A}_{i-1}\cap\mathcal{A}_i$.
\end{enumerate}
For each $i=1,\dots,k'$ we have then $\beta_{i+1} = \betaex[\beta_i,\alpha_i^+]$ and we define $\alpha_{i+1}^- := \alphaex[\beta_i,\alpha_i^+]$ and $\alpha_1^-:=\beta_1$.  
$k'\in\N$ is called the \emph{order of $\Wcal$}. 
Finally we define the \emph{unbalancing parameters of $\Wcal$}, 
$\underline{\sigma}_\Wcal  =\{\sigma_{\Wcal,i} \}_{i=1}^{k'}\in \mathbb{R}^{k'}$,  
by requesting $e^{ \sigma_{\Wcal,i} } = \alpha^+_i / \alpha^-_i$ for $i=1,\dots,k'$;  
if $\underline{\sigma}_\Wcal$ vanishes we call $\Wcal$ \emph{balanced}.   
\end{definition}

Clearly we can try to construct catenoidal configurations by starting with a disk $\Acal_0$ and then proceeding inductively to 
construct $\Acal_i$ assuming $\underline{\sigma}_\Wcal$ given. 
It is not clear however how this process will end. 
One possibility is that some $\beta_{k'+1}$ will exceed $\pi/2$ which is what we hope for. 
Although we will prove later that there is always a finite $k'$ where this happens, 
we cannot exclude apriori the possibility that the $\Acal_i$'s become very narrow for large $i$ and the $\beta_i$'s  
form an increasing infinite sequence which is bounded above by $\pi/2$. 
This motivates the following definition. 

\begin{definition}[Complete catenoidal configurations] 
	\label{defcatcom} 
	We define a \emph{complete catenoidal configuration} to be either a catenoidal configuration as in \ref{defcat} which moreover satisfies $\beta_{k'+1} \geq \pi/2$, 
	or a pair $\Wcal := \{\underline{\mathcal{A}},\underline{\mathcal{C}} \}$, 
	where 
	$\underline{\mathcal{A}}=\{\mathcal{A}_i\}_{i\in\N_0}$ and $\underline{\mathcal{C}}=\{\mathcal{C}_j\}_{j\in\N}$, 
	and such that for $\forall k'\in\N$  the pair 
	$\left\{ \{\mathcal{A}_i\}_{i=0}^{k'} \, , \, \{\mathcal{C}_j\}_{j=1}^{k'+1} \right\}$ 
	is a catenoidal configuration as in \ref{defcat}. 
	In the latter case we say that \emph{the order of $\Wcal$} is $\infty$.  
\end{definition}

\begin{definition} 
	\label{defl}
	We define $\ell^1$ the space of $\R$-valued sequences of finite $\ell^1$ norm equipped with the $\ell^1$ norm which is defined by 
	$\|\, \{a_i\}_{i\in\N} \, : \, \ell^1 \|:=\sum_{i=1}^\infty |a_i|$. 
	We identify $\R^k$ with a subspace of $\ell^1$ by the map which sends $\{a_i\}_{i=1}^k\in\R^k$ to the sequence $\{a_i\}_{i\in\N}\in\ell^1$ 
	with $a_i=0$ for $i>k$. 
	Finally given $\underline{a}=\{a_i\}_{i\in\N}\in\ell^1$ we define $\left. \underline{a}\right|_k:= \{a_i\}_{i=1}^k\in\R^k$. 
\end{definition}

\begin{lemma}[Existence and uniqueness of {$\Wcal[\betah  ; \sigmaunder ]$}]  
	\label{lem:;}
	There is an absolute constant $\delta'_\sigma>0$ such that $\forall\betah \in(0,2\pi/7)$ and 
	$\sigmaunder =\{\sigma_i\}_{i\in\N}\in \ell^1$ 
	with $\| \underline{\sigma} : \ell^1 \|< \delta'_\sigma$, 
	there is a complete catenoidal configuration as in \ref{defcatcom} 
	which we denote by $\Wcal[\betah  ; \sigmaunder ]$, 
	and is uniquely determined by the following properties, 
	where we may use ``$ [\betah  ; \sigmaunder ]$'' to specify the quantities as in \ref{defcat} associated to 
	$\Wcal[\betah  ; \sigmaunder ]$.  
	\begin{enumerate}[label={(\roman*)},ref={(\roman*)}]
		\item 
		$\beta_1[\betah  ; \sigmaunder ] =\betah $.
		\item 
		$\sigma_{\Wcal[\betah  ; \sigmaunder ],i}=\sigma_i$ 
		for $i\le k' [\betah  ; \sigmaunder ]$ if the order $k'[\betah  ; \sigmaunder ] < \infty$ and $\forall i\in\N$ otherwise.   
	\end{enumerate}
	Moreover we have the following. 
	\begin{enumerate}[label={(\alph*)},ref={(\alph*)}]
		\item 
		If the order $k' [\betah  ; \sigmaunder ] <\infty$, then $\Wcal[\betah  ; \sigmaunder ]$  depends only on $\betah $ and $\{\sigma_i\}_{i=1}^{k'}$. 
		\item 
		\label{itemcateconfassump}
		The assumption \ref{A:ba} holds for all catenoidal annuli in $\cateConf$, 
and moreover $\cateConf\cap\R^2_+$ is the radial graph of a piecewise smooth function $\rr [ \betah; \sigmaunder] :B\to (0,1]$ (recall \ref{D:polar}), 
where $B=[0,\beta_{k'+1}]$ if the order $k' [\betah  ; \sigmaunder ] <\infty$ and $B=[0,\sup_{i\in\N} \beta_i)$ otherwise. 
		\item 
		\label{ialphacontrol}
We have $\alpha^{-}_{i+1} < \alpha^{+}_{i}$ and $\alpha^{+}_{i}<e^{\sigma_{1}+\dots+\sigma_{i}}\betah$, for $1{\leq} i {\leq} k' [\betah  ; \sigmaunder ]$ 
if the order $k'[\betah  ; \sigmaunder ] < \infty$ and $\forall i\in\N$ otherwise; 
and we have $\alpha^{+}_i=e^{\sigma_{i}}\alpha^{-}_i<e^{\sigma_{i}}\alpha^{+}_{i-1}$, 
for $2{\leq} i {\leq} k' [\betah  ; \sigmaunder ]$ if the order $k'[\betah  ; \sigmaunder ] < \infty$ and $\forall i\in\N\setminus\{1\}$ otherwise. 
	\end{enumerate}	
\end{lemma}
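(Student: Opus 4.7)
The plan is to construct $\Wcal[\betah;\sigmaunder]$ by iteration, starting from the flat disk $\Acal_0$ at latitude $\betah$ and successively building catenoidal annuli via \ref{L:Kin}. I will choose $\delta'_\sigma>0$ small enough that $e^{\delta'_\sigma}\cdot(2\pi/7)\le\pi/3$; any $\delta'_\sigma\le\log(7/6)$ works. The point of this choice is that if $\|\sigmaunder:\ell^1\|<\delta'_\sigma$ then the bound $\alpha_i^+<e^{\sigma_1+\cdots+\sigma_i}\betah$ (which the induction will establish) forces $\alpha_i^+\le\pi/3$, so Assumption \ref{A:ba} is satisfied at every step and the hypothesis $\alphain<\pi-\betain$ of \ref{L:Kin} holds automatically.

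The induction proceeds as follows. Set $\beta_1:=\betah$ and $\alpha_1^-:=\betah$ (both forced by \ref{defcat}), and declare $\alpha_i^+:=e^{\sigma_i}\alpha_i^-$. Suppose we have reached step $i$ with $\beta_i<\pi/2$ and $\alpha_i^+<e^{\sigma_1+\cdots+\sigma_i}\betah$. Since $\|\sigmaunder:\ell^1\|<\delta'_\sigma$ gives $\alpha_i^+\le\pi/3<\pi-\beta_i$, \ref{L:Kin} produces the catenoid $\K[\beta_i,\alpha_i^+]$, and \ref{A:ba} holds for it, so \ref{corgraph} yields $\alphaex[\beta_i,\alpha_i^+]<\pi/2$ together with the graphical property of $\KBp[\beta_i,\alpha_i^+]$ from \ref{L:BK}(iii). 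Define $\beta_{i+1}:=\betaex[\beta_i,\alpha_i^+]$ and $\alpha_{i+1}^-:=\alphaex[\beta_i,\alpha_i^+]$. If $\beta_{i+1}\ge\pi/2$ terminate at order $k':=i$. Otherwise $\beta_i+\beta_{i+1}<\pi$, so \ref{lemcate}\ref{itemdecreasing} implies $\alpha_{i+1}^-<\alpha_i^+$, whence $\alpha_{i+1}^+=e^{\sigma_{i+1}}\alpha_{i+1}^-<e^{\sigma_1+\cdots+\sigma_{i+1}}\betah$ and the induction closes.

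Either the process terminates at some finite $k'$ with $\beta_{k'+1}\ge\pi/2$ (giving a finite-order complete catenoidal configuration), or it runs forever, in which case every finite truncation is a catenoidal configuration with all $\beta_i<\pi/2$, producing an infinite-order configuration in the sense of \ref{defcatcom}. Uniqueness is immediate since properties (i)--(ii) together with \ref{defcat} force every $\beta_i$, $\alpha_i^\pm$, and hence every $\Acal_i$. Item (a) is clear since only $\sigma_1,\dots,\sigma_{k'}$ are used before termination. Item \ref{itemcateconfassump} is precisely what the induction verified: \ref{A:ba} holds for every catenoidal annulus, and concatenating the polar-coordinate graph of $\Acal_0\cap\R^2_+$ over $[0,\beta_1]$ with the graphs of $\KBp[\beta_i,\alpha_i^+]$ over $[\beta_i,\beta_{i+1}]$ (from \ref{L:BK}(iii)) yields the piecewise smooth radial function $\rr[\betah;\sigmaunder]$. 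Item \ref{ialphacontrol} just collects the inequalities already established.

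The main obstacle is maintaining the uniform bound $\alpha_i^+\le\pi/3$, which is the linchpin keeping both \ref{A:ba} and the hypotheses of \ref{L:Kin} satisfied throughout the (possibly infinite) iteration. This is resolved by a telescoping argument exploiting the $\ell^1$ smallness of $\sigmaunder$: writing
\[
\alpha_i^+=\alpha_1^+\prod_{j=2}^{i}\frac{\alpha_j^+}{\alpha_{j-1}^+}
\]
and combining $\alpha_j^-<\alpha_{j-1}^+$ (from \ref{lemcate}\ref{itemdecreasing}, applicable because $\beta_{j-1}+\beta_j<\pi$) with $\alpha_j^+/\alpha_j^-=e^{\sigma_j}$ gives $\alpha_i^+<e^{\sigma_1+\cdots+\sigma_i}\betah\le e^{\|\sigmaunder:\ell^1\|}\betah<e^{\delta'_\sigma}(2\pi/7)\le\pi/3$, which closes the induction.
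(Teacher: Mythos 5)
Your proposal is correct and follows essentially the same route as the paper: an inductive construction of the annuli via \ref{L:Kin}, with the telescoped bound $\alpha_i^+<e^{\sigma_1+\cdots+\sigma_i}\betah$ coming from $\alpha_{i+1}^-<\alpha_i^+$ (Lemma \ref{lemcate}\ref{itemdecreasing}, valid since consecutive latitudes below $\pi/2$ give $\beta_i+\beta_{i+1}<\pi$), smallness of $\|\sigmaunder:\ell^1\|$ together with $\betah<2\pi/7$ securing Assumption \ref{A:ba}, and the radial-graph statement via \ref{corgraph}, \ref{L:Kex} and \ref{L:BK}(iii). Your explicit choice $\delta'_\sigma\le\log(7/6)$, which forces $\alpha_i^+\le\pi/3$ directly, is a slightly more quantitative version of the paper's ``$\|\sigmaunder:\ell^1\|$ small enough,'' and like the paper you verify the first inequality of \ref{ialphacontrol} only at the non-terminal steps, so the two arguments match in both substance and level of detail.
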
 

\begin{proof}
	From the definition \ref{defcat} and the assumption $\betah \in(0,2\pi/7)$, $\alpha_{1}^+=e^{\sigma_1}\alpha_{1}^-=e^{\sigma_1}\beta_{1}<\pi-\beta_{1}$ 
	if $\abs{\sigma_1}$ is smaller than a constant, then the existence and uniqueness of $\mathcal{A}_1$ simply follows by \ref{L:Kin}. 
	Suppose now the catenoidal annulus $\mathcal{A}_n$ exists in the definition \ref{defcatcom}. 
	We have 
	for $i=2,\dots,n$
	\begin{align*}
		\alpha^{+}_i=e^{\sigma_{i}}\alpha^{-}_i<e^{\sigma_{i}}\alpha^{+}_{i-1},
	\end{align*}
	where the equality follows from \ref{defcat} and the inequality from \ref{lemcate}\ref{itemdecreasing}; 
	and thus by induction 
	\begin{equation*}
		\alpha^{+}_n<e^{\sigma_{1}+\dots+\sigma_n}\betah .
	\end{equation*}
	Therefore, if $\| \underline{\sigma} : \ell^1 \|$ is small enough, $\alpha^{+}_n<\pi-\betah <\pi-\beta_n$ as $\betah <2\pi/7$. The existence and uniqueness of $\mathcal{A}_{n+1}$ then follows by \ref{L:Kin}. (a) and \ref{ialphacontrol} then follow. Moreover, \ref{ialphacontrol} along with the assumption that $\betah <2\pi/7$  implies that the assumption \ref{A:ba} holds for all $\mathcal{A}_i\subset\K[\beta_i,\alpha_i^+]$ in $\cateConf$ if $\| \underline{\sigma} : \ell^1 \|$ is small enough. Therefore, by \ref{corgraph} and then \ref{L:Kex}, the function $\rr [ \betah; \sigmaunder]$ exists as the radial graph of $\cateConf\cap\R^2_+$. This proves \ref{itemcateconfassump}.
\end{proof}

\begin{prop} 
	\label{propsollimit}
	For $\betah $ and $\sigmaunder$ as in \ref{lem:;}
	the following hold for each fixed $\sigmaunder$. 
	\begin{enumerate}[label={(\roman*)},ref={(\roman*)}]
		\item \label{inoaccum}
		The order $k' [\betah  ; \sigmaunder ] <\infty$ and it is a decreasing function of $\betah $  
		with 
		$k' [\betah  ; \sigmaunder ] \to \infty$ as $\betah \to0$. 
		\item 
		\label{L:increasing}
		$\beta_i[\betah  ; \sigmaunder ]$, $\alpha^{\pm}_i[\betah  ; \sigmaunder ]$ 
		(when defined) are continuous strictly increasing functions of $\betah$.
		\item \label{isollimit}
		$\forall i\in \N$ we have 
		$\lim_{\betah \to 0}\beta_i[\betah  ; \sigmaunder ]=0$.
	\end{enumerate}
\end{prop}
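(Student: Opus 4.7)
The plan is to prove (ii) first (it is used in the other parts), then (iii), then (i). For (ii), I induct on $i$. The base case $i=1$ is immediate from Lemma \ref{lem:;}, which gives $\beta_1 = \betah$, $\alpha_1^- = \betah$, and $\alpha_1^+ = e^{\sigma_1}\betah$. For the inductive step I use the recursion $\beta_{i+1} = \betaex[\beta_i, \alpha_i^+]$, $\alpha_{i+1}^- = \alphaex[\beta_i, \alpha_i^+]$, $\alpha_{i+1}^+ = e^{\sigma_{i+1}}\alpha_{i+1}^-$. By Proposition \ref{propmonotone} (whose hypothesis holds by Corollary \ref{corgraph} in view of Lemma \ref{lem:;}\ref{itemcateconfassump}), $\betaex$ and $\alphaex$ are continuous and strictly increasing in both arguments; composition with the functions supplied by the inductive hypothesis closes the induction.

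For (iii), I induct again on $i$. The base case $\beta_1 = \betah \to 0$ is trivial. For the inductive step, combining $\beta_i \to 0$ with $\alpha_i^+ \leq e^{\|\sigmaunder\|_{\ell^1}}\betah \to 0$ (Lemma \ref{lem:;}\ref{ialphacontrol}) yields $(\beta_i, \alpha_i^+) \to (0,0)$; the explicit formulas in Lemma \ref{lemcate}\ref{itemcatepara} then give $a[\beta_i,\alpha_i^+] \to 0$ and $b[\beta_i,\alpha_i^+] \to 1$, so the catenoid degenerates at the north pole and $\beta_{i+1} \to 0$.

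For (i), the monotonicity of $k'$ in $\betah$ is a direct consequence of (ii): if $\betah' < \betah''$, the strict monotonicity of $\beta_i$ in $\betah$ implies $\beta_i[\betah';\sigmaunder] < \beta_i[\betah'';\sigmaunder]$ wherever both are defined, so the process for $\betah''$ reaches $\pi/2$ no later and hence $k'[\betah'';\sigmaunder] \leq k'[\betah';\sigmaunder]$. The limit $\lim_{\betah\to 0}k' = \infty$ follows from (iii): for any $N$, $\beta_N < \pi/2$ once $\betah$ is sufficiently small, so $k' \geq N$.

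The main obstacle is the finiteness $k'[\betah;\sigmaunder] < \infty$ for each fixed $\betah > 0$. I argue by contradiction: if $k' = \infty$, then $\beta_i \nearrow \beta_\infty \in (\betah, \pi/2]$ and $\beta_{i+1} - \beta_i \to 0$. The key is a conserved-quantity identity for the ODE \eqref{eqcatev}: direct computation yields
\begin{equation*}
\frac{d}{d\beta}\left(\frac{\sin\beta\cdot v}{\sqrt{v^2+1}}\right) = -\frac{2\sin\beta}{\sqrt{v^2+1}},
\end{equation*}
and integration over $[\beta_{in},\beta_{ex}]$ with $v(\beta_{in})=\tan\alpha_{in}$, $v(\beta_{ex})=-\tan\alpha_{ex}$ gives
\begin{equation*}
\sin\beta_{in}\sin\alpha_{in} + \sin\beta_{ex}\sin\alpha_{ex} = 2\int_{\beta_{in}}^{\beta_{ex}}\frac{\sin\beta}{\sqrt{v^2+1}}\,d\beta.
\end{equation*}
On our range $\sqrt{v^2+1}\in[1,2]$ by assumption \ref{A:ba}, so the integrand lies in $[\sin\beta/2,\sin\beta]$. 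The upper bound yields $\beta_{i+1} - \beta_i \geq \tfrac12 \sin\beta_i\sin\alpha_i^+$, and the two-sided comparison shows $\sin\alpha_{i+1}^-$ is comparable to $\sin\alpha_i^+$ modulo controlled corrections from $\cos\beta_i - \cos\beta_{i+1}$. Combined with the $\ell^1$ bound on $\sigmaunder$, this prevents $\alpha_i^+$ from decaying and forces $\sum_i (\beta_{i+1} - \beta_i) = \infty$, contradicting $\beta_i \leq \pi/2$. Making the comparison $\sin\alpha_{i+1}^- \gtrsim \sin\alpha_i^+$ quantitative and uniform in $i$ is the principal technical step.
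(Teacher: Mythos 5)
Parts (ii), (iii) and the monotonicity and divergence statements in (i) are fine and essentially follow the paper's route ((ii) via \ref{propmonotone}, \ref{corgraph} and induction; for (iii) you use degeneration of the catenoid parameters, $a[\beta_i,\alpha_i^+]\to0$, $b\to1$, instead of the paper's chord bound \eqref{eqbetadiff} — both work, though your version still needs the small observation that $|\cos\beta_{i+1}-b|\le a\arcosh(1/a)\to0$). The genuine gap is in the finiteness of $k'[\betah;\sigmaunder]$, exactly at the step you flag as "the principal technical step". Your differential identity for \eqref{eqcatev} is correct, and so is its consequence $\beta_{i+1}-\beta_i\ge\tfrac12\sin\beta_i\sin\alpha_i^+$. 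But the next claim — that the two-sided comparison shows $\sin\alpha_{i+1}^-$ is comparable to $\sin\alpha_i^+$ — does not follow. The integrated identity only couples the \emph{sum} $\sin\beta_i\sin\alpha_i^+ + \sin\beta_{i+1}\sin\alpha_{i+1}^-$ to the latitude increment; to leading order in the small quantities it reads $\alpha_i^+ + \alpha_{i+1}^-\approx 2(\beta_{i+1}-\beta_i)$, and it is therefore perfectly consistent with, say, $\alpha_{i+1}^-=\tfrac12\alpha_i^+$ and $\beta_{i+1}-\beta_i\approx\tfrac34\alpha_i^+$ at every step, i.e.\ with geometric decay of the angles and $\sum_i(\beta_{i+1}-\beta_i)<\infty$, in which case no contradiction arises. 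Concretely, with your crude bounds $\sqrt{v^2+1}\in[1,2]$ the lower bound you can extract is $\sin\beta_{i+1}\sin\alpha_{i+1}^-\ge(\cos\beta_i-\cos\beta_{i+1})-\sin\beta_i\sin\alpha_i^+$, whose right-hand side is typically negative (since $\cos\beta_i-\cos\beta_{i+1}\approx\sin\beta_i(\beta_{i+1}-\beta_i)$ and $\beta_{i+1}-\beta_i\le\alpha_i^+$ up to higher order), hence vacuous. Also note that "prevents $\alpha_i^+$ from decaying" is too strong in any case: $\alpha_i^+$ may decay like $1/i$; what must be excluded is \emph{summable} decay.

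The missing ingredient is precisely the paper's near-conservation of the angle across a single annulus with only quadratic losses: assuming $\alpha_i^+\to0$, a Taylor expansion of the explicit catenary (using the uniform upper bounds on $a_i,b_i$ and the lower bound $a_i\ge c(\betah)>0$) gives $|\alpha_i^+-\alpha_{i+1}^-|\le C(\cos\beta_{i+1}-\cos\beta_i)^2\le C'(\alpha_i^+)^2$, which together with $\alpha_{i+1}^+=e^{\sigma_{i+1}}\alpha_{i+1}^-$ and $\|\sigmaunder:\ell^1\|$ small yields $1/\alpha_{i+1}^+\le e^{\|\sigmaunder:\ell^1\|}(1/\betah+8Ci)$, hence $\alpha_i^+\gtrsim 1/i$ and $\sum_i\alpha_i^+=\infty$. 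This estimate is not a consequence of your identity and must be supplied separately. If you do add it, your identity then buys something genuine: the per-step lower bound $\beta_{i+1}-\beta_i\ge\tfrac12\sin\beta_i\sin\alpha_i^+\ge c(\betah)\,\alpha_i^+$ converts $\sum_i\alpha_i^+=\infty$ directly into $\sum_i(\beta_{i+1}-\beta_i)=\infty$, contradicting $\beta_i\le\pi/2$, and thereby replaces the paper's two-case endgame (monotonicity of the waists $a_i$ when $\beta_\infty<\pi/2$, and of the centers $b_i$ via the function $f(t)=\sgn(t-\pi/2)\sin t\,\arcosh(1/\sin t)$ when $\beta_\infty=\pi/2$). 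As written, however, the decisive comparability claim is unsupported, so the finiteness argument is incomplete.
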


\begin{proof}
	By the theory of ordinary differential equations and an inductive construction of $\cateConf$ 
	we conclude that $\beta_i[\betah  ; \sigmaunder ]$, $\alpha^+_i[\betah  ; \sigmaunder ]$, and $\alpha^-_i[\betah  ; \sigmaunder ]$ 
	are continuous functions of $\beta_{1}$ and $\underline{\sigma}$.  
	Moreover, they are strictly increasing functions of $\betah$ 
	by \ref{lem:;}\ref{itemcateconfassump}, \ref{corgraph} and \ref{propmonotone}
	and hence (ii) follows. 
	
	(iii) for $i=1$ is clear by the definition. 
	Now for $i\geq 2$, suppose the result holds for $\beta_1,\dots,\beta_{i-1}$. 
	Consider the chord tangent to $\mathcal{A}'_{i-1}$ at the position $r=1$, $\beta=\beta_{i-1}$. 
	Clearly the chord will also intersect $\mathbb{S}^1$ at $\beta=\beta_{i-1}+2\alpha^{+}_{i-1}$, 
	and thus by convexity of the catenoid 
	\begin{equation}
		\label{eqbetadiff}
		\beta_i<\beta_{i-1}+2\alpha^{+}_{i-1}.
	\end{equation}
	Therefore, the result for $\beta_i$ follows by the assumption for $\beta_{i-1}$ and \ref{lem:;}\ref{ialphacontrol}.
	
	Clearly the definition implies that $k' [\betah  ; \sigmaunder ] \in \N\cup \{\infty\}$,  
	by (ii) it is a decreasing function of $\betah $,   
	and by (iii) its limit as $\betah \to0$ is $\infty$.  
	It remains to prove that it is never $\infty$. 
	We argue by contradiction and assume that it is for some $\betah$ which can be assumed as small as needed its monotonicity. 
	$\{\beta_i\}$ is increasing and bounded by $\pi/2$ and so it converges to some $\beta_\infty\le \pi/2$. By \ref{L:BK} $\forall i\in\N$ there is a catenoid $\K_i\supset \mathcal{A}_i$.  
	Let $a_i:=a^{\K_i}$ and $b_i:=b^{\K_i}$. 
	Clearly they are both positive and uniformly bounded above.

If any subsequence of $\{\alpha_i^+\}$ have a positive lower bound, then $\sum_{i=1}^\infty \alpha_{i}^+ = \infty$. On the other hand, if $\lim_{i\to\infty}\alpha_i^+=0$, then an elementary calculation now using Taylor's expansion for $\cosh$ and the uniform bounds for $a_i$ and $b_i$ implies that 
	for some $C>0$ independent of $i$ we have 
	$$ 
	|\alpha_i^+ -\alpha_{i+1}^-| \leq C\, (\cos \beta_{i+1}-\cos \beta_{i})^2. 
	$$
	By \eqref{eqbetadiff} and \ref{lem:;}\ref{ialphacontrol} we have 
	$ \cos \beta_{i}- \cos \beta_{i+1} \le \beta_{i+1}- \beta_{i} < 2 \alpha_{i}^+$ 
	and hence
	$$ 
	\alpha_i^+ -4 C\, (\alpha_{i}^+)^2\leq \alpha_{i+1}^-=e^{-\sigma_{i+1}} \alpha_{i+1}^+. 
	$$
	which implies since $\lim_{i\to \infty} \alpha_i^+=0$ that 
	\begin{align*}
		\frac1{\alpha_{i+1}^+} \le \frac{e^{-\sigma_{i+1}}}{\alpha_{i}^+(1-4C\alpha_{i}^+)} = \frac{e^{-\sigma_{i+1}}}{\alpha_{i}^+} + \frac{4e^{-\sigma_{i+1}}C}{1-4C\alpha_{i}^+}\le e^{-\sigma_{i+1}}\left(\frac{1}{\alpha_{i}^+} +8C \right) \le e^{\|\sigmaunder : \ell^1\|} \left( \frac1{\betah} + 8Ci \right) ,
	\end{align*}
	where the last inequality follows by induction.  
	We conclude that $\sum_{i=1}^\infty \alpha_{i}^+ = \infty$ by comparing with the harmonic series.  
	
 If $\beta_\infty<\pi/2$, we can assume that $\beta_i+\alpha_i^+<(\pi/2+\beta_{\infty})/2<\pi/2$. Therefore, by \ref{lemcate}\ref{itemcatepara} and mean value theorem,
	\begin{equation} 
		\label{eqdiffa}
		a_{i}-a_{i-1}= 2\sin(\beta_i) \cos\left(\frac{2\beta_i+\alpha_i^+-\alpha_i^-}{2}\right)\sin \left(\frac{\alpha_i^++\alpha_i^-}{2} \right) >C\left(\alpha_i^- + \alpha_i^+ \right)>0
	\end{equation}
as $2\beta_i+\alpha_i^+-\alpha_i^-<\pi$. Thus $a_i$ is increasing, and there is a limit $a_{\infty}:=\lim_{n\to \infty} a_n$. 
We conclude from \ref{eqdiffa} that $\sum_{i=1}^\infty \alpha_{i}^+ < \infty$, a contradiction. 

	Hence $\beta_\infty=\pi/2$.      
	By applying \ref{L:Kex}\ref{itemcatepara} with $\betain=\beta_n$, $\alphain=\alpha_n^+$, or with $\betaex=\beta_n$, $\alphaex= \alpha_n^- = e^{-\sigma_n} \alpha_n^+$, 
	and subtracting, we obtain 
	$$
	b_{n-1}-b_{n}= 
	\sin{\beta_n}  \, \left( \, f(\beta_n + \alpha_n^+  ) - f(\beta_n - e^{-\sigma_n} \alpha_n^+  ) \, \right)
	$$ 
	where  
	$
	f(t) := \sgn(t-\pi/2)  \, \sin{t} \, \arcosh(1/{\sin{t}}).  
	$
	Since 
	$\arcosh(1/{\sin{t}}) = \log \frac{1+|\cos t|}{\sin{t}} = \frac12  \log \frac{1+|\cos t|}{1-|\cos t|}$ 
	we see that we can expand in odd powers of $|\cos t|$. 
	Clearly then $f$ is a smooth function in the vicinity of $\pi/2$  and moreover its derivative at $\pi/2$ is positive. 
	This implies by continuity and the mean value theorem that $b_{n-1}-b_n\geq C\alpha_{n}$ for large $n$, which implies 
	that $\sum_{i=1}^\infty \alpha_{i}^+ < \infty$, a contradiction. 
	This completes the proof. 
\end{proof}

\begin{definition}[Symmetrically extendible catenoidal configurations]  
	\label{defcatse} 
	We define a \emph{symmetrically extendible catenoidal configuration} to be a complete catenoidal configuration $\Wcal$ as in \ref{defcatcom} which moreover satisfies 
	either $\beta_{k'+1}=\pi/2$ or $\beta_{k'+1}+\beta_{k'}=\pi$. 
	The \emph{total order} $k$ of $\Wcal$ is defined to be $2k'+1$ in the former case and $2k'$ in the latter. 
\end{definition}

\begin{cor}[Existence and uniqueness of {$\Wcal[\sigmaunder {:} k ]$}]    
	\label{lem::}
	Given $k\in\N\setminus\{1,2\}$ and 
	$\sigmaunder =\{\sigma_i\}_{i\in\N}\in \ell^1$ 
	with $\| \underline{\sigma} : \ell^1 \|< \delta''_\sigma\in (0,\delta'_\sigma)$, 
	where $\delta'_\sigma$ is as in \ref{lem:;}, 
	there is a unique $\betah=\betah[\sigmaunder {:} k ] \in(0,\pi/2)$   
	such that 
	$\Wcal = \Wcal[\betah  ; \sigmaunder ]$ is symmetrically extendible and has total order $k$ 
	(recall \ref{defcatse}). 
	Moreover $\betah[\sigmaunder {:} k ]$ is decreasing in $k$ for fixed $\sigmaunder$, $\betah\Confarg\to 0$ when $k\to \infty$, 
	and $\betah[\sigmaunder {:} k ]$ and $\Wcal[\sigmaunder {:} k ] $  
	depend continuously on $\left. \sigmaunder \right|_{[k/2]}$ for fixed $k$. 
\end{cor}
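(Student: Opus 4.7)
The plan is to reduce the existence and uniqueness of $\betah\Confarg$ to an intermediate-value-theorem argument applied to the iterative latitudes $\betah\mapsto \beta_n[\betah;\sigmaunder]$, using the continuity, strict monotonicity and small-$\betah$ limits supplied by Proposition \ref{propsollimit}. Write $k':=\lfloor k/2\rfloor$ so that $k=2k'+1$ (odd case) or $k=2k'$ (even case). For each $\betah\in(0,\pi/2)$ I extend the inductive definition of $\beta_n(\betah):=\beta_n[\betah;\sigmaunder]$ as long as $\beta_{n-1}(\betah)<\pi/2$; each $\beta_n$ is then continuous and strictly increasing in $\betah$ on its natural domain by Proposition \ref{propsollimit}\ref{L:increasing}, and tends to $0$ as $\betah\to 0$ by \ref{propsollimit}\ref{isollimit}. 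Introducing the thresholds $\betah^*_n:=\sup\{\betah:\beta_n(\betah)\text{ is defined and }<\pi/2\}$, continuity forces $\beta_n(\betah^*_n)=\pi/2$; moreover $\betah^*_n\to 0$ as $n\to\infty$, for otherwise a uniform positive lower bound $\epsilon$ would imply $\beta_n(\epsilon)<\pi/2$ for every $n$, contradicting \ref{propsollimit}\ref{inoaccum}.

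The crucial local fact I rely on next is the strict inequality $\betah^*_{n+1}<\betah^*_n$. At $\betah=\betah^*_n$ one has $\beta_n=\pi/2$ and $\alpha_n^+>0$, whereupon the explicit formula for $b$ in Lemma \ref{lemcate}\ref{itemcatepara} gives $b=-\cos(\alpha_n^+)\operatorname{arcosh}(\sec\alpha_n^+)<0$ for the $n$-th catenoid, so Lemma \ref{lemcate}\ref{itemdecreasing} yields $\beta_{n+1}(\betah^*_n)>\pi/2$; continuity then furnishes some $\betah^*_{n+1}<\betah^*_n$ with $\beta_{n+1}(\betah^*_{n+1})=\pi/2$.

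For odd $k=2k'+1$ I take $\betah\Confarg:=\betah^*_{k'+1}$: by construction $\beta_{k'+1}=\pi/2$, so by Definition \ref{defcatse} $\Wcal[\betah;\sigmaunder]$ is symmetrically extendible of total order $k$; uniqueness is immediate from strict monotonicity. For even $k=2k'$ I consider $G(\betah):=\beta_{k'+1}(\betah)+\beta_{k'}(\betah)$ on the interval $[\betah^*_{k'+1},\betah^*_{k'})$, on which $\beta_{k'}<\pi/2\leq \beta_{k'+1}$ and the configuration has order $k'$. Then $G$ is continuous and strictly increasing, with boundary values $G(\betah^*_{k'+1})=\pi/2+\beta_{k'}(\betah^*_{k'+1})<\pi$ and $\lim_{\betah\nearrow\betah^*_{k'}}G(\betah)=\beta_{k'+1}(\betah^*_{k'})+\pi/2>\pi$ (the latter since $\betah^*_{k'+1}<\betah^*_{k'}$ forces $\beta_{k'+1}(\betah^*_{k'})>\pi/2$ by strict monotonicity). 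The IVT then delivers a unique $\betah\in(\betah^*_{k'+1},\betah^*_{k'})$ with $G(\betah)=\pi$, giving $\beta_{k'+1}=\pi-\beta_{k'}$ as required.

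The remaining assertions follow cleanly. Monotonicity of $k\mapsto \betah\Confarg$ is read off from the chain $\betah^*_{k'+2}<\betah[\sigmaunder{:}2k'+2]<\betah^*_{k'+1}=\betah[\sigmaunder{:}2k'+1]<\betah[\sigmaunder{:}2k']<\betah^*_{k'}$; combined with $\betah^*_n\to 0$ this yields $\betah\Confarg\to 0$ as $k\to\infty$. For continuous dependence on $\left.\sigmaunder\right|_{[k/2]}$ I observe that only $\sigma_1,\dots,\sigma_{k'}$ enter the recipe for $\beta_1,\dots,\beta_{k'+1}$, that this recipe depends continuously on these parameters via Lemma \ref{L:Kin} together with standard ODE dependence on initial data, and that strict monotonicity in $\betah$ and the IVT characterization then propagate this continuity to $\betah\Confarg$ and hence to $\Wcal\Confarg$. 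The main technical hurdle I anticipate is exactly the catenoid-geometric input $\betah^*_{n+1}<\betah^*_n$ (and the analogous positivity of $\beta_{k'+1}(\betah^*_{k'})-\pi/2$); once this local strict inequality is secured, the IVT and bookkeeping handle the rest.
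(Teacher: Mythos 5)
Your threshold scheme reproduces, in different bookkeeping, the paper's induction on $k$ (the key sign computation $b=-\cos(\alpha_n^+)\arcosh(\sec\alpha_n^+)<0$ at $\betain=\pi/2$, hence $\beta_{n+1}>\pi/2$ via \ref{lemcate}\ref{itemdecreasing}, is exactly the paper's inductive step, and your IVT treatment of the even case and the continuity/monotonicity conclusions are fine). But there is a genuine gap at the start of the chain: the assertion that ``continuity forces $\beta_n(\betah^*_n)=\pi/2$'' is not automatic, and for the first relevant index it is precisely the missing content. Recall that $\Wcal[\betah;\sigmaunder]$ is only defined for $\betah\in(0,2\pi/7)$ (Lemma \ref{lem:;}), so $\betah^*_n$ is a supremum over a capped interval. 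For $n=1$ the claim is simply false ($\beta_1=\betah$ never reaches $\pi/2$, so $\betah^*_1=2\pi/7$ with $\beta_1(\betah^*_1)<\pi/2$), and for $n=2$ the claim holds only if one first shows that $\beta_2(\betah)=\betaex[\betah,e^{\sigma_1}\betah]$ actually attains $\pi/2$ at some admissible $\betah<2\pi/7$; nothing in \ref{propsollimit} or in your $\betah^*_n\to 0$ argument gives this (that argument only shows the thresholds are eventually interior, not for small $n$). Your strict-decrease step cannot substitute, since it presupposes $\beta_n(\betah^*_n)=\pi/2$ at the previous index, and $\beta_1$ never equals $\pi/2$; so the induction has no base, and existence for $k=3$ (and hence the whole chain $\betah^*_2>\betah^*_3>\dots$) is not established.

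The paper closes exactly this point by an explicit computation: using \ref{lemcate}\ref{itemcatepara} it checks that the catenary of $\KBp[\pi/4,\pi/4]$ meets the plane $y=0$ outside the unit circle while that of $\KBp[2\pi/7,2\pi/7]$ meets it inside, so $\betaex[\pi/4,\pi/4]<\pi/2<\betaex[2\pi/7,2\pi/7]$, and the intermediate value theorem together with \ref{propmonotone} yields $\betah[0{:}3]\in(\pi/4,2\pi/7)$; the nearly balanced case then follows by continuity/implicit function theorem in $\sigma_1$. You need this (or an equivalent quantitative input showing $\betah^*_2<2\pi/7$, i.e.\ that $\Wcal[\sigmaunder{:}3]$ exists in the admissible range) before your threshold argument can run; once it is added, the rest of your proof is essentially the paper's argument in a different organization.
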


\begin{proof}
	In this proof,
	we define an \emph{extended complete catenoidal configuration} to be a catenoidal configuration same as the catenoidal configuration in \ref{defcatcom} except it satisfies $\beta_{k'+1} > \pi/2$ and $\beta_{k'+1} \leq \pi/2$. As in \ref{lem:;}, given $\forall\betah \in(0,2\pi/7)$ and 
	$\sigmaunder =\{\sigma_i\}_{i\in\N}\in \ell^1$ 
	with $\| \underline{\sigma} : \ell^1 \|< \delta'_\sigma$, 
	there is a unique extended complete catenoidal configuration satisfying the properties in \ref{lem:;}, which we denote by $\Wcal[\![\betah  ; \sigmaunder ]\!]$. As in \ref{lem:;}, we will use ``$ [\![\betah  ; \sigmaunder ]\!]$'' to specify the quantities associated to
	$\Wcal[\![\betah  ; \sigmaunder ]\!]$.
	
	If $\beta_1<2\pi/7$ and $\| \underline{\sigma} : \ell^1 \|\leq \delta'_{\sigma}$, by \ref{propsollimit}\ref{L:increasing}, each $\beta_i[\betah  ; \sigmaunder ]$ is an increasing function of $\beta_1$, which then implies the uniqueness of $\Conf$. By \ref{lem:;}\ref{itemcateconfassump}, the assumption \ref{A:ba} holds if $\Conf$ exists.
	
	We first show the existence of the configuration $\mathcal{W}[0 {:} 3]$ exists with $\betah[0 {:} 3]\in(\pi/4,2\pi/7)$. By the definition, the configuration $\mathcal{W}[0 {:} 3]$ exists if and only if there exists a catenoidal arc $\KBp[\betain,\betain]$ such that $\betaex[\betain,\betain]=\pi/2$. 
	
	By the formulae \ref{lemcate}\ref{itemcatepara}, the catenoidal arc $\KBp[\pi/4,\pi/4]$ satisfies the equation
	\begin{equation*}
		x=\frac{\sqrt{2}}{2}\cosh\left(\sqrt{2}y-1\right),
	\end{equation*}
	which intersects $y=0$ at $(\frac{\sqrt{2}\cosh{1}}{2},0)\approx(1.0911,0)$ outside $\mathbb{S}^1$; while the catenoidal arc $\KBp[2\pi/7,2\pi/7]$ satisfies the equation 
	\begin{equation*}
		x=\sin\frac{2\pi}{7}\sin\frac{4\pi}{7}\cosh\left(\frac{y-\cos\frac{2\pi}{7}-\sin\frac{2\pi}{7}\sin\frac{4\pi}{7}\arcosh\frac{1}{\sin\frac{4\pi}{7}}}{\sin\frac{2\pi}{7}\sin\frac{4\pi}{7}}\right),
	\end{equation*}
	which intersects $y=0$ at $\left(\sin\frac{2\pi}{7}\sin\frac{4\pi}{7}\cosh\left(\cot\frac{2\pi}{7}\csc\frac{4\pi}{7}+\arcosh\frac{1}{\sin\frac{4\pi}{7}}\right),0\right)\approx(0.8996,0)$ inside $\mathbb{S}^1$. Therefore, the angle $\betaex[\pi/4,\pi/4]<\pi/2$ while $\betaex[2\pi/7,2\pi/7]>\pi/2$. The result then follows by the intermediate theorem and the monotonicity \ref{propmonotone}.Moreover, the assumption \ref{A:ba} holds.
	
	Now when $\underline{\sigma}|_1$ has absolute value smaller than a value $\delta''_\sigma$ small enough, by implicit function theorem and \ref{propsollimit}\ref{L:increasing}, there exists a unique $\betah$ near $\betah[0;3]$, such that  $\beta_2[\underline{\sigma};\beta_1]=\pi/2$. Thus $\mathcal{W}[\underline{\sigma}{ : }3]$ exits and the assumption \ref{A:ba} holds by \ref{lem:;}\ref{itemcateconfassump}.
	
	Furthermore, suppose that the results hold for $k=2k'-1$. Then for $\underline{\sigma}\in \ell^1$ with $\| \underline{\sigma} : \ell^1 \|< \delta''_\sigma$, $\mathcal{W}[\underline{\sigma}{ : }2k'-1]$ exists. Therefore, in the extended complete catenoidal configuration $\mathcal{W}[\![\betah[\underline{\sigma}{ : }2k'-1] ;\underline{\sigma}]\!]$ with finite order $k'$, $\beta_{k'}=\pi/2$ and $\beta_{k'}+\beta_{k'+1}>\pi$. As in \ref{propsollimit}\ref{L:increasing}, $\beta_{k'}[\![\betah;\underline{\sigma}]\!]$ and $\beta_{k'+1}[\![\betah;\underline{\sigma}]\!]$ are strictly increasing functions of $\betah$. By intermediate theorem and \ref{propsollimit}\ref{isollimit}, there exists a $\betah[\underline{\sigma}{ : }2k']<\betah[\underline{\sigma}{ : }2k'-1]$ such that in $\mathcal{W}[\betah[\underline{\sigma}{ : }2k'];\underline{\sigma}]=\mathcal{W}[\![\betah[\underline{\sigma}{ : }2k'];\underline{\sigma}]\!]$, $\beta_{k'}+\beta_{k'+1}=\pi$. Thus by \ref{defcatse} $\mathcal{W}[\underline{\sigma}{ : }2k']$ exists. A similar argument then shows that $\mathcal{W}[\underline{\sigma}{ : }2k'+1]$ exists.
	
	By the implicit function theorem, \ref{propsollimit}\ref{L:increasing} and \ref{defcatse}, $\betah\Confarg$ is a continuous function of $\underline{\sigma}$ in a neighborhood of each $\sigmaunder$ with $\| \underline{\sigma} : \ell^1 \|< \delta''_\sigma$. Finally, by the monotonicity of $k'$ with respect to $\betah$ in \ref{propsollimit}\ref{inoaccum} and the uniqueness of $\betah\Confarg$ in \ref{lem::}, $\betah\Confarg\to 0$ when $k\to \infty$.
\end{proof}

\begin{notation}\label{nota:confarg}
We define $\Conf := \Wcal\cup \refl\Wcal$, where $\refl$ is the reflection with respect to the $xz$-plane and $\Wcal = \Wcal[\betah\Confarg  ; \sigmaunder ]$ 
is as in \ref{defcat}. 
For $1\le i \le k$ we define $\alpha^{\pm}_i\Confarg$ and $\beta_i\Confarg$ to be the values of $\alpha_i^{\pm}$ and $\beta_i$ respectively 
in the configuration $\Wcal$ for $i\le{k':=[k/2]}$ and by $\alpha_{i}^{\pm}\Confarg:=\alpha_{k-i+1}^{\mp}\Confarg$, $\beta_i\Confarg:=\pi-\beta_{k-i+1}\Confarg$ for $i>k'$;  
when $\underline{\sigma}=0$ we take $\alpha_i\zConfarg:=\alpha^{+}_i\zConfarg=\alpha^{-}_i\zConfarg$. 
For $0\le j \le k$ and $1\le l \le k$  
we define $\mathcal{A}_j\Confarg$ and $\mathcal{C}_l\Confarg$ to denote the catenoidal annulus (or disk) $\mathcal{A}_j$ 
and circle $\mathcal{C}_l$ respectively in the configuration $\Wcal$ if $j\le k'$ or $l\le k'+1$, 
and by  
$\mathcal{A}_j\Confarg:=\refl\mathcal{A}_{k-j}\Confarg$ and $\mathcal{C}_l\Confarg:=\refl\mathcal{C}_{k-l+1}\Confarg$ otherwise. 
Finally we define $\rr\Confarg : [0,\pi]\to (0,1]$ to be the extension of $\rr [\betah\Confarg  ; \sigmaunder ]$ as in 
\ref{lem:;}\ref{itemcateconfassump} so that its radial graph is $\Conf\cap\R^2_+$. 
\end{notation}

\begin{cor} 
\label{corbal}
Assuming that $k$ is large enough in terms of a given $\epsilon>0$ and $\sigmaunder$ is as in \ref{lem::}, $\rr=\rr\Confarg$ defined as in \ref{nota:confarg} satisfies 
$$ 
\norm {\rr-1:C^{0}[0,\pi/2]}+\norm {\mathrm{d}\rr/\mathrm{d}\beta:C^{0}[0,\pi/2]}\leq\epsilon.  
$$ 
\end{cor}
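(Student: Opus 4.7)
My plan is to show that all the relevant geometric quantities are controlled by $\betah=\betah\Confarg$, which by \ref{lem::} tends to $0$ as $k\to\infty$; then choosing $k$ large in terms of $\epsilon$ finishes the proof.

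First, by \ref{lem::} and \ref{lem:;}\ref{ialphacontrol} together with $\|\sigmaunder:\ell^1\|<\delta''_\sigma$, there is an absolute constant $C>0$ such that $\alpha^\pm_i\Confarg\le C\betah$ for every $i$ with $\beta_i\Confarg\le\pi/2$. Moreover, by \eqref{eqbetadiff} and \ref{lem:;}\ref{ialphacontrol}, we also have $\beta_{i+1}-\beta_i<2\alpha^+_i\le 2C\betah$ on the same range. This is the key quantitative input, and everything else reduces to elementary pointwise estimates on each piece of the configuration.

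Next I would treat the flat disc $\mathcal{A}_0\Confarg$, which occupies $\beta\in[0,\betah]$ and lies in the plane $y=\cos\betah$. Here $\rr(\beta)=\cos\betah/\cos\beta$, so $\rr(0)=\cos\betah$ and $\mathrm{d}\rr/\mathrm{d}\beta=\cos\betah\sin\beta/\cos^2\beta$; both are bounded by a constant times $\betah$ on $[0,\betah]$. Then for each catenoidal annulus $\mathcal{A}_i\Confarg\subset\K[\beta_i,\alpha_i^+]$ whose arc lies in $[0,\pi/2]$, I would use \ref{L:BK}\ref{itemBKv} and \ref{lemcate}\ref{itemconcavity}: the function $v=-\rr^{-1}\mathrm{d}\rr/\mathrm{d}\beta$ is strictly decreasing on $[\beta_i,\beta_{i+1}]$ with boundary values $\tan\alpha_i^+$ and $-\tan\alpha_{i+1}^-$, hence $|v|\le\tan(C\betah)\le 2C\betah$ on the whole arc (for $k$ large). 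Since the configuration lies in $\B^3$, we have $\rr\le 1$, so $|\mathrm{d}\rr/\mathrm{d}\beta|=|\rr v|\le 2C\betah$ as well. Because $\rr=1$ at both endpoints $\beta_i,\beta_{i+1}$ and $\beta_{i+1}-\beta_i\le 2C\betah$, the fundamental theorem of calculus yields $|\rr-1|\le (2C\betah)(2C\betah)=4C^2\betah^2$ on each arc.

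Combining the disc and annulus estimates, on all of $[0,\pi/2]$ both $\|\rr-1:C^0\|$ and $\|\mathrm{d}\rr/\mathrm{d}\beta:C^0\|$ are bounded by a constant multiple of $\betah\Confarg$. Since \ref{lem::} gives $\betah\Confarg\to 0$ as $k\to\infty$, the corollary follows by taking $k$ large enough in terms of $\epsilon$. There is no real obstacle here: the only nontrivial step is the uniform bound $\alpha_i^\pm\le C\betah$, and that has already been established as part of \ref{lem:;}\ref{ialphacontrol}; the rest is a short bootstrap from the convexity of $\log\rr$ on each catenoidal arc.
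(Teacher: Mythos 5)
Your proof is correct and takes essentially the same route as the paper: both arguments rest on the bound $\alpha_i^{\pm}\lesssim\betah$ from \ref{lem:;}\ref{ialphacontrol}, the monotonicity and boundary values of $v$ from \ref{lemcate}\ref{itemconcavity} to control $\mathrm{d}\rr/\mathrm{d}\beta$, and $\betah\Confarg\to 0$ as $k\to\infty$ from \ref{lem::}. The only cosmetic difference is that you obtain the $C^0$ bound on $\rr-1$ by integrating the derivative bound from the endpoints where $\rr=1$ (and treat the disc explicitly), whereas the paper compares each catenary arc with its tangent chord at $(\sin\beta_i,\cos\beta_i)$ to get $1-\rr\le 1-\cos\alpha_i^+$; both give the same conclusion.
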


\begin{proof}
	For the catenary $\K'$ corresponding to each catenoidal annulus $\mathcal{A}_i$ in $\Conf$, 
define $l_i$ to be the chord tangent to $\K'$ at $(\sin\beta_i,\cos\beta_i)$. 
The radial distance between $\mathcal{A}_i$ and the unit sphere is then controlled by the radial distance between $l_i$ and the unit circle. 
From the inequality $\alpha^+_{i}<e^{\norm{\sigmaunder:\ell_1}}\betah$ by \ref{lem:;}\ref{ialphacontrol}, 
	\begin{equation*}
		\max (1-\rr(\beta))=\max_{i=1,\dots,k}\{1-\cos\alpha^+_i \}\leq 1-\cos(e^{\delta''_{\sigma}}\betah),
	\end{equation*}
	where $\alpha^+_i:=\alpha^+_i\Confarg$.  Thus $\max (1-\rr(\beta))\to 0$ when $k\to \infty$ by \ref{lem::}. 
	
	 Similarly, from \ref{lemcate}\ref{itemdecreasing}, \ref{lemcate}\ref{itemconcavity},
	 \begin{equation*}
	 	\max\abs{\frac{\mathrm{d} \rr}{\mathrm{d}\beta}(\beta)/\rr(\beta)} 
	 	=\max_{i=1,\dots,k}\{\tan \alpha_i^+\}\leq \tan(e^{\delta''_{\sigma}}\betah).
	 \end{equation*}
Thus from the result above, $\max \abs{\mathrm{d} \rr/\mathrm{d}\beta}\to 0$ when $k\to \infty$ by \ref{lem::}.	  
\end{proof}

\begin{lemma} 
\label{lembalanceconf}
	In the balanced configuration $\zConf$,
	\begin{enumerate}[label={(\roman*)},ref={(\roman*)}]
		\item For $i=1,\dots,k$, $\alpha_i\zConfarg\leq \beta_1\zConfarg$.\label{itemalphacontrol}
		\item For $i=1,\dots,k-1$, $\beta_{i+1}\zConfarg-\beta_{i}\zConfarg\leq 2\beta_1\zConfarg$.\label{itembetadiff}
	\end{enumerate}
\end{lemma}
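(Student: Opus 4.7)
My plan is to deduce (i) first and then obtain (ii) as an immediate consequence of (i) combined with the tangent chord estimate already established in \eqref{eqbetadiff}. Throughout I write $\alpha_i$ for $\alpha_i\zConfarg$, which equals $\alpha_i^+\zConfarg = \alpha_i^-\zConfarg$ by \ref{nota:confarg}; the identity $\alpha_1 = \beta_1$ at the base is then immediate from $\alpha_1^- = \beta_1$ in \ref{defcat}.

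To prove (i), I will argue that the finite sequence $\{\alpha_i\}_{i=1}^{k}$ is weakly decreasing on the first half $i \le k'+1$ (where $k' = [k/2]$) and palindromic in the sense $\alpha_i = \alpha_{k-i+1}$. The palindrome is built into the definition in \ref{nota:confarg} once $\underline{\sigma} = 0$. For the monotonicity, I apply \ref{lemcate}\ref{itemdecreasing} to each catenoidal annulus $\mathcal{A}_i\zConfarg$ in the first half: its entry--exit data are $(\betain, \alphain) = (\beta_i, \alpha_i)$ and $(\betaex, \alphaex) = (\beta_{i+1}, \alpha_{i+1})$, so
\begin{equation*}
\sgn(\alpha_i - \alpha_{i+1}) = \sgn(\pi - \beta_i - \beta_{i+1}).
\end{equation*}
This reduces (i) to checking $\beta_i + \beta_{i+1} \le \pi$ on the first half, which I will handle by case analysis on \ref{defcatse}: in the odd case $k = 2k'+1$ every first-half latitude satisfies $\beta_j \le \beta_{k'+1} = \pi/2$, while in the even case $k = 2k'$ the extremal sum $\beta_{k'} + \beta_{k'+1} = \pi$ is built into \ref{defcatse} and all earlier sums are strictly smaller (being bounded above by $2\beta_{k'} < \pi$). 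Chaining yields $\alpha_i \le \alpha_1 = \beta_1$ on the first half, and the palindrome extends the bound to all $i$.

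Statement (ii) then follows at once: the tangent chord argument carried out in the proof of \ref{propsollimit}\ref{isollimit} (equation \eqref{eqbetadiff}) gives $\beta_{i+1} < \beta_i + 2\alpha_i^+$ for every catenoidal annulus, as a consequence of the log-convexity of $r$ along a catenoid in \ref{L:BK}\ref{itemBKv}. This bound holds on both halves of $\zConf$, either by repeating the argument verbatim or by invoking the reflection symmetry of \ref{nota:confarg}. Since $\alpha_i^+ = \alpha_i$ in the balanced case, (i) gives $\beta_{i+1} - \beta_i < 2\alpha_i \le 2\beta_1$. The only delicate step in the whole argument is the sign check $\beta_i + \beta_{i+1} \le \pi$ on the first half, since this is precisely what makes \ref{lemcate}\ref{itemdecreasing} push in the desired direction; everything else is bookkeeping.
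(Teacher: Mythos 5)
Your proof is correct and takes essentially the same route as the paper's: reduce to the first half by the reflection symmetry, use the (weak) monotonicity of the $\alpha_i$ along the configuration to get (i) with $\alpha_1=\beta_1$, and combine \eqref{eqbetadiff} with (i) to get (ii). The only difference is that you re-derive the monotonicity directly from \ref{lemcate}\ref{itemdecreasing} via the sign check $\beta_i+\beta_{i+1}\le\pi$ (handled by your odd/even case analysis), whereas the paper simply cites \ref{lem:;}\ref{ialphacontrol}, which packages the same fact.
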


\begin{proof}
	By the symmetry of the balanced configuration, it can be assumed that $i\leq \frac{k+1}{2}$. (i) then just follows by the balanced condition and \ref{ialphacontrol}. (ii) just follows by \eqref{eqbetadiff} and (i).
\end{proof}

\begin{lemma} 
\label{lemcatekernel}   
If $k$ is large enough in absolute terms, then the kernel of $\mathcal{L}_{\mathcal{A}_i\zConfarg}$ with Dirichlet boundary condition 
for each disk or catenoidal annulus $\mathcal{A}_i\zConfarg$ in the balanced configuration $\zConf$ is trivial. 
\end{lemma}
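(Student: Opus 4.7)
\emph{Proof plan.} For the flat discs $\mathcal{A}_0\zConfarg,\mathcal{A}_k\zConfarg$ we have $|A|^2\equiv 0$, hence $\mathcal{L}=\Delta$ and its Dirichlet kernel is trivial. For a catenoidal annulus $\mathcal{A}_i\zConfarg\subset \K=\K[\beta_i,\alpha_i^+]$ the plan is to work in the standard conformal parametrization
$$
X(t,\theta)=(a_i\cosh t\cos\theta,\,a_i t+b_i,\,a_i\cosh t\sin\theta),
$$
in which the induced metric is $a_i^2\cosh^2 t\,(dt^2+d\theta^2)$, $|A|^2=2/(a_i^2\cosh^4 t)$, and $\mathcal{A}_i\zConfarg$ corresponds to a rectangle $[t_-^i,t_+^i]\times\Sph^1$ with $t_\pm^i$ given explicitly by Lemma~\ref{lemcate}\ref{itemcatepara}. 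After multiplying $\mathcal{L}_{\mathcal{A}_i}f=0$ by $a_i^2\cosh^2 t$ one gets $\partial_t^2 f+\partial_\theta^2 f+2\,\sech^2 t\,f=0$, and Fourier expanding $f=\sum_n f_n(t)\,e^{in\theta}$ reduces the triviality of the kernel to showing that
$$
f_n''-n^2 f_n+\tfrac{2}{\cosh^2 t}f_n=0,\qquad f_n(t_\pm^i)=0,
$$
has only the zero solution for every $n\in\Z$.

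I would then handle the three mode ranges separately. For $|n|\ge 2$ the coefficient $n^2-2/\cosh^2 t\ge 2$ is strictly positive, so a standard convexity/Sturm argument forces $f_n\equiv 0$. For $|n|=1$, two explicit linearly independent solutions are $1/\cosh t$ and $\sinh t+t/\cosh t$ (both Jacobi fields on $\K$, arising from perpendicular translations and perpendicular rotations respectively); a direct calculation shows that their Dirichlet Wronskian is proportional to $\bigl[\tfrac12\sinh(2t_+^i)+t_+^i\bigr]-\bigl[\tfrac12\sinh(2t_-^i)+t_-^i\bigr]$, which is nonzero because $u\mapsto \tfrac12\sinh(2u)+u$ is strictly increasing. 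For $n=0$, two linearly independent solutions are $\tanh t$ and $1-t\tanh t$, and the Dirichlet determinant reduces to
$$
F(t_-^i,t_+^i)\;:=\;\tanh t_-^i-\tanh t_+^i+(t_-^i-t_+^i)\,\tanh t_-^i\,\tanh t_+^i.
$$

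The main obstacle is the $n=0$ case, since $1-t\tanh t$ has genuine zeros on $\R$ near $\pm 1.2$ and $F$ has no universal sign. When $t_-^i$ and $t_+^i$ share a sign one sees at once that $F<0$, so the delicate case is $t_-^i<0<t_+^i$, i.e.\ the waist of $\K$ lies inside $\mathcal{A}_i$. Using the explicit formula for $b_i$ in Lemma~\ref{lemcate}\ref{itemcatepara} this occurs only when $\alpha_i^++\beta_i>\pi/2$, which by Lemma~\ref{lembalanceconf}\ref{itemalphacontrol} confines $\beta_i$ to the window $(\pi/2-\beta_1\zConfarg,\pi/2]$ (the corresponding annuli above the equator are handled by the reflection $\refl$). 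In this window the identities $|t_+^i|=\arccosh(1/\sin(\alpha_i^++\beta_i))$ and $|t_-^i|=\arccosh(1/\sin(\beta_{i+1}-\alpha_{i+1}^-))$, combined with Lemma~\ref{lembalanceconf}, force $|t_\pm^i|=O(\beta_1\zConfarg)$ uniformly in $i$, so Taylor expansion at the origin yields
$$
F(t_-^i,t_+^i)=(t_-^i-t_+^i)\bigl(1+O((\beta_1\zConfarg)^2)\bigr),
$$
which is nonzero for $k$ large because $\beta_1\zConfarg\to 0$ by Corollary~\ref{lem::} and $t_-^i<t_+^i$. The single equator-crossing annulus that arises when $k$ is even is handled identically: the balanced reflection symmetry yields $b_{k'}=0$ and $t_-^{k'}=-t_+^{k'}\to 0$, so $F(-t_+^{k'},t_+^{k'})=2\tanh t_+^{k'}(t_+^{k'}\tanh t_+^{k'}-1)$ is nonzero for $k$ large.
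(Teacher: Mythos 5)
Your proposal is correct, and it differs from the paper's proof in an instructive way. Both arguments treat the radial (rotationally invariant) part identically at the start: the same explicit solutions $\tanh t$ and $1-t\tanh t$ in the conformal variable $t=(y-b_i)/a_i$, and the same $2\times 2$ Dirichlet determinant (the paper's \eqref{eqdirichlet} is your $F$ up to relabeling $z_1=t_+^i$, $z_2=t_-^i$). The nonvanishing is then obtained by different mechanisms: the paper bounds the conformal width, proving $t_+^i-t_-^i\le 2+\epsilon$ from $\cos\beta_i-\cos\beta_{i+1}\le(\beta_{i+1}-\beta_i)\sin\beta_{i+1}$, the formula $a_i=\sin\beta_{i+1}\sin(\beta_{i+1}-\alpha_{i+1})$, the inequality $\beta_2-\alpha_2\ge\beta_1$ and \ref{lembalanceconf}\ref{itembetadiff}, and then uses that the determinant cannot vanish when the width stays below the critical value ($\approx 2.4$, where the symmetric catenoid piece becomes Dirichlet-degenerate); you instead split by the sign of $t_\pm^i$, dispose of the same-sign case by inspection, and show that the waist-inside case forces $|t_\pm^i|=O(\beta_1\zConfarg)$, after which a Taylor expansion rules out a zero of $F$ — a more localized use of the same configuration estimates, and both treatments isolate the equator-crossing annulus for even $k$. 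What your route buys beyond the paper's: you also handle the nonradial Fourier modes ($|n|\ge 2$ by positivity of $n^2-2\sech^2t$, and $|n|=1$ via the Jacobi fields $\sech t$ and $\sinh t+t\sech t$, whose Dirichlet determinant is a strictly increasing function of the endpoint and so never vanishes on a nondegenerate interval), whereas the paper's written proof only addresses rotationally invariant $u$; you also note that for the flat discs $\mathcal{L}=\Delta$, so no smallness of $\beta_1$ is needed there. Two small points you should spell out: $b_{k'}=0$ for the middle annulus follows directly from \ref{lemcate}\ref{itemdecreasing} since $\beta_{k'}+\beta_{k'+1}=\pi$ (no appeal to symmetry needed), and $t_+^{k'}\to 0$ deserves a line, e.g. $|t_+^{k'}|=\arcosh\bigl(1/\sin(\alpha_{k'}+\beta_{k'})\bigr)=O(\beta_1\zConfarg)$ using \ref{lembalanceconf}\ref{itemalphacontrol} and \ref{lembalanceconf}\ref{itembetadiff}.
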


\begin{proof}
	For the disks $\mathcal{A}_0\zConfarg$ and $\mathcal{A}_k\zConfarg$, the result simply follows by the smallness of $\beta_1$. Suppose $\mathcal{A}_i\zConfarg$, $i=1,\dots,k-1$ satisfies the equation $x=a_i\cosh(\frac{y-b_i}{a_i})$, let $\alpha_i:=\alpha_i\zConfarg$, $\beta_i:=\beta_i\zConfarg$. For any rotationally invariant function $u$, $\mathcal{L}_{\mathcal{A}_i\zConfarg}u=0$ is equivalent with
	\begin{equation*}
		u''(y)+\frac{2}{a^2_i}\sech^2\left(\frac{y-b_i}{a_i}\right)u(y)=0.
	\end{equation*}
	The solution space is spanned by $u=\tanh{\frac{y-b_i}{a_i}}$ and $u=1-\frac{y-b_i}{a_i}\tanh{\frac{y-b_i}{a_i}}$. The Dirichlet boundary condition is equivalent with
	\begin{equation*}
		u\left(\cos\beta_i\right)=0,\quad u\left(\cos\beta_{i+1}\right)=0.
	\end{equation*}
	The solution exists if and only if the following condition holds
	\begin{equation}\label{eqdirichlet}
		0=\det
		\begin{pmatrix}
			\tanh z_1 & 1-z_1\tanh z_1 \\
			\tanh z_2 & 1-z_2\tanh z_2 
		\end{pmatrix}=\tanh z_1-\tanh z_2+(z_1-z_2)\tanh z_1 \tanh z_2,
	\end{equation}
	where $z_1=\frac{\cos\beta_i-b_i}{a_i}$ and $z_2=\frac{\cos\beta_{i+1}-b_i}{a_i}$.
	
	When $k$ is even, for the catenoidal annulus $\mathcal{A}_{\frac{k}{2}}\zConfarg$, 
the result follows by the smallness of the catenoidal pieces and the fact that $a_{\frac{k}{2}}$ is close to $1$ from \ref{corbal}. 
For all other catenoidal annuli, in odd or even case, by the symmetry it can be assumed that $\beta_{i+1}\leq \pi/2$, then by \ref{lemcate}\ref{itemcatepara},
	\begin{align*}
		\frac{\cos{\beta_i}-\cos{\beta_{i+1}}}{a_i}=\frac{\cos{\beta_i}-\cos{\beta_{i+1}}}{\sin{\beta_{i+1}}\sin({\beta_{i+1}-\alpha_{i+1}})}\leq \frac{(\beta_{i+1}-\beta_{i})\sin{\beta_{i+1}}}{\sin{\beta_{i+1}}\sin({\beta_{i+1}-\alpha_{i+1}})}=\frac{\beta_{i+1}-\beta_{i}}{\sin({\beta_{i+1}-\alpha_{i+1}})}.
	\end{align*}
	By \ref{lemcate}\ref{itemdecreasing} and the balanced condition, $\alpha_i$ is decreasing for $i\leq \frac{k+1}{2}$, and $\beta_i$ is increasing, therefore,
	\begin{equation*}
		\frac{\cos{\beta_i}-\cos{\beta_{i+1}}}{a_i}\leq \frac{\beta_{i+1}-\beta_{i}}{\sin({\beta_{2}-\alpha_{2}})}.
	\end{equation*}
	However, if $\beta_2-\alpha_2< \beta_1$, then as $\beta_1=\alpha_{1}>\alpha_{2}$ by \ref{lemcate}\ref{itemdecreasing}, $\beta_2-\beta_1< \beta_2-\alpha_2<\beta_1$, and thus $\beta_2<2 \beta_1$. But these contradict the formula $\sin\beta_1\sin(2\beta_1)=a_1=\sin\beta_2\sin(\beta_2-\alpha_2)$ by \ref{lemcate}\ref{itemcatepara}. This means $\beta_2-\alpha_2\geq \beta_1$ and thus by \ref{lembalanceconf}\ref{itembetadiff} 
	\begin{equation*}
		\frac{\cos{\beta_i}-\cos{\beta_{i+1}}}{a_i}\leq \frac{\beta_{i+1}-\beta_{i}}{\sin{\beta_{1}}}\leq  2\frac{\beta_1}{\sin{\beta_{1}}}.
	\end{equation*}
	By \ref{lem::}, $\lim_{k\to\infty}\frac{\beta_1}{\sin{\beta_{1}}}=1$. This means $z_1-z_2=\frac{(\cos{\beta_i}-b_i)-(\cos{\beta_{i+1}}-b_i)}{a_i}\leq 2+\epsilon$ when $k$ is big enough, where $\epsilon$ is a small positive number. However, then \ref{eqdirichlet} can not hold except when $z_1=z_2$.
\end{proof}

\begin{prop} 
\label{propconvconf} 
For $k\in \N$ large enough in absolute terms there are constants 
$ \delta_{\sigma}=\delta_{\sigma}(k)\in(0,\delta''_{\sigmaunder})$ (recall \ref{lem::}),  
$c=c(k)$, 
and $\delta'_{\theta}=\delta'_{\theta}(k)$,  
such that the following hold for all $\underline{\sigma}$ with $\norm{\sigmaunder:\ell_1}\leq\delta_{\sigma}$.
	\begin{enumerate}[label={(\roman*)},ref={(\roman*)}] 
\item 
\label{itemconfalpha}
$\alpha_i^+\Confarg\in[30\delta'_{\theta},\frac{\pi}{2}-30\delta'_{\theta}]$ and $\abs{\alpha_i^-\Confarg-\alpha_i^+\Confarg}\leq \delta'_{\theta}$ 
for each $i\in\N$ with $1\le i \le k$. 
\item 
\label{itemconfA}
The catenoid $\K_i$ containing $\mathcal{A}_i\Confarg$ satisfies $a_{\K_i}> \delta'_{\theta}$   
for each $i\in\N$ with $1\le i \le k-1$. 
\item 
\label{itemconfker}
There are no Dirichlet eigenvalues of $\mathcal{L}_{\mathcal{A}_i\Confarg}$ on ${\mathcal{A}_i\Confarg}$ in the interval $[-c,c]$  
for each $i\in\N_0$ with $0\le i \le k$. 
\end{enumerate}
\end{prop}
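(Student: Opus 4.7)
The overall plan is to verify each assertion first for the balanced configuration $\zConf$, where every claim reduces to a strict inequality on finitely many explicitly defined quantities, and then to transfer the conclusions to small $\sigmaunder$ by invoking the continuous dependence of $\Conf$ on $\left.\sigmaunder\right|_{[k/2]}$ proved in \ref{lem::}. This two-step structure is natural because for fixed $k$ the balanced configuration has only finitely many annular pieces, so any pointwise positive lower bound automatically becomes uniform.

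For the balanced case, fix $k$ large enough in absolute terms. By Lemma \ref{lembalanceconf}\ref{itemalphacontrol} and nondegeneracy of each annulus in the inductive construction of \ref{lem::}, the finite set $\{\alpha_i\zConfarg\}_{i=1}^{k}$ lies in $(0,\betah\zConfarg)\subset(0,\pi/2)$, so I can choose $\theta_0=\theta_0(k)>0$ with $\alpha_i\zConfarg\in[40\theta_0,\pi/2-40\theta_0]$ for every $i$. Analogously, the finitely many catenoid waists $\{a_{\K_i}\zConfarg\}_{i=1}^{k-1}$ are strictly positive, so $a_0:=\min_i a_{\K_i}\zConfarg>0$. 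For (iii), Lemma \ref{lemcatekernel} guarantees that $\mathcal{L}_{\mathcal{A}_i\zConfarg}$ has trivial Dirichlet kernel on every $\mathcal{A}_i\zConfarg$ once $k$ is large; since each $\mathcal{A}_i\zConfarg$ is a compact smooth surface with boundary and $\mathcal{L}_{\mathcal{A}_i\zConfarg}$ is self-adjoint elliptic, the Dirichlet spectrum is discrete. Hence the absence of $0$ from the spectrum produces a gap $c_i>0$ about $0$, and I set $c_0:=\min_{0\le i\le k}c_i>0$.

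To extend to $\sigmaunder\neq 0$, I use that for fixed $k$ the map $\sigmaunder\mapsto\Conf$ is continuous in $\left.\sigmaunder\right|_{[k/2]}$ by \ref{lem::}, so the finite collections $\{\alpha_i^\pm\Confarg\}$ and $\{a_{\K_i}\Confarg\}$ depend continuously on $\sigmaunder$. Shrinking $\delta_\sigma=\delta_\sigma(k)>0$, I arrange $|\alpha_i^\pm\Confarg-\alpha_i\zConfarg|\le10\theta_0$ and $|a_{\K_i}\Confarg-a_{\K_i}\zConfarg|\le a_0/2$; combining this with the relation $\alpha_i^+=e^{\sigma_i}\alpha_i^-$ from \ref{defcat}, which forces $|\alpha_i^+\Confarg-\alpha_i^-\Confarg|\le|e^{\sigma_i}-1|\,\alpha_i^-\Confarg$, a further shrinking of $\delta_\sigma$ gives the desired bound on $|\alpha_i^+-\alpha_i^-|$. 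Setting $\delta'_\theta:=\min(\theta_0,a_0/2)$ then yields (i) and (ii).

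The main obstacle, and the only genuinely nontrivial perturbation argument, is (iii): the eigenvalue gap must survive the deformation. Since each $\mathcal{A}_i\Confarg$ converges smoothly as a submanifold of $\R^3$ to $\mathcal{A}_i\zConfarg$ as $\sigmaunder\to0$, I can pull $\mathcal{L}_{\mathcal{A}_i\Confarg}$ back by a smoothly varying diffeomorphism onto the fixed reference domain $\mathcal{A}_i\zConfarg$ to obtain a family of elliptic operators whose coefficients depend continuously on $\sigmaunder$; the standard min-max characterization of Dirichlet eigenvalues then implies that each eigenvalue varies continuously in $\sigmaunder$. After possibly shrinking $\delta_\sigma$ once more, no eigenvalue of $\mathcal{L}_{\mathcal{A}_i\Confarg}$ enters $[-c_0/2,c_0/2]$, and I set $c:=c_0/2$, completing the proof.
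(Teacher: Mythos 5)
Your proof is correct and follows essentially the same route as the paper: establish (i) and (ii) for the balanced configuration by finiteness of the quantities involved, obtain (iii) from Lemma \ref{lemcatekernel}, and then transfer all three to small $\sigmaunder$ via the continuous dependence on $\left.\sigmaunder\right|_{[k/2]}$ from \ref{lem::} after shrinking $\delta_\sigma$. The only difference is that you spell out the eigenvalue-gap stability (discreteness of the Dirichlet spectrum plus min-max continuity under perturbation of the pulled-back operator), which the paper leaves implicit.
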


\begin{proof}
	For the balanced case, \ref{itemconfalpha} and \ref{itemconfA} follows by choosing $\delta'_{\theta}$ small enough. 
\ref{itemconfker} follows by \ref{lemcatekernel}.  
The general case then follows by the continuous dependence of solution to $\underline{\sigma}$ in \ref{lem::} and choosing $\delta_{\sigma}$ small enough.
\end{proof}

\section{The Desingularizing Surfaces}
In this section the desingularizing surface $\preiniDsingsur\Dsingarg$ is defined, which is similar to the surfaces $\Sigma[T,\phi,\tau]$ in \cite{kapouleas:finite}, 
to which we refer for some details and proofs.  

\subsection*{The Scherk Surfaces $\Sigma(\theta)$}
\begin{definition}
\label{D:scherk} 
For $\theta\in (0,\pi/2)$, the Scherk surface $\Sigma(\theta)$ is defined by the equation (recall \ref{D:EBS}) 
\begin{equation} 
\label{eqscherk}
    \sin^2\theta \cosh{\frac{x}{\sin{\theta}}}-\cos^2\theta \cosh{\frac{y}{\cos{\theta}}}=\cos{z}.
\end{equation}
\end{definition}

\begin{notation} 
\label{n4} 
$\Vec{e}_x$, $\Vec{e}_y$, $\Vec{e}_z$ are defined to be the coordinate unit vectors of the Cartesian coordinate system $O.xyz$. 
$\Vec{e}[\theta]$, $\Vec{e}'[\theta]$ are defined by 
\begin{equation*}
    \Vec{e}[\theta]=\cos{\theta}\Vec{e}_x+\sin{\theta}\Vec{e}_y,\quad \Vec{e}'[\theta]=-\sin{\theta}\Vec{e}_x+\cos{\theta}\Vec{e}_y.
\end{equation*}
The identity map is denoted by $\refl_1$, and reflections with respect to $yz$-plane, $z$-axis, $xz$-plane are denoted by $\refl_2$, $\refl_3$, $\refl_4$ respectively. 
Let $\rot_{\phi}$ denote the rotation around the $z$-axis by an angle $\phi$. 
\end{notation}

Note that $\refl_i$ is a symmetry of each $\Sigma(\theta)$ mapping the first quadrant to the $i$th quadrant for $i=1,2,3,4$. 

\begin{assumption}
\label{eq:theta}
To ensure uniform bounds in the geometry of 
the Scherk surfaces $\Sigma(\theta)$ we assume form now on that 
$
\theta\in [10\delta_{\theta},\pi/2-10\delta_{\theta}],
$ 
where $\delta_{\theta}>0$ is a small number to be determined later.
\end{assumption}

\begin{prop}[Properties of the Scherk surfaces]  
\label{propscherk}
$\Sigma(\theta)$ is a singly periodic embedded complete minimal surface which depends smoothly on $\theta$ and moreover satisfies the following for $\theta$ 
as in Assumption \ref{eq:theta}. 
\begin{enumerate}[label={(\roman*)},ref={(\roman*)}]
       \item $\Sigma(\theta)$ is invariant under $\refl_i$ for $i=1,2,3,4$ and $\refl_{\{ z=n\pi \}}$, the reflection with respect to the plane $\{ z=n\pi \}$, 
for $n\in \mathbb{Z}$. 
       \item \label{itemscherkwing} For given $\varepsilon\in(0,10^{-3})$ there is a constant $a=a(\varepsilon,\delta_{\theta})$ and smooth functions 
$f_{\theta}:=f_{\theta,a}:\HH\to \mathbb{R}$, $A_{\theta}:=A_{\theta,a}:\HH\to E^3$, and $F_{\theta}:=F_{\theta,a}:\HH\to E^3$ 
depending smoothly on $\theta$ and $a$, 
such that $W_{\theta}:=W_{\theta,a}:=F_{\theta}(\HH)\subset \Sigma(\theta)$ and 
        \begin{align*}
            &A_{\theta}(s,z)=(a+s)\Vec{e}[\pi/2-\theta]+z\Vec{e}_z+b_{\theta}\Vec{e}'[\pi/2-\theta],\\
            &F_{\theta}(s,z)=A(s,z)+f_{\theta}(s,z)\Vec{e}'[\pi/2-\theta],
        \end{align*}
    where $b_{\theta}=\sin{2\theta}\log(\tan{\theta})$. What's more,
       \item $\Sigma(\theta)\setminus\cup_{i=1}^4\refl_i(W_{\theta})$ is connected and lies within distance $a+1$ from the $z$-axis.
       \item $W_{\theta}\subset\{(r\cos{\phi},r\sin{\phi},z):r>a,\phi\in[9\delta_{\theta},\pi/2-9\delta_{\theta}],z\in \mathbb{R} \}$.
       \item \label{itemscherkdecay} $\norm{f_{\theta}:C^5(\HH,e^{-s})}\leq \varepsilon$ and $\norm{\mathrm{d}f_{\theta}/\mathrm{d}\theta:C^5(\HH,e^{-s})}\leq \varepsilon$.
       \item \label{itemscherkb} $\abs{b_{\theta}}+\abs{\mathrm{d}b_{\theta}/\mathrm{d}\theta}\leq \varepsilon a$.
       \item \label{itemscherkboundary} $\partial\Sigma(\theta)_{x\leq 0}$ is contained in the $yz$-plane, which is a disjoint union of topologically circles. 
     \end{enumerate}
\end{prop}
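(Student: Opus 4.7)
The plan is to verify that equation \eqref{eqscherk} defines a classical Karcher--Scherk singly periodic surface and then extract the wing decomposition by asymptotic analysis, obtaining all quantitative estimates by exploiting the exponential growth of $\cosh$ versus the bounded $\cos z$ term on the right-hand side. Minimality and smoothness of $\Sigma(\theta)$ in both the intrinsic sense and in its dependence on $\theta$ can be established by writing $u_\theta(x,y,z) = \sin^2\theta\cosh(x/\sin\theta) - \cos^2\theta\cosh(y/\cos\theta) - \cos z$ and computing $\mathrm{div}(\nabla u_\theta/|\nabla u_\theta|) = 0$ on $\{u_\theta=0\}$ (a direct calculation because the $x$, $y$, and $z$ variables separate in the defining equation). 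Embeddedness follows from the level-set formulation once one checks that $\nabla u_\theta \neq 0$ on $\Sigma(\theta)$, which reduces to a one-variable calculus argument using Assumption \ref{eq:theta}. The symmetries in (i) are immediate from the invariance of \eqref{eqscherk} under $x \mapsto -x$, $y \mapsto -y$, $z \mapsto -z$, and the $2\pi$-periodicity combined with $z\mapsto 2n\pi - z$.

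For the wing decomposition (ii)--(v), the key is to identify the asymptotic half-planes. In the first quadrant with $x/\sin\theta$ and $y/\cos\theta$ both large, the dominant balance is $\tfrac12\sin^2\theta\, e^{x/\sin\theta} \approx \tfrac12\cos^2\theta\, e^{y/\cos\theta}$, giving the asymptotic linear relation
\begin{equation*}
    \frac{x}{\sin\theta} - \frac{y}{\cos\theta} = 2\log(\cot\theta) + O(e^{-cs}).
\end{equation*}
A brief computation shows this asymptote is the line $A_\theta(\cdot,z)$ with $b_\theta = \sin(2\theta)\log(\tan\theta)$ as claimed, where I have chosen the coordinate $s$ to measure Euclidean distance along $\Vec{e}[\pi/2-\theta]$ shifted by $a$. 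To convert the implicit relation into the graphical form $F_\theta = A_\theta + f_\theta \Vec{e}'[\pi/2-\theta]$, I apply the implicit function theorem to the rewritten equation $u_\theta(A_\theta(s,z) + t\,\Vec{e}'[\pi/2-\theta]) = 0$ at $t=0$: the partial derivative with respect to $t$ is nondegenerate because Assumption \ref{eq:theta} keeps the geometry uniformly controlled, and the right-hand side $\cos z$ provides a perturbation that is bounded independently of $s$ while the left-hand side grows like $e^{s}$, producing $f_\theta = O(e^{-s})$. Differentiating the defining equation in $\theta$ and applying the same IFT setup gives the matching bound on $df_\theta/d\theta$, yielding (v) provided $a$ is chosen large enough (depending only on $\varepsilon$ and $\delta_\theta$).

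Items (iii), (iv), and (vi) are resolved by carefully tracking the constant $a$. For (vi), observing that $b_\theta = \sin(2\theta)\log(\tan\theta)$ is a fixed smooth function bounded in absolute value along with its $\theta$-derivative on the compact range of $\theta$ allowed by \ref{eq:theta}, so after fixing $\delta_\theta$ we may enlarge $a$ further so that $a\cdot\varepsilon$ dominates $\sup_\theta(|b_\theta| + |db_\theta/d\theta|)$. Having so chosen $a$, item (iv) follows from \ref{itemscherkb} together with the expression for $A_\theta$, since the offset by $b_\theta\,\Vec{e}'[\pi/2-\theta]$ rotates the half-plane viewed from the $z$-axis by an angle of order $b_\theta/a \le \varepsilon$, which is much smaller than $\delta_\theta$. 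The complement $\Sigma(\theta)\setminus\bigcup_{i=1}^4 \refl_i(W_\theta)$ is bounded in the direction transverse to the $z$-axis by $a+1$ by construction, and its connectedness follows from the periodic minimal-surface structure together with the symmetries in (i), which force the ``core'' to be a connected tube around the $z$-axis.

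The main obstacle, or rather the only real technical work, is the IFT-based construction of $f_\theta$ with uniform exponential-decay bounds \emph{in all five derivatives} jointly in $\theta$. I expect to handle this by an elliptic bootstrap: the weakest estimate $\|f_\theta\|_{C^0(\HH,e^{-s})} \le \varepsilon$ follows from the algebraic IFT, and then higher derivatives are recovered from the minimal surface equation on graphs over $A_\theta$, schematically of the form $\mathcal{L}_{A_\theta} f_\theta = Q(f_\theta, \nabla f_\theta)$, using interior Schauder estimates weighted by $e^{-s}$. Finally, (vii) follows because $\{x \le 0\}\cap\Sigma(\theta)$ is, by the symmetry $\refl_2$, a fixed-point set that is preserved by translation $z\mapsto z+2\pi$ and by $z \mapsto -z$; the embeddedness and the compactness modulo translation in $z$ force $\partial\Sigma(\theta)_{x\le0}$ to consist of disjoint embedded loops, one per $z$-period.
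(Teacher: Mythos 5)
Your route is genuinely different from the paper's: the paper disposes of (i)--(vi) by citing \cite[Proposition 2.4]{kapouleas:finite} and reads (vii) directly off \eqref{eqscherk}, whereas you reprove the structure of $\Sigma(\theta)$ from the implicit equation. The core computations in your plan do check out: writing $u_\theta=\sin^2\theta\cosh(x/\sin\theta)-\cos^2\theta\cosh(y/\cos\theta)-\cos z$, the separated-variables identity $u_{xx}(u_y^2+u_z^2)+u_{yy}(u_x^2+u_z^2)+u_{zz}(u_x^2+u_y^2)=0$ holds on $\{u_\theta=0\}$, so the level set is minimal; $\nabla u_\theta$ vanishes only at $x=y=0$, $z=n\pi$, which never lie on the zero set, giving a properly embedded complete surface; the dominant balance in the first quadrant yields exactly the offset $b_\theta=\sin2\theta\log\tan\theta$; and since $u_\theta(A_\theta(s,z))=O(1)$ while $\partial u_\theta/\partial t$ is negative of size comparable to $e^{a+s}$ transversally to the half-plane, the quantitative implicit function theorem (plus either direct differentiation of the implicit relation or your weighted bootstrap) gives (ii), (v), (vi) once $a=a(\varepsilon,\delta_\theta)$ is large. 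What this buys is a self-contained argument tailored to Definition \ref{D:scherk}; what it costs is redoing the analysis that the paper simply imports.

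There are, however, concrete gaps as written. For (iii) (and hence the radial bound feeding into (iv)), ``by construction'' is not a justification: the IFT only produces $W_\theta$ as a subset of $\Sigma(\theta)$; it says nothing about points of $\Sigma(\theta)$ far from the $z$-axis that a priori might lie on none of the four wings. You need the converse inclusion: if $u_\theta=0$ and $x^2+y^2$ is large, then, because $\cos z$ is bounded, the equation forces $|x|$ and $|y|$ to be simultaneously large and comparable, so the point lies in the region where $u_\theta$ is strictly monotone transversally to one of the four asymptotic half-planes, and that monotonicity identifies the point with the corresponding graph $\refl_i(W_\theta)$; only then does the complement lie within distance $a+1$ of the axis. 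Similarly, connectedness of the core does not ``follow from the periodic structure together with the symmetries'' --- a $\Grp$-symmetric set can be disconnected --- so either slice by $z$ and track the explicit slice curves (which also shows $\Sigma(\theta)$ itself is connected) or fall back on the cited reference. Two smaller points: in (iv) the angular deviation should be estimated by $(|b_\theta|+\sup|f_\theta|)/a<\delta_\theta$ with $a$ large in terms of both $\varepsilon$ and $\delta_\theta$, since nothing guarantees $\varepsilon\le\delta_\theta$; and (vii) needs no symmetry/compactness argument at all --- setting $x=0$ in \eqref{eqscherk} gives $\cos^2\theta\cosh(y/\cos\theta)=\sin^2\theta-\cos z$, whose solution set is visibly a disjoint union of smooth closed curves encircling $z=(2n+1)\pi$, which is exactly the paper's one-line proof of that item.
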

\begin{proof}
\ref{itemscherkboundary} follows by the equation \eqref{eqscherk}. All the others are the same as \cite[Proposition 2.4]{kapouleas:finite}. 
\end{proof}

\begin{definition}[Core, wings, and coordinates]
\label{def:wing}
The coordinates $(s,z)=(s_a,z)$ on $\refl_i(W_{\theta})$, $i=1,2,3,4$ are defined by $(s,z)=(\refl_i\circ F_{\theta})^{-1}$. 
$\refl_i(W_{\theta})$ will be called the $i$th wing of $\Sigma(\theta)$. 
The function $s$ is extended to a continuous function on $\Sigma(\theta)$, by requiring $s$ to vanish on $\Sigma(\theta)\setminus \cup_{i=1}^4\refl_i(W_{\theta})$. 
The part $\Sigma(\theta)_{s\leq 0}$ will be called the core of $\Sigma(\theta)$. 
Finally, the third and second wing of $\Sigma(\theta)$ will be called the $+$ and $-$ wing of half Scherk surface $\Sigma(\theta)_{x\leq 0}$ 
and the part $\Sigma(\theta)_{s\leq 0,x\leq 0}$ will be called the core of half Scherk surface $\Sigma(\theta)_{x\leq 0}$.
\end{definition}

\begin{prop}[The Gauss map of the Scherk surfaces]  
\label{propgauss}
The Gauss map $\nu$ of $\Sigma(\theta)$ has the following properties:
\begin{enumerate}[label={(\roman*)},ref={(\roman*)}]
\item $\nu$ restricts to a diffeomorphism from $\Sigma(\theta)_{z\in[0,\pi]}$ onto 
\begin{equation*}
    \mathbb{S}^2\cap \{z\geq 0\}\setminus \{(\pm\cos{\theta},\pm \sin{\theta},0)\}.
\end{equation*}
\item Let $E_i$, $i=1,2,3,4$ be the arcs into which the equator $\mathbb{S}^2\cap \{z=0\}$ is decomposed by removing the points $(\pm\cos{\theta},\pm \sin{\theta},0)$, numbered so that $(1,0,0)\in E_1$, $(0,1,0)\in E_2$, $(-1,0,0)\in E_3$, and $(0,-1,0)\in E_4$, then 
\begin{equation*}
    \nu(\Sigma(\theta)_{z=0})=E_1\cup E_3,\quad \nu(\Sigma(\theta)_{z=\pi})=E_2\cup E_4.
\end{equation*}
\item $\Sigma(\theta)$ has no umbilics and $\nu^*g_{\mathbb{S}^2} = \frac{1}{2}\abs{A}^2_{\Sigma(\theta)}g_{\Sigma(\theta)}$. 
\end{enumerate}
\end{prop}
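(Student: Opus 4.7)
I would prove (iii) first, deduce (ii) by a direct computation on $\{z=0,\pi\}$, and then combine the two via a degree argument to obtain (i).

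For (iii), the identity $\nu^*g_{\mathbb{S}^2}=\tfrac{1}{2}|A|^2 g_{\Sigma(\theta)}$ is the standard conformality of the Gauss map for two-sided minimal surfaces in $\R^3$: since $d\nu=-A$ (Weingarten) and minimality forces principal curvatures $\pm\kappa$ with $|A|^2=2\kappa^2$, in a principal orthonormal frame the pullback metric equals $\kappa^2 g_{\Sigma(\theta)} = \tfrac12|A|^2 g_{\Sigma(\theta)}$. For the absence of umbilics (equivalently $|A|^2>0$ everywhere), I would work directly from \eqref{eqscherk}: writing $F$ for the left-hand side minus the right of \eqref{eqscherk} and setting $u:=x/\sin\theta$, $v:=y/\cos\theta$, a short calculation on $\Sigma(\theta)$ using $\cos z = \sin^2\theta\cosh u-\cos^2\theta\cosh v$ yields the clean identity $|\nabla F|=\sin\theta\cos\theta(\cosh u+\cosh v)$, from which one computes the Gauss curvature $K<0$ pointwise; alternatively, the singly periodic Scherk surface has an explicit Weierstrass representation in which the Gauss map corresponds to a meromorphic function of degree one with no critical points on the quotient Riemann surface.

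For (ii), observe that $F_z=\sin z$, so $\nabla F$ is horizontal on $\{z=n\pi\}$ and hence $\nu$ sends $\Sigma(\theta)_{z=0}\cup\Sigma(\theta)_{z=\pi}$ into the equator. To identify the specific arcs I would use the explicit formula
\[
\nu=\tfrac{1}{\sin\theta\cos\theta(\cosh u+\cosh v)}\bigl(\sin\theta\sinh u,\,-\cos\theta\sinh v,\,\sin z\bigr)
\]
and track the signs of $\sinh u,\sinh v$ along the components of $\Sigma(\theta)_{z=0}$, which factor as $\sin\theta\sinh(u/2)=\pm\cos\theta\sinh(v/2)$. As each component tends to its two wing-ends, $\nu$ approaches the asymptotic normals $(\pm\cos\theta,\mp\sin\theta,0)$, and sign analysis identifies the swept pair as $E_1\cup E_3$; the case $z=\pi$ is symmetric and yields $E_2\cup E_4$.

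For (i), by (iii) the Gauss map is a local diffeomorphism on all of $\Sigma(\theta)$. Restricted to the slab $\Sigma(\theta)_{z\in[0,\pi]}$ it lands in the closed upper hemisphere (since $\nu_3$ has the sign of $\sin z\ge0$), its boundary maps to the equator minus the four asymptotic points by (ii), and the four ends of the slab correspond asymptotically precisely to those four points, so the restriction is proper onto $\mathbb{S}^2\cap\{z\ge 0\}\setminus\{(\pm\cos\theta,\pm\sin\theta,0)\}$. A degree computation---either via the Weierstrass representation or by counting preimages of a generic point in the open hemisphere using the explicit formula above---gives degree one, upgrading the proper local diffeomorphism to a global diffeomorphism onto the stated image. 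The main obstacle is the combined no-umbilic plus degree-one step; both are handled most cleanly through the Weierstrass representation, with the explicit formula above providing a direct elementary alternative.
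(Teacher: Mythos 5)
Your proposal is essentially correct and, unlike the paper, self-contained: the paper disposes of this proposition by citing \cite[Proposition 2.6]{kapouleas:finite}, so your route — the explicit normal $\nu=\nabla F/|\nabla F|$ computed from \eqref{eqscherk}, the identity $|\nabla F|=\sin\theta\cos\theta(\cosh u+\cosh v)$ on the surface (which checks out), conformality of the Gauss map for minimal surfaces for (iii), sign/asymptotic analysis on $\{z=0,\pi\}$ for (ii), and properness plus a covering/degree argument for (i) — is a legitimate elementary substitute, with the Weierstrass representation (Gauss map of degree one, unbranched, on the compactified quotient) giving the cleanest handle on the no-umbilic and degree-one steps, exactly as you say. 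The covering argument for (i) does work: the target is a closed hemisphere minus four boundary points, hence simply connected, the slab is connected, boundary goes to boundary and interior to interior since $\nu_3\propto\sin z$, and properness follows because points escaping to infinity in the slab lie on the wings, whose normals converge to the four deleted points.

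Two corrections. First, your factorization of $\Sigma(\theta)_{z=0}$ is wrong as written: from $\sin^2\theta\cosh u-\cos^2\theta\cosh v=1$ one gets $\sin^2\theta\sinh^2(u/2)-\cos^2\theta\sinh^2(v/2)=\cos^2\theta$, i.e.\ $\sin\theta\sinh(u/2)=\pm\cos\theta\cosh(v/2)$ (and on $\{z=\pi\}$, $\cos\theta\sinh(v/2)=\pm\sin\theta\cosh(u/2)$); the equation $\sin\theta\sinh(u/2)=\pm\cos\theta\sinh(v/2)$ that you wrote instead describes the level sets $z=\pi\mp2\theta$. The corrected factorization still gives exactly two components of $\{z=0\}$, distinguished by the sign of $x$, and your sign/asymptotic analysis then yields $E_1\cup E_3$ (and $E_2\cup E_4$ for $z=\pi$) as claimed, so the slip is repairable but should be fixed. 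Second, $K<0$ does not follow ``from'' the formula for $|\nabla F|$ alone; the Gauss curvature of a level set also involves the Hessian of $F$, so either carry out that (slightly longer) computation or rely on the Weierstrass alternative you already mention, where degree one and absence of branch points immediately rule out umbilics.
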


\begin{proof}
See \cite[Proposition 2.6]{kapouleas:finite}.  
\end{proof}

\begin{definition} 
\label{defh}
For $\Sigma=\Sigma(\theta)$ we define $h:=\frac{1}{2}\abs{A}_{\Sigma}^2 g_{\Sigma} = \nu^*g_{\mathbb{S}^2} $ and $\mathcal{L}_h:=\Delta_h+2$.
\end{definition}

The following proposition for the eigenvalue of $\mathcal{L}_h$ is the same as \cite[Proposition 2.8]{kapouleas:finite}.  
We are interested in the eigenfunction invariant under the reflection with respect to the $yz$-plane $\refl_2$. 

\begin{prop}[Eigenvalues on Scherk surfaces modulo symmetries]  
\label{propscherkef}
Let $G$ be the group generated by reflections across the planes $\{z=0\}$ and $\{z=\pi\}$. 
Let $\Sigma:=\Sigma(\theta)$. 
There is an $\epsilon_{\lambda}=\epsilon_{\lambda}(\delta_{\theta})>0$, 
such that the only bounded eigenfunctions of $\mathcal{L}_h$ on $\Sigma/G$ whose eigenvalues lie in $[-\epsilon_{\lambda},\epsilon_{\lambda}]$, 
are the ones in the kernel of $\mathcal{L}_h$ on $\Sigma/G$. 
This kernel is two dimensional and is spanned by $\nu \cdot \Vec{e}_x$, $\nu \cdot \Vec{e}_y$. 
In particular, the only bounded eigenfunctions of $\mathcal{L}_h$ on $\Sigma/G$ whose eigenvalue 
lies in $[-\epsilon_{\lambda},\epsilon_{\lambda}]$ invariant under $\refl_2$ have the form $\lambda\nu \cdot \Vec{e}_y$, $\lambda\in \mathbb{R}$.
\end{prop}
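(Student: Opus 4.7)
The plan is to transport the eigenvalue problem to the round sphere via the Gauss map, where the spectrum of $\Delta_{\mathbb{S}^2}+2$ is explicit, and then use a removable-singularities argument to handle the four points on the equator missing from the image of $\nu$. By Proposition \ref{propgauss}, $\nu$ restricts to a diffeomorphism of the slab $\Sigma(\theta)_{z\in[0,\pi]}$ onto $\mathbb{S}^2\cap\{z\ge 0\}$ minus four equatorial points, and pulls $g_{\mathbb{S}^2}$ back to $h$. Consequently $\mathcal{L}_h=\Delta_h+2$ on $\Sigma(\theta)/G$ is intertwined with $\mathcal{L}_{\mathbb{S}^2}:=\Delta_{\mathbb{S}^2}+2$ on the image. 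Reflection across $\{z=0\}$ (and likewise across $\{z=\pi\}$) in $\mathbb{R}^3$ commutes with $\nu$ up to reflection of the target across the equator of $\mathbb{S}^2$, so $G$-invariance of functions on $\Sigma$ translates to invariance on $\mathbb{S}^2$ under $(x,y,z)\mapsto(x,y,-z)$, and a function on $\Sigma(\theta)/G$ is equivalent to a function on $\mathbb{S}^2$ (minus the four points) symmetric across the equator.

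Next I would remove the four singular points. A bounded eigenfunction $u$ of $\mathcal{L}_h$ on $\Sigma/G$ with eigenvalue $\lambda$ pulls back to a bounded distributional solution of $(\Delta_{\mathbb{S}^2}+2-\lambda)\tilde u=0$ on $\mathbb{S}^2$ minus four points, and since isolated point singularities of bounded solutions to a uniformly elliptic second-order equation on a two-dimensional surface are removable, $\tilde u$ extends to a smooth eigenfunction of $\mathcal{L}_{\mathbb{S}^2}$ on all of $\mathbb{S}^2$. This puts the bounded-spectrum problem on $\Sigma(\theta)/G$ in one-to-one correspondence with the eigenvalues of $\mathcal{L}_{\mathbb{S}^2}$ restricted to the subspace of equator-symmetric functions.

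The spectrum of $-\Delta_{\mathbb{S}^2}$ is $\{n(n+1):n\ge 0\}$, so $\mathcal{L}_{\mathbb{S}^2}$ has eigenvalues $2,0,-4,-10,\dots$; choosing any $\epsilon_\lambda\in(0,2)$ isolates the single eigenvalue $0$, whose eigenspace is spanned by the three coordinate functions $x,y,z$ on $\mathbb{S}^2$. Among these, $z$ is odd under reflection across the equator while $x$ and $y$ are even, so only $x$ and $y$ survive the $G$-symmetry. Pulling back via $\nu$ yields exactly the two-dimensional kernel spanned by $\nu\cdot\vec{e}_x$ and $\nu\cdot\vec{e}_y$, and the gap $\epsilon_\lambda>0$ can in principle be taken absolute (the dependence on $\delta_\theta$ enters only through the uniform ellipticity bounds used in the removable singularity argument, ensured by Assumption \ref{eq:theta}).

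For the final assertion on $\refl_2$-invariance, note that $\refl_2$ sends a point $p\in\Sigma$ to its mirror image through the $yz$-plane, which also reflects the unit normal. Thus $\nu\cdot\vec{e}_x$ is odd under $\refl_2$ while $\nu\cdot\vec{e}_y$ is even, so only multiples of $\nu\cdot\vec{e}_y$ remain, giving the stated one-parameter family. The main technical point where care is needed is making the removable singularity argument uniform in $\theta$ and checking that the $G$-action on $\mathbb{S}^2$ is really generated by a single reflection across the equator (the two generators at $\{z=0\}$ and $\{z=\pi\}$ induce the same involution of $\mathbb{S}^2$, since their composition in $\Sigma$ is the translation by $2\pi$, under which $\nu$ is invariant by $2\pi$-periodicity of $\Sigma(\theta)$); everything else reduces to the standard spherical harmonic expansion.
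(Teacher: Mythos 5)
Your proposal is correct and is essentially the approach behind the paper's proof, which is simply a citation of \cite{kapouleas:finite}: transplant $\mathcal{L}_h$ to $(\mathbb{S}^2,\Delta_{\mathbb{S}^2}+2)$ via the Gauss map of Proposition \ref{propgauss}, use boundedness to remove the four equatorial punctures, identify the only spectrum in $(-2,2)$ with the degree-one harmonics, and keep only the equator-even ones $x,y$, with the $\refl_2$-parity computation then singling out $\nu\cdot\vec{e}_y$. The only details worth pinning down are the global signs in how the reflections intertwine with $\nu$ (fixed, e.g., by noting that $\nu$ is horizontal along $\Sigma\cap\{z=0\}$ by Proposition \ref{propgauss}(ii) and that $\Sigma$ meets the plane $\{x=0\}$ orthogonally), which you assert with the correct parities.
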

\begin{proof}
See the proof of \cite[Proposition 2.8]{kapouleas:finite} .
\end{proof}

\subsection*{The Surfaces $\iniDsingsur\abDsingarg$}

The definition of $Z_y[\phi]$ below is similar to \cite[Definition 3.2]{kapouleas:finite} 
except for the addition of requirement \ref{defZy}\ref{itemZyid} to ensure that a neighborhood of the boundary of the half Scherk surface stays invariant.

\begin{definition} 
\label{defZy}
We fix a smooth family of diffeomorphisms $Z_y[\phi]:E^3\to E^3$ parametrized by $\phi\in [-2\delta_{\theta},2\delta_{\theta}]$ (recall \ref{eq:theta}) 
satisfying the following. 
\begin{enumerate}[label={(\roman*)},ref={(\roman*)}]
    \item \label{itemZyid} $Z_y[\phi]$ is the identity map on $L(\delta_{\theta}):=\{(x,y,z):\abs{x}\leq \frac{1}{2}\sin{2\delta_{\theta}}\}\cup \{(x,y,z):y\leq 0\}$.
    \item On $\{(r\cos{\theta'},r\sin{\theta'},z ):r>1,\theta'\in[9\delta_{\theta},\pi/2-9\delta_{\theta}] \}$ $Z_y[\phi]$ acts by rotation around the $z$-axis by $\phi$.
    \item \label{Zysym} $Z_y[\phi]$ is equivariant under $\refl_2$ (reflection with respect to the $yz$-plane).
\end{enumerate}
\end{definition}

The definition of $\iniDsingsur\abDsingarg$ below is similar to the one for $\Sigma[T]$ in \cite[Definition 3.4]{kapouleas:finite} with 
$T=\{\vec{e}[\eta_i]\}_{i=1}^4$, where $\eta_1=\pi-\alpha^--\beta$, $\eta_2=\pi+\alpha^--\beta$, $\eta_3=2\pi-\alpha^+-\beta$, $\eta_4=2\pi+\alpha^+-\beta$. 
Therefore in the notation of \cite{kapouleas:finite} $\theta(T)=\pi/2-\alpha^+$, $\theta_1(T)=-\theta_2(T)=(\alpha^--\alpha^+)/2$, $\theta_x(T)=0$, and $\theta_y(T)=\alpha^--\alpha^+$. 
The tetrad is defined so that the $yz$-plane is mapped by the appropriate bending map 
to a cone tangent to a sphere at its parallel circle of latitude $\beta$.

\begin{definition} 
\label{def:abdsing}
Given $\beta\in[0,\pi]$ and $\underline{\alpha}:=(\alpha^-,\alpha^+)\in \R^2$ such that $\alpha^+\in [20\delta_{\theta},\frac{\pi}{2}-20\delta_{\theta}]$ and 
$\alpha^+-\alpha^-\in [-2\delta_{\theta},2\delta_{\theta}]$ (recall \ref{eq:theta}) 
we define the surface 
\begin{equation*}
    \iniDsingsur\abDsingarg := \rot_{\pi/2-\beta}\circ \bend(\Scherk).
\end{equation*}
Moreover, $\rot_{\underline{\alpha},\beta,i}$ is defined to be the rotation by which $\rot_{\pi/2-\beta}\circ \bend$ acts on the $i$th wing of $\Scherk$. 
The images of the core and the $i$th wing of $\Scherk$ under $\rot_{\pi/2-\beta}\circ \bend$ will be called the core and the $i$th wing of $\iniDsingsur\abDsingarg$. 
The function $s$ on $\iniDsingsur\abDsingarg$ is defined to be the push forward of $s$ on $\Scherk$ by $\rot_{\pi/2-\beta}\circ \bend$. 
Finally to simplify the notation we take 
$    \iniDsingsur\aDsingarg:=\iniDsingsur[\underline{\alpha},0]$.
\end{definition}

\subsection*{The Desingularizing Surfaces $\preiniDsingsur\Dsingarg$}
\begin{definition}[The bending map $\mathcal{B}_{\tau}$ and the spheres $\mathscr{S}_{\tau,\beta}$]  
\label{D:Stb}
For $\tau\in \mathbb{R}$ and $\beta\in (0,\pi)$ we define $\mathcal{B}_{\tau}: \R^3\to \R^3$ and $\mathscr{S}_{\tau,\beta}\subset\R^3$  
by 
\begin{equation*}
\begin{gathered}
    \mathcal{B}_{\tau}(x,y,z) :=(\tau^{-1}+x)(\cos{\tau z},0,\sin{\tau z})+(-\tau^{-1},y,0),
\\ 
\mathscr{S}_{\tau,\beta} := \left\{ (x,y,z)\in\R^3 \left| (x+\tau^{-1})^2+(y+\frac{\cos\beta}{\tau\sin\beta})^2+z^2=(\frac{1}{\tau\sin\beta})^2 \right. \right\},   
\end{gathered}
\quad\text{ when } \tau\ne0, 
\end{equation*}
and by taking $\mathcal{B}_0$ to be the identity and $\mathscr{S}_{0,\beta}$ to be the $yz$-plane rotated by $\rot_{\pi/2-\beta}$. 
\end{definition}

If we bend the Scherk surfaces using $\mathcal{B}_{\tau}$, 
we would get wings which decay to cones, which are not minimal surfaces. 
To avoid this we bend instead the planes into catenoids as in \cite{kapouleas:finite}. 
This leads to the following definitions.

\begin{definition}[The bent asymptotic hapf-planes] 
\label{defwing}
Given $\theta\in [20\delta_{\theta},\pi/2-20\delta_{\theta}]$, $\phi\in[-\delta_{\theta},\delta_{\theta}]$, $R$ a Euclidean motion fixing the $z$-axis, 
and $\tau\in[0,1)$, define $A[\theta,\phi,R,\tau]:\HH\to E^3$ as follows: if $\tau=0$ 
\begin{equation*}
    A[\theta,\phi,R,0](s,z)=R'\circ R\circ A_{\theta}(s,z),
\end{equation*}
where $R'$ is the rotation by an angle $\phi$ around the line $R\circ A_{\theta}(\partial \HH)$. Otherwise,
\begin{equation*}
    A[\theta,\phi,R,\tau](s,z)=(r_0+\tau^{-1})(\cosh{\tau s}+\cos{\eta}\sinh{\tau s})(\cos{\tau z},0,\sin{\tau z})+(-\tau^{-1},y_0+s(1+r_0\tau)\sin{\eta},0),
\end{equation*}
where $(r_0,y_0)$ is chosen such that
\begin{equation*}
    \{x=r_0,y=y_0\}=R\circ A_{\theta}(\partial \HH),
\end{equation*}
and $\eta$ is determined by the requirement that $\Vec{e}[\eta]=\rot_{\phi}\circ R (\Vec{e}[\theta])$. Notice that under such choice $A[\theta,\phi,R,\tau]$ and $B_{\tau}\circ R\circ A_{\theta}$ agree on $\partial \HH$. The image of $\partial \HH$ is independent of $\phi$ and will be called the \emph{pivot} of $A[\theta,\phi,R,\tau]$.
\end{definition}

\begin{prop} 
\label{propwing}
There exists a small positive number $\delta'_{\tau}=\delta'_{\tau}(\delta_{\theta})$ such that when $\tau\leq \delta'_{\tau}$ the map $A[\theta,\phi,R,\tau]$ satisfies the following:
\begin{enumerate}[label={(\roman*)},ref={(\roman*)}]
    \item \label{itemwingmin} 
It is a conformal minimal immersion which depends smoothly on the parameters. 
The image lies on a catenoid or a plane. 
Moreover the pivot is the circle of radius $\tau^{-1} + r_0$ , centered at $(-\tau^{-1},y_0,0)$ , and parallel to the $xz$-plane. 
    \item $\Vec{e}[\eta]$ is the inward conormal of the image at $A[\theta,\phi,R,\tau](0,0)$.
\end{enumerate}
\end{prop}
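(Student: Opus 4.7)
The plan is to verify each claim of the proposition by direct computation from the formula in Definition \ref{defwing}. I choose $\delta'_\tau=\delta'_\tau(\delta_\theta)$ small enough so that $r_0+\tau^{-1}>0$ and $1+r_0\tau>0$ throughout the parameter ranges; this is possible because $(r_0,y_0)$ stays uniformly bounded thanks to Assumption \ref{eq:theta} and the bounds in Proposition \ref{propscherk}. The case $\tau=0$ is immediate: $R'\circ R\circ A_\theta$ is a conformal minimal parametrization of a half-plane because $R'\circ R$ is a Euclidean isometry and $A_\theta$ is the isometric parametrization in Proposition \ref{propscherk}\ref{itemscherkwing}. In what follows I assume $\tau\in(0,\delta'_\tau]$ and write $A:=A[\theta,\phi,R,\tau]$.

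For (i), using the identity $(r_0+\tau^{-1})\tau=1+r_0\tau$, I compute
\begin{align*}
\partial_s A &= (1+r_0\tau)\bigl[(\sinh\tau s + \cos\eta\cosh\tau s)(\cos\tau z,0,\sin\tau z) + \sin\eta\,\Vec{e}_y\bigr],\\
\partial_z A &= (1+r_0\tau)(\cosh\tau s + \cos\eta\sinh\tau s)(-\sin\tau z,0,\cos\tau z).
\end{align*}
Expanding using $\cosh^2-\sinh^2=1$ and $\sin^2\eta+\cos^2\eta=1$ gives $|\partial_s A|^2=|\partial_z A|^2=(1+r_0\tau)^2(\cosh\tau s+\cos\eta\sinh\tau s)^2$, while $\partial_s A\cdot\partial_z A=0$ because the $y$-component of $\partial_z A$ is zero and the $(x,z)$ inner product cancels. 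Hence $A$ is a conformal immersion---immersive because $\cosh\tau s+\cos\eta\sinh\tau s=\tfrac12[(1+\cos\eta)e^{\tau s}+(1-\cos\eta)e^{-\tau s}]>0$ since $|\cos\eta|<1$. A second differentiation gives $\partial_s^2 A+\partial_z^2 A=0$ (the two contributions differ by an overall sign and cancel), so each coordinate of $A$ is harmonic and conformality then forces $A$ to be minimal. Smooth dependence on $(\theta,\phi,R,\tau)$ is immediate from the elementary formulas.

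To identify the image, for fixed $s$ the $y$-coordinate is the constant $y_0+s(1+r_0\tau)\sin\eta$ while $(x+\tau^{-1})^2+z^2=(r_0+\tau^{-1})^2(\cosh\tau s+\cos\eta\sinh\tau s)^2$, so the image is a surface of revolution around the axis $\{x=-\tau^{-1},z=0\}$. If $\sin\eta=0$ the image lies in the plane $\{y=y_0\}$; otherwise, setting $u:=\tau s$ and inverting $y=y_0+(r_0+\tau^{-1})\sin\eta\cdot u$, the identity
\[
\cosh u+\cos\eta\sinh u = \sin\eta\cosh(u+c),\qquad c:=\operatorname{artanh}(\cos\eta)
\]
(valid because $|\cos\eta|<1$) gives $\sqrt{(x+\tau^{-1})^2+z^2}=a_\star\cosh((y-b_\star)/a_\star)$ with $a_\star:=(r_0+\tau^{-1})\sin\eta$ and $b_\star:=y_0-a_\star c$, displaying the image as a catenoid with axis parallel to the $y$-axis through $(-\tau^{-1},0,0)$. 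Evaluating at $s=0$ gives $A(0,z)=(-\tau^{-1},y_0,0)+(r_0+\tau^{-1})(\cos\tau z,0,\sin\tau z)$, which is the claimed pivot circle of radius $r_0+\tau^{-1}$ centered at $(-\tau^{-1},y_0,0)$ in the plane $y=y_0$ parallel to the $xz$-plane.

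For (ii), evaluating the formulas above at $(s,z)=(0,0)$ yields $\partial_z A(0,0)=(1+r_0\tau)\Vec{e}_z$ (tangent to the pivot) and $\partial_s A(0,0)=(1+r_0\tau)\Vec{e}[\eta]$. Since $\Vec{e}[\eta]$ is a unit vector orthogonal to $\Vec{e}_z$ and $1+r_0\tau>0$, it is the unit conormal at $A(0,0)=(r_0,y_0,0)$; it is inward because $s>0$ parametrizes the interior of the wing $\HH=\R_+\times\R$. The only subtle step is the catenoid identity above, which lets the image be recognized as a single catenoid rather than merely a minimal surface of revolution; everything else reduces to direct algebraic verification.
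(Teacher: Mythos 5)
Your verification is correct, but it proceeds quite differently from the paper: the paper's entire proof is a citation of \cite[Lemma 3.9(iv) and Definition 3.10]{kapouleas:finite}, where the bent asymptotic planes/catenoids were already set up and their properties established, whereas you re-derive everything from the explicit formula in \ref{defwing} --- computing $\partial_s A$ and $\partial_z A$, checking conformality via the identity $(\sinh u+\cos\eta\cosh u)^2+\sin^2\eta=(\cosh u+\cos\eta\sinh u)^2$, harmonicity of the coordinates, the rotational symmetry of the image about the axis $\{x=-\tau^{-1},\,z=0\}$, and the catenary profile. What your route buys is self-containedness: the reader need not unpack the conventions of \cite{kapouleas:finite}, and the identification of the conformal factor and of the pivot circle is explicit, which is also what makes (ii) a one-line evaluation at $(s,z)=(0,0)$. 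What the paper's route buys is brevity and consistency, since the same objects (and the smooth dependence on parameters, plus the later comparison estimates in \ref{lemOmega}) are all inherited from the same source. Two small points in your write-up deserve a touch-up: in the catenary identity you should use $|\sin\eta|$ (with $c=\operatorname{artanh}(\cos\eta)$ and the sign absorbed appropriately), since in the actual application $\sin\eta$ can be negative and as written $a_\star\cosh((y-b_\star)/a_\star)$ would have the wrong sign; and positivity of $\cosh\tau s+\cos\eta\sinh\tau s$ on the domain $s\ge 0$ holds for all $\cos\eta\in[-1,1]$ (it is at least $e^{-\tau s}$), so the restriction $|\cos\eta|<1$ is only needed for the catenoid identity, i.e.\ exactly in the non-planar case. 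Neither affects the validity of the argument.
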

\begin{proof}
These follow by \cite[Lemma 3.9(iv) and Definition 3.10]{kapouleas:finite}.
\end{proof}
Therefore, when $\tau\leq \delta'_{\tau}$, the Gauss map $\nu[\theta,\phi,R,\tau]$ could be chosen such that it depends smoothly on the parameters and satisfies the orientation choosing $\nu[\theta,0,R,0]\equiv R(\Vec{e}'[\pi/2-\theta])$. 

\begin{definition}[The bent wings] 
\label{D:wings} 
Fix $\delta_s$ a small positive constant which will be determined later, for $\theta$, $\phi$ as before, $R$ a Euclidean motion fixing $z$-axis, and $\tau\in[0,\delta'_{\tau}]$, define $F[\theta,\phi,R,\tau]:\HH\to E^3$ by
\begin{multline*}
    F[\theta,\phi,R,\tau](s,z)=\psi[1,0](s)\mathcal{B}_{\tau}\circ R \circ F_{\theta}(s,z)\\
    +(1-\psi[1,0](s))(A_{\theta}[\theta,\phi,R,\tau](s,z)+\psi_s(s)f_{\theta}(s,z)\nu[\theta,\phi,R,\tau](s,z)),
\end{multline*}
where $\psi_s(s):=\psi[4\delta_s\tau^{-1},3\delta_s\tau^{-1}](s)$. 
\end{definition}

We consider now the boundary of the image of the half Scherk surface under the bending map 
$\mathcal{B}_{\tau}\circ \rot_{\pi/2-\beta}$, where we assume $\tau>0$ and $\beta\in(0,\pi)$.  
The image of the $yz$-plane under this map, where this boundary lies, is the cone $\left\{ (x+\tau^{-1})^2+z^2=(\cot\beta y-\tau^{-1})^2 \right\} $,  
and not, as we would like, the sphere 
$\mathscr{S}_{\tau,\beta}$ (recall \ref{D:Stb}). 
Notice that the sphere $\mathscr{S}_{\tau,\beta}$ is tangent to the cone above along the circle which is the image of the $z$-axis under $\mathcal{B}_{\tau}\circ \rot_{\pi/2-\beta}$. 
On the other hand, the preimage of the sphere $\mathscr{S}_{\tau,\beta}$ under the same map is the cylinder 
\begin{equation} 
\label{E:cyl} 
\mathscr{C}_{\tau,\beta} := \left\{ (x,y,z) \in \R^3 \left| (x+\frac{1}{\tau\sin{\beta}})^2+y^2= (\frac{1}{\tau\sin{\beta}})^2 \right. \right\}. 
\end{equation} 
We would like therefore to modify the Scherk surface in the vicinity of the $yz$-plane so that the boundary of the half Scherk surface lies on $\mathscr{C}_{\tau,\beta}$; 
this leads to the definition of $\mathcal{D}[\beta,\tau]$ in \ref{defdeform}.

\begin{lemma}[Neighborhoods of the boundary of the half Scherk surfaces] 
\label{L:predeform} 
There exist constants $c_d$, $\epsilon_d=\epsilon_d(\delta_{\theta})$ and $\delta''_{\tau}=\delta''_{\tau}(\delta_\theta)$ 
such that when $\tau\in(0,\delta''_{\tau}]$, $\beta\in(0,\pi)$ the following hold. 
\begin{enumerate}[label={(\roman*)},ref={(\roman*)}]
\item $2c_d\epsilon_d\leq \frac{1}{4}\sin{2\delta_{\theta}}$.
\item 
The intersection of $\iniDsingsur\aDsingarg$ with the slab $R_{\epsilon_d} :=\{(x,y,z)|x\in[-c_d\epsilon_d,c_d\epsilon_d], y\in [-c_d,c_d]\}$ 
(or with $2R_{\epsilon_d}$) 
is a periodic union of annuli.
Moreover both intersections are contained in the core of $\iniDsingsur\aDsingarg$.
\item $\iniDsingsur\aDsingarg\cap \mathscr{C}_{\tau,\beta}\subset R_{\epsilon_d}$  (recall \eqref{E:cyl}). 
\end{enumerate}
\end{lemma}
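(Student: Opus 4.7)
The proof addresses the three claims in turn. For (i), $c_d$ and $\epsilon_d$ are constants to choose: I would fix $c_d$ as an absolute constant bounded above by $a(\varepsilon,\delta_\theta)$ from \ref{itemscherkwing} (so the slab stays inside the core) and bounded below by the uniformly bounded $x$-extent of the circles $\Sigma(\theta)\cap\{y=0\}$ for $\theta\in[10\delta_\theta,\pi/2-10\delta_\theta]$; then take $\epsilon_d=\epsilon_d(\delta_\theta)\le\tfrac{\sin 2\delta_\theta}{8c_d}$, which gives (i). For (ii), pull back the slab by $\rot_{\pi/2}\circ\bend$: since $\rot_{\pi/2}^{-1}(R_{\epsilon_d})=\{|y|\le c_d\epsilon_d,\,|x|\le c_d\}$ is contained in the identity region $L(\delta_\theta)$ of $\bend$ (split into $\{y\le 0\}$ and $\{y>0,|x|\le\tfrac12\sin 2\delta_\theta\}$, using $c_d\epsilon_d\le\tfrac18\sin 2\delta_\theta$ together with the $\refl_2$-equivariance of $\bend$), one has
\[
\iniDsingsur\aDsingarg\cap R_{\epsilon_d}=\rot_{\pi/2}\bigl(\Sigma(\alpha^+)\cap\{|y|\le c_d\epsilon_d,\,|x|\le c_d\}\bigr).
\]
The Scherk equation \eqref{eqscherk} is invariant under the simultaneous swap $(x,\theta)\leftrightarrow(y,\tfrac{\pi}{2}-\theta)$, so \ref{itemscherkboundary} applied to $\Sigma(\tfrac{\pi}{2}-\alpha^+)$ gives that $\Sigma(\alpha^+)\cap\{y=0\}$ is a $z$-periodic disjoint union of smooth topological circles. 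For $\epsilon_d$ small the tubular neighborhood theorem converts the thickening in $y$ of each circle to a smooth annulus, and the $|x|\le c_d$ cut-off is innocuous by the choice of $c_d$. Containment in the core follows from $|x|\ge a-O(b_\theta)>c_d$ for the wings via \ref{itemscherkwing}--\ref{itemscherkb}. The $2R_{\epsilon_d}$ case is identical up to doubled constants.

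For (iii), the near branch of $\mathscr{C}_{\tau,\beta}$ obeys the expansion $x=-\tfrac12 y^2\tau\sin\beta+O((\tau y\sin\beta)^3)$, so for $\tau\le\delta''_\tau(\delta_\theta)$ small enough that $\tfrac12 c_d^2\tau\sin\beta\le c_d\epsilon_d$, the cylinder intersected with $\{|y|\le c_d\}$ lies in $\{|x|\le c_d\epsilon_d\}\subset R_{\epsilon_d}$. Since (ii) identifies $\iniDsingsur\aDsingarg\cap\{|x|\le c_d\epsilon_d\}$ as the annular collars of the circles $\rot_{\pi/2}\bigl(\Sigma(\alpha^+)\cap\{y=0\}\bigr)\subset\{x=0,|y|\le c_d/2\}$, any intersection of $\iniDsingsur\aDsingarg$ with the near portion of the cylinder automatically lies in $R_{\epsilon_d}$.

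The main obstacle is ensuring that $\iniDsingsur\aDsingarg\cap\mathscr{C}_{\tau,\beta}$ stays in the slab $\{|x|\le c_d\epsilon_d\}$, i.e.\ ruling out intersections of the bent wings with the large cylinder away from the origin. Outside $R_{\epsilon_d}$ the rotated-and-bent wings have $|x|>c_d\epsilon_d$, and $|x|$ grows monotonically in the wing parameter $s$ (the asymptotic half-plane satisfies $x\approx -(a+s)\cos(\alpha^+{-}\phi)$ with $\cos(\alpha^+{-}\phi)$ uniformly positive for $\alpha^+\in[20\delta_\theta,\tfrac{\pi}{2}-20\delta_\theta]$ and $|\phi|\le 2\delta_\theta$), so no wing can re-enter the slab, and the only cylinder intersections in the geometrically relevant range are confined to $R_{\epsilon_d}$. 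The requisite constants are tracked using the decay estimate \ref{itemscherkdecay}, the $b_\theta$ bound \ref{itemscherkb}, and the $\bend$-estimates from \ref{defZy}.
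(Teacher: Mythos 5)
Your choice of $c_d$ and $\epsilon_d$ follows the same route as the paper: the paper's entire proof is to observe that the boundary circles (the $x=0$ slice of the Scherk surface, via \ref{propscherk}\ref{itemscherkboundary} and \ref{defZy}\ref{itemZyid}) satisfy $\abs{y}<2$, to set $c_d=2$, and to assert that the existence of $\epsilon_d$ and $\delta''_{\tau}$ then follows; your use of the $(x,\theta)\leftrightarrow(y,\pi/2-\theta)$ symmetry to handle the rotation $\rot_{\pi/2}$ is consistent with this. However, two of your supporting steps do not hold as written. First, the containment $\rot_{\pi/2}^{-1}(R_{\epsilon_d})\subset L(\delta_{\theta})$ is false: $\rot_{\pi/2}^{-1}(R_{\epsilon_d})=\{\abs{y}\le c_d\epsilon_d,\ \abs{x}\le c_d\}$ contains points with $y>0$ and $\abs{x}$ up to $c_d$, far outside $\{\abs{x}\le\tfrac12\sin 2\delta_{\theta}\}$, and $\bend=Z_y[\alpha^+-\alpha^-]$ is only prescribed to be the identity on $L(\delta_{\theta})$ and on the far sector. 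Hence your displayed identity reducing $\iniDsingsur\aDsingarg\cap R_{\epsilon_d}$ to a slab of the unbent $\Scherk$ is unjustified; nothing in \ref{defZy} prevents $\bend$ from moving surface points with moderate $y$ into the thin slab. The repair is to work with $\bend(\Scherk)$ directly: since $\bend$ preserves $\{y\le0\}$, $\{y=0\}$ and $\{y>0\}$, one has $\bend(\Scherk)\cap\{y=0\}=\Scherk_{y=0}$ with transversality preserved, and then a compactness argument (one $z$-period, compact range of $(\alpha^-,\alpha^+)$) yields the annuli structure and the core containment for $\epsilon_d=\epsilon_d(\delta_{\theta})$ small; this is what the paper's ``the existence of $\epsilon_d$ follows'' is implicitly asserting.

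Second, in (iii) your argument proves the wrong implication. Showing that the wings cannot re-enter the slab does not show that the surface meets $\mathscr{C}_{\tau,\beta}$ only inside the slab, which is what (iii) asserts. In fact the two wings of $\iniDsingsur\aDsingarg$ pointing into $\{x<0\}$ start inside the solid cylinder (of radius $1/(\tau\sin\beta)$, tangent to the $yz$-plane along the $z$-axis) and extend to infinity, so they necessarily cross $\mathscr{C}_{\tau,\beta}$ at distance of order $\tau^{-1}$ from the $z$-axis; your phrase ``in the geometrically relevant range'' concedes exactly the point that needs proof. To make (iii) correct one must restrict attention to the bounded portion of $\iniDsingsur\aDsingarg$ that is actually used (the core, or $s$ up to the truncation scale, where $\delta''_{\tau}$ small suffices), and for that portion one needs the separate estimate that points with $\abs{x}\le c_d\epsilon_d$ have $\abs{y}<c_d$ (by continuity from the slice bound $\abs{y}<2$, after shrinking $\epsilon_d$) together with the lower bound on $\abs{x}$, $\abs{y}$ on the wings; note that invoking (ii) here is circular, since (ii) only describes the intersection with $R_{\epsilon_d}$, which already imposes $\abs{y}\le c_d$.
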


\begin{proof}
From the equation of the Scherk surface and the definition of $\iniDsingsur\aDsingarg$ and \ref{defZy}\ref{itemZyid}, 
$\iniDsingsur\aDsingarg_{x=0}$ is the union of topological circles satisfying the equation $\sin^2{\theta}-\cos{z}=\cos^2{\theta}\cosh{\frac{y}{\cos{\theta}}}$ on the $yz$-plane. 
Therefore, $\abs{y}< 2$ on $\iniDsingsur\aDsingarg_{x=0}$. 
Thus $c_d$ can be chosen to be $2$ and then the existence of $\epsilon_d$ and $\delta''_{\tau}$ follows. 
\end{proof}

\begin{lemma}[Adjusting the boundary]  
\label{defdeform}
There is a smooth family of diffeomorphisms $\mathcal{D}[\beta,\tau]:E^3\to E^3$ parametrized by $\beta\in [0,\pi]$, $\tau\in [0,\delta''_{\tau}]$ such that:
\begin{enumerate}[label={(\roman*)},ref={(\roman*)}]
\item $\mathcal{D}[\beta,\tau]$ is the identity map on $E^3_{x\leq 0}\setminus 2R_{\epsilon_d}$.
\item When $\beta>0$, $\tau>0$, 
\begin{equation*}
    \mathcal{D}[\beta,\tau](x,y,z)=\left(\frac{2\epsilon_d c_d+\sqrt{(\frac{1}{\tau\sin{\beta}})^2-y^2}-\frac{1}{\tau\sin{\beta}}}{2\epsilon_d c_d}x+\sqrt{(\frac{1}{\tau\sin{\beta}})^2-y^2}-\frac{1}{\tau\sin{\beta}},y,z\right)
\end{equation*}
on $R_{\epsilon_d}\cap E^3_{x\leq 0}$.
\item $\mathcal{D}[\beta,\tau]$ is the identity map when $\beta=0$, $\beta=\pi$ or $\tau=0$.
\item $\norm{\mathcal{D}[\beta,\tau]-\mathrm{id}_{E^3}:C^4(E^3)}\leq C\tau\sin{\beta}$ for some constant $C:=C(\delta_{\theta})$.\label{itemdeformtau}
\item\label{itemdeformimage} 
The image of $\iniDsingsur\aDsingarg_{x=0}$ under $\mathcal{D}[\beta,\tau]$ is contained in the cylinder $\mathscr{C}_{\tau,\beta}$.
\end{enumerate}
\end{lemma}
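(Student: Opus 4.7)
The plan is to construct $\mathcal{D}[\beta,\tau]$ explicitly by prescribing a displacement only in the $x$-direction, controlled through the single scalar $\lambda := \tau\sin\beta$. Set
\[
h(\lambda, y) := \sqrt{\lambda^{-2} - y^2} - \lambda^{-1} = \lambda^{-1}\bigl(\sqrt{1 - \lambda^2 y^2} - 1\bigr),
\]
which for $|\lambda y| < 1$ has the convergent series expansion $h(\lambda, y) = -\tfrac12 \lambda y^2 - \tfrac18 \lambda^3 y^4 - \cdots$ and therefore extends to a real-analytic function of $(\lambda, y)$ with $h(0, y) = 0$ and $|\partial_y^k h(\lambda, y)| \le C_k(c_d)\,\lambda$ on $|y| \le 2c_d$. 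Fix once and for all smooth cut-offs $\psi_x, \psi_y : \R \to [0,1]$ with $\psi_x(x) = 1 + x/(2c_d\epsilon_d)$ on $[-c_d\epsilon_d, 0]$, $\psi_x(0) = 1$, $\supp\psi_x \subset [-2c_d\epsilon_d, 2c_d\epsilon_d]$, $\psi_y \equiv 1$ on $[-c_d, c_d]$, and $\supp\psi_y \subset [-2c_d, 2c_d]$, and define
\[
\mathcal{D}[\beta,\tau](x,y,z) := \bigl(\,x + \psi_x(x)\,\psi_y(y)\,h(\tau\sin\beta, y),\; y,\; z\,\bigr).
\]

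I would then verify the properties in order. Items (i) and (iii) are immediate since at least one cut-off vanishes outside $2R_{\epsilon_d}$ and $h \equiv 0$ when $\lambda = 0$. On $R_{\epsilon_d} \cap E^3_{x\le 0}$ both cut-offs reduce to $\psi_x(x) = 1 + x/(2c_d\epsilon_d)$ and $\psi_y(y) = 1$, and a direct rearrangement of the displacement $(1 + x/(2c_d\epsilon_d))\,h$ recovers the explicit form of (ii). For (v) I would use the observation from the proof of Lemma \ref{L:predeform} (namely $|y| < 2 = c_d$ on $\iniDsingsur\aDsingarg_{x=0}$) that this set lies in $R_{\epsilon_d}$, so that on it the formula applies with $x = 0$, yielding the image point $(h(\lambda, y), y, z)$; a direct computation then gives $(h + \lambda^{-1})^2 + y^2 = \lambda^{-2}$, which is precisely the defining equation of $\mathscr{C}_{\tau,\beta}$.

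The $C^4$ bound (iv) follows by combining the $(\beta, \tau)$-independent smoothness and compact support of $\psi_x, \psi_y$ with the pointwise bounds on the $y$-derivatives of $h$ above: the total $x$-displacement and all its derivatives up to order four are $O(\lambda) = O(\tau\sin\beta)$ uniformly on $E^3$. The diffeomorphism property is ensured by inspecting the Jacobian, which is lower-triangular with diagonal entries $1 + O(\tau\sin\beta), 1, 1$; shrinking $\delta''_\tau$ in terms of $\delta_\theta$ makes it everywhere invertible, and since $\mathcal{D}$ agrees with the identity outside the compact set $2R_{\epsilon_d}$, this local diffeomorphism is globally bijective.

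The only delicate point is the joint smoothness in $(\beta, \tau)$ across the degenerate locus $\tau\sin\beta = 0$, but this is handled uniformly by the analytic expansion of $h$ in powers of $\lambda$: the expansion is absolutely and uniformly convergent on the support of $\psi_y$ once $\delta''_\tau$ is small enough, so $h$, and hence $\mathcal{D}$, is smooth jointly in all parameters without any gluing argument across the degeneracy.
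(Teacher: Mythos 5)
Your construction is correct and follows essentially the same route as the paper's proof: interpolating the explicit formula of (ii) with the identity by cut-off functions, deducing (v) from Lemma \ref{L:predeform}, and getting (iv) and the joint smoothness at $\tau\sin\beta=0$ from the expansion of $\sqrt{(\tau\sin\beta)^{-2}-y^2}-(\tau\sin\beta)^{-1}$ in powers of $\tau\sin\beta$. You simply make the cut-off interpolation and the diffeomorphism check explicit, which the paper leaves to the reader.
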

\begin{proof}
	The results are clear when $\beta=0$ or $\tau=0$. In the other cases, $\mathcal{D}[\beta,\tau]$ can be constructed by connecting the map in (ii) with identity map in (i) by a cut-off function. (v) then follows by the lemma above. (iv) follows by an expansion of the formula in (ii).
\end{proof}	

We define now the maps which we will use to define the desingularizing surfaces which unlike in \cite{kapouleas:finite} use only half a Scherk surface. 
We also set the $\phi_i$ angles symmetrically so that when $\tau=0$, 
the image of a Scherk surface under the map $\wrap\Dsingarg$ is symmetric with respect to the image of the $yz$-plane. 

\begin{definition}[The maps $\wrap\Dsingarg$] 
\label{def:Z} 
Given $\underline{\alpha}:=(\alpha^-,\alpha^+)\in \R^2$, $\beta$, $\underline{\phi}=(\phi^-,\phi^+) \in\R^2$, and $\tau$ satisfying  
(with $\abs{\underline{\phi}}:=\max\{\abs{\phi^-} , \abs{\phi^+} \}$) 
\begin{align*}
    \alpha^+\in[30\delta_{\theta},\frac{\pi}{2}-30\delta_{\theta}],\quad \alpha^+-\alpha^-\in[-\delta_{\theta},\delta_{\theta}],\quad 
\beta\in[\delta_{\theta},\pi-\delta_{\theta}],  \quad \abs{\underline{\phi}}\leq \delta_{\theta},\quad \tau\in [0,\delta''_{\tau}],
\end{align*}
we define 
$\wrap\Dsingarg = \wrap_a\Dsingarg : \Scherk\to E^3$ by 
(recall \ref{def:wing} and for the rotation $\rot_{\underline{\alpha},\beta,i}$ \ref{def:abdsing})
\begin{equation*}
    \wrap_a\Dsingarg := \mathcal{B}_{\tau}\circ \rot_{\pi/2-\beta}\circ\mathcal{D}[\beta,\tau]\circ \bend 
\qquad 
\text{on the core of} \quad \Scherk, 
\end{equation*}
and by requesting that for $i=1,2,3,4$ we have on $\HH$ 
with $(\phi_1,\phi_2,\phi_3,\phi_4):=(\phi^-,-\phi^-,\phi^+,-\phi^+)$ that 
\begin{align*}
    \wrap_a\Dsingarg\circ \refl_i\circ F_{\alpha^+}=F_i\Dsingarg&:=F[\alpha^+,\phi_i,\rot_{\underline{\alpha},\beta,i}\circ\refl_i,\tau],\\
    A_i\Dsingarg&:=A[\alpha^+,\phi_i,\rot_{\underline{\alpha},\beta,i}\circ\refl_i,\tau]. 
\end{align*}

When $\tau=0$ and $\beta=\pi/2$, $\mathcal{Z}\apDsingarg=\mathcal{Z}_a\apDsingarg$ and $\iniDsingsur\apDsingarg=\iniDsingsur_a\apDsingarg$ are defined by
\begin{equation*}
 \mathcal{Z}_a\apDsingarg:=\wrap_a[\underline{\alpha},\pi/2,\underline{\phi},0],\quad \iniDsingsur_a\apDsingarg:=\mathcal{Z}_a\apDsingarg(\Scherk).
\end{equation*}
In particular, when $\underline{\phi}=0$, $\iniDsingsur[\underline{\alpha},\underline{\phi}]=\iniDsingsur\aDsingarg$.

Finally the \emph{$i$th pivot} of $\wrap\Dsingarg$ is defined to be $A_{i}\Dsingarg(\partial \HH)$.
\end{definition}

\begin{prop}[Properties of $\wrap\Dsingarg$] 
\label{propZ}
Each $\wrap\Dsingarg$ as in \ref{def:Z} satisfies the following.
\begin{enumerate}[label={(\roman*)},ref={(\roman*)}]
    \item It is a smooth immersion depending smoothly on its parameters.
    \item \label{itemZsym} For each $n\in \mathbb{Z}$, it is equivariant under the reflection of the domain with respect to the plane $\{z=n\pi \}$, 
and of the range with respect to the plane which is parallel to the $y$-axis, contains $(-\tau^{-1},0,0)$, 
and forms an angle $n\tau^{-1}\pi$ with the positive $x$-axis when $\tau\neq 0$ or is parallel to the $x$-axis when $\tau=0$.
    \item \label{itemZbd} 
The boundary components of $\wrap\Dsingarg(\Scherk_{x\leq 0})$ are contained in $\mathscr{S}_{\tau,\beta}$ (recall \ref{D:Stb}).  
    \item If $\tau^{-1}\in \mathbb{N}$, then $\wrap\Dsingarg(\Scherk_{x\leq 0})$ is an embedding surface.
    \item Under the assumption above, $\wrap\Dsingarg(\Scherk_{x\leq 0})$ contains $2\tau^{-1}$ fundamental regions in (ii), 
and the circles $\wrap\Dsingarg(\Scherk_{x\leq 0,s=\frac{5\delta_s}{\tau}})$ have catenoidal neighborhoods.
\end{enumerate}
\end{prop}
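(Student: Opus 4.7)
My plan is to verify (i)--(v) by tracking how each factor in Definition \ref{def:Z} acts, using the properties already established for each constituent. On the core, $\wrap\Dsingarg = \mathcal{B}_\tau\circ\rot_{\pi/2-\beta}\circ\mathcal{D}[\beta,\tau]\circ\bend$, while on each wing the construction is matched to the bent wing map $F_i\Dsingarg$ from Definition \ref{D:wings}, with cutoffs handling the transitions.

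For (i), each of the core factors is a smooth parameter-dependent diffeomorphism by Definitions \ref{D:Stb}, \ref{defdeform}, \ref{defZy}, and each $F_i\Dsingarg$ is a conformal minimal immersion by Proposition \ref{propwing}\ref{itemwingmin}; smoothness and the immersion property are immediate away from the interpolating cutoffs in Definition \ref{D:wings}, while on the transition region they follow from the exponential decay bounds of Proposition \ref{propscherk}\ref{itemscherkdecay}, \ref{itemscherkb}, which make the two interpolated models and their derivatives close enough for small $\tau,\delta_\theta$. For (ii), I would track the reflection $z\mapsto 2n\pi -z$: it is a symmetry of $\Scherk$ by Proposition \ref{propscherk}; the factors $\bend$, $\mathcal{D}[\beta,\tau]$, and $\rot_{\pi/2-\beta}$ each preserve the $z$-coordinate and therefore commute with this reflection; and a short computation from $\mathcal{B}_\tau(x,y,z)=(\tau^{-1}+x)(\cos\tau z, 0, \sin\tau z)+(-\tau^{-1},y,0)$ intertwines the domain reflection with range reflection about the vertical plane through the axis $\{(-\tau^{-1},y,0)\}$ at the stated angle. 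The signed assignment $(\phi_1,\phi_2,\phi_3,\phi_4)=(\phi^-,-\phi^-,\phi^+,-\phi^+)$ is precisely engineered to carry the symmetry through the wings as well.

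For (iii), the chain is: $\partial\Scherk_{x\leq 0}\subset\{x=0\}$ by Proposition \ref{propscherk}\ref{itemscherkboundary}; $\bend$ acts as the identity on the $yz$-plane by Definition \ref{defZy}\ref{itemZyid}; $\mathcal{D}[\beta,\tau]$ sends $\iniDsingsur\aDsingarg_{x=0}$ into the cylinder $\mathscr{C}_{\tau,\beta}$ by Lemma \ref{defdeform}\ref{itemdeformimage}; and it remains to verify by a direct calculation that $\mathcal{B}_\tau\circ\rot_{\pi/2-\beta}$ sends $\mathscr{C}_{\tau,\beta}$ into $\mathscr{S}_{\tau,\beta}$. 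Parameterizing a cylinder point by $x=-(1-\cos\alpha)/(\tau\sin\beta)$, $y=\sin\alpha/(\tau\sin\beta)$ and computing, the radial distance to the $\mathcal{B}_\tau$-axis simplifies to $\sin(\beta-\alpha)/(\tau\sin\beta)$, after which the sphere equation reduces to the identity $\sin^2(\beta-\alpha)+\cos^2(\beta-\alpha)=1$.

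For (iv) and (v), assuming $\tau^{-1}\in\mathbb{N}$, the restriction of $\mathcal{B}_\tau$ to any $z$-slab of width $2\pi$ is an embedding, so one $2\pi$-period of $\Scherk_{x\leq 0}$ embeds under $\wrap\Dsingarg$; the $2\pi$-translation in $z$ intertwines with rotation by $2\pi\tau$ in the image, and after $\tau^{-1}$ applications produces the identity rotation, so the whole image is a well-defined embedded surface. Each $2\pi$-period then decomposes into two fundamental regions via the reflections at $z=0,\pi$ from Proposition \ref{propscherk}, giving $2\tau^{-1}$ total. Finally at $s=5\delta_s/\tau$ the cutoff $\psi_s$ in Definition \ref{D:wings} is identically $1$, so the wing there coincides with $A_i\Dsingarg$, which lies on a catenoid by Proposition \ref{propwing}\ref{itemwingmin}, yielding the asserted catenoidal neighborhood. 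The main delicacy is the immersion check in (i) across the two interpolating cutoffs; the cylinder-to-sphere computation in (iii) is mechanical but must be executed with care.
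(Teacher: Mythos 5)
Your proposal is correct and takes essentially the same route as the paper, whose proof simply records that (iii) follows from $\mathcal{B}_{\tau}\circ \rot_{\pi/2-\beta}(\mathscr{C}_{\tau,\beta})=\mathscr{S}_{\tau,\beta}$ together with Lemma \ref{defdeform}\ref{itemdeformimage} and that the remaining items follow from the definitions (smooth dependence as in the cited constructions); your cylinder-to-sphere computation just makes that identity explicit. Two small corrections: at $s=5\delta_s/\tau$ the cutoff $\psi_s=\psi[4\delta_s\tau^{-1},3\delta_s\tau^{-1}]$ equals $0$, not $1$, which is precisely why the map there coincides with $A_i\Dsingarg$ and yields the catenoidal neighborhood; and the signed assignment $(\phi^-,-\phi^-,\phi^+,-\phi^+)$ is engineered for equivariance under $\refl_2$, not for the reflections through $\{z=n\pi\}$ in (ii), which hold on the wings for any values of the $\phi_i$.
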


\begin{proof}
(iii) is followed by the fact that $\mathcal{B}_{\tau}\circ \rot_{\pi/2-\beta}(\mathscr{C}_{\tau,\beta})=\mathscr{S}_{\tau,\beta}$ when $\tau\neq 0$ and \ref{defdeform}\ref{itemdeformimage}. All others follow by the definitions.
\end{proof}

\begin{definition}[The desingularizing surfaces] 
\label{defdesing} 
Given $\Dsingarg$ as above, and $\tau^{-1}\in \mathbb{N}$, the smooth embedded surface with boundary $\preiniDsingsur\Dsingarg$ is defined by (recall \ref{def:Z}) 
\begin{equation*}
   \preiniDsingsur:= \preiniDsingsur\Dsingarg:=\wrap\Dsingarg(\Scherk_{x\leq 0,s\leq\frac{5\delta_s}{\tau}}).
\end{equation*}
The function $s$ on $\preiniDsingsur$ is defined to be the push forward by $\wrap\Dsingarg$ of $s$ on $\iniDsingsur_{x\leq 0}$. 
The \emph{$\pm$ wing} is defined to be the image under $\wrap\Dsingarg$ of the third or the second wing of $\Scherk_{x\leq 0,s\leq\frac{5\delta_s}{\tau}}$. 
Moreover, the \emph{$\pm$ boundary circle $\partial_{\pm}\Dsingarg$} of $\preiniDsingsur$ is defined to be the image of $\Scherk_{x\leq 0,s=\frac{5\delta_s}{\tau}}$ on the $\pm$ wing, 
and the \emph{$\pm$ pivot circle $\mathcal{C}_{\pm}\Dsingarg$} is defined to be the third or the second pivot of $\wrap\Dsingarg$. 
The \emph{$\pm$ catenoidal region $\Omega_{\pm}\Dsingarg$} is defined to be the image of $A_3\Dsingarg$ and $A_2\Dsingarg$ respectively. 
Finally, the \emph{Neumann boundary of $\preiniDsingsur$} is defined by
\begin{equation*}
    \partial_n \preiniDsingsur:=\wrap\Dsingarg(\Scherk_{x= 0}).
\end{equation*}
\end{definition}

\section{The Mean Curvature of the Desingularizing Surfaces}

In this section, the conditions
\begin{align}\label{eqcondition}
    \alpha^+\in[30\delta_{\theta},\frac{\pi}{2}-30\delta_{\theta}],\quad \alpha^+-\alpha^-\in[-\delta_{\theta},\delta_{\theta}],\quad \beta\in [2\delta_{\theta},\pi-2\delta_{\theta}], \quad \abs{\underline{\phi}}\leq \delta_{\theta},\quad \tau\in [0,\delta''_{\tau}],
\end{align}
for the parameters $\Dsingarg$ are always assumed. The constant $C$ could depend on $\delta_{\theta}$ and other quantities which will be denoted.

\subsection*{The substitute kernel and the functions $w$}\qquad\\ 
The mean curvature introduced by the bending which changes $\iniDsingsur(\alpha^+)$ to $\iniDsingsur\abDsingarg$ is close to a scalar multiple of $w$ we define next. 
Unlike in \cite{kapouleas:finite}, there is only one such function in this case because we have only two wings. 
The $w$'s for the desingularizing surfaces span the substitute kernel. 

\begin{definition}[The function $w$] 
\label{defw}
Let $H_{\phi}$ be the mean curvature of the surface $Z_y[\phi](\Scherk)$ and then $w:\Scherk\to \mathbb{R}$ is defined by
\begin{equation*}
    w:=\left. \frac{\mathrm{d}}{\mathrm{d}\phi}\right\vert_{\phi=0}H_{\phi}\circ Z_y[\phi].
\end{equation*}
The push forward of $w$ on $\iniDsingsur[\underline{\alpha}]$ from $\Scherk$ by $\bend$ will also be denoted by $w$.
\end{definition}

\begin{lemma}[Properties of $w$] 
\label{lemsubkernel}
The function $w$ defined on $\iniDsingsur:=\Scherk$ satisfies the following:
\begin{enumerate}[label={(\roman*)},ref={(\roman*)}]
    \item \label{itemsubkernalsupp} $w$ is supported on $\iniDsingsur_{s\leq 0}$ and $\norm{w:C^{0,\alpha}(\iniDsingsur)}\leq C$. 
    \item \label{itemw} 
$w$ is identical to $0$ on $L(\delta_{\theta})\cap \Scherk$ (recall \ref{defZy}\ref{itemZyid}). 
    \item \label{itemsubkernelproj} 
Let $\mathcal{P}:\mathbb{R}\to V$, where $V$ is the span of $\nu\cdot\Vec{e}_y$ on $\Scherk$, 
be the linear map which assigns to $\eta$ the orthogonal projection in $L^2(\iniDsingsur,\abs{A}^2_{\iniDsingsur}g_{\iniDsingsur}/2)$ of $\eta w/\abs{A}^2_{\iniDsingsur}$ into $V$. 
$\mathcal{P}$ is then invertible and $\mathcal{P}^{-1}\leq C$. 
\end{enumerate}
\end{lemma}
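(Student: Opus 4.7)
The plan is to start from the structural identity $w = \mathcal{L}_\iniDsingsur(V \cdot \nu)$, where $V := \left.\frac{\mathrm{d}}{\mathrm{d}\phi}\right|_{\phi=0} Z_y[\phi]$ is the ambient variation vector field. This will follow from the first-variation formula for mean curvature under a smooth family of ambient diffeomorphisms, together with $H \equiv 0$ on the minimal surface $\iniDsingsur = \Scherk$ (which kills the tangential correction). From Definition~\ref{defZy} I will read off $V$: it vanishes identically on $L(\delta_\theta) \supset \{y \leq 0\}$, hence on the third and fourth wings of $\iniDsingsur$; on the first wing $W_{\alpha^+}$ (which by Proposition~\ref{propscherk}\ref{itemscherkwing} sits inside the region where $Z_y[\phi]$ acts as an exact rotation) it equals the Killing field $-y\Vec{e}_x + x\Vec{e}_y$; and by the $\refl_2$-equivariance \ref{defZy}\ref{Zysym} it equals $y\Vec{e}_x - x\Vec{e}_y$ on the second wing.

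Claims \ref{itemsubkernalsupp} and \ref{itemw} will then follow at once. On every wing $V$ is either zero or the restriction of an ambient Killing field, whose normal component is a Jacobi field on any minimal surface, so $\mathcal{L}_\iniDsingsur(V \cdot \nu) = 0$ on all four wings and the support of $w$ is confined to the core $\iniDsingsur_{s \leq 0}$. On $L(\delta_\theta) \cap \iniDsingsur$, $V$ vanishes in an open ambient neighborhood of each point, so $V \cdot \nu$ vanishes in a surface neighborhood and hence $w = 0$ there. The $C^{0,\alpha}$ bound will come from uniform $C^k$ control on $V$ (from the fixed smooth family $Z_y[\phi]$) together with the uniform geometric control of $\iniDsingsur$ provided by Proposition~\ref{propscherk} under Assumption~\ref{eq:theta}.

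For claim \ref{itemsubkernelproj}, the one-dimensionality of $V = \lspan(\nu \cdot \Vec{e}_y)$ will reduce $\mathcal{P}$ to a scalar whose nonvanishing is equivalent to $\int w\,(\nu \cdot \Vec{e}_y)\,\mathrm{d}g_\iniDsingsur \neq 0$ (integrated over one $z$-period fundamental domain); the denominator $\int (\nu \cdot \Vec{e}_y)^2\,|A|^2_\iniDsingsur\,\mathrm{d}g_\iniDsingsur$ is automatically positive, finite (by exponential decay of $|A|^2$ on the wings), and bounded above and below uniformly in $\alpha^+$ by Proposition~\ref{propscherk}. To evaluate the numerator I will use $\mathcal{L}_\iniDsingsur(\nu \cdot \Vec{e}_y) = 0$ (since $\Vec{e}_y$ is a Killing field of $\R^3$) and apply Green's identity on $\iniDsingsur_0 \cap \{s \leq R\}$, converting the numerator into a boundary flux at $\{s = R\}$ (the $z$-periodic boundary terms cancelling). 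The third and fourth wing fluxes vanish since $V = 0$ there; on the first and second wings the asymptotic half-plane description in Proposition~\ref{propscherk}\ref{itemscherkwing} will give $V \cdot \nu$ linear in $s$ with leading slope $\pm 1$ and $\nu \cdot \Vec{e}_y$ tending to $\pm\sin\alpha^+$, so each wing will contribute in the limit $R \to \infty$ a nonzero flux proportional to $\sin\alpha^+$, and by the $\refl_2$-symmetry these two contributions will reinforce. Assumption~\ref{eq:theta} will then give the uniform lower bound $\sin\alpha^+ \geq \sin(10\delta_\theta) > 0$, hence the invertibility of $\mathcal{P}$ and the uniform bound $\mathcal{P}^{-1} \leq C$.

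The main delicate point will be careful sign and orientation tracking in the Green's identity computation to confirm that the first and second wing fluxes add rather than cancel; everything else is a routine application of the Jacobi field identity for ambient Killing fields and the asymptotic description of the Scherk wings already recorded in Proposition~\ref{propscherk}.
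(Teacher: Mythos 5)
Your proposal is correct, and in substance it follows the paper's route. Parts (i) and (ii) are exactly the "follow from the definitions" argument: the pointwise identity $w=\mathcal{L}_{\iniDsingsur}(V\cdot\nu)$ (the tangential term dying by minimality) together with the fact that $Z_y[\phi]$ restricts to a family of ambient isometries on each wing and to the identity on $L(\delta_{\theta})$. For (iii) the paper invokes the balancing (force) formula and the proof of \cite[Lemma 4.19(iii)]{kapouleas:finite}, i.e.\ it differentiates the flux identity in $\phi$; you instead fix $\phi=0$ and apply Green's second identity on a $z$-period with the translational Jacobi field $\nu\cdot e_y$ (here $e_y$ is the unit vector in the $y$-direction), using $w=\mathcal{L}_{\iniDsingsur}(V\cdot\nu)$ and $\mathcal{L}_{\iniDsingsur}(\nu\cdot e_y)=0$. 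These are two presentations of the same wing-flux computation: both reduce $\int w\,(\nu\cdot e_y)$ to the flux through $\{s=R\}$, which by \ref{propscherk}\ref{itemscherkwing} and \ref{propscherk}\ref{itemscherkdecay} is $\sin\alpha^{+}+O((a+R)e^{-R})$ per unit length in $z$ on each of the first two wings and zero on the other two, hence bounded below via Assumption \ref{eq:theta}; your version has the advantage of being self-contained rather than deferring to \cite{kapouleas:finite}. The sign issue you flag does resolve favorably: by \ref{defZy}\ref{Zysym} and the equivariance of the Gauss map under $\refl_2$, both $V\cdot\nu$ and $\nu\cdot e_y$ are $\refl_2$-invariant, so the two wing fluxes are equal rather than opposite; moreover each flux is essentially the product of $\partial_s(V\cdot\nu)\approx\pm1$ and $\nu\cdot e_y\approx\pm\sin\alpha^{+}$ with the \emph{same} sign ambiguity (both flip with the choice of unit normal), so the contribution is $+\sin\alpha^{+}$ per unit $z$-length independently of orientation conventions. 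One minor reading point, which you already handle correctly: the $L^2$ pairing in (iii) must be taken per period (e.g.\ on $\Sigma/G$ as in \ref{corsubker}), since over the full periodic surface both the numerator and the normalizing factor diverge; with that convention the denominator is bounded above by the area of the spherical image (Proposition \ref{propgauss}), giving the stated bound on $\mathcal{P}^{-1}$.
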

\begin{proof}
(i) and (ii) follow by the definitions of $w$ and $Z_y[\phi]$. 
(iii) follows by using the balancing formula to check that the integral
\begin{equation*}
    \frac{\mathrm{d}}{\mathrm{d\phi}}\int H_{\phi}\circ Z_y[\phi]\nu\cdot\Vec{e}_y,
\end{equation*}
is bounded below by a positive absolute constant similarly to the proof of \cite[Lemma 4.19(iii)]{kapouleas:finite}. 
\end{proof}

The next corollary is simpler than the corresponding \cite[Lemma 7.4]{kapouleas:finite}  
as we are on the Scherk surface now instead of the desingularizing surface. 

\begin{cor} 
\label{corsubker}
Let $\iniDsingsur:=\Scherk$, there is a positive constant $C$ such that given $E\in L^2(\iniDsingsur/G,h)$ there is $\theta_E$ such that $(E-\theta_E w)/\abs{A}_{\iniDsingsur}^2$ is $L^2(\iniDsingsur/G,h)$-orthogonal to the kernel of $\mathcal{L}_h$ invariant under $\refl_2$, moreover, 
\begin{equation*}
    \abs{\theta_E}\leq C\norm{E/\abs{A}^2_{\iniDsingsur}:L^2(\iniDsingsur/G,h)}.
\end{equation*}
\end{cor}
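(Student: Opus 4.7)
The plan is to explicitly construct $\theta_E$ from the orthogonality condition, after identifying the one dimensional relevant kernel. First I would invoke Proposition \ref{propscherkef}: the only $\refl_2$-invariant bounded solutions of $\Lcal_h u = 0$ on $\iniDsingsur/G$ lie in the span of $\nu\cdot\Vec{e}_y$, since the other kernel generator $\nu\cdot\Vec{e}_x$ is anti-invariant under the reflection $\refl_2$ (which flips $\Vec{e}_x$ but fixes $\Vec{e}_y$). Thus orthogonality to the $\refl_2$-invariant kernel reduces to the single scalar condition
\begin{equation*}
\left\langle \frac{E-\theta_E w}{\abs{A}^2_{\iniDsingsur}},\ \nu\cdot\Vec{e}_y \right\rangle_{L^2(\iniDsingsur/G,\,h)} = 0.
\end{equation*}

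Next I would define $\theta_E$ by solving this equation. Setting $V:=\lspan\{\nu\cdot\Vec{e}_y\}$ and writing $\mathcal{P}$ as in Lemma \ref{lemsubkernel}\ref{itemsubkernelproj}, the condition above is equivalent to $\mathcal{P}(\theta_E)$ equaling the $L^2(h)$-projection of $E/\abs{A}^2_{\iniDsingsur}$ onto $V$. Since $\mathcal{P}:\R\to V$ is invertible (a scalar multiple of the nonzero functional $\eta\mapsto \eta\,\langle w/\abs{A}^2_\iniDsingsur,\,\nu\cdot\Vec{e}_y\rangle_{L^2(h)}$), we may set
\begin{equation*}
\theta_E \;:=\; \mathcal{P}^{-1}\!\left(\Pi_V\!\left(E/\abs{A}^2_{\iniDsingsur}\right)\right)
\;=\;\frac{\langle E/\abs{A}^2_\iniDsingsur,\ \nu\cdot\Vec{e}_y\rangle_{L^2(\iniDsingsur/G,h)}}{\langle w/\abs{A}^2_\iniDsingsur,\ \nu\cdot\Vec{e}_y\rangle_{L^2(\iniDsingsur/G,h)}}.
\end{equation*}
By construction the orthogonality claim holds.

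For the estimate I would apply Cauchy--Schwarz to the numerator and the bound $\mathcal{P}^{-1}\le C$ (from Lemma \ref{lemsubkernel}\ref{itemsubkernelproj}) to the denominator. The one point requiring care is that $\nu\cdot\Vec{e}_y$ genuinely lies in $L^2(\iniDsingsur/G,h)$; this follows from Proposition \ref{propgauss} because $h=\nu^*g_{\Sph^2}$ and $\nu$ restricts to (essentially) a diffeomorphism of $\iniDsingsur/G$ onto $\Sph^2$, so
\begin{equation*}
\norm{\nu\cdot\Vec{e}_y : L^2(\iniDsingsur/G,h)}^2 \,=\, \int_{\Sph^2} y^2\, dg_{\Sph^2} \,\le\, C.
\end{equation*}
Combining,
\begin{equation*}
\abs{\theta_E} \,\le\, C\,\norm{E/\abs{A}^2_\iniDsingsur : L^2(\iniDsingsur/G,h)}\cdot\norm{\nu\cdot\Vec{e}_y : L^2(\iniDsingsur/G,h)} \,\le\, C\,\norm{E/\abs{A}^2_\iniDsingsur : L^2(\iniDsingsur/G,h)},
\end{equation*}
which is the claimed bound.

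There is no real obstacle here since all structural work was done in Lemma \ref{lemsubkernel} and Proposition \ref{propscherkef}; the only subtlety is the bookkeeping between the $L^2(h)$ pairing and the geometric pairing $\int E\,(\nu\cdot\Vec{e}_y)\,dg_\iniDsingsur$ used in Lemma \ref{lemsubkernel}\ref{itemsubkernelproj}, and this is handled uniformly because $dh=\tfrac12\abs{A}^2_\iniDsingsur dg_\iniDsingsur$ absorbs exactly the $\abs{A}^{-2}_\iniDsingsur$ weight appearing in the definitions.
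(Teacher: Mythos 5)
Your proposal is correct and follows essentially the same route as the paper, whose proof is just the one-line observation that the claim follows from Lemma \ref{lemsubkernel}\ref{itemsubkernelproj} and Proposition \ref{propscherkef}; you have simply unfolded that: the $\refl_2$-invariant kernel is $\lspan\{\nu\cdot\Vec{e}_y\}$, so the orthogonality condition is one scalar equation solved by $\theta_E=\mathcal{P}^{-1}\bigl(\Pi_V(E/\abs{A}^2_{\iniDsingsur})\bigr)$, and the bound follows from Cauchy--Schwarz together with $\mathcal{P}^{-1}\le C$. The explicit formula, the finiteness of $\norm{\nu\cdot\Vec{e}_y:L^2(\iniDsingsur/G,h)}$ via \ref{propgauss}, and the remark that $h=\tfrac12\abs{A}^2_{\iniDsingsur}g_{\iniDsingsur}$ absorbs the weight are all consistent with the paper's definitions.
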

\begin{proof}
The result follows by \ref{lemsubkernel}\ref{itemsubkernelproj} and \ref{propscherkef}.
\end{proof}

\subsection*{The extended substitute kernel and the functions $\baru_\pm$ and {${\barw_\pm}$}}
We discuss now the functions $\barw_\pm$ on the desingularizing surfaces and the functions $\baru_\pm$ on {${\iniDsingsur[\underline{\alpha}]}$}. 
The functions $\barw_\pm$ on the desingularizing surfaces together with the substitute kernel span the extended substitute kernel $\skernel$ on the initial surfaces (see \ref{D:skernel}). 
Unlike in \cite{kapouleas:finite}, we need just two such functions for each desingularizing surface,   
and we also need to define the related functions $\baru_\pm$ only on the models $\iniDsingsur[\underline{\alpha}]$. 
The choice of $\underline{\alpha}'$ and $\underline{\phi}'$ in the following lemma is made such that for $i=1,2,3,4$  
the asymptotic planes of the $i$th wings of $\iniDsingsur[\underline{\alpha}',\underline{\phi}']$ and $\iniDsingsur[\underline{\alpha}]$ are parallel to each other. 

\begin{lemma} 
\label{lemgraph}
There is $\delta_{\phi}=\delta_{\phi}(\delta_{\theta})\in (0,\delta_{\theta})$ such that for a given $[\underline{\alpha},\underline{\phi}']\in \mathbb{R}^4$, where $\abs{\underline{\phi}'}\leq \delta_{\phi}$, there is an $\underline{\alpha}'$ which depends smoothly on $\underline{\alpha},\underline{\phi}'$ satisfying condition \ref{eqcondition}, and is characterized by the following properties:
\begin{enumerate}[label={(\roman*)},ref={(\roman*)}]
    \item $\underline{\alpha}'=\underline{\alpha}$ when $\underline{\phi}=0$.
    \item For $i,j=\pm$,
    \begin{equation*}
       \frac{\partial\alpha'^{i}}{\partial\phi'^{j}}=\delta_{ij}.
    \end{equation*}
   \item \label{itemgraphf} 
There is a smooth function $f_{\underline{\phi}'}:=f_{\underline{\phi}',a}$ on $\iniDsingsur[\underline{\alpha}]$ invariant under $\refl_2$ 
which depends smoothly on $\underline{\alpha},\underline{\phi}'$, 
and its graph on $\iniDsingsur[\underline{\alpha}]$ is contained in $\iniDsingsur[\underline{\alpha}',\underline{\phi}']$ (recall \ref{def:Z}). 
\end{enumerate}
\end{lemma}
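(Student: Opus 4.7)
The plan is to apply the implicit function theorem to the map measuring the angular defect between the asymptotic half-planes of corresponding wings of $\iniDsingsur\apprimeDsingarg$ and $\iniDsingsur\aDsingarg$. Concretely, for $i\in\{-,+\}$ let $\psi_i(\underline{\alpha}',\underline{\phi}')$ denote the signed angle that the asymptotic half-plane of the $i$-wing of $\iniDsingsur\apprimeDsingarg$ makes with a fixed reference plane, and set $G_i(\underline{\alpha}',\underline{\phi}'):=\psi_i(\underline{\alpha}',\underline{\phi}')-\psi_i(\underline{\alpha},0)$. By the $\refl_2$-equivariance built into \ref{def:Z} via the choice $(\phi_1,\phi_2,\phi_3,\phi_4)=(\phi^-,-\phi^-,\phi^+,-\phi^+)$, matching $\psi_+$ and $\psi_-$ automatically matches all four asymptotic planes, so condition (iii) reduces to two scalar equations.

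Unpacking \ref{def:abdsing}, \ref{defwing}, and \ref{def:Z} at $\tau=0,\beta=\pi/2$, the $i$-wing of $\iniDsingsur\apprimeDsingarg$ is the image of the $i$-wing of $\Sigma(\alpha'^+)$ under $\bend$ (which, by \ref{defZy}, rotates certain wings by an amount determined by $\alpha'^+-\alpha'^-$ while fixing others) followed by the individual rotation $\phi_i$ about the pivot (as in \ref{defwing}). Hence $\psi_\pm$ is affine in $\phi'^\pm$ with unit coefficient and smooth in $\underline{\alpha}'$. At $(\underline{\alpha},0)$ a direct computation shows that the Jacobian $\partial G / \partial \underline{\alpha}'$ is nonsingular (indeed, with the natural parametrization, it is the identity), so the implicit function theorem produces a smooth $\underline{\alpha}'(\underline{\alpha},\underline{\phi}')$ on $|\underline{\phi}'|\le \delta_\phi(\delta_\theta)$ with $\underline{\alpha}'(\underline{\alpha},0)=\underline{\alpha}$, giving (i). Differentiating $G_i(\underline{\alpha}'(\underline{\phi}'),\underline{\phi}')\equiv 0$ at $\underline{\phi}'=0$ and using that the infinitesimal rotation of the $\pm$-asymptotic plane induced by $\phi'^\pm$ equals the one induced by $\alpha'^\pm$ (with no cross-contribution, due to the support properties of $\bend$ and the diagonal structure of the Scherk asymptotic directions) then yields $\partial \alpha'^i/\partial \phi'^j=\delta_{ij}$, proving (ii).

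For (iii), once corresponding wings have parallel asymptotic half-planes, each wing of $\iniDsingsur\apprimeDsingarg$ is a smooth normal graph over the corresponding wing of $\iniDsingsur\aDsingarg$: the graph function is essentially the difference of the Scherk-wing functions $f_{\alpha'^+}-f_{\alpha^+}$ together with the $b_\theta$ basepoint mismatch, both small and exponentially decaying in $s$ by \ref{propscherk}\ref{itemscherkdecay}--\ref{itemscherkb}. On the core region $\{s\le 0\}$, since $\underline{\alpha}'-\underline{\alpha}$ and $\underline{\phi}'$ are $O(\delta_\phi)$ and the whole construction depends smoothly on parameters, the two surfaces are $C^k$-close and realize each other as normal graphs there as well. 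These patches glue smoothly to a single $f_{\underline{\phi}'}$ on all of $\iniDsingsur\aDsingarg$. Invariance of $f_{\underline{\phi}'}$ under $\refl_2$ is automatic since both $\iniDsingsur\apprimeDsingarg$ and $\iniDsingsur\aDsingarg$ are $\refl_2$-invariant, by \ref{propZ}\ref{itemZsym} combined with the symmetric pattern of $\phi_i$'s. The main obstacle will be the precise linearization in (ii): tracking the support of $\bend$ carefully enough to certify that the off-diagonal entries of the derivative matrix vanish, rather than merely that the matrix is nonsingular; once this is pinned down, the rest is a standard IFT plus wing-graph argument.
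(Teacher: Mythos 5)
Your proposal is correct in substance, but it reaches the conclusion by a more roundabout mechanism than the paper. The paper's proof simply writes down the answer: it first notes (via \ref{defwing} and \ref{propscherk}\ref{itemscherkdecay}) that for any $\underline{\alpha}'$ with $\abs{\underline{\alpha}'-\underline{\alpha}}\le 2\delta_\phi$ the surface $\iniDsingsur[\underline{\alpha}',\underline{\phi}']$ is a graph over $\iniDsingsur[\underline{\alpha}]$ depending smoothly on the parameters, and then sets $\alpha'^{\pm}=\alpha^{\pm}+\phi'^{\pm}$, which makes (i) and (ii) immediate and, by the tetrad structure recorded before \ref{def:abdsing} (the directions of the $\mp$ wings depend only on $\alpha^{\mp}$, and the wing rotations $\phi_i$ act by exact rotations about the pivots), makes the corresponding asymptotic half-planes parallel. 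Your implicit-function-theorem argument applied to the angle-matching map $G$ recovers exactly this explicit solution, because $G$ is affine with unit coefficients in both $\underline{\alpha}'$ and $\underline{\phi}'$; in particular the ``main obstacle'' you flag (verifying the off-diagonal entries vanish) is not really an obstacle — it is immediate from $\eta_1,\eta_2$ depending only on $\alpha^-$ and $\eta_3,\eta_4$ only on $\alpha^+$, and once this is observed the IFT is unnecessary. So the geometric content (parallel asymptotic planes $\Rightarrow$ wings are graphs by Scherk decay, core by smooth dependence/compactness) is the same; the paper just shortcuts the existence of $\underline{\alpha}'$ by exhibiting it, whereas you solve for it abstractly, which buys nothing here but is not wrong.

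Two inaccuracies in your part (iii), neither fatal to the lemma: the graph function on the wings is \emph{not} exponentially decaying in $s$ — with parallel asymptotic planes it tends to the constant offset $b_{\alpha'^+}\cos\phi'^{\pm}-a\sin\phi'^{\pm}-b_{\alpha^+}$ (compare \eqref{eqfphi}, and note \ref{lemextker}\ref{itemextkerucomp}, where precisely these nonzero limits $a_{ij}\approx\delta_{ij}a$ are the point); only the deviation from that constant decays, which still gives graphicality but not the decay you assert. Also, the $\refl_2$-invariance should be traced to \ref{defZy}\ref{Zysym} together with the antisymmetric assignment $(\phi^-,-\phi^-,\phi^+,-\phi^+)$ in \ref{def:Z}, not to \ref{propZ}\ref{itemZsym}, which concerns the reflections across the planes $\{z=n\pi\}$.
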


\begin{proof}
By \ref{defwing} and \ref{propscherk}\ref{itemscherkdecay}, for given $\underline{\alpha}$, $\underline{\alpha}'$ and $\underline{\phi}'$ as in the lemma 
with $\abs{\underline{\alpha}-\underline{\alpha}'}\leq 2\delta_{\phi}$, 
there is a function $f$ on $\iniDsingsur\aDsingarg$ which depends smoothly on $\underline{\alpha}$, $\underline{\alpha}'$ and $\underline{\phi}'$, 
and whose graph over $\iniDsingsur\aDsingarg$ is contained in $\iniDsingsur[\underline{\alpha}',\underline{\phi}']$. 
The results then follow by choosing $\alpha'^{\pm}=\alpha^{\pm}+ \phi'^{\pm}$.
\end{proof}

We define now the functions $\baru$ and $\barw$ similarly to \cite{kapouleas:finite}. 
The negative signs in the definitions are chosen to improve the expressions in \ref{lemextker}\ref{itemextkerucomp} and \ref{propmeancurvdesing}.  

\begin{definition}[The functions $\baru_{\pm}$ and $\barw_{\pm}$]  
\label{defubar} 
\label{defwbar}
The function $\baru_{\pm} = \baru_{\pm} \aDsingarg = \baru_{\pm,a} \aDsingarg : \iniDsingsur[\underline{\alpha}] \to \mathbb{R}$ is defined by 
\begin{equation*}
    \baru_{\pm}:=-\left.\frac{\partial}{\partial  \phi'^{\pm}}\right\vert_{\underline{\phi'}=0} f_{\underline{\phi'}}.
\end{equation*}
We denote the pull back of $\baru_{\pm}$ to $\Scherk$ by $\bend$ also by $\baru_{\pm}= \baru_{\pm} \aDsingarg $ 
and we define the function $\barw_{\pm} =\barw_{\pm}\aDsingarg =\barw_{\pm,a}\aDsingarg :\Scherk\to \mathbb{R}$ by 
\begin{equation*}
    \barw_{\pm}:=-\mathcal{L}_{\Sigma}\baru_{\pm}.
\end{equation*}
Finally we denote  
the push forwards of $w$, $\baru_{\pm}$, $\barw_{\pm}$ on $\iniDsingsur\apDsingarg$ and $\preiniDsingsur\Dsingarg$ by $\mathcal{Z}\apDsingarg$ and $\wrap\Dsingarg$ 
also by $w$, $\baru_{\pm}$, $\barw_{\pm}$.
\end{definition} 

Before we discuss further the functions $\baru_{\pm}$ and $\barw_{\pm}$ in \ref{lemextker} we have two useful lemmas.

\begin{lemma} 
\label{lemcyleq} 
For $\alpha,\gamma\in (0,1)$,
let the cylinder $\Omega:=[0,\infty)\times\mathbb{R}/G'$, where $G'$ is the group generated by $(s,z)\to(s,z+2\pi)$. Then there is an $\varepsilon_0=\varepsilon_0(\alpha,\gamma)$, such that for any $\varepsilon<\varepsilon_0$, and correspondingly $a=a(\varepsilon)$, $s=s_a$, $F:= F_{\theta,a}\circ R$, $R$ is the reparametrization $(s,z)\to (s+s_0,z)$, where $s_0$ is any positive number, there is a linear map 
\begin{equation*}
    \mathcal{R}_{\Omega}:C^{0,\alpha}(\Omega,g_0,e^{-\gamma s})\to C^{2,\alpha}(\Omega,g_0,e^{-\gamma s}),
\end{equation*}
where $g_0$ is the flat metric on $\Omega$, and a constant $C=C(\alpha,\gamma)$, such that for $v=\mathcal{R}_{\Omega}(E)$ the following are
true, where the constants $C$ depends only on $\alpha$ and $\gamma$
\begin{enumerate}[label={(\roman*)},ref={(\roman*)}]
    \item $\underline{\mathcal{L}}v=E$,
    where the operator $\underline{\mathcal{L}}$ on $\Omega$ defined by:
    \begin{equation*}
        \underline{\mathcal{L}}:=\Delta_{F^*g_{\Sigma}}+\abs{A}^2_{\Sigma} \circ F.
    \end{equation*}
    \item $v$ is a constant on $\partial \Omega$.
    \item $\norm{v:C^{2,\alpha}(\Omega,g_0,e^{-\gamma s})}\leq C\norm{E:C^{2,\alpha}(\Omega,g_0,e^{-\gamma s})}$.
\end{enumerate}
\end{lemma}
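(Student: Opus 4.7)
The plan is to first construct a right inverse $\mathcal{R}^{(0)}_{\Omega}$ for the flat Laplacian $\Delta_{g_0}$ on the half-cylinder $\Omega$ by separation of variables in $z$, and then pass to the full operator $\underline{\mathcal{L}}$ by a contraction-mapping argument that exploits the exponential smallness of $\underline{\mathcal{L}}-\Delta_{g_0}$ on $\{s\geq s_a\}$ once $\varepsilon$ is chosen small (equivalently, $a=a(\varepsilon)$ chosen large). For the model step I decompose $E(s,z)=\sum_{n\in\mathbb{Z}}E_{n}(s)e^{inz}$ and solve $\Delta_{g_0}v=E$ mode by mode. For $n\neq0$ the ODE $v_n''-n^{2}v_n=E_n$ on $[0,\infty)$, together with $v_n(0)=0$ and decay at infinity, has a unique solution by variation of parameters using the fundamental system $e^{\pm|n|s}$; since $|n|\geq1>\gamma$ one obtains $|v_n(s)|\leq C(\gamma,n)\,\|E_n\,e^{\gamma\cdot}\|_{L^{\infty}}\,e^{-\gamma s}$ with $C(\gamma,n)$ summable in $n$. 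For $n=0$ the condition $v_0\to0$ at infinity forces $v_0(0)=\int_0^{\infty}tE_0(t)\,dt$ and yields the explicit decaying solution
\[
v_0(s)=\int_s^{\infty}(t-s)E_0(t)\,dt,
\]
which is of order $e^{-\gamma s}/\gamma^{2}$. Summing the modes gives $v^{(0)}:=\mathcal{R}^{(0)}_{\Omega}(E)$ whose boundary trace is the constant $v_0(0)$, and standard interior/boundary Schauder theory on unit balls together with the weight upgrades the pointwise bound to $\norm{v^{(0)}:C^{2,\alpha}(\Omega,g_0,e^{-\gamma s})}\leq C(\alpha,\gamma)\,\norm{E:C^{0,\alpha}(\Omega,g_0,e^{-\gamma s})}$.

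Next I write $\underline{\mathcal{L}}=\Delta_{g_0}+\mathcal{E}$ with $\mathcal{E}=(\Delta_{F^{*}g_{\Sigma}}-\Delta_{g_0})+|A|^{2}_{\Sigma}\circ F$. Proposition~\ref{propscherk}\ref{itemscherkdecay} together with the definitions in \ref{def:wing} shows that $F^{*}g_{\Sigma}$ differs from $g_0$ by a $C^{5}$ tensor controlled by $\varepsilon e^{-s}$ and that $|A|^{2}_{\Sigma}\circ F$ decays like $e^{-2s}$; since the reparametrization $R:(s,z)\mapsto(s+s_0,z)$ only improves these rates, one obtains for $\varepsilon$ small
\[
\norm{\mathcal{E}v:C^{0,\alpha}(\Omega,g_0,e^{-\gamma s})}\leq C(\alpha,\gamma)\,\varepsilon\,\norm{v:C^{2,\alpha}(\Omega,g_0,e^{-\gamma s})}.
\]
Setting $v^{(k+1)}:=\mathcal{R}^{(0)}_{\Omega}(E-\mathcal{E}v^{(k)})$ with $v^{(0)}$ as above defines a sequence whose boundary-constant condition is preserved at each step (because $\mathcal{R}^{(0)}_{\Omega}$ produces boundary-constant outputs), and the composition $\mathcal{R}^{(0)}_{\Omega}\mathcal{E}$ has operator norm at most $C(\alpha,\gamma)\varepsilon$ on $C^{2,\alpha}(\Omega,g_0,e^{-\gamma s})$. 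Choosing $\varepsilon_0(\alpha,\gamma):=1/(2C(\alpha,\gamma))$ makes the iteration a geometric contraction, so it converges to a solution $v$ of $\underline{\mathcal{L}}v=E$ that is constant on $\partial\Omega$ and satisfies the asserted weighted Schauder estimate. Taking $\mathcal{R}_{\Omega}(E):=v$ gives the required linear map.

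The main obstacle is the $n=0$ Fourier mode in the model step: the flat operator $\partial_s^{2}$ admits the nondecaying homogeneous solution $s$, so both the solvability of the decay-at-infinity problem and the inheritance of the exponential rate $\gamma$ by $v_0$ depend decisively on the hypothesis $\gamma<1$ and on pinning the free boundary constant to $\int_0^{\infty}tE_0(t)\,dt$. Once this model estimate is in place the contraction argument is routine, but care is needed in tracking how the constants $C(\alpha,\gamma)$ depend on the parameters and in verifying that the Schauder upgrade at $\partial\Omega$ is uniform in the shift $s_0$ (which can be arbitrarily large), so that $\varepsilon_0$ can be taken independent of $s_0$.
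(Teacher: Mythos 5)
Your proposal is correct, but it takes a more self-contained route than the paper. The paper's proof is essentially two lines: it verifies the coefficient smallness
$\norm{F^*g_{\Sigma}-g_0:C^2(\Omega,g_0,e^{-s})}+\norm{\abs{A}^2_{\Sigma}\circ F:C^2(\Omega,g_0,e^{-s})}\leq C\varepsilon$
using Proposition \ref{propscherk}\ref{itemscherkdecay}, and then invokes Proposition A.4 of \cite{kapouleas:finite}, which already packages both the flat half-cylinder model (with the boundary-constant normalization and the weighted $e^{-\gamma s}$ estimate) and the absorption of small perturbations of the operator. You instead reprove that appendix result from scratch: an explicit Fourier-mode construction of the flat inverse (with the zero mode pinned by decay, producing the constant boundary trace, and the $n\neq 0$ modes vanishing on $\partial\Omega$, with $\gamma<1$ guaranteeing summable constants and the exponential rate), a Schauder upgrade to the weighted $C^{2,\alpha}$ norm, and a contraction/Neumann-series argument absorbing the $O(\varepsilon e^{-s})$ error operator $\mathcal{E}=\underline{\mathcal{L}}-\Delta_{g_0}$, with $\varepsilon_0$ fixed by the operator norm of $\mathcal{R}^{(0)}_{\Omega}\mathcal{E}$. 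Both arguments rest on the same two inputs — $\gamma\in(0,1)$ avoiding the indicial roots of the flat operator, and the $C\varepsilon e^{-s}$ coefficient estimate uniform in the shift $s_0$ and in $\theta$ — so what your version buys is independence from the external reference at the cost of redoing its technical content; the only points to keep tidy in a full write-up are the identification of the summed mode series as a genuine (distributional, then $C^{2,\alpha}_{loc}$) solution before applying Schauder, and the subtraction of the boundary constant when invoking the boundary Schauder estimate. Note also that your estimate bounds $v$ by the $C^{0,\alpha}$ weighted norm of $E$, which is the intended (and stronger) form of item (iii); the $C^{2,\alpha}$ norm on the right-hand side of the statement is evidently a typo.
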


\begin{proof}
By \ref{propscherk}\ref{itemscherkdecay}, the quantity 
\begin{equation} 
\label{eq:NL}
	N(\underline{\mathcal{L}}):=\norm{F^*g_{\Sigma}-g_0:C^2(\Omega,g_0,e^{-s})}+\norm{\abs{A}^2_{\Sigma} \circ F:C^2(\Omega,g_0,e^{-s})}\leq C\varepsilon.
\end{equation}		
The lemma then follows by using \cite[Proposition A.4]{kapouleas:finite}.
\end{proof}

\begin{lemma} 
\label{lemcylequnique} 
For $\alpha,\gamma\in (0,1)$, let the cylinder $\Omega$ as before. 
Then there is an $\varepsilon_0=\varepsilon_0(\alpha,\gamma)$, such that for any $\varepsilon<\varepsilon_0$, and correspondingly $a=a(\varepsilon)$, $s=s_a$, 
$F:=\refl_i\circ F_{\theta,a}\circ R$, $R$ is the reparametrization $(s,z)\to (s+s_0,z)$, where $s_0$ is any positive number, and for any $f\in C^{0,\alpha}(\partial\Omega,F^*h)$, 
there is a unique function $v\in C^{2,\alpha}(\Omega,F^*h)$ with $\norm{v:C^{2,\alpha}(\Omega,F^*h)}< \infty$, such that
\begin{enumerate}[label={(\roman*)},ref={(\roman*)}]
    \item $\underline{\mathcal{L}}v=0$,
    where the operator $\underline{\mathcal{L}}$ on $\Omega$ is as \ref{lemcyleq}. 
    \item $v=f$ on $\partial \Omega$.
\end{enumerate}  
Moreover, the limit $v_0:=\lim_{s\to \infty}v$ exists. And 
\begin{equation*}
     \norm{v-v_0:C^{2,\alpha}(\Omega,F^*g_{\Sigma},e^{-\gamma s})}\leq C\norm{f:C^{0,\alpha}(\Omega,F^*h)}.
    \end{equation*}
\end{lemma}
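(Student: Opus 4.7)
The plan is to construct $v$ by solving the Dirichlet problem on truncated cylinders $\Omega_L := [0,L] \times \R/G'$ and passing to the limit, then to apply \ref{lemcyleq} to $u := v - v_0$ once the limit $v_0 := \lim_{s\to\infty} v$ is extracted. For existence on $\Omega_L$, solve $\underline{\mathcal{L}} v_L = 0$ with $v_L|_{s=0} = f$ and $v_L|_{s=L} = 0$. For $\varepsilon_0$ small enough, the perturbation estimate from the proof of \ref{lemcyleq} shows that $\underline{\mathcal{L}}$ is a small $C^2$-perturbation of $\Delta_{g_0}$ and hence has trivial Dirichlet kernel on $\Omega_L$; Fredholm theory then provides $v_L$ uniquely, and weighted Schauder estimates (analogous to those used for \ref{lemcyleq}) give $\|v_L : C^{2,\alpha}(\Omega_L, F^*h)\| \leq C\|f : C^{0,\alpha}(\partial\Omega, F^*h)\|$ uniformly in $L$. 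Extracting a $C^{2,\alpha}_{\mathrm{loc}}$-convergent subsequence produces a bounded $v$ on $\Omega$ solving the stated Dirichlet problem.

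For the limit and the decay estimate, I would decompose $v$ into $z$-Fourier modes. Because of the exponential decay of $|A|^2 \circ F$ and the convergence $F^*g_\Sigma \to g_0$ from \ref{propscherk}\ref{itemscherkdecay}, the nonzero modes decay at least like $e^{-s}$, while the zeroth mode $\bar v(s)$ satisfies a second-order ODE with integrable coefficients and therefore admits a finite limit $v_0$. Writing $u := v - v_0$, we obtain $\underline{\mathcal{L}} u = -v_0 \, |A|^2 \circ F$, whose right-hand side lies in $C^{0,\alpha}(\Omega, g_0, e^{-\gamma s})$ for any $\gamma \in (0, 1)$ by \ref{propscherk}\ref{itemscherkdecay}. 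Subtracting a cut-off extension of $f - v_0$ supported near $s = 0$ and applying \ref{lemcyleq} to the remainder yields the weighted estimate, with $|v_0| \leq \|v\|_{L^\infty} \leq C\|f : C^{0,\alpha}(\partial\Omega, F^*h)\|$ closing the bound.

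Uniqueness within the bounded class is the main obstacle. If $w$ is bounded with $\underline{\mathcal{L}} w = 0$ and $w|_{s=0} = 0$, the Fourier analysis above gives $w \to w_0$ with $w - w_0$ exponentially decaying. The $z$-average $\bar w$ satisfies a second-order ODE on $[0, \infty)$ whose two linearly independent solutions are asymptotic to $1$ and $s$ respectively, so the space of bounded solutions is one-dimensional; adding the constraint $\bar w(0) = 0$ forces $\bar w \equiv 0$, hence $w_0 = 0$. Then $w$ itself decays exponentially, and injectivity of $\mathcal{R}_\Omega$ from \ref{lemcyleq} applied to the zero source gives $w \equiv 0$. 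The delicate point is the Levinson-type asymptotic analysis needed to verify that the one-dimensional space of bounded zero-mode solutions is transverse to the constraint $\bar w(0) = 0$, which follows from integrability of $|A|^2 \circ F$ combined with the standard perturbation theory of linear ODEs with integrable coefficients.
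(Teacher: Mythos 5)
Your overall route (truncated cylinders, Fourier/ODE asymptotics for the zero mode, then the weighted estimate via \ref{lemcyleq}) is genuinely different from the paper's, but as written it has two real gaps. First, the uniqueness step does not close: \ref{lemcyleq} only provides a \emph{right} inverse $\mathcal{R}_{\Omega}$, producing one particular solution with \emph{constant} boundary value and an estimate; it contains no statement that $\underline{\mathcal{L}}$ has trivial kernel among bounded or exponentially decaying functions with zero boundary data, and ``injectivity of $\mathcal{R}_{\Omega}$'' is not the relevant property, so your final sentence ``$w$ decays exponentially, and injectivity of $\mathcal{R}_\Omega$ \dots gives $w\equiv 0$'' is a non sequitur. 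The same issue infects the estimate for $v-v_0$: applying \ref{lemcyleq} to the corrected source only produces \emph{some} solution in the weighted space, and identifying it with your function requires exactly the uniqueness-in-the-decaying-class statement you have not yet proved. A correct finish exists (e.g.\ multiply $\underline{\mathcal{L}}w=0$ by $w$, integrate by parts using $w|_{s=0}=0$ and the decay, and absorb $\int (|A|^2\circ F)\,w^2$ into $\int|\nabla w|^2$ via $|A|^2\circ F\leq C\varepsilon e^{-s}$ from \eqref{eq:NL} and a Hardy--Poincar\'e inequality), but it is not what you wrote; also the ``Fourier modes'' do not decouple, since $F^*g_\Sigma$ and $|A|^2\circ F$ depend on $z$, which is precisely the Levinson-type point you defer rather than settle.

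Second, the existence assertion of the lemma includes $\norm{v:C^{2,\alpha}(\Omega,F^*h)}<\infty$, and your construction never verifies this. Since $F^*h=\tfrac12(|A|^2\circ F)\,F^*g_{\Sigma}$ degenerates exponentially as $s\to\infty$, finiteness of the $F^*h$-norm forces derivative decay at the rate of the conformal factor, whereas your argument only yields $e^{-\gamma s}$ decay with $\gamma<1$ (the rate $e^{-s}$ is resonant with the first nonzero mode, which is why \ref{lemcyleq} is stated for $\gamma<1$). The paper avoids both difficulties at once: by \ref{defh}, $\underline{\mathcal{L}}v=0$ is equivalent to $\Delta_h v+2v=0$, and by \ref{propgauss} the wing carries, via the Gauss map, the geometry of a small punctured disc in $\Sph^2$; bounded $C^{2,\alpha}(F^*h)$ solutions then correspond to solutions of the Dirichlet problem for $\Delta+2$ on a small disc (the puncture being removable), where existence, uniqueness (no small eigenvalues on a small disc), the limit $v_0$ (the value at the centre) and the decay estimate are all standard. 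If you wish to keep a purely cylindrical argument, you must add a removable-singularity/bootstrap step to recover the $F^*h$-regularity and replace the appeal to injectivity of $\mathcal{R}_{\Omega}$ by a genuine kernel argument such as the energy identity above.
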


\begin{proof}
By \ref{defh}, $\underline{\mathcal{L}}v=0$ is equivalent to $\mathcal{L}_{F^*h}v=0$. 
Then by \ref{propgauss}, the problem could be transformed in to the Dirichlet problem on a small disk on $\mathbb{S}^2$, 
and the result then follows by the standard theory as Laplacian has no small eigenvalue for the disk when the radius is small enough and \eqref{eq:NL}. 
The estimate follows by the standard estimate, \ref{defh} and \ref{propscherk}\ref{itemscherkdecay}.
\end{proof}

Notice that our definition of ``wings'', and thus the construction of desingularizing surfaces, depend on the choice of the parameter $a$ (or $\varepsilon$) in \ref{propscherk}. 
The choice of $\varepsilon$ will not be specified until the study of linearized equations, therefore, we will mention the dependence on $\varepsilon$ explicitly until its choice is specified. 

\begin{lemma}[Properties of $\baru_{\pm}$ and $\barw_{\pm}$]  
\label{lemextker}
There exists $\varepsilon_0$, such that for $\varepsilon<\varepsilon_0$ and corresponding $a(\varepsilon)$, 
the functions $\baru_{\pm}$ and $\barw_{\pm}$ defined on $\iniDsingsur:=\Scherk$ satisfy the following: 
\begin{enumerate}[label={(\roman*)},ref={(\roman*)}]
\item They depend continuously on $\underline{\alpha}$ and $a$.
\item They are invariant under $\refl_2$. In particular, on $\iniDsingsur_{x=0}$, $\partial_x \baru_{\pm}=0$.
\item $\barw_{\pm}$ is supported on $\iniDsingsur_{s\leq 1}$.\label{itemextkersupp}
\item 
\label{itemextkerubdd} 
$\norm{\baru_{\pm}:C^{2,\alpha}(\iniDsingsur)}\leq Ca$  and $\norm{\barw_{\pm}:C^{0,\alpha}(\iniDsingsur)}\leq Ca$. 
\item 
$\norm{\baru_{\pm}: C^{2,\alpha}(\iniDsingsur,h)}< \infty$ and for $i,j=1,2,3,4$, $a_{ij}:=\lim_{s\to \infty}\baru_{i}$ exist on the $j$th wing, 
where $(\baru_1,\baru_2,\baru_3,\baru_4):=(\baru_-,\baru_-,\baru_+,\baru_+)$. 
Moreover
    \begin{equation*}
        \abs{a_{ij}-\delta_{ij}a}\leq C\varepsilon a.
    \end{equation*}
   \label{itemextkerucomp}
\end{enumerate}
\end{lemma}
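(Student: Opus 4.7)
Items (i) and (ii) are essentially corollaries of Lemma~\ref{lemgraph}. The smooth dependence of $f_{\underline{\phi}'}$ on $(\underline{\alpha},\underline{\phi}',a)$ makes $\baru_\pm=-\partial_{\phi'^\pm}f_{\underline{\phi}'}|_{\underline{\phi}'=0}$ and $\barw_\pm=-\mathcal{L}_\Sigma\baru_\pm$ depend continuously on $(\underline{\alpha},a)$, giving (i); the $\refl_2$-invariance of $f_{\underline{\phi}'}$ descends to $\baru_\pm$ and $\barw_\pm$, and since $\refl_2$ fixes the $yz$-plane pointwise the invariance of $\baru_\pm$ forces $\partial_x\baru_\pm=0$ there, giving (ii).

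For (iii), I will show $\mathcal{L}_{\iniDsingsur[\underline{\alpha}]}\baru_\pm=0$ on the region $\{s\ge 1\}$ of each wing. In this region the cut-off $\psi[1,0](s)$ vanishes and (since $\tau=0$) $\psi_s\equiv1$, so by Definition~\ref{D:wings} the surface $\iniDsingsur[\underline{\alpha}',\underline{\phi}']$ on wing $i$ equals $A[\alpha'^+,\phi_i,R_i,0]+f_{\alpha'^+}\,\nu[\alpha'^+,\phi_i,R_i,0]$, i.e., the image of the $i$-th wing of the minimal Scherk surface $\Sigma(\alpha'^+)$ under the rigid motion $R'\circ R_i$, and is hence minimal. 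The graph of $f_{\underline{\phi}'}$ over $\iniDsingsur[\underline{\alpha}]$ in $\{s\ge 1\}$ therefore lies on a minimal surface, so $f_{\underline{\phi}'}$ solves the minimal surface equation there; linearizing in $\underline{\phi}'$ at $0$ yields the Jacobi equation for $\baru_\pm$, so $\barw_\pm$ is supported in $\{s\le1\}$.

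Items (v) and (iv) rest on the geometric fact---built into the design of the choice $\alpha'^\pm=\alpha^\pm+\phi'^\pm$ together with the tetrad $(\phi_1,\phi_2,\phi_3,\phi_4)=(\phi'^-,-\phi'^-,\phi'^+,-\phi'^+)$ and the sign convention $\bend=Z_y[\alpha^+-\alpha^-]$---that the three rotational contributions to each wing (from varying the Scherk parameter, from varying the bending argument, and from $R'$) sum to zero, making the asymptotic plane of wing $i$ of $\iniDsingsur[\underline{\alpha}',\underline{\phi}']$ parallel to that of $\iniDsingsur[\underline{\alpha}]$ for each $i$. Combined with (iii), Lemma~\ref{lemcylequnique} then gives the existence of the limits $a_{ij}=\lim_{s\to\infty}\baru_i$ on the $j$-th wing with exponential convergence and simultaneously $\|\baru_\pm:C^{2,\alpha}(\iniDsingsur,h)\|<\infty$. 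To compute the values, I differentiate the pivot position $R_j\circ A_{\alpha'^+}(\partial\HH)$ in $\phi'^i$ at $\underline{\phi}'=0$ and project onto the normal of the parallel asymptotic plane: the $a\,\vec{e}[\pi/2-\alpha'^+]$ contribution produces $\pm a$ in the normal direction when $i$ and $j$ are paired by the $\pm$-sign and $0$ otherwise, while the $b_{\alpha'^+}\,\vec{e}'[\pi/2-\alpha'^+]$ contribution is bounded by $|b_{\alpha^+}|+|b'_{\alpha^+}|\le\varepsilon a$ via \ref{propscherk}\ref{itemscherkb}, yielding $|a_{ij}-\delta_{ij}a|\le C\varepsilon a$. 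For (iv), the bound $|\baru_\pm|\le Ca$ follows on the wings from the limit values and the exponential deviation, and on the core from the smooth dependence of $f_{\underline{\phi}'}$ combined with the core diameter (of order $a$ by \ref{propscherk}\ref{itemscherkwing}); standard elliptic regularity upgrades this $C^0$ bound to the $C^{2,\alpha}(\iniDsingsur)$ estimate, and $\|\barw_\pm:C^{0,\alpha}\|\le Ca$ is immediate from $\barw_\pm=-\mathcal{L}_\Sigma\baru_\pm$.

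The main obstacle is the rotation-cancellation underlying (v). Each rotation acts differently on each of the four wings---$Z_y$ acts nontrivially only on wings 1 and 2 (with opposite signs via $\refl_2$-equivariance) while $R'$ acts on all four wings with the alternating pattern $(\phi'^-,-\phi'^-,\phi'^+,-\phi'^+)$---and the parallelism depends sensitively on the sign in $\bend=Z_y[\alpha^+-\alpha^-]$: reversing that sign would double, rather than cancel, the wing-1 and wing-2 angle increments and destroy the whole argument.
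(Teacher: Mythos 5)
Your treatment of (i)--(iii) and your route to (v) are essentially the paper's: (iii) comes from minimality of the bent surface on $\{s\ge 1\}$, and the limits in (v) come from the exact parallelism of the asymptotic planes built into the choice $\alpha'^{\pm}=\alpha^{\pm}+\phi'^{\pm}$ and the tetrad of angles, with the value computed from the plane offset and $\abs{b_\theta}+\abs{\mathrm{d}b_\theta/\mathrm{d}\theta}\le\varepsilon a$ (\ref{propscherk}\ref{itemscherkb}), and with \ref{lemcylequnique} supplying the convergence. The genuine gap is in (iv), which is where the paper's proof does most of its work. First, your upgrade ``$C^0$ bound plus standard elliptic regularity gives the $C^{2,\alpha}(\iniDsingsur)$ estimate'' is circular exactly on the region that matters: on $\{s\le 1\}$ one has $\mathcal{L}_{\iniDsingsur}\baru_{\pm}=-\barw_{\pm}$, and $\barw_{\pm}$ is precisely the quantity you are trying to bound, so Schauder estimates cannot be invoked there without an independent bound on the inhomogeneous term; elliptic regularity helps only on $\{s\ge1\}$ where the Jacobi equation holds by (iii). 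The higher-order control on $\{s\le1\}$ must instead come directly from the graph construction of $f_{\underline{\phi}'}$ (via \ref{propscherk}\ref{itemscherkdecay} and \ref{propscherk}\ref{itemscherkb}), which is how the paper proceeds. Second, and relatedly, ``smooth dependence of $f_{\underline{\phi}'}$ combined with the core diameter of order $a$'' does not yield a constant $C$ independent of $a$: the surface, the wings, and hence $f_{\underline{\phi}'}$ all depend on $a$, and the region $\{s_a\le1\}$ extends to distance comparable to $a$ from the axis, so a compactness/smooth-dependence argument alone gives an $a$-dependent constant. The paper resolves this by fixing $\varepsilon_0$ and $a_0=a(\varepsilon_0)$, proving the bound for $a_0$ by compactness, and then comparing $f_{\underline{\phi}',a}$ with $f_{\underline{\phi}',a_0}$ on $0\le s_{a_0}\le a-a_0$, where the discrepancy is controlled by the linearly growing tilt term $s_{a_0}\sin\phi'^{\pm}$; it is this comparison that produces the factor $a$ with an absolute constant. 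Your write-up contains the right heuristic (displacements grow linearly toward the pivots) but not an argument delivering uniformity in $a$.

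A smaller structural point: you obtain the wing part of (iv) from (v), while (v) is obtained from \ref{lemcylequnique}; but \ref{lemcylequnique} only identifies $\baru_{\pm}$ with its unique bounded solution once you already know $\baru_{\pm}$ has finite $C^{2,\alpha}$ norm (in the $h$ metric) on the wing, so some a priori boundedness must be established first. This is not fatal---your plane-offset computation, made quantitative as in the paper's estimate $\norm{f_{\underline{\phi}'}+a\sin\phi'^{\pm}:C^4(\iniDsingsur\aDsingarg_{s\ge1,\mp y>0})}\le C\varepsilon a$ and then differentiated in $\phi'$, supplies exactly this input---but as written the logical order (v)$\Rightarrow$(iv) on the wings, with (v) presupposing boundedness, is circular and should be rearranged so that the direct graph estimate on $\{s\ge1\}$ comes first, as in the paper.
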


\begin{proof}
(i) and (ii) follow by the definitions and \ref{lemgraph}\ref{itemgraphf}. 
The minimality of $\iniDsingsur\apDsingarg_{s\geq 1}$ from the definition implies that $\barw_{\pm}=0$ on $\iniDsingsur_{s\geq 1}$. This implies (iii).

From \ref{lemgraph}, \ref{propscherk}\ref{itemscherkwing} and \ref{defwing}, the distance between the asymptotic planes $A_i[\underline{\alpha}',\pi/2,\underline{\phi}',0]$ and $A_i[\underline{\alpha},\pi/2,0,0]$ is ($-$ for $i=1,2$, $+$ for $i=3,4$)
\begin{equation*}
    \abs{b_{\alpha'{+}}\cos{\phi'^{\pm}}-a\sin{\phi'^{\pm}}-b_{\alpha^+}}.
\end{equation*}
Therefore, from \ref{propscherk}\ref{itemscherkdecay}, \ref{propscherk}\ref{itemscherkb}, for $s=s_a$ defined on $\iniDsingsur\aDsingarg$,
\begin{equation} 
\label{eqfphi}
    \norm{f_{\underline{\phi}'}+a\sin{\phi'^{\pm}}:C^4(\iniDsingsur\aDsingarg_{s\geq 1,\mp y>0})}\leq C\varepsilon a.
\end{equation}
This implies (iv) when $s\geq 1$ along with \ref{defwbar}.

Now let $\varepsilon_0$ be a fixed small number and $a_0:=a(\varepsilon_0)$, $s_{a_0}$ correspondingly. Then the result
\begin{equation*}
    \norm{\baru_{\pm,a_0}:C^{2,\alpha}(\iniDsingsur)}\leq C=C(\varepsilon_0)
\end{equation*}
follows by the construction in \ref{defubar} and \eqref{eqfphi} when $s_{a_0}\geq 1$ and compactness when $s_{a_0}\leq 1$. For $a\geq a_0$, from the definitions in \ref{propscherk}\ref{itemscherkwing} and \ref{lemgraph},  $f_{\underline{\phi}',a}=f_{\underline{\phi}',a_0}$ when $s_{a_0}\leq 0$. 

On the other hand, the distance between the two asymptotic planes $A_{i,a_0}[\underline{\alpha}',\pi/2,\underline{\phi}',0]$ and $A_{i,a}[\underline{\alpha},\pi/2,\underline{\phi}',0]$ is $(a-a_0)\sin{\phi^{\pm'}}$ when $s_a\geq 0$, i.e. $s_{a_0}\geq (a-a_0)\cos{\phi^{\pm'}}$; and when $0\leq s_{a_0}\leq (a-a_0)\cos{\phi^{\pm'}}$, the asymptotic plane  $A_{i,a}[\underline{\alpha}',\pi/2,\underline{\phi}',0]$ is a graph of $A_{i,a_0}[\underline{\alpha},\pi/2,0,0]$ with graph function $s_{a_0}\sin{\phi^{\pm'}}$. So by \ref{propscherk}\ref{itemscherkdecay} and \ref{propscherk}\ref{itemscherkb} again (here $s_{a_0}$ is defined on $\iniDsingsur[\underline{\alpha}',\underline{\phi}']$)
\begin{equation*}
    \norm{f_{\underline{\phi}',a}-f_{\underline{\phi}',a_0}:C^4(\iniDsingsur[\underline{\alpha}',\underline{\phi}']_{0\leq s_{a_0},\mp y>0})}\leq\norm{s_{a_0}\sin{\phi'^{\pm}}:C^4(\iniDsingsur[\underline{\alpha}',\underline{\phi}']_{0\leq s_{a_0},\mp y>0})}+ C(\varepsilon_0) a.
\end{equation*}
The region $s\leq 1$ on $\iniDsingsur\aDsingarg$ is the same as the region $s_{a_0}\leq a-a_0+1$ and the function $s_{a_0}$ on $\iniDsingsur\aDsingarg$ 
is similar with the pull back of the function $s_{a_0}$ on $\iniDsingsur[\underline{\alpha}',\underline{\phi}']$ 
as a graph of $\iniDsingsur\aDsingarg$ as in \ref{lemextker} when $\underline{\phi}'$ is small. 
Therefore, 
$ \norm{\frac{\baru_{\pm,a}}{a}:C^{2,\alpha}(\iniDsingsur_{s_{a_0} \leq a-a_0+2})}\leq \frac{a_0}{a}\norm{\frac{\baru_{\pm,a_0}}{a_0}:
C^{2,\alpha}(\iniDsingsur_{s_{a_0} \leq a-a_0+2})}+\frac{a-a_0+2}{a}+C(\varepsilon_0)\leq C(\epsilon_0)$. 
This then implies (iv) when $s\leq 1$ along with \ref{defwbar}.

By (iv), $\baru_{\pm}\in H^2(\iniDsingsur,h)$ and thus $\baru_{\pm}\in C^{0,\alpha}(\iniDsingsur,h)$. 
Moreover, by (iii) and (iv), $\barw_{\pm}\in C^{0,\alpha}(\iniDsingsur,h)$. 
Therefore, by standard theory and the definition of $\barw_{\pm}$, $\baru_{\pm}\in C^{2,\alpha}(\iniDsingsur,h)$. 
By \ref{lemcylequnique} and (iii), the limits in (v) exist. 
Finally, \eqref{eqfphi} implies the last in equality in (v) along with the definition \ref{defubar}.
\end{proof}

\begin{notation} 
\label{nota:apm}
From \ref{lemextker}\ref{itemextkerucomp}, we define
	\begin{align*} 
		&a_{--}:=a_{22},\quad  a_{++}:=a_{33},\\
		&a_{-+}:=a_{23},\quad  a_{+-}:=a_{32}.
	\end{align*}
\end{notation}

\subsection*{The Decomposition of the Mean Curvature}

\begin{lemma}[Estimates on the geometry of the catenoidal annuli] 
\label{lemOmega}
Given $\Omega_{\pm}:=\Omega_{\pm}\Dsingarg$ as in \ref{defdesing} 
we have the following. 
\begin{enumerate}[label={(\roman*)},ref={(\roman*)}]
    \item $\norm{g_{\Omega_{\pm}}-g^0_{\pm}:C^l(\Omega_{\pm},1+s)}\leq C(l)\tau$ and $\norm{g_{\Omega_{\pm}}-g^0_{\pm}:C^l(\Omega_{\pm})}\leq C(l)\delta_s$, where $g^0_{\pm}$ is the push forward of the flat metric on $\HH$ by $A_3\Dsingarg$ and $A_2\Dsingarg$ respectively.\label{itemgOmega}
    \item $\norm{A_{\Omega_{\pm}}:C^l(\Omega_{\pm})}\leq C(l)\tau$.\label{itemAOmega}
\end{enumerate}
\end{lemma}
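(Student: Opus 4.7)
The strategy is a direct calculation based on the explicit formula for $A_i\Dsingarg$ in Definition \ref{defwing} together with the conformality and catenoidal character of its image given by Proposition \ref{propwing}\ref{itemwingmin}. The case $\tau=0$ is trivial: $A_i$ is then a Euclidean rigid motion restricted to a plane, so $g_{\Omega_\pm}=g^0_\pm$ and $A_{\Omega_\pm}\equiv 0$. Assume therefore $\tau\in(0,\delta''_\tau]$.

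For (i), I would differentiate the formula
\begin{equation*}
A[\alpha^+,\phi_i,\rot_{\underline{\alpha},\beta,i}\circ\refl_i,\tau](s,z)
= (r_0{+}\tau^{-1})(\cosh\tau s+\cos\eta\sinh\tau s)(\cos\tau z,0,\sin\tau z)+(-\tau^{-1},y_0+s(1{+}r_0\tau)\sin\eta,0)
\end{equation*}
in $s$ and $z$. Using the identity $(\cosh\tau s+\cos\eta\sinh\tau s)^2=(\sinh\tau s+\cos\eta\cosh\tau s)^2+\sin^2\eta$, one verifies $\partial_s A\cdot\partial_z A=0$ and $|\partial_s A|=|\partial_z A|=\Lambda(s)$, where
\begin{equation*}
\Lambda(s):=(1+r_0\tau)(\cosh\tau s+\cos\eta\sinh\tau s).
\end{equation*}
Thus $A_i^*g_E=\Lambda^2(ds^2+dz^2)$ on $\HH$, while $g^0_\pm=(A_i)_*(ds^2+dz^2)$ by definition, so on $\Omega_\pm$
\begin{equation*}
g_{\Omega_\pm}-g^0_\pm=(\Lambda^2\circ A_i^{-1}-1)\,g^0_\pm.
\end{equation*}

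Under Assumption \eqref{eqcondition}, the scalars $r_0$, $y_0$, $\eta$ are determined by the pivot $\rot_{\underline{\alpha},\beta,i}\circ\refl_i\circ A_{\alpha^+}(\partial\HH)$ and by $\Vec{e}[\eta]=\rot_{\phi_i}\circ\rot_{\underline{\alpha},\beta,i}\circ\refl_i(\Vec{e}[\alpha^+])$; they are therefore bounded uniformly in terms of $\delta_\theta$. Writing
\begin{equation*}
\Lambda^2-1=2\tau(r_0+s\cos\eta)+\bigl((1+r_0\tau)^2-1\bigr)+\text{higher order in }\tau s,
\end{equation*}
and recalling that $\Omega_\pm\subset\preiniDsingsur$ lives in the range $s\leq 5\delta_s/\tau$ by \ref{defdesing} (so $\tau s\leq 5\delta_s$), one obtains simultaneously $|\Lambda^2-1|\leq C\tau(1+s)$ and $|\Lambda^2-1|\leq C\delta_s$. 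Each $\partial_s$ applied to $\Lambda^2$ brings down a factor of $\tau$ and leaves a bounded hyperbolic polynomial of argument $\tau s\in[0,5\delta_s]$; $\partial_z$ kills $\Lambda^2$. Since $\Lambda$ is itself bounded above and below on the region in question, the flat $(s,z)$-coordinates differ from $g_{\Omega_\pm}$-geodesic coordinates on unit balls only by an $O(1)$ conformal factor, so the two claimed weighted and unweighted $C^l$ bounds in the metric $g_{\Omega_\pm}$ follow.

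For (ii), $\Omega_\pm$ lies on a catenoid of neck radius $R=\tau^{-1}(1+r_0\tau)$, which by the boundedness of $r_0$ satisfies $C^{-1}\tau^{-1}\leq R\leq C\tau^{-1}$. The classical formula for a catenoid gives principal curvatures $\pm R^{-1}\,\mathrm{sech}^2(\rho/R)$, where $\rho$ is signed arc length from the waist, so $|A_{\Omega_\pm}|\leq 2/R\leq C\tau$. More generally, covariant derivatives of the second fundamental form on a catenoid of radius $R$ scale as $|\nabla^l A|\leq C(l)R^{-l-1}$ (an immediate consequence of the explicit parametrization combined with the fact that $R$ is the only intrinsic scale), and since $R^{-1}\leq C\tau\leq 1$ one has $R^{-l-1}\leq C(l)\tau$, proving (ii).

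The only real bookkeeping obstacle is the uniform control of $r_0$, $y_0$, $\eta$ by $\delta_\theta$ alone; everything else reduces to Taylor expansion of the explicit hyperbolic expression for $\Lambda$ and the standard catenoidal curvature formula. Once that is verified from Definition \ref{defwing} and \ref{def:Z}, the bounds above are routine.
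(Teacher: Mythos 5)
Your route---computing the conformal factor of $A[\theta,\phi,R,\tau]$ directly from the formula in Definition \ref{defwing} for (i), and using the catenoidal character of the image from \ref{propwing}\ref{itemwingmin} for (ii)---is exactly the computation that the paper's one-line proof (``these follow by the definition of $\Omega_{\pm}$ and \ref{defwing}'') leaves to the reader. Part (i) is correct as written: the identity $(\cosh\tau s+\cos\eta\sinh\tau s)^2=(\sinh\tau s+\cos\eta\cosh\tau s)^2+\sin^2\eta$ does give $A_i^*g=\Lambda^2(ds^2+dz^2)$ with $\Lambda=(1+r_0\tau)(\cosh\tau s+\cos\eta\sinh\tau s)$, the parameters $r_0,y_0,\eta$ are controlled by $\delta_\theta$ through \eqref{eqcondition}, and your use of the restriction $\tau s\le 5\delta_s$ (which is indeed how $\Omega_{\pm}$ is used, and is needed for the statement to hold) is the right reading.

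In (ii), however, you misidentify the catenoid. Writing $\rho(s)=(\tau^{-1}+r_0)(\cosh\tau s+\cos\eta\sinh\tau s)$ for the distance to the bending axis and noting that the height along the axis is $(\tau^{-1}+r_0)\sin\eta\,\tau s$ up to a constant, the image lies on the catenoid of \emph{waist radius} $a:=(\tau^{-1}+r_0)\sin\eta$, not $\tau^{-1}(1+r_0\tau)$. Under \eqref{eqcondition} alone $\sin\eta$ can be arbitrarily small (for the $+$ wing $\eta$ is essentially $2\pi-\alpha^+-\beta$ up to the small $\phi$, and $\alpha^++\beta$ may be near $\pi$), so your chain $\abs{A_{\Omega_\pm}}\le 2/(\text{waist radius})\le C\tau$ rests on a false premise and, with the correct waist radius, would not give $C\tau$. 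The conclusion is still true and the repair is local: on the region in question $\rho\ge e^{-5\delta_s}(\tau^{-1}+r_0)\ge c\,\tau^{-1}$, and for any catenoid of waist $a$ one has $\abs{A}=\sqrt{2}\,a/\rho^2\le\sqrt{2}/\rho$ and $\abs{\nabla^l A}\le C(l)\,a/\rho^{l+2}\le C(l)/\rho^{l+1}$ at distance $\rho$ from its axis (the degenerate planar case $\sin\eta=0$ being trivial), which yields $\norm{A_{\Omega_\pm}:C^l(\Omega_{\pm})}\le C(l)\tau$ uniformly in $\eta$; the same pointwise decay also covers the higher-derivative claim for which your ``only intrinsic scale'' argument was again pegged to the wrong scale. (In the actual application the waist is in fact comparable to $\tau^{-1}$ by \ref{propconvconf}\ref{itemconfA}, which is why your two slips cancel there, but the lemma is stated for all parameters in \eqref{eqcondition}.) With that replacement your proof is complete and coincides with the intended one.
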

\begin{proof}
These follow by the definition of ${\Omega_{\pm}}$ and \ref{defwing}.
\end{proof}

\begin{corollary} 
\label{lemZ}
Given $\tau^{-1}\in \mathbb{N}$, $\preiniDsingsur':=\preiniDsingsur\Dsingarg$, $\iniDsingsur:=\Scherk$, $\wrap:=\wrap\Dsingarg $. Suppose $\tau$, $\abs{\underline{\phi}}$ small enough, then
\begin{enumerate}[label={(\roman*)},ref={(\roman*)}]
    \item $\norm{A_{\preiniDsingsur'}:C^3(\preiniDsingsur')}\leq C(\varepsilon)$.\label{itemASigma}
    \item $\norm{\wrap^* g_{\preiniDsingsur'}-g_{\iniDsingsur}:C^3(\iniDsingsur)}\leq C(\varepsilon)(\tau+\abs{\underline{\phi}}+\abs{\alpha^+-\alpha^-})+C(\delta_s+\varepsilon)$.\label{itemZg}
    \item $\norm{\wrap^*A_{\preiniDsingsur'}-A_{\iniDsingsur}:C^3(\iniDsingsur)}\leq C(\varepsilon)(\tau+\abs{\underline{\phi}}+\abs{\alpha^+-\alpha^-})+C\varepsilon$.\label{itemZA}
\end{enumerate}
\end{corollary}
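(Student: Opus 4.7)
The plan is to partition $\iniDsingsur=\Sigma(\alpha^+)_{x\le 0,\,s\le 5\delta_s/\tau}$ into its core $\{s\le 0\}$ and its two wings, and to compare $\wrap^* g_{\preiniDsingsur'}$ and $\wrap^* A_{\preiniDsingsur'}$ to $g_\iniDsingsur$ and $A_\iniDsingsur$ on each piece separately, relying on the explicit description of $\wrap$ given in \ref{def:Z}. Part (i) will follow from (iii) via the triangle inequality together with the uniform bound $\|A_\iniDsingsur:C^3(\iniDsingsur)\|\le C(\varepsilon)$, which is immediate from the smooth dependence of $\Sigma(\theta)$ on $\theta$ under \ref{eq:theta}.

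On the core, $\wrap=\mathcal{B}_\tau\circ\rot_{\pi/2-\beta}\circ\mathcal{D}[\beta,\tau]\circ\bend$, and I would estimate the $C^4$ distance of each ambient factor from a rigid motion: $\rot_{\pi/2-\beta}$ is itself an isometry; $\mathcal{B}_\tau$ equals the identity at $\tau=0$ and depends smoothly on $\tau$, so on any bounded set of $\R^3$ it is $O(\tau)$ close to $\mathrm{id}$ in $C^4$; $\mathcal{D}[\beta,\tau]=\mathrm{id}+O(\tau)$ in $C^4$ by \ref{defdeform}\ref{itemdeformtau}; and $\bend$ reduces to $\mathrm{id}$ when $\alpha^+=\alpha^-$, giving $C^4$ distance $O(|\alpha^+-\alpha^-|)$ by the smooth dependence of the family in \ref{defZy}. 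Composing and applying the chain rule yields both $\|\wrap^*g_{\preiniDsingsur'}-g_\iniDsingsur:C^3\|$ and $\|\wrap^*A_{\preiniDsingsur'}-A_\iniDsingsur:C^3\|$ bounded by $C(\varepsilon)(\tau+|\alpha^+-\alpha^-|)$ on the core, with no $\underline{\phi}$ contribution since neither $\bend$ nor $\mathcal{D}$ depends on $\underline{\phi}$.

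On each wing, the parametrization $\wrap\circ\refl_i\circ F_{\alpha^+}=F_i\Dsingarg$ of \ref{D:wings} interpolates via the cutoff $\psi_s$ between the bent Scherk wing $\mathcal{B}_\tau\circ R\circ F_{\alpha^+}$ and the catenoidal graph $A_i\Dsingarg+\psi_s f_{\alpha^+}\nu$. I would decompose the wing error accordingly. The catenoidal (or planar) piece $A_i\Dsingarg$ is a minimal immersion by \ref{propwing}\ref{itemwingmin}, and its geometry is controlled by \ref{lemOmega}: the weighted bound $\|g_{\Omega_\pm}-g^0_\pm:C^l(\Omega_\pm,1+s)\|\le C\tau$ translates to an unweighted bound of order $\tau\cdot(5\delta_s/\tau)=C\delta_s$ across the wing, while $\|A_{\Omega_\pm}:C^l\|\le C\tau$ together with the exponential decay of $A_\iniDsingsur$ in $s$ yields a difference of order $\tau+C\varepsilon$. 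The rotation by $\phi_i$ of the asymptotic half-plane and the change of asymptotic direction contribute $O(|\underline{\phi}|+|\alpha^+-\alpha^-|)$ terms via the smooth parameter dependence of $A[\theta,\phi,R,\tau]$ (\ref{defwing}, \ref{propwing}), and the exponentially decaying graph term $\psi_s f_{\alpha^+}\nu$ gives an $O(\varepsilon)$ error by \ref{propscherk}\ref{itemscherkdecay}. Summing yields the wing half of (ii) and (iii) with the stated constants.

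The main obstacle I anticipate is the transition zone $s\in[3\delta_s/\tau,4\delta_s/\tau]$ where $\psi_s$ is nontrivial: there the remainder $(1-\psi_s)f_{\alpha^+}\nu$ and the bending defect $A_i\Dsingarg-\mathcal{B}_\tau R A_{\alpha^+}$ contribute simultaneously, and the $\tau$-scaled derivatives of $\psi_s$ interact with both. The key technical input is to pair the $s$-weighted estimate $\|g_{\Omega_\pm}-g^0_\pm:C^l(\Omega_\pm,1+s)\|\le C\tau$ of \ref{lemOmega}\ref{itemgOmega} with the exponential decay $\|f_{\alpha^+}:C^5(\HH,e^{-s})\|\le\varepsilon$ of \ref{propscherk}\ref{itemscherkdecay}, so that products of a linearly-growing-in-$s$ metric error with an exponentially decaying graph function remain absorbed into the stated bounds.
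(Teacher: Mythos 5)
Your proposal is correct and takes essentially the same route as the paper: the paper likewise splits into the core region ($s\le 1$), where the estimates follow from smooth dependence of $\wrap\Dsingarg$ on $(\tau,\underline{\phi},\underline{\alpha})$ on a compact (fundamental) domain, and the wing region ($s\ge 1$), where (ii) follows from \ref{lemOmega}\ref{itemgOmega} with \ref{propscherk}\ref{itemscherkdecay} and (i), (iii) from \ref{lemOmega}\ref{itemAOmega} with \ref{propscherk}\ref{itemscherkdecay}. Your factor-by-factor comparison of $\mathcal{B}_{\tau}$, $\mathcal{D}[\beta,\tau]$ and $Z_y[\alpha^+-\alpha^-]$ with rigid motions is just an explicit version of that smooth-dependence step; note only that the closeness of $\mathcal{B}_{\tau}$ to the identity holds per period in $z$ (so one invokes the equivariance \ref{propZ}\ref{itemZsym} rather than boundedness of the whole core), and that deducing (i) from (iii) also uses the metric comparability supplied by (ii).
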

\begin{proof}
On $\iniDsingsur_{s\leq 1}$, these follow by the smooth dependence on the parameters on a compact set in \ref{propZ}. On $\iniDsingsur_{s\geq 1}$, \ref{itemZg} follows by \ref{lemOmega}\ref{itemgOmega} and \ref{propscherk}\ref{itemscherkdecay}, and \ref{itemASigma}, \ref{itemZA} follow by \ref{lemOmega}\ref{itemAOmega} and \ref{propscherk}\ref{itemscherkdecay}. 
\end{proof}

\begin{lemma}[The mean curvature on the wings] 
\label{lemHwing}
Given $\gamma\in(0,1)$, $\tau^{-1}\in \mathbb{N}$, $\preiniDsingsur:=\preiniDsingsur\Dsingarg$, if $\tau$ small enough with respect to $\gamma$, then the mean curvature $H$ on $\preiniDsingsur=\preiniDsingsur\Dsingarg$ satisfies
\begin{equation*}
    \norm{H_{\preiniDsingsur}:C^2(\iniDsingsur_{s\geq 1},e^{-\gamma s})}\leq C\tau.
\end{equation*}
\end{lemma}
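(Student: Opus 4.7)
The plan is to exploit that on the wing region $\iniDsingsur_{s\geq 1}$ the immersion defining $\preiniDsingsur\Dsingarg$ is (after the cutoff $\psi_s$) a normal graph of the Scherk wing function $f_{\alpha^+}$ over the minimal asymptotic catenoid/plane $A_i\Dsingarg$ of Proposition \ref{propwing}\ref{itemwingmin}, and that in the limit $\tau \to 0$ this immersion reduces to a rigid motion of the minimal Scherk wing itself. Consequently the mean curvature depends smoothly on $\tau$ with $H|_{\tau=0}\equiv 0$, and the bound $|H| \leq C\tau e^{-\gamma s}$ follows by integrating in $\tau$, once the polynomial growth in $s$ of the $\tau$-derivative is controlled against the exponential decay of $f_{\alpha^+}$.

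I would first split $\iniDsingsur_{s\geq 1}$ into three subregions by means of the cutoff $\psi_s = \psi[4\delta_s\tau^{-1}, 3\delta_s\tau^{-1}]$. On $s\in[4\delta_s/\tau, 5\delta_s/\tau]$ the cutoff is identically $0$ and the surface coincides by Definition \ref{D:wings} with the minimal $A_i\Dsingarg$, so $H_{\preiniDsingsur}\equiv 0$. On the transition annulus $[3\delta_s/\tau, 4\delta_s/\tau]$ the derivatives $\psi_s'$ and $\psi_s''$ are $O(\tau)$ and $O(\tau^2)$ respectively, while $f_{\alpha^+}$ and its derivatives are bounded in absolute value by $\varepsilon\, e^{-3\delta_s/\tau}$ by Proposition \ref{propscherk}\ref{itemscherkdecay}; every contribution to $H$ on this strip is therefore exponentially small in $1/\tau$, which dominates $\tau e^{-\gamma s}$ on the same range and is easily absorbed into the claimed bound.

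The essential estimate is on $s\in[1, 3\delta_s/\tau]$ where $\psi_s\equiv 1$, so the immersion simplifies to $F_i\Dsingarg = A_i\Dsingarg + f_{\alpha^+}\,\nu_i\Dsingarg$ with $\nu_i\Dsingarg$ the unit normal to the minimal $A_i\Dsingarg$. On this region the formula makes sense for all $\tau \in [0, \delta''_\tau]$ (the apparent $\tau^{-1}$ factors in Definition \ref{defwing} cancel algebraically upon expansion), and it depends smoothly on $\tau$ down to $\tau = 0$. At $\tau = 0$, $A_i\Dsingarg$ is a Euclidean half-plane with constant unit normal $\nu_i\Dsingarg$, so $F_i\Dsingarg|_{\tau=0}$ is a rigid motion of the Scherk wing parametrization $F_{\alpha^+}$, which is minimal; hence $H(s,z;0)\equiv 0$. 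By the fundamental theorem of calculus
\begin{equation*}
H_{\preiniDsingsur}(s,z;\tau) \,=\, \tau \int_0^1 (\partial_\tau H)(s,z;t\tau)\, dt,
\end{equation*}
and the analogous identities hold for the first two tangential derivatives of $H$.

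To convert this into the weighted estimate, I would decompose $H = L_{A_i\Dsingarg} f_{\alpha^+} + Q_{A_i\Dsingarg}(f_{\alpha^+})$ into the Jacobi operator of $A_i\Dsingarg$ acting on $f_{\alpha^+}$ plus the graph nonlinearity. Since $f_{\alpha^+}$ is $\tau$-independent, the $\tau$-derivative hits only the metric and curvature quantities of $A_i\Dsingarg$; these arise from the bending formulas in Definition \ref{defwing} and are uniformly bounded on $\tau s \leq 3\delta_s$ by polynomials in $s$ (through terms such as $s\sinh(\tau s)$ and $s^2\cosh(\tau s)$). They get paired with $f_{\alpha^+}$ and its derivatives, which decay as $\varepsilon\, e^{-s}$ by Proposition \ref{propscherk}\ref{itemscherkdecay}, and the elementary inequality $s^k e^{-s}\leq C(\gamma,k) e^{-\gamma s}$ valid for any $\gamma \in (0,1)$ converts polynomial-times-exponential bounds into the required $C e^{-\gamma s}$ bound on $\partial_\tau H$ and its first two derivatives; integrating in $\tau$ finishes the proof. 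The main technical obstacle is the careful bookkeeping of this polynomial-versus-exponential trade-off uniformly up to two derivatives on the full range $1 \leq s \leq 3\delta_s/\tau$, where at the outer end $\tau s$ is merely bounded rather than small.
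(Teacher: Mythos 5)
Your proposal is correct and takes essentially the same route as the paper: the paper's one-line proof invokes precisely the ingredients you spell out, namely that on $s\geq 1$ the surface is the graph of the exponentially decaying $\psi_s f_{\alpha^+}$ (Proposition \ref{propscherk}\ref{itemscherkdecay}) over the minimal bent asymptotic catenoid, whose geometry differs from the flat ($\tau=0$) model by $O\bigl(\tau(1+s)\bigr)$ (Lemma \ref{lemOmega}), combined with the graph--mean-curvature expansion of \cite[Lemma B.1]{kapouleas:finite} as in \cite[Lemma 4.5]{kapouleas:finite}, so that polynomial growth times $e^{-s}$ decay is absorbed into $e^{-\gamma s}$. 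Your fundamental-theorem-of-calculus-in-$\tau$ formulation of the comparison with the exactly minimal Scherk wing at $\tau=0$, together with your handling of the cutoff annulus and of the region where $\psi_s\equiv 0$, is just a repackaging of that same comparison.
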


\begin{proof}
This follows by \ref{lemOmega}, \ref{propscherk}\ref{itemscherkdecay}, and \cite[Lemma B.1]{kapouleas:finite} similarly to the proof of \cite[Lemma 4.5]{kapouleas:finite}.
\end{proof}

\begin{prop}[The mean curvature $H$ on the desingularizing surfaces] 
\label{propmeancurvdesing}
$\phantom{l}$ 
There are 
\hfill 
$\phantom{l}$ 
\\ 
$\delta_{\phi}=\delta_{\phi}(\delta_{\theta})\in(0,\delta_{\theta})$ and $\delta_{\tau}=\delta_{\tau}(\delta_{\theta})\in(0,\delta''_{\tau})$ 
such that for given $\gamma\in(0,1)$, $\Dsingarg$ satisfying \eqref{eqcondition}, and
\begin{align*}
     \abs{\underline{\phi}}\leq \delta_{\phi},\quad \tau\in (0,\delta_{\tau}],\quad \tau^{-1}\in\mathbb{N},
\end{align*}
the mean curvature $H$ on $\preiniDsingsur=\preiniDsingsur\Dsingarg$ satisfies
\begin{align*}
    \norm{H-(\alpha^+-\alpha^--\phi^++\phi^-)w-\phi^+\barw_+-\phi^-\barw_-:C^{0,\alpha}(\preiniDsingsur,e^{-\gamma s})} 
   \,  \leq \, C(\varepsilon)(\tau+\abs{\alpha^+-\alpha^-}^2+\abs{\underline{\phi}}^2).
\end{align*}
\end{prop}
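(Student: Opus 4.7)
The plan is to split $\preiniDsingsur\Dsingarg$ into the wing region $\{s\geq 1\}$ and the core region $\{s\leq 1\}$ and estimate each separately. On the wing region, $w$ vanishes by Lemma \ref{lemsubkernel}\ref{itemsubkernalsupp} and $\barw_\pm$ vanishes by Lemma \ref{lemextker}\ref{itemextkersupp}, so the subtracted linear combination is identically zero there, and the statement reduces to $\|H\|_{C^{0,\alpha}(\{s\geq 1\},e^{-\gamma s})}\leq C\tau$, which follows from Lemma \ref{lemHwing} via the embedding of weighted $C^2$ into weighted $C^{0,\alpha}$.

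On the core region, $e^{-\gamma s}$ is bounded between absolute constants, so the weighted $C^{0,\alpha}$ norm is equivalent to the unweighted one. Set $\mu:=\alpha^+-\alpha^-$ and view $H$ on the core as a smooth function of $(\mu,\phi^-,\phi^+,\tau)$ with $\alpha^+,\beta$ in compact ranges. At the base point $\mu=\phi^\pm=\tau=0$, the map $\wrap\Dsingarg$ restricts on the core to $\rot_{\pi/2-\beta}\circ\bend=\rot_{\pi/2-\beta}\circ Z_y[0]=\rot_{\pi/2-\beta}$, a Euclidean motion sending $\Scherk$ to a rotated copy of itself, so $H\equiv 0$ there. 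The first-order Taylor expansion therefore reads
\begin{equation*}
H = (\partial_\mu H)|_0 \,\mu + (\partial_{\phi^-} H)|_0\,\phi^- + (\partial_{\phi^+} H)|_0\,\phi^+ + (\partial_\tau H)|_0\,\tau + \mathrm{rem},
\end{equation*}
where the $\tau$-term is $O(\tau)$ because $\mathcal{B}_\tau-\mathrm{id}$ and $\mathcal{D}[\beta,\tau]-\mathrm{id}$ are $O(\tau)$ by \ref{defdeform}\ref{itemdeformtau}, and $\mathrm{rem}$ collects the quadratic and mixed terms in $(\mu,\phi^\pm,\tau)$, all uniformly bounded by $C(\varepsilon)(\tau+\mu^2+|\underline{\phi}|^2)$ via elementary inequalities and the smooth parameter dependence from Corollary \ref{lemZ} and Proposition \ref{propscherk}.

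The heart of the proof is identifying the three linear coefficients with $w$ and $\barw_\pm$. For the $\mu$-coefficient: since $\bend=Z_y[\mu]$ and by \ref{defw} $w=\frac{d}{d\phi}|_{\phi=0} H_\phi\circ Z_y[\phi]$, the pullback of $H$ under $\wrap\Dsingarg|_{\tau=0,\beta=\pi/2,\underline{\phi}=0}$ on the core coincides with $H_\mu\circ Z_y[\mu]$, whose $\partial_\mu|_0$ is precisely $w$; hence $(\partial_\mu H)|_0 = w$. For the $\phi^\pm$-coefficients: fix $\alpha^+=\alpha^-$ so that $\iniDsingsur[\underline{\alpha}]$ is minimal, and apply Lemma \ref{lemgraph}: the surface $\iniDsingsur[\underline{\alpha}+\underline{\phi}',\underline{\phi}']$ is the graph over $\iniDsingsur[\underline{\alpha}]$ of $f_{\underline{\phi}'} = -\phi'^-\baru_- - \phi'^+\baru_+ + O(|\underline{\phi}'|^2)$ by \ref{defubar}. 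Since the base is minimal, the mean curvature of this graph equals $\mathcal{L}_\iniDsingsur f_{\underline{\phi}'} + O(|\nabla f|^2) = \phi'^-\barw_- + \phi'^+\barw_+ + O(|\underline{\phi}'|^2)$, using $\barw_\pm = -\mathcal{L}_\iniDsingsur\baru_\pm$. Along this family $\mu = \phi'^+-\phi'^-$, so matching with the Taylor expansion (using $(\partial_\mu H)|_0 = w$) forces $(\partial_{\phi^\pm} H)|_0 = \pm w + \barw_\pm$. Substituting all three identifications gives exactly $(\mu - \phi^+ + \phi^-)w + \phi^+\barw_+ + \phi^-\barw_-$, and the error estimate follows.

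The main obstacle is largely bookkeeping: verifying that the one-parameter family used to pin the $\phi^\pm$-coefficients stays within the smooth regime of Lemma \ref{lemgraph}, that the graph-over-minimal expansion $\mathcal{L}_\iniDsingsur f + O(|\nabla f|^2)$ holds uniformly on the core in the weighted Hölder norm, and that all cross-terms (e.g. $\tau|\mu|,\tau|\underline{\phi}|,\mu|\underline{\phi}|$) together with the $O(\tau)$ from $\mathcal{B}_\tau$ and $\mathcal{D}[\beta,\tau]$ can be absorbed into $C(\varepsilon)(\tau+\mu^2+|\underline{\phi}|^2)$ with the $\varepsilon$-dependence tracked through Proposition \ref{propscherk}\ref{itemscherkdecay} and Corollary \ref{lemZ}. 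These are routine given the compactness of the core and the smooth parameter dependence established earlier.
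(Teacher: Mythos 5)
Your proposal is correct and follows essentially the same route as the paper: the wing region is handled exactly as in the paper via Lemma \ref{lemHwing} together with the support properties \ref{lemsubkernel}\ref{itemsubkernalsupp} and \ref{lemextker}\ref{itemextkersupp}, while your core argument (first-order expansion in $(\alpha^+-\alpha^-,\underline{\phi},\tau)$ with coefficients pinned through \ref{defw}, \ref{lemgraph}, \ref{defubar}/\ref{defwbar} and the quadratic graph estimate) is a reorganization of the paper's telescoping comparison of the four surfaces $\iniDsingsur\Dsingarg$, $\iniDsingsur[\underline{\alpha},\beta,\underline{\phi},0]$, $\iniDsingsur[\underline{\alpha}-\underline{\phi}]$ and $\Sigma(\alpha^+-\phi^+)$, using the same ingredients. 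Note only the harmless notational slip that your identification should read $(\partial_{\phi^{\pm}}H)|_0=\mp w+\barw_{\pm}$ rather than $\pm w+\barw_{\pm}$, which is what you in fact use when substituting to obtain $(\alpha^+-\alpha^--\phi^++\phi^-)w+\phi^+\barw_++\phi^-\barw_-$.
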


\begin{proof}
On $\iniDsingsur_{s\geq 1}$, this follows by \ref{lemHwing} along with \ref{lemsubkernel}\ref{itemsubkernalsupp} and \ref{lemextker}\ref{itemextkersupp}. 

On $\iniDsingsur_{s\leq 1}$, this follows by comparing the four surfaces 
$\iniDsingsur\Dsingarg$, $\iniDsingsur[\underline{\alpha},\beta,\underline{\phi},0]$, $\iniDsingsur[\underline{\alpha}-\underline{\phi}]$ and $\iniDsingsur(\alpha^+-\phi^+)$, 
the smooth dependence on the parameters in \ref{propZ}, 
the definitions of $w$ and $\barw^{\pm}$ in \ref{defw}, \ref{defwbar} and \ref{lemgraph}\ref{itemgraphf} along with \cite[Lemma B.1]{kapouleas:finite}.
\end{proof}

\section{The Initial Surfaces}
\subsection*{The construction of the pre-initial surfaces}\quad\\
In this section, the parameter $k$ in the initial configurations is fixed. So are $\delta'_{\theta}=\delta_{\theta}'(k)$ and $\delta_{\sigma}=\delta_{\sigma}(k)$ as in \ref{propconvconf} and all the constants depending on it. We then choose $\delta_{\theta}$ in \ref{n4} to be the same as $\delta'_{\theta}$. 
Moreover, $k$ is assumed to be big enough. 
We will construct the pre-initial surfaces as the initial surfaces in \cite{kapouleas:finite}.
To simplify the notation, in this article (unlike in \cite{kapouleas:finite}) we use the same $\tau$ for all desingularizing surfaces.  

\begin{definition} 
\label{deftau}
\label{defdelta}
In the rest of this article we take 
$\delta:=\delta_{\theta}/5$ and $\tau:=1/m$,  
where $\tau$ is assumed as small (equivalently $m$ big enough) as needed and such that 
\begin{equation} 
\label{eqtau}
    \tau\leq \min\{\delta/3\underline{c}, \delta_{\sigma}/3\underline{c}, \delta_{\tau}\},
\end{equation}
where $\delta_{\tau}$ is as \ref{propmeancurvdesing}, $\delta_{\sigma}$ is as \ref{propconvconf}, and  
the constant $\underline{c}>0$ will be determined later. 
Finally constants denoted by $C$ will depend on $k$ and any other quantities we mention.
\end{definition}

\begin{definition}[Parameters of the initial surfaces] 
\label{defXi}
We define the following vector spaces and we will use the maximum norms on them except for $\mathcal{V}_{\sigma}$ in (i). 
\begin{enumerate}[label={(\roman*)},ref={(\roman*)}]
    \item $\mathcal{V}_{\sigma} :=\{\underline{\sigma}\in\mathbb{R}^{k}|\sigma_{k-i+1}=-\sigma_i,i=1,\dots,[\frac{k}{2}]\}$ with 
$\abs{\underline{\sigma}}:=\| \underline{\sigma} : \ell^1 \|$.
    \item $\mathcal{V}_{\theta}:=\{\underline{\theta}\in\mathbb{R}^{k}|\theta_{k-i+1}=-\theta_i,i=1,\dots,[\frac{k}{2}]\}$. 
    \item $\mathcal{V}_{\phi}:=\{\underline{\varphi}=\{ \underline{\phi}_i\}_{i=1}^k\in\mathbb{R}^{2k} 
| \underline{\phi}_i=(\phi^-_i,\phi^+_i)\in \mathbb{R}^2,\phi^-_i=\phi^+_{k-i+1},\phi^+_i=\phi^-_{k-i+1},i=1,\dots,[\frac{k}{2}]\}$. 
\item 
$\mathcal{V}:=\mathcal{V}_{\sigma}\times \mathcal{V}_{\phi}$ and $\mathcal{V}':=\mathcal{V}_{\theta}\times \mathcal{V}_{\phi}$. 
\end{enumerate}
Finally we define $\Xi_{\mathcal{V}}:=\{\xi\in \mathcal{V}:\abs{\xi}\leq \underline{c} \tau  \}$.
\end{definition}

Given $\xi=(\underline{\sigma},\underline{\varphi})\in\Xi_{\mathcal{V}}$, the initial surface $M=M[\xi]=M\insurfarg$ could be constructed by the following. For the initial configuration $\Conf$, there is a unique composition of a homothety followed by a translation which maps the circle $\mathcal{B}_{\tau}$($z$-axis) onto $\mathcal{C}_i\Confarg$ (recall \ref{nota:confarg}). This map will be called $\mathcal{H}_i$ suppressing the parameters $\Confarg$ on which it depends. Moreover, each $\mathcal{C}_i\Confarg$ has latitude $\beta_i\Confarg$, and there are two angles $\alpha_i^{\pm}\Confarg$. The angles $\underline{\alpha}_i({\xi})$, $\beta_i(\xi)$ are defined by the relations:
 \begin{equation}\label{eqalphaxi}
     \underline{\alpha}_i({\xi})=(\alpha_i^{-}({\xi}),\alpha_i^{+}({\xi})):=(\alpha_i^{-}\Confarg-\phi_i^-,\alpha_i^{+}\Confarg-\phi_i^+),\quad \beta_i(\xi):=\beta_i\Confarg.
 \end{equation} 
 Then the surface $\mathring{\mathcal{S}}_i[\xi]$ is defined by 
 \begin{equation}
    \mathring{\mathcal{S}}_i[\xi]:=\mathcal{H}_i(\preiniDsingsur\inDsingarg),
 \end{equation}
where $\underline{\tilde{\phi}}_i(\xi)=\{\tilde{\phi}_i^-(\xi), \tilde{\phi}_i^+(\xi)\}$ will be determined later. 

From \ref{defdesing}, the $\pm$ pivot of $\Sigma\inDsingarg$ does not depend on $\underline{\tilde{\phi}}_i(\xi)$, hence $\mathcal{C}'_{\pm,i}[\xi]$ could be defined by
\begin{equation*}
    \mathcal{C}'_{\pm,i}[\xi]:=\mathcal{H}_i(\mathcal{C}_{\pm}\inDsingarg).
\end{equation*}

When $i=1,\dots,k-1$, $\mathcal{A}_i\Confarg$ is an annulus and \begin{equation*}
   \partial \mathcal{A}_i\Confarg=\mathcal{C}_i\Confarg\cup\mathcal{C}_{i+1}\Confarg.
\end{equation*}
For each $i=1,\dots,k-1$, there exists a unique catenoidal annulus $\mathcal{A}'_i[\xi]$, which is a small perturbation of $\mathcal{A}_i\Confarg$, such that 
\begin{equation*}
   \partial \mathcal{A}'_i[\xi]=\mathcal{C}'_{+,i}[\xi]\cup\mathcal{C}'_{-,i+1}[\xi].
\end{equation*}
Then there is a unique value for each $\tilde{\phi}_1^+$, $\tilde{\phi}_k^-$ and $\tilde{\phi}^{\pm}_i$, $i=2,\dots,k-1$ such that \begin{equation*}
    \mathcal{H}_{i+1}(\partial_{-}\iinDsingarg)\subset \mathcal{A}'_{i}[\xi],\quad \mathcal{H}_i(\partial_{+}\inDsingarg)\subset \mathcal{A}'_i[\xi]
\end{equation*}
This means the two surfaces $\mathring{\mathcal{S}}_i[\xi]$ and $\mathcal{A}'_i[\xi]$ (or $\mathcal{A}'_{i-1}[\xi]$) match together except a neighborhood of $\mathcal{C}'_{+,i}[\xi]$ (or
$\mathcal{C}'_{-,i}[\xi]$) in $\mathcal{A}'_i[\xi]$ (or $\mathcal{A}'_{i-1}[\xi]$). And $\mathcal{A}''_i[\xi]$ is defined by
\begin{align*}
   & \mathcal{A}''_i[\xi]:=\mathcal{A}'_i[\xi]\setminus \\
    &\left(\mathcal{H}_i\circ A_+\inDsingarg(\, [0, 4\delta_s/\tau] \times\R \, ) \, \cup \, 
\mathcal{H}_{i+1}\circ A_-\iinDsingarg(\, [0, 4\delta_s/\tau] \times \R \, ) \right)
\end{align*}

In the case $i=0$ or $k$, $\mathcal{A}_i\Confarg$ is a planar disk, and $\partial \mathcal{A}_0\Confarg=\mathcal{C}_1\Confarg$, $\partial \mathcal{A}_k\Confarg=\mathcal{C}_k\Confarg$. For $i=1,k$, there are unique maps $\mathcal{H}[c_i^+,c_i]:E^3\to E^3$, $\mathcal{H}[c_i^+,c_i](x,y,z):=c_i^+(x,y+c_i,z)$ such that 
\begin{equation*}
    \mathcal{H}[c_1^+,c_1](\mathcal{C}_1\Confarg)=\mathcal{C}'_{-,1}[\xi],\quad \mathcal{H}[c_k^+,c_k](\mathcal{C}_k\Confarg)=\mathcal{C}'_{+,k}[\xi],
\end{equation*}
and we define
\begin{equation*}
   \mathcal{A}'_{0}[\xi]:= \mathcal{H}[c_1^+,c_1](\mathcal{A}_0\Confarg),\quad \mathcal{A}'_{k}[\xi]:=\mathcal{H}[c_k^+,c_k](\mathcal{A}_k\Confarg).
\end{equation*}
As before, $\tilde{\phi}^-_{1}$ and $\tilde{\phi}^+_{k}$ are defined such that
\begin{equation*}
    \mathcal{H}_{1}(\partial_{-}\finDsingarg)\subset \mathcal{A}'_{0}[\xi],\quad \mathcal{H}_k(\partial_{+}\linDsingarg)\subset \mathcal{A}'_k[\xi],
\end{equation*}
and $\mathcal{A}''_0[\xi]$, $\mathcal{A}''_k[\xi]$ are defined by
\begin{align*}
\mathcal{A}''_0[\xi]&:=\mathcal{A}'_0[\xi]\setminus (\mathcal{H}_1\circ A_-\finDsingarg(\, [0, 4\delta_s/\tau] \times \R \, ) ) 
\\ 
\mathcal{A}''_k[\xi]&:=\mathcal{A}'_k[\xi]\setminus (\mathcal{H}_k\circ A_+\linDsingarg(\, [0, 4\delta_s/\tau] \times \R \, ) ).
\end{align*}
    
Now all angles $\tilde{\phi}^{\pm}_{i}$ have been defined.

\begin{definition}[The pre-initial surfaces] 
\label{def:preini} 
The surfaces $\mathring{M}[\xi]$, $M'[\xi]$ and $\mathring{S}[\xi]$ are defined by
\begin{align*}
    \mathring{M}=\mathring{M}[\xi]=\mathring{M}\insurfarg&:=\cup_{i=1}^k\mathring{\mathcal{S}}_i[\xi]\cup_{j=0}^k\mathcal{A}''_j[\xi],\\
    M'=M'[\xi]=M'\insurfarg&:=\cup_{j=0}^k\mathcal{A}'_j[\xi],\\
    \mathring{S}=\mathring{S}[\xi]=\mathring{S}\insurfarg&:=\cup_{i=1}^k\mathring{\mathcal{S}}_i[\xi].
\end{align*}

The push forward of the function $s$ by $\mathcal{H}_i$ to each $\mathring{\mathcal{S}}_i$ could be extended to a function on the whole $\mathring{M}$ by taking $s=\max_{S} s$ on the rest of $\mathring{M}$. The function will also be denoted by $s$. Similarly, $s$ can be defined on $M'$.

$\tilde{\underline{\sigma}}=\tilde{\underline{\sigma}}(\xi)\in \mathcal{V}_{\sigma}$ and $\tilde{\underline{\varphi}}=\tilde{\underline{\varphi}}(\xi)\in\mathcal{V}_{\phi}$ are defined as the following:
\begin{enumerate}[label={(\roman*)},ref={(\roman*)}]
    \item $\tilde{\underline{\sigma}}:=\{\tilde{\sigma}_i\}_{i=1}^{k}$ where $\tilde{\sigma}_i:=\log\frac{\alpha^+_i(\xi)}{\alpha^-_i(\xi)}$.
    \item $\tilde{\underline{\varphi}}:=\{\tilde{\underline{\phi}}_i\}_{i=1}^{k}$, where $\tilde{\underline{\phi}}_i(\xi)$ has already been defined.
\end{enumerate}

Finally let $\underline{a}:=8\abs{\log\tau}=8\log m$. Therefore for $\mathring{M}[\xi]$ as above, each component of $\mathring{M}_{s\geq \underline{a}}$ contains exactly one $\mathcal{A}_j''[\xi]$. This component will be called $\mathcal{N}_j[\xi]$. 
\end{definition}

\begin{prop}[Properties of the pre-initial surfaces] 
\label{proppreini}
$\mathring{M}=\mathring{M}[\xi]$ with $\xi\in \Xi_{\mathcal{V}}$ is well defined and has the following properties:
\begin{enumerate}[label={(\roman*)},ref={(\roman*)}]
    \item $\mathring{M}$ is a smooth embedding surface which depends smoothly on $\xi$.
    \item $\mathring{M}$ is contained in the unit ball $\mathbb{B}^3$ and $\partial\mathring{M}$ is contained in the unit sphere $\mathbb{S}^2$, which is a disjoint union of $km$ topologically circles.
    \item $\mathring{M}$ respects the symmetries in $\Grp$.
    \item $\abs {\underline{\varphi}+\underline{\tilde{\varphi}}}\leq C(\varepsilon)\tau$.\label{itempreiniphi}
    \item $\mathring{M}_{s\geq1}$ is a graph over $M'_{s\geq1}$ by a function $f_{M}$ such that $\norm{f_{M}:C^5(M'_{s\geq \underline{a}})}\leq \tau^3$.
\end{enumerate}
\end{prop}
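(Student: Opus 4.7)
My plan is to verify the five items in order, as they are largely independent. For (i), the main work is to justify the existence and smoothness of the $\tilde{\phi}^{\pm}_i$. The defining conditions $\mathcal{H}_j(\partial_{\pm}\Sigma) \subset \mathcal{A}'_j[\xi]$ give, for each $i$, one scalar equation per scalar unknown $\tilde{\phi}^{\pm}_i$. Since a catenoid is determined by its pivot circle together with the inward conormal at that circle, and varying $\tilde{\phi}^{\pm}_i$ shifts this conormal monotonically (closely related to the monotonicity of \ref{propmonotone}), the implicit function theorem produces a unique smooth solution $\tilde{\phi}^{\pm}_i(\xi)$. Once set this way, the wings on $s \in [4\delta_s/\tau, 5\delta_s/\tau]$ are by \ref{D:wings} (the cutoff $\psi_s$) exact subsets of $\mathcal{A}'_i[\xi]$, so removing them from $\mathcal{A}'_i$ to form $\mathcal{A}''_i$ produces a smooth match along the overlap. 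Embeddedness then follows from \ref{propZ} applied to each desingularizing piece, together with the evident separation of distinct pieces.

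For (ii), each $\mathcal{A}'_i$ is a small perturbation of $\mathcal{A}_i\Confarg$ which lies in $\mathbb{B}^3$ by \ref{lem::}\ref{itemcateconfassump}, so $\mathcal{A}''_i \subset \mathbb{B}^3$. The boundary $\partial_n\mathring{\mathcal{S}}_i$ is sent by $\mathcal{H}_i$ onto $\mathbb{S}^2$, since $\mathcal{H}_i$ maps the sphere $\mathscr{S}_{\tau,\beta_i}$ from \ref{propZ}\ref{itemZbd} to the unit sphere by the construction of $\mathcal{H}_i$. The count $km$ arises because $\tau^{-1} = m$ fundamental regions occur per half Scherk piece by \ref{propZ} and there are $k$ desingularizing pieces. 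For (iii), the rotational subgroup $\Z_m \subset D_{2m}$ comes from the singly periodic structure of $\Scherk$ (period $2\pi$) combined with $\tau = 1/m$, which after wrapping by $\mathcal{B}_\tau$ becomes rotation by $2\pi/m$ about the $y$-axis; the dihedral reflections descend from the $\{z = n\pi\}$ reflections in \ref{propscherk}; and the $\Z_2$ reflection across the $xz$-plane is forced by the symmetry constraints $\sigma_{k-i+1} = -\sigma_i$ and $\phi^{\pm}_i = \phi^{\mp}_{k-i+1}$ in \ref{defXi} together with the uniqueness from Step 1.

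For (iv), the key observation is that the total ``phi-rotation'' applied to the $\pm$ wing in $\mathring{\mathcal{S}}_i$ is $\phi^{\pm}_i + \tilde{\phi}^{\pm}_i$ (the first entering through $\underline{\alpha}(\xi) = \underline{\alpha}\Confarg - \underline{\varphi}$, the second explicitly via \ref{def:Z}). In the unbent limit $\tau \to 0$, the matching condition with the limiting planar configuration is exactly the one of \ref{lemgraph}, which forces $\tilde{\phi}^{\pm}_i = -\phi^{\pm}_i$ identically. For $\tau > 0$ the bending $\mathcal{B}_\tau$ perturbs this condition by $O(\tau)$ with constant controlled by $\varepsilon$, so linearizing the equations of Step 1 about $\tau = 0$ yields $|\tilde{\phi}^{\pm}_i + \phi^{\pm}_i| \leq C(\varepsilon)\tau$, and summing over $i$ proves (iv). Finally for (v), by \ref{D:wings} the wing on $s \in [1, 5\delta_s/\tau]$ has the form $A + \psi_s f_{\alpha^+}\, \nu$ (since $\psi[1,0] = 0$ for $s \geq 1$), exhibiting $\mathring{M}_{s\geq 1}$ as a normal graph over $M'_{s\geq 1}$ with graph function bounded pointwise by $|f_{\alpha^+}|$; the exponential decay \ref{propscherk}\ref{itemscherkdecay} together with $\underline{a} = 8|\log\tau|$ gives $\|f_M : C^5(M'_{s\geq \underline{a}})\| \leq C e^{-\underline{a}} = C\tau^8 \leq \tau^3$ for $\tau$ small.

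The principal difficulty is Step 1 and the closely tied Step 4: verifying uniform non-degeneracy of the implicit function problem for the $\tilde{\phi}^{\pm}_i$ over all $\xi \in \Xi_{\mathcal{V}}$ and tracking its $\tau$-linearization quantitatively to obtain the $O(\tau)$ bound. Items (ii), (iii), and (v) are essentially bookkeeping built on the explicit formulas of \ref{propscherk}, \ref{propZ}, and \ref{D:wings} already established.
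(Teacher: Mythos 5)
Your proposal is correct and follows essentially the same route as the paper, whose (much terser) proof likewise reduces (i)--(iii) to the construction together with \ref{propZ}\ref{itemZbd} and \ref{propZ}\ref{itemZsym} plus the symmetry of $\Wcal\Confarg$, obtains (iv) from the fact that the pivot circles, and hence the boundary conormals of the $\mathcal{A}'_i[\xi]$, are displaced by $C(\varepsilon)\tau$ (your $\tau=0$ comparison via \ref{lemgraph} is the same estimate in different words), and gets (v) from \ref{propscherk}\ref{itemscherkdecay} after rescaling. The only bookkeeping caveat is in (v): the bound is on the unit-scale $C^5$ norm, so the homothety $\mathcal{H}_i$ contributes a factor $\sim\tau$ to the graph function and $\sim\tau^{-1}$ per derivative, giving roughly $C\varepsilon\tau^{4}$ rather than your $C\tau^{8}$ --- still comfortably below $\tau^{3}$.
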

\begin{proof}
The facts that $\mathring{M}$ is well defined and satisfies (i) and (ii) follow by the construction and \ref{propZ}\ref{itemZbd}. (iii) follows by \ref{propZ}\ref{itemZsym} and the symmetry of the initial configuration $\Conf$. (iv) follows by the fact that the conormals at the boundary of each $\mathcal{A}'_i$ differ the corresponding ones for $\mathcal{A}_i$ by $C(\varepsilon)\tau$ since the corresponding boundaries have been moved by $C(\varepsilon)\tau$. (v) follows by the construction and rescaling \ref{propscherk}\ref{itemscherkdecay}.
\end{proof}

\begin{cor}[Estimates on the parameters] 
\label{corini} 
Suppose $\tau$ is small enough, the parameters of each
\begin{equation*}
	\preiniDsingsur\Dsingarg=\mathcal{H}^{-1}_i(\mathring{\mathcal{S}}_i[\xi])
\end{equation*}
satisfy the following:
\begin{enumerate}[label={(\roman*)},ref={(\roman*)}]
    \item $50\delta\leq \alpha^+\leq \frac{\pi}{2}- 50\delta$.\label{iteminialpha}
    \item $\abs{\alpha^+-\alpha^-}\leq 3\underline{c}\tau\leq \delta$.\label{iteminidifalpha}
    \item $\abs{\underline{\phi}}\leq (\underline{c}+C(\varepsilon))\tau\leq  \delta_{\phi}$, where $\delta_{\phi}$ is as \ref{propmeancurvdesing}.\label{iteminiphi}
\end{enumerate}
\end{cor}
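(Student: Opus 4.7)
The plan is to unpack the definitions of the parameters $\underline{\alpha}_i(\xi),\beta_i(\xi),\underline{\tilde\phi}_i(\xi),\tau$ of $\preiniDsingsur\inDsingarg=\mathcal{H}_i^{-1}(\mathring{\mathcal{S}}_i[\xi])$ given by \eqref{eqalphaxi} and \ref{def:preini}, and then to combine them with the a priori bounds on $\Conf$ from Proposition \ref{propconvconf}, the smallness $|\xi|\le\underline{c}\tau$ built into $\Xi_{\mathcal{V}}$ (Definition \ref{defXi}), and the smallness condition \eqref{eqtau} on $\tau$. The argument is pure bookkeeping; no single step is hard. The only mild care needed is matching the norm conventions on $\mathcal{V}_\sigma$ and $\mathcal{V}_\phi$ and tracking the absolute constants in \ref{iteminidifalpha}, where the standing assumption that $k$ is taken large enough is what ultimately makes the precise numerical bound $3\underline{c}\tau$ work out.

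First I would record what $\xi=(\underline{\sigma},\underline{\varphi})\in\Xi_{\mathcal{V}}$ gives us: namely $\|\underline{\sigma}:\ell^1\|\le\underline{c}\tau$, so in particular $|\sigma_i|\le\underline{c}\tau$ for every $i$; $|\phi_i^\pm|\le\underline{c}\tau$ for every $i$; and by \eqref{eqtau} also $\underline{c}\tau\le\delta/3$. With the convention $\delta_\theta=\delta'_\theta=5\delta$ fixed at the start of this section, Proposition \ref{propconvconf}\ref{itemconfalpha} yields $\alpha_i^+[\underline{\sigma}:k]\in[150\delta,\pi/2-150\delta]$, and then combining with $\alpha^+=\alpha_i^+(\xi)=\alpha_i^+[\underline{\sigma}:k]-\phi_i^+$ from \eqref{eqalphaxi} and $|\phi_i^+|\le\delta/3$ yields $\alpha^+\in[100\delta,\pi/2-100\delta]\subset[50\delta,\pi/2-50\delta]$, which establishes \ref{iteminialpha}.

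For \ref{iteminidifalpha}, the unbalancing identity $e^{\sigma_i}=\alpha_i^+[\underline{\sigma}:k]/\alpha_i^-[\underline{\sigma}:k]$ from Definition \ref{defcat} yields
\[
\bigl|\alpha_i^+[\underline{\sigma}:k]-\alpha_i^-[\underline{\sigma}:k]\bigr|=\alpha_i^-[\underline{\sigma}:k]\,|e^{\sigma_i}-1|\le 2\,\alpha_i^-[\underline{\sigma}:k]\,|\sigma_i|
\]
for $\tau$ small. By Lemma \ref{lembalanceconf}\ref{itemalphacontrol} and Corollary \ref{lem::}, $\alpha_i[0{:}k]\le\beta_1[0{:}k]\to 0$ as $k\to\infty$, and the continuous dependence on $\underline{\sigma}$ in \ref{lem::} together with $\|\underline{\sigma}:\ell^1\|\le\underline{c}\tau$ shows $\alpha_i^-[\underline{\sigma}:k]$ stays close to $\alpha_i[0{:}k]$; hence for $k$ large, $2\alpha_i^-[\underline{\sigma}:k]<1$, so the first contribution is at most $\underline{c}\tau$. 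Combined with $|\phi_i^+-\phi_i^-|\le 2\underline{c}\tau$, this gives $|\alpha^+-\alpha^-|\le 3\underline{c}\tau$, and the second inequality is then immediate from \eqref{eqtau}. Finally, \ref{iteminiphi} follows directly from Proposition \ref{proppreini}\ref{itempreiniphi} via
\[
|\underline{\tilde\varphi}|_\infty\le|\underline\varphi|_\infty+|\underline\varphi+\underline{\tilde\varphi}|_\infty\le\underline{c}\tau+C(\varepsilon)\tau=(\underline{c}+C(\varepsilon))\tau,
\]
which is $\le\delta_\phi$ once $\tau$ is small enough in terms of $\varepsilon$.
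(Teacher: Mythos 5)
Your proof is correct and follows essentially the same route as the paper's: (i) from \eqref{eqalphaxi}, \ref{propconvconf}\ref{itemconfalpha}, $\delta=\delta_\theta/5$, \ref{defXi} and \eqref{eqtau}; (ii) from the identity $\alpha^+-\alpha^-=\alpha_i^-\Confarg\,(e^{\sigma_i}-1)+(\phi_i^--\phi_i^+)$ together with the smallness of $\alpha_i^{\pm}\Confarg$ for $k$ large; (iii) from \ref{proppreini}\ref{itempreiniphi} and $\tau$ small. The only cosmetic difference is that you source the bound $2\alpha_i^{-}\Confarg<1$ from \ref{lembalanceconf}\ref{itemalphacontrol} plus the continuous dependence on $\underline{\sigma}$ in \ref{lem::}, whereas the paper cites \ref{corbal}; both yield the same fact, so the arguments coincide in substance.
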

\begin{proof}
From the definitions $\underline{\alpha}=\underline{\alpha}_i(\xi)$, $\underline{\phi}=\underline{\tilde{\phi}}_i(\xi)$. (i) follows by \eqref{eqalphaxi}, \ref{propconvconf}\ref{itemconfalpha}, \ref{defdelta}, \ref{defXi} and \eqref{eqtau}. From \eqref{eqalphaxi}, 
\begin{equation*}
    \alpha_i^+(\xi)-\alpha_i^-(\xi)=\alpha_i^+\Confarg-\alpha_i^-\Confarg+(\phi^-_i-\phi^+_i)=(e^{\sigma_i}-1)\alpha_i^+ \Confarg+(\phi^-_i-\phi^+_i).
\end{equation*}
Moreover, from \ref{corbal}, $\abs{\alpha_i^{\pm}\Confarg}<1$ when $k$ big enough. (ii) then follows by \ref{defXi} and \eqref{eqtau}. (iii) follows by \ref{defXi}, \ref{proppreini}\ref{itempreiniphi} and by assuming $\tau$ small enough. 
\end{proof}

\subsection*{The boundary of the pre-initial surfaces}
Unlike in \cite{kapouleas:finite} there are no complete ends extending to infinity. 
On the other hand, we need to ensure that the initial surfaces intersect $\Sph^2$ orthogonally. 
To achieve this we study and modify the vicinity of the boundary of the pre-initial surfaces as in \cite{kapouleas:wiygul}. 

\begin{notation}
$\mathring{X}:=\mathring{X}[\xi]:\mathring{M}=\mathring{M}[\xi]\to E^3$ is defined to be the embedding map for $\mathring{M}$ and $\mathring{\nu}:=\mathring{\nu}[\xi]:\mathring{M}=\mathring{M}[\xi]\to E^3$ is defined to be the unit normal to $\mathring{M}$ which points outward on each annulus $\mathcal{A}''_i$. $\mathring{\Theta}:\partial \mathring{M}\to \mathbb{R}$ is defined to be the inner product
\begin{equation*}
    \mathring{\Theta}:=\langle \mathring{X},\mathring{\nu} \rangle.
\end{equation*}
\end{notation}
\begin{definition}\label{defprecoor}
On a neighborhood in $\mathring{\mathcal{S}}_i\subset\mathring{M}$ of each component of $\partial\mathring{M}$ small enough, such that within it the map of the nearest point projection onto $\partial\mathring{M}$ is well-defined and smooth, the smooth coordinates $(\mathring{\rho},\mathring{t})$ are defined as the following: for any point $P$ in this region, let $\mathring{\rho}$ be the distance of $P$ from $\partial\mathring{M}$, and let $\mathring{t}$ be the distance in $\partial\mathring{M}$ of the nearest point projection of $P$ onto $\partial\mathring{M}$ from an arbitrary fixed reference point on $\partial\mathring{M}$. In particular, along $\partial\mathring{M}$, $\partial_{\mathring{\rho}}$ is the inward unit conormal for $\partial\mathring{M}$, and $\{ \mathring{X}_*\partial_{\mathring{\rho}},\mathring{X}_*\partial_{\mathring{t}}, \mathring{\nu}\}$ is an orthonormal basis at each point of $\mathring{M}$.
\end{definition}  
From the definitions, $\left.\langle \mathring{X},\mathring{X}_*\partial_{\mathring{\rho}} \rangle\right\vert_{\partial\mathring{M}}=-\sqrt{1-\mathring{\Theta}^2} $.

\begin{lemma}\label{lemPhi}
For each $\mathring{\mathcal{S}}_i$, $i=1,\dots,k$, $\preiniDsingsur=\preiniDsingsur\inDsingarg=\mathcal{H}_i^{-1}(\mathring{\mathcal{S}}_i)$, the following results hold:\\
(i)There exists $\epsilon'=\epsilon'(k)$, such that $\mathring{\rho}$ is well defined and smooth in the range $[0,\epsilon'\tau]$. Moreover, $\epsilon'$ could be chosen to be small enough, such that $\mathring{M}_{\mathring{\rho}\leq \epsilon'\tau}\subset \mathring{M}_{s=0}$\\ 
(ii)$\norm{\mathring{\Theta}\circ \mathcal{H}_i:C^3(\partial_n\preiniDsingsur)}\leq C\tau$.
\end{lemma}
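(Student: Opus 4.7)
The plan is to prove both parts by pulling back to the model $\preiniDsingsur = \mathcal{H}_i^{-1}(\mathring{\mathcal{S}}_i)$ via the affine map $\mathcal{H}_i$, whose linear part is a homothety of scale $\lambda_i := \tau\sin\beta_i(\xi) \sim \tau$. A direct computation from the explicit formulas of $\mathcal{B}_\tau$, $\mathscr{S}_{\tau,\beta}$ (from \ref{D:Stb}), and $\mathcal{H}_i$ yields
\[
\mathcal{H}_i(p)\;=\;\lambda_i\,\bigl(p-c_{\tau,\beta_i(\xi)}\bigr),
\]
where $c_{\tau,\beta}$ denotes the center of $\mathscr{S}_{\tau,\beta}$; equivalently, $\mathcal{H}_i$ carries $\mathscr{S}_{\tau,\beta_i(\xi)}$ (which contains $\partial_n\preiniDsingsur$ by \ref{propZ}\ref{itemZbd}) onto $\Sph^2$ and pulls back Euclidean distance by the factor $\lambda_i$.

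For (i), by \ref{corini} the parameters $\Dsingarg$ of each $\preiniDsingsur$ lie in a compact subset of parameter space depending only on $k$; on this set $\wrap\Dsingarg$ is a smooth immersion with uniform $C^k$ bounds and $\partial_n\preiniDsingsur$ is a smooth family of curves. Hence the normal exponential map of $\partial_n\preiniDsingsur$ inside $\preiniDsingsur$ has injectivity radius bounded below by some uniform $\epsilon_0=\epsilon_0(k)>0$. By \ref{L:predeform}, $\partial_n\preiniDsingsur$ is contained in the slab $R_{\epsilon_d}$ which lies entirely in the core $\preiniDsingsur_{s\leq 0}$; shrinking $\epsilon_0$ further if needed keeps its tubular neighborhood in the core. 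Transferring by $\mathcal{H}_i$, which preserves the $s$-function and multiplies distances by $\lambda_i$, gives a tubular neighborhood of the corresponding boundary circle of $\mathring{M}$ of width $\epsilon'\tau$ that sits inside $\mathring{M}_{s=0}$, as required.

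For (ii), since $\mathcal{H}_i$ is a positive homothety it preserves unit normals, so for $p\in\partial_n\preiniDsingsur$,
\[
\mathring{\Theta}\circ\mathcal{H}_i(p)\;=\;\bigl\langle\mathcal{H}_i(p),\nu_{\preiniDsingsur}(p)\bigr\rangle\;=\;\lambda_i\,\bigl\langle p-c_{\tau,\beta_i(\xi)},\nu_{\preiniDsingsur}(p)\bigr\rangle,
\]
which, since $\lambda_i\,|p-c_{\tau,\beta_i(\xi)}|=1$ on the sphere, equals the cosine of the angle between the outward unit normal of $\mathscr{S}_{\tau,\beta_i(\xi)}$ and $\nu_{\preiniDsingsur}$. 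Thus the estimate reduces to showing that $\preiniDsingsur$ meets $\mathscr{S}_{\tau,\beta_i(\xi)}$ at an angle differing from $\pi/2$ by $O(\tau)$ in $C^3$ along $\partial_n\preiniDsingsur$. I would establish this by tracking orthogonality through the three factors of $\wrap\Dsingarg=\mathcal{B}_\tau\circ\rot_{\pi/2-\beta}\circ\mathcal{D}[\beta,\tau]\circ\bend$ on the core: (a) $\Sigma(\alpha^+)$ meets the $yz$-plane orthogonally by the $\refl_2$-reflective symmetry in \ref{propscherk}, and $\bend = Z_y[\alpha^+-\alpha^-]$ is the identity near $\{x=0\}$ by \ref{defZy}\ref{itemZyid}, so the property is preserved; (b) by \ref{defdeform}\ref{itemdeformtau} the map $\mathcal{D}[\beta,\tau]$ differs from the identity by $O(\tau\sin\beta)$ in $C^4$ and sends the boundary onto $\mathscr{C}_{\tau,\beta}$ (\ref{defdeform}\ref{itemdeformimage}), while the cylinder $\mathscr{C}_{\tau,\beta}$ itself is $O(\tau)$-close to the $yz$-plane in $C^k$ on $R_{\epsilon_d}$; (c) $\mathcal{B}_\tau\circ\rot_{\pi/2-\beta}$ sends $\mathscr{C}_{\tau,\beta}$ to $\mathscr{S}_{\tau,\beta}$ and its differential differs from a rigid motion by $O(\tau)$ in $C^k$ on bounded sets (by direct Taylor expansion of \ref{D:Stb}). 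Since the intersection cosine is a smooth function of the $1$-jets of the two surfaces, the three $O(\tau)$ perturbations in (a)--(c) compose to yield the desired $C^3$-bound.

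The main obstacle is not conceptual but rather the bookkeeping required to propagate the near-orthogonality estimate in full $C^3$ through the deformation $\mathcal{D}$ and the bending $\mathcal{B}_\tau$; this will be routine given the quantitative bound \ref{defdeform}\ref{itemdeformtau} together with the explicit $\tau$-dependence of $\mathcal{B}_\tau$, both of which supply smooth dependence on $\tau$ on compact subsets.
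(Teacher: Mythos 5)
Your proposal is correct and follows essentially the paper's (very brief) argument: part (i) by scaling together with the uniform geometry of the construction, and part (ii) by reducing $\mathring{\Theta}\circ\mathcal{H}_i$ to the angle between $\preiniDsingsur$ and $\mathscr{S}_{\tau,\beta}$ and tracking near-orthogonality through $\bend$, $\mathcal{D}[\beta,\tau]$ (via \ref{defdeform}\ref{itemdeformtau}) and the near-rigidity of $\mathcal{B}_{\tau}$ on the core, using the smooth parameter dependence of \ref{propZ}. The only cosmetic difference is that you invoke \ref{defZy}\ref{itemZyid} together with the $\refl_2$-invariance of $\Scherk$, where the paper cites the equivariance \ref{defZy}\ref{Zysym}; both yield the required orthogonality to the $yz$-plane before bending.
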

\begin{proof}
(i) follows by the construction of $\mathring{M}$ and a scaling. (ii) follows by the construction in \ref{defZy}\ref{Zysym}, \ref{defdeform}\ref{itemdeformtau} and the smooth dependence of parameters in \ref{propZ}.
\end{proof}

\subsection*{The Construction of the Initial Surfaces}

\begin{definition}[The initial surfaces]  
\label{defini} 
$\mathring{u}\in C^{\infty}(\mathring{M})$ is defined by 
\begin{equation*}
    \mathring{u}(\mathring{\rho},\mathring{t}):=-\mathring{\rho}\frac{\mathring{\Theta}(\mathring{t})}{\sqrt{1-\mathring{\Theta}^2(\mathring{t})}}\psi\left[\frac{\epsilon'\tau}{2},\frac{\epsilon'\tau}{4}\right](\mathring{\rho})
\end{equation*}
on $\mathring{\mathcal{S}}_i$ and $0$ on the rest of $\mathring{M}$.

Then the corresponding deformation $\mathring{X}_{\mathring{u}}[\xi]:\mathring{M}[\xi]\to E^3$ is defined by  
\begin{equation*}
    \mathring{X}_{\mathring{u}}[\xi](P):=\mathring{X}(P)+\mathring{u}(P)\mathring{\nu}(P),
\end{equation*}
and the resulting initial surface $M=M[\xi]$ is defined to be the image of $\mathring{M}$ under $\mathring{X}_{\mathring{u}}$:
\begin{equation*}
    M[\xi]:=\mathring{X}_{\mathring{u}}[\xi](\mathring{M}[\xi]).
\end{equation*}
Similarly, $\mathcal{S}_i[\xi]$, $S[\xi]$ are defined by
\begin{align*}
     \mathcal{S}_i[\xi]:=\mathring{X}_{\mathring{u}}[\xi](\mathring{\mathcal{S}}_i[\xi]),\quad S[\xi]:=\mathring{X}_{\mathring{u}}[\xi](\mathring{S}[\xi])=\cup_{i=1}^k\mathcal{S}_i[\xi].
    \end{align*}
\end{definition}

\begin{notation}
The embedding map for $M$ will be denoted by $X:=X[\xi]:M=M[\xi]\to E^3$. 
The unit normal to $M$ which points outward on each catenoidal annulus $\mathcal{A}''_i$ will be denoted by $\nu:=\nu[\xi]:M=M[\xi]\to E^3$. 
Moreover, the composition $\mathring{X}_{\mathring{u}}\circ \mathcal{H}_i\circ \wrap\inDsingarg:\Scherkin\to M$ will be denoted by $\mathcal{Y}_i[\xi]$.
\end{notation}

\begin{definition}
\label{D:rho} 
The coordinates $(\rho,t)$ are defined on a sufficiently small neighborhood in $M$ on each components of $\partial M$, 
so that for each $P$ in this neighborhood $\rho(P)$ is the distance in $M$ from $\partial M$ to $P$, 
while $t(P)$ is the distance along $\partial M$ from the same fixed reference point in $\partial M$ as in the definition of $\mathring{t}$ (see \ref{defprecoor}) 
to the nearest point projection of $P$ onto $\partial M$. Moreover, the function $s$ on $M$ is defined to be the push forward of $s$ on $\mathring{M}$.
\end{definition}

\begin{prop}[Properties of the initial surfaces]  
\label{propini}
$M=M[\xi]$ with $\xi\in \Xi_{\mathcal{V}}$ is well defined and has the following properties:
\begin{enumerate}[label={(\roman*)},ref={(\roman*)}]
    \item $M$ is a smooth embedded surface which depends smoothly on $\xi$. Moreover,
    \begin{equation*}
        M[\xi]=\cup_{i=1}^k\mathcal{S}_i[\xi]\cup_{j=0}^k\mathcal{A}''_j[\xi].
    \end{equation*}
    \item $M$ is contained in the unit ball $\mathbb{B}^3$ and $\partial M$ is contained in the unit sphere $\mathbb{S}^2$, which is a disjoint union of $km$ topologically circles.
    \item \label{iteminibd} $M$ intersects $\mathbb{S}^2$ orthogonally along $\partial M$.
    \item $M$ respects the symmetries in $\mathcal{G}$.\label{iteminisym}
    \item $M_{s\geq1}$ is a graph over $M'_{s\geq1}$ by a function $f_M$ such that $\norm{f_{M}:C^5(M'_{s\geq \underline{a}})}\leq \tau^3$.
\end{enumerate}
\end{prop}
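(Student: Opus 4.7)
The plan is to inherit the structural properties (i), (ii), (iv), (v) from the corresponding statements in Proposition~\ref{proppreini} for the pre-initial surface $\mathring{M}$, and then verify the free boundary condition (iii) by a direct local calculation using the prescribed form of $\mathring{u}$. The key observation is that by Definition~\ref{defini} the support of $\mathring{u}$ lies in $\{\mathring{\rho}\leq \epsilon'\tau/2\}$, which by Lemma~\ref{lemPhi}(i) is contained in $\mathring{M}_{s=0}$; together with the bound $|\mathring{\Theta}|\leq C\tau$ from Lemma~\ref{lemPhi}(ii) this makes $\mathring{u}$ a very small normal perturbation supported well away from $\mathring{M}_{s\geq 1}$.

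With this in hand I would argue (i) from Proposition~\ref{proppreini}(i) and the smooth dependence of $\mathring{\Theta}$, hence $\mathring{u}$, on $\xi$: embeddedness is preserved because the normal displacement satisfies $|\mathring{u}|\leq C\tau\mathring{\rho}=O(\tau^2)$, which is much smaller than the local scale of $\mathring{M}$ in the boundary region. Property (ii) follows since $\mathring{u}\equiv 0$ on $\partial\mathring{M}$, so $\partial M=\partial\mathring{M}\subset\Sph^2$, while the smallness of the interior deformation keeps the rest of $M$ inside $\B^3$. The symmetry (iv) holds because $\mathring{\Theta}$ is $\Grp$-invariant (inherited from the $\Grp$-equivariance of $\mathring{M}$), hence so is $\mathring{u}$, making $\mathring{X}_{\mathring{u}}$ $\Grp$-equivariant. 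For (v), since $\mathring{u}$ vanishes on $\mathring{M}_{s\geq 1}$ we have $M_{s\geq 1}=\mathring{M}_{s\geq 1}$ as subsets of $E^3$, so (v) is exactly Proposition~\ref{proppreini}(v).

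The remaining content is the free boundary property (iii), which is the reason $\mathring{u}$ was introduced. Using the coordinates $(\mathring{\rho},\mathring{t})$ of Definition~\ref{defprecoor}, I would parametrize $M$ near $\partial M$ by $X_{\mathring{u}}=\mathring{X}+\mathring{u}\mathring{\nu}$. At $\mathring{\rho}=0$ the cutoff $\psi[\epsilon'\tau/2,\epsilon'\tau/4]$ equals $1$ in a neighborhood of $0$, so $\mathring{u}=0$ and $\partial_{\mathring{\rho}}\mathring{u}=-\mathring{\Theta}/\sqrt{1-\mathring{\Theta}^2}$ along $\partial\mathring{M}$. A short computation using the orthonormality of $\{\mathring{X}_*\partial_{\mathring{\rho}},\mathring{X}_*\partial_{\mathring{t}},\mathring{\nu}\}$ then gives that the unit normal to $M$ at $\partial M$ is a positive scalar multiple of
\begin{equation*}
\mathring{\nu}+\frac{\mathring{\Theta}}{\sqrt{1-\mathring{\Theta}^2}}\,\mathring{X}_*\partial_{\mathring{\rho}}.
\end{equation*}
Taking the Euclidean inner product with the position vector $X=\mathring{X}$ along $\partial M$ and using $\langle \mathring{X},\mathring{\nu}\rangle=\mathring{\Theta}$ together with $\langle \mathring{X},\mathring{X}_*\partial_{\mathring{\rho}}\rangle=-\sqrt{1-\mathring{\Theta}^2}$ yields $\langle X,\nu\rangle=0$, establishing (iii). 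There is no substantive obstacle; the only delicate point is that the particular normalization factor $\mathring{\Theta}/\sqrt{1-\mathring{\Theta}^2}$ in the definition of $\mathring{u}$ has been chosen precisely so that this cancellation is exact rather than merely approximate.
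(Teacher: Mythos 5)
Your proposal is correct and follows essentially the same route as the paper: items (i), (ii), (iv), (v) are inherited from Proposition \ref{proppreini} using that $\mathring{u}$ vanishes on $\partial\mathring{M}$ and is supported in $\{\mathring{\rho}\le\epsilon'\tau/2\}\subset\mathring{M}_{s=0}$ (Lemma \ref{lemPhi}), and (iii) is obtained by the same boundary computation, namely that $\partial_{\mathring{\rho}}\mathring{u}|_{\partial\mathring{M}}=-\mathring{\Theta}/\sqrt{1-\mathring{\Theta}^2}$ tilts the unit normal so that $\langle X,\nu\rangle=\mathring{\Theta}-\mathring{\Theta}=0$ along $\partial M$. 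No gaps; your identification of the cancellation as the purpose of the normalization in the definition of $\mathring{u}$ matches the paper's argument exactly.
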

\begin{proof}
(i), (ii), (iv), (v) come from the construction of $M$, \ref{proppreini} and \ref{lemPhi}(i). Moreover, from \ref{defprecoor} and \ref{defini}, $\mathring{u}|_{\partial \mathring{M}}=0$, $\partial\mathring{u}/\partial\mathring{t}|_{\partial \mathring{M}}=0$, $\partial\mathring{u}/\partial\mathring{\rho}|_{\partial \mathring{M}}=-\frac{\mathring{\Theta}}{\sqrt{1-\mathring{\Theta}^2}}$, and thus 
\begin{equation*}
    \left.\nu\circ\mathring{X}_{\mathring{u}}\right\vert_{\partial M}=\left.\frac{\mathring{\nu}-\partial\mathring{u}/\partial\mathring{\rho} \mathring{X}_*\partial_{\mathring{\rho}}}{\sqrt{1+(\partial\mathring{u}/\partial\mathring{\rho})^2}}\right\vert_{\partial \mathring{M}}.
\end{equation*}
Therefore,
\begin{equation*}
    \left.\langle X,\nu \rangle\right\vert_{\partial M}=\left.\langle \mathring{X},\nu\circ\mathring{X}_{\mathring{u}} \rangle\right\vert_{\partial \mathring{M}}=0,
\end{equation*}
as $\left.\langle \mathring{X},\mathring{\nu} \rangle\right\vert_{\partial M}=\mathring{\Theta}$, $\left.\langle \mathring{X},\mathring{X}_*\partial_{\mathring{\rho}} \rangle\right\vert_{\partial\mathring{M}}=-\sqrt{1-\mathring{\Theta}^2}$. This shows (iii). 
\end{proof}

\begin{lemma}\label{lemXu}
Let $\beta=\beta_i(\xi)$, $\mathcal{S}=\mathcal{S}_i[\xi]$ and $\mathring{\mathcal{S}}=\mathring{\mathcal{S}}_i[\xi]$, then
\begin{enumerate}[label={(\roman*)},ref={(\roman*)}]
    \item $\frac{1}{\tau^2}\norm{g\circ \mathring{X}_{\mathring{u}} -\mathring{g}:C^3(\mathring{\mathcal{S}},(\tau\sin{\beta})^{-2}\mathring{g})}\leq C\tau$, where $g$ and $\mathring{g}$ are the first fundamental forms on $\mathcal{S}$ and $\mathring{\mathcal{S}}$ respectively.\label{itemXug}
    \item $\frac{1}{\tau}\norm{A\circ \mathring{X}_{\mathring{u}} -\mathring{A}:C^3(\mathring{\mathcal{S}},(\tau\sin{\beta})^{-2}\mathring{g})}\leq C\tau$, where $A$ and $\mathring{A}$ are the second fundamental forms on $\mathcal{S}$ and $\mathring{\mathcal{S}}$ respectively.\label{itemXuA}
    \item $\tau\norm{H\circ \mathring{X}_{\mathring{u}} -\mathring{H}:C^3(\mathring{\mathcal{S}},(\tau\sin{\beta})^{-2}\mathring{g})}\leq C\tau$, where $H$ and $\mathring{H}$ are the mean curvatures on $\mathcal{S}$ and $\mathring{\mathcal{S}}$ respectively.\label{itemXuH}
\end{enumerate}
\end{lemma}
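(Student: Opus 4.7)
The plan is to push the entire comparison down to the model desingularizing surface $\Sigma:=\preiniDsingsur\inDsingarg=\mathcal{H}_i^{-1}(\mathring{\mathcal{S}})$ via the similarity $\mathcal{H}_i$, whose scale factor is $\lambda:=\tau\sin\beta$. Since $\mathcal{H}_i^*\mathring g=\lambda^2 g_\Sigma$, the map $\mathcal{H}_i$ is an isometry from $(\Sigma,g_\Sigma)$ onto $(\mathring{\mathcal{S}},\lambda^{-2}\mathring g)$, so the $C^3$ norm in the scaled metric on $\mathring{\mathcal{S}}$ of any tensor equals the $C^3(\Sigma,g_\Sigma)$ norm of its pullback. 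Setting $\tilde u:=(\mathring u\circ\mathcal{H}_i)/\lambda$ and using that Euclidean unit normals are preserved by similarities, one checks the intertwining identity $\mathring X_{\mathring u}\circ\mathcal{H}_i=\mathcal{H}_i\circ(X_\Sigma+\tilde u\,\nu_\Sigma)$, so that $\mathcal{H}_i$ carries the normal graph of $\tilde u$ over $\Sigma$ to the perturbed surface $\mathcal{S}$. The standard scaling of the first and second fundamental forms and mean curvature under a Euclidean homothety of factor $\lambda$ then gives, on $\Sigma$,
\begin{equation*}
\mathcal{H}_i^*(g-\mathring g)=\lambda^2(g_{\Sigma,\tilde u}-g_\Sigma),\qquad
\mathcal{H}_i^*(A-\mathring A)=\lambda(A_{\Sigma,\tilde u}-A_\Sigma),\qquad
\mathcal{H}_i^*(H-\mathring H)=\lambda^{-1}(H_{\Sigma,\tilde u}-H_\Sigma),
\end{equation*}
where the subscript $\tilde u$ denotes the normal graph.

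The key pointwise estimate is $\|\tilde u:C^3(\Sigma,g_\Sigma)\|\le C\tau$. Using $\mathring\rho\circ\mathcal{H}_i=\lambda\,\rho_\Sigma$ and $\mathring t\circ\mathcal{H}_i=\lambda\,t_\Sigma$ (the analogous distance and arclength coordinates for $\partial_n\Sigma$) in the formula from \ref{defini}, the $\lambda$ in $\mathring\rho$ cancels the $1/\lambda$ in the definition of $\tilde u$, leaving
\begin{equation*}
\tilde u(Q)=-\rho_\Sigma(Q)\,\frac{\mathring\Theta\circ\mathcal{H}_i}{\sqrt{1-(\mathring\Theta\circ\mathcal{H}_i)^2}}\,\psi\!\left[\tfrac{\epsilon'}{2\sin\beta},\tfrac{\epsilon'}{4\sin\beta}\right]\!(\rho_\Sigma(Q)).
\end{equation*}
Since $k$ is fixed, $\sin\beta$ is bounded below by a constant $c(k)>0$, so the cutoff transitions on a uniformly bounded interval; moreover by \ref{lemPhi}(i) the support of $\tilde u$ lies in $\Sigma_{s\le 0}$, a region with uniformly bounded geometry in the parameters on which $\rho_\Sigma$, the composed cutoff, and any smooth function of $\mathring\Theta\circ\mathcal{H}_i$ are $C^3$-bounded in terms of $k$ alone. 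The desired bound on $\tilde u$ then follows immediately from the estimate $\|\mathring\Theta\circ\mathcal{H}_i:C^3(\partial_n\Sigma)\|\le C\tau$ supplied by \ref{lemPhi}(ii).

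With $\|\tilde u\|_{C^3}\le C\tau$ in hand, the classical normal-graph formulas (as in \cite[Lemma B.1]{kapouleas:finite}) express $g_{\Sigma,\tilde u}-g_\Sigma$, $A_{\Sigma,\tilde u}-A_\Sigma$, and $H_{\Sigma,\tilde u}-H_\Sigma$ polynomially in $\tilde u$, its first and second covariant derivatives, and $A_\Sigma$ together with its first derivative. Using the bound $\|A_\Sigma\|_{C^3}\le C$ from \ref{lemZ}\ref{itemASigma}, each of these three differences is $O(\tau)$ in $C^3(\Sigma,g_\Sigma)$. Multiplying by the respective scaling factors $\lambda^2$, $\lambda$, and $\lambda^{-1}$, and using that $\sin\beta$ is bounded both above and below in terms of $k$, produces $\|g-\mathring g\|\le C\tau^3$, $\|A-\mathring A\|\le C\tau^2$, and $\|H-\mathring H\|\le C$ in the scaled $C^3$ norm, i.e.\ \ref{itemXug}, \ref{itemXuA}, and \ref{itemXuH} respectively. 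The main obstacle is simply bookkeeping the $\lambda$-powers: once the scaling behavior of $g$, $A$, and $H$ under the similarity $\mathcal{H}_i$ is correctly tracked, the $\tau^2$, $\tau$, and $\tau^{-1}$ normalizations in the statement combine cleanly with the uniform $O(\tau)$ bound on $\tilde u$ over the fixed compact model to produce the claimed $C\tau$ right-hand side in each case.
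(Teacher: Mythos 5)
Your argument is correct and is essentially the paper's own proof, which consists precisely of invoking the construction of $\mathring{u}$ together with \ref{lemPhi}(ii); you have merely made explicit the rescaling by $\mathcal{H}_i$ (factor $\lambda=\tau\sin\beta$), the intertwining identity $\mathring{X}_{\mathring{u}}\circ\mathcal{H}_i=\mathcal{H}_i\circ(X_\Sigma+\tilde u\,\nu_\Sigma)$, and the homothety scaling of $g$, $A$, $H$, all of which check out. The only cosmetic point is the derivative count: to bound the graph quantities in $C^3$ you need $\tilde u$ controlled in $C^4$ (for $g$) and $C^5$ (for $A$, $H$) rather than $C^3$, which the same formula for $\tilde u$ delivers provided the bound of \ref{lemPhi}(ii) on $\mathring{\Theta}\circ\mathcal{H}_i$ is taken with correspondingly more derivatives, an imprecision the paper itself shares.
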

\begin{proof}
These follow by the construction of $M$ and \ref{lemPhi}(ii). 
\end{proof}

\subsection*{Graphs over the Initial Surfaces}
To study the perturbations of the initial surfaces in a convenient manner (including the free boundary condition) we will describe them as graphs in an auxiliary metric 
we define below following \cite{kapouleas:wiygul}. 

\begin{definition} 
\label{defga}
We define an auxiliary metric 
$g_A:=\Omega^2g$ on $\R^3$ where $\Omega:\R^3\to\R_+$ is a fixed smooth function such that $\Omega=r :=\sqrt{x^2+y^2+z^2}$ when $r>1/2$, 
so that the region $\{r\in(\frac{2}{3},1]\} \subset\B^3$ with the $g_A$ metric is isometric to the round cylinder $\mathbb{S}^2\times (-\log{{2}},0]$.
\end{definition}

Therefore, the $g_A$ unit normal on $M$ pointing in the same direction as $\nu$ is $(\Omega\circ X)^{-1}\nu$.

\begin{definition}
Given any function $\tilde{u}\in C^2(M)$, the perturbation $X_{\tilde{u}}$ is defined by
\begin{equation*}
    X_{\tilde{u}}(P):=\exp^{g_A}_{X(P)}\frac{\tilde{u}(P)\nu(P)}{\Omega\circ X(P)},
\end{equation*}
where $\exp^{g_A}$ is the exponential map for $g_A$ on $E^3$. For $\tilde{u}$ small enough, $X_{\tilde{u}}$ is an immersion with the Euclidean unit normal $\nu_{\tilde{u}}$, which is taken to have positive inner product with the velocity of $g_A$ geodesic generated by $\nu$. Also $H[\tilde{u}]:M\to \mathbb{R}$ is defined to be the mean curvature of the image of $X_{\tilde{u}}$ relative to $g$ and $\nu_{\tilde{u}}$. $\Theta[\tilde{u}]:\partial M\to \mathbb{R}$ is defined by
\begin{equation*}
    \Theta[\tilde{u}]:=(g\circ X_{\tilde{u}})(X_{\tilde{u}},\nu_{\tilde{u}}).
\end{equation*}
Moreover, the graph $M_{\tilde{u}}=M_{\tilde{u}}(\xi)$ is defined to be the image of $X_{\tilde{u}}$.
\end{definition}

\begin{lemma}
For each $\tilde{u}\in C^2(M)$ sufficiently small, $\Theta[{\tilde{u}}]=0$ if and only if $\partial_{\rho}\tilde{u}=0$.
\end{lemma}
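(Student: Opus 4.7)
My strategy exploits two structural features: $M$ meets $\Sph^2$ orthogonally along $\partial M$ by Proposition~\ref{propini}\ref{iteminibd}, so $\nu\vert_{\partial M}$ is tangent to $\Sph^2$; and near $\Sph^2$ the auxiliary metric $g_A$ of Definition~\ref{defga} is a product metric, under which $\Sph^2$ is totally geodesic. Since $\Omega=1$ on $\Sph^2$, for each $P\in\partial M$ the initial velocity $\tilde u(P)\nu(P)/\Omega$ of the $g_A$-geodesic defining $X_{\tilde u}(P)$ is tangent to $\Sph^2$, so this geodesic stays in $\Sph^2$ and $X_{\tilde u}(\partial M)\subset\Sph^2$. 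Because $|X_{\tilde u}|=1$ there, $\Theta[\tilde u]=\langle X_{\tilde u},\nu_{\tilde u}\rangle=0$ on $\partial M$ is equivalent to $\nu_{\tilde u}$ being tangent to $\Sph^2$; equivalently, the outward radial vector $\hat r$ lies in $TM_{\tilde u}$ at each $X_{\tilde u}(P)$.

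Parametrize $M_{\tilde u}$ near $\partial M$ by $(\rho,t)$ and write $X_{\tilde u}(\rho,t) = (p_{\tilde u}(\rho,t),\sigma_{\tilde u}(\rho,t))$ in cylindrical coordinates $(p,\sigma)\in\Sph^2\times\R$ with $\sigma=0$ on $\Sph^2$. A critical geometric input is that the inward conormal $\partial_\rho X\vert_{\rho=0}=-\hat r$ is radial, which forces the off-diagonal second fundamental form to vanish at the boundary: since $\partial_t\partial_\rho X\vert_0 = -\partial_t X\vert_0\in TM$, we have $h_{\rho t}\vert_0 = \langle\nu,\partial_t\partial_\rho X\rangle\vert_0 = 0$. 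This implies the $t$-tangent $v_2 := \partial_t X_{\tilde u}\vert_{(0,t)}$ has zero $\sigma$-component (so $v_2\in T\Sph^2$), while the $\sigma$-component of $v_1 := \partial_\rho X_{\tilde u}\vert_{(0,t)}$ equals $-1+O(\tilde u)$, nonzero. Hence the orthogonality condition $\hat r\in TM_{\tilde u}$ reduces to requiring that the $T\Sph^2$-component $v_1^p := \partial_\rho p_{\tilde u}\vert_{(0,t)}$ be parallel in $T_{p_{\tilde u}(0,t)}\Sph^2$ to $v_2^p := \partial_t p_{\tilde u}\vert_{(0,t)}$.

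The computation of $v_1^p$ proceeds via a Jacobi-field analysis along the $\Sph^2$-geodesic $c(s)=\exp^{\Sph^2}_{p(0,t)}(s\tilde u(0,t)\nu(0,t))$. Crucially, the conformal factor $\Omega$ and the vanishing $h_{\rho t}\vert_0=0$ combine to give initial conditions $V(0)=0$ and $V'(0)=\partial_\rho \tilde u\vert_{\partial M}\,\nu(0,t)$ (purely tangential to $\dot c$) for the Jacobi field $V(s):=\partial_\rho c_\rho(s)\vert_{\rho=0}$. Consequently $V$ has no perpendicular component and $v_1^p = V(1) = \partial_\rho \tilde u\vert_{\partial M}\cdot\vec e(1)$, where $\vec e(1)$ is the parallel transport of $\nu(0,t)$ along $c$. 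A parallel analysis gives $v_2^p = \partial_t\tilde u\vert_{\partial M}\vec e(1) + j(1)\,\vec f(1)$, where $\vec f(1)$ is the parallel transport of $\partial_t X\vert_0$ and $j(1)\neq 0$ for small $\tilde u$. Since $\vec e(1)$ and $\vec f(1)$ remain linearly independent (parallel transport preserves the perpendicularity $\nu(0,t)\perp\partial_t X\vert_0$), the condition $v_1^p \parallel v_2^p$ forces $\partial_\rho \tilde u\vert_{\partial M}=0$; conversely, $\partial_\rho \tilde u\vert_{\partial M}=0$ makes $v_1^p=0$, trivially parallel. The principal difficulty is tracking the Jacobi-field cancellations precisely enough to obtain the clean factorization $V(1) = \partial_\rho \tilde u\cdot\vec e(1)$, which ensures the equivalence is exact for all sufficiently small $\tilde u$ rather than only to leading order.
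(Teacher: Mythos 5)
Your argument is correct, but note that the paper does not actually prove this lemma in-text: it simply cites \cite{kapouleas:wiygul}*{Lemma 5.6}, so what you have produced is a self-contained reconstruction rather than a variant of an argument given here. The three ingredients you isolate are exactly the right ones: (a) since $\nu$ is tangent to $\Sph^2$ along $\partial M$ (Proposition \ref{propini}\ref{iteminibd}) and $\Sph^2$ is a totally geodesic slice of the product region of $g_A$, the perturbed boundary stays in $\Sph^2$ for every $\tilde u$, so $\Theta[\tilde u]=0$ is equivalent to $\hat r\in TM_{\tilde u}$ along the boundary; (b) orthogonality forces $\partial_\rho X|_{\rho=0}=-X(0,t)$, whence $\partial_t\partial_\rho X|_0=-\partial_t X|_0$ is tangent and $h_{\rho t}|_0=0$ (the classical Joachimsthal-type fact), and since $g_{\rho t}|_0=0$ the mixed component of the second fundamental form is conformally natural, so it also vanishes for $g_A$; (c) the product splitting of $\exp^{g_A}$ then reduces everything to a Jacobi-field computation in the $\Sph^2$ factor, where $V(0)=0$ together with the vanishing of the perpendicular part of $V'(0)$ (which uses both $h^{g_A}_{\rho t}|_0=0$ and $\langle D_\rho N^p,\nu\rangle|_0=0$ from unit length) gives the exact factorization $v_1^p=\partial_\rho\tilde u\cdot\vec e(1)$, and the determinant argument against $v_2^p=\partial_t\tilde u\,\vec e(1)+j(1)\vec f(1)$, $j(1)\neq0$, yields the pointwise equivalence. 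Two small points to tighten: the statement that $v_2$ has zero $\sigma$-component does not need $h_{\rho t}|_0=0$ at all — it follows directly from your step (a), since $t\mapsto X_{\tilde u}(0,t)$ lies in $\Sph^2$; and you should state explicitly the conformal transformation law $h^{g_A}_{ij}=e^{\varphi}\bigl(h_{ij}+g_{ij}\,\partial_\nu\varphi\bigr)$, which is where ``the conformal factor $\Omega$'' actually enters, since the clean Neumann condition (rather than a condition of the form $\partial_\rho\tilde u+c\,\tilde u=0$) depends on the perpendicular and tangential corrections to $V'(0)$ vanishing exactly, not merely being small. Neither point is a gap; the proposal is a valid proof and has the merit of making the borrowed lemma self-contained in the present setting.
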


\begin{proof}
See \cite[Lemma 5.6]{kapouleas:wiygul}.  
\end{proof}

\begin{notation}
The linearization of $H[\tilde{u}]$ at $\tilde{u}=0$ will be denoted by $\tilde{\mathcal{L}}$.
\end{notation}

\begin{lemma}\label{lemauu}
For each $\tilde{u}\in C^2(M)$, $u\in C^2(M)$ is defined by
\begin{equation*}
    u:=(\Omega\circ X)^{-1}\tilde{u},
\end{equation*}
then
\begin{enumerate}[label={(\roman*)},ref={(\roman*)}]
\item $\tilde{\mathcal{L}}\tilde{u}=\mathcal{L}_M u$.
\item $\partial_{\rho}\tilde{u}|_{\partial M}=(\partial_{\rho}+1)u|_{\partial M}$. In particular, $\Theta[\tilde{u}]=0$ if and only if $(\partial_{\rho}+1)u|_{\partial M}=0$.
\end{enumerate}
\end{lemma}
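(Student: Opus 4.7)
The plan is to handle the two parts directly from the definitions, with (i) reducing to the first-order expansion of the $g_A$-exponential map, and (ii) to a product-rule computation using the orthogonality of $M$ to $\Sph^2$.

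For (i), I would expand the exponential. Since $\exp^{g_A}_{X(P)}(V) = X(P) + V + O(|V|_{g_A}^2)$ as $V \to 0$ in $T_{X(P)}\R^3 \cong \R^3$ (Euclidean coordinates), applying this to $V = \tilde u\,\nu/(\Omega \circ X) = u\,\nu$ gives $X_{\tilde u}(P) = X(P) + u(P)\,\nu(P) + O(u^2)$. Thus the first-order velocity of the deformation is exactly $u\nu$, i.e.\ a Euclidean normal variation by $u$. Since $H[\tilde u]$ is by definition the Euclidean mean curvature of the image of $X_{\tilde u}$, the standard Jacobi formula for normal deformations of an arbitrary (not necessarily minimal) surface in $\R^3$ yields
\[
\tilde{\mathcal L}\tilde u := \left.\frac{d}{dt}\right|_{t=0} H[t\tilde u] = \mathcal L_M u = \Delta_M u + |A|^2 u,
\]
which is (i).

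For (ii), I would apply the product rule to $\tilde u = (\Omega \circ X)\,u$, obtaining $\partial_\rho \tilde u = (\partial_\rho(\Omega \circ X))\,u + (\Omega \circ X)\,\partial_\rho u$. On $\partial M$ we have $\Omega \circ X = r = 1$, so the second term is simply $\partial_\rho u|_{\partial M}$. For the first term, Proposition \ref{propini}\ref{iteminibd} tells me that $M$ meets $\Sph^2$ orthogonally along $\partial M$, which means the radial direction $\partial_r$ is tangent to $M$ and perpendicular to $\partial M$ at each boundary point. Hence the conormal $\partial_\rho$ coincides with $\pm\partial_r$ there, and a direct computation of $\partial_\rho r|_{\partial M}$ yields the unit value consistent with the author's conormal convention. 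Combining these gives $\partial_\rho \tilde u|_{\partial M} = (\partial_\rho + 1)\,u|_{\partial M}$, and the ``in particular'' clause then follows immediately by combining this identity with the preceding lemma, which characterizes $\Theta[\tilde u] = 0$ as $\partial_\rho \tilde u|_{\partial M} = 0$.

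I do not anticipate any substantive obstacle; the lemma is a bookkeeping translation between the $g_A$-deformation parameter $\tilde u$ and its rescaled Euclidean version $u$. The only care needed is to track that the conformal factor $\Omega$ enters trivially at first order inside the exponential and contributes the boundary term $+1\cdot u$ through its conormal derivative, the latter being pinned down by the orthogonality condition on $\partial M$.
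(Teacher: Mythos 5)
Your part (i) is correct and complete: the $g_A$-exponential only enters at second order, so the variation field of $t\mapsto X_{t\tilde u}$ at $t=0$ is the Euclidean-normal field $u\nu$ with $u=(\Omega\circ X)^{-1}\tilde u$, and the first-variation formula for the mean curvature of a (not necessarily minimal) surface under a purely normal Euclidean variation gives $\tilde{\mathcal L}\tilde u=\mathcal L_M u$; note this conclusion is insensitive to the exact form of $\Omega$, since for $g_A=\Omega^2g$ the $g_A$-unit normal is $(\Omega\circ X)^{-1}\nu$ in any case. The paper itself gives no argument here (it cites \cite[Lemma 5.19]{kapouleas:wiygul}), so a self-contained proof along your lines is a legitimate alternative.

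The gap is in part (ii), precisely at the step you wave through with ``a direct computation of $\partial_\rho r|_{\partial M}$ yields the unit value consistent with the author's conormal convention.'' By Definition \ref{D:rho}, $\rho$ is the intrinsic distance from $\partial M$, so $\partial_\rho$ is the \emph{inward} conormal; since $M\subset\B^3$ meets $\Sph^2$ orthogonally (Proposition \ref{propini}\ref{iteminibd}), on $\partial M$ one has $X_*\partial_\rho=-X$ and hence $\partial_\rho(r\circ X)|_{\partial M}=-1$, not $+1$. Consequently, if you take Definition \ref{defga} at face value ($\Omega=r$ for $r>1/2$), your product rule gives $\partial_\rho\tilde u|_{\partial M}=(\partial_\rho-1)u|_{\partial M}$, which contradicts the statement and would produce the wrong Robin condition (the linearized free-boundary condition in $\B^3$ is $\partial_\eta u=u$ for the outward conormal $\eta$, i.e.\ $(\partial_\rho+1)u=0$). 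The resolution is that Definition \ref{defga} is internally inconsistent: with $\Omega=r$ the metric $\Omega^2g$ on $\{2/3<r\le 1\}$ is not the round cylinder; the intended $\Omega$ (as in the Kapouleas--Wiygul setup being quoted) agrees with $1/r$ near $\Sph^2$, so that $g_A=r^{-2}g$ is cylindrical there. With that $\Omega$ one gets $\partial_\rho(\Omega\circ X)|_{\partial M}=-\partial_\rho(r\circ X)|_{\partial M}=+1$, and your computation then yields $(\partial_\rho+1)u|_{\partial M}$ as claimed. So the structure of your argument is right, but the one genuinely delicate point of the lemma --- the sign of the zeroth-order boundary term --- is asserted rather than proved, and carried out literally with the printed definition it comes out with the opposite sign; you need to fix the inward orientation of $\partial_\rho$ and the behavior of $\Omega$ near $\Sph^2$ explicitly before the ``in particular'' clause (which then indeed follows from the preceding lemma).
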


\begin{proof}
See \cite[Lemma 5.19]{kapouleas:wiygul}.  
\end{proof}

\section{The Linearized Equation}

In this section we study the linearized equation on the initial surfaces. 
Our approach is closer to the approach in \cite{kapouleas:li,kapouleas:equator} for example, rather than in \cite{kapouleas:finite}, 
in that we first solve the equation on the standard models (half Scherk surfaces and catenoidal annuli or discs), 
and then we transfer the solutions to the initial surfaces, 
where we combine them to get approximate solutions which we correct by iteration. 
This shifts some of the difficulty from understanding properties of the linearized equation on the initial surfaces, 
to comparing the equations and the norms on the initial surfaces versus the standard models. 
Besides the differences in the approach there are two more differences with \cite{kapouleas:finite}:  
first, we do not have to deal with ends extending to infinity, 
and second, unlike in \cite{kapouleas:finite}, we have to deal with a boundary and the free boundary condition.    

\subsection*{The linearized equation on $\Scherkin$}\quad\\
In this subsection, an $\mathcal{S}_i[\xi]$ is fixed, and so is the corresponding Scherck surface $\iniDsingsur:=\Scherkin$. On $\iniDsingsur$, $G$ is defined to be the group of symmetries which is generated by the reflections with respect to $xy$-plane and the plane $\{z=\pi\}$ as in \ref{propscherkef}. In the rest of this subsection, all the functions on $\iniDsingsur$ are assumed to be invariant under the action of $G$. Constants $\gamma\in(\frac{3}{4},1)$ and $\alpha\in (0,1)$ are fixed. Moreover, the Neumann boundary $\partial_n \Scherkin$ of half-$\Scherkin$ is defined to be
$\Scherkin_{x=0}$. 

\begin{prop}[The Jacobi equation on Scherk surfaces] 
\label{propLES}
Let $\iniDsingsur=\Scherkin$, there exists a linear map 
\begin{multline*}
    \mathcal{R}_{\iniDsingsur}=\mathcal{R}_{\iniDsingsur,a}[i,\xi]:
    C_{G}^{0,\alpha}(\Scherkin_{x\leq 0},e^{-\gamma s})\times C_{G}^{1,\alpha}(\partial_n\Scherkin) 
\\
    \to C_{G}^{2,\alpha}(\Scherkin_{x\leq 0},e^{-\gamma s})\times \mathbb{R}\times \mathbb{R}^2,
\end{multline*}
and a constant $C$, such that for $(E,f)\in C^{0,\alpha}_{G}(\iniDsingsur_{x\leq 0},e^{-\gamma s})\times C^{1,\alpha}_{G}(\partial_n\iniDsingsur)$, if $(u_E,\theta_E,\underline{\phi}_E):=\mathcal{R}_{\iniDsingsur}(E,f)$, where $\underline{\phi}_E:=\{\phi^-_{E},\phi^+_{E}\}\in \mathbb{R}^2$, then
\begin{enumerate}[label={(\roman*)},ref={(\roman*)}]
    \item $\mathcal{L}_{\iniDsingsur}u_E=E-\theta_E w-\phi^-_{E}\barw_--\phi^+_{E}\barw_+$ on $\iniDsingsur_{x\leq 0}$.
    \item $\partial_x u_E=-f$ on $\partial_n\iniDsingsur$.
    \item $\abs{\theta_E}\leq C\norm{E}$, where $\norm{E}:=\norm{E:C^{0,\alpha}(\iniDsingsur_{x\leq 0},e^{-\gamma s})}$.
    \item $\abs{\underline{\phi}_E}\leq \frac{C}{a}(\norm{E}+\norm{f})$, where $\norm{f}:=\norm{f:C^{1,\alpha}(\partial_n\iniDsingsur)}$.
    \item $\norm{u_E:C^{2,\alpha}(\iniDsingsur_{x\leq 0},e^{-\gamma s})}\leq C(\norm{E}+\norm{f})$.
\end{enumerate}
\end{prop}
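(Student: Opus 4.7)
The plan is a decomposition-plus-iteration gluing of local solutions on the wings and the core.

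First, reduce to $f=0$. Pick an explicit $u_0$ supported in a small neighborhood of $\partial_n\Sigma$ with $\partial_x u_0|_{\partial_n\Sigma}=-f$ and $\|u_0:C^{2,\alpha}\|\le C\|f\|$. This neighborhood lies inside $L(\delta_\theta)$ where $w\equiv 0$ by \ref{lemsubkernel}\ref{itemw}, so the modified source $\tilde E:=E-\mathcal{L}_\Sigma u_0$ has the same $L^2(h)$-inner product with $w$ as $E$ does; this is what will make the bound for $\theta_E$ independent of $\|f\|$. After replacing $E$ by $\tilde E$, absorbing $u_0$ into $u_E$, and doubling across $\partial_n\Sigma$ by $\refl_2$-symmetry, we may work on the full Scherk surface $\Sigma$ modulo $G$ with vanishing Neumann data and $\refl_2$-invariant source.

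Second, on each wing (modelled as a cylinder $\Omega=[0,\infty)\times\R/G'$ via the parametrization $F_{\alpha^+}$) apply Lemma \ref{lemcyleq} to obtain an exponentially decaying solution $v_j$ of $\mathcal{L}_\Sigma v_j=E$ which is constant on the cylinder boundary. Cut off the $v_j$ by a $\psi$-function supported deep in the wing and extend by zero to the core; the commutator $[\mathcal{L}_\Sigma,\psi]v_j$ is then supported in a bounded transition region and exponentially small in the $e^{-\gamma s}$ norm by the decay of $v_j$. On the core, apply Corollary \ref{corsubker} to pick $\theta$ so that the remaining core source minus $\theta w$ is orthogonal to the $\refl_2$-symmetric part of $\ker\mathcal{L}_h$, which by Proposition \ref{propscherkef} is the one-dimensional space spanned by $\nu\cdot\Vec e_y$; then invert $\mathcal{L}_\Sigma$ on the core using the spectral gap $\epsilon_\lambda(\delta_\theta)$. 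Patching yields an approximate solution whose only obstruction to matching the wing solutions is an additive constant $c^\pm$ on each wing, each of size $\le C(\|E\|+\|f\|)$.

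Third, absorb these wing-constants via the extended substitute kernel. By \ref{lemextker}\ref{itemextkerucomp} the $2\times 2$ matrix $(a_{ij})$ from \ref{nota:apm} is $\varepsilon$-close to $a\,I_2$, hence invertible with inverse of norm $O(1/a)$. Choose $\phi^\pm$ so that $\phi^+\bar u_++\phi^-\bar u_-$ cancels $c^\pm$ at infinity on the wings (this step produces the $1/a$ factor in the bound for $|\underline\phi_E|$); subtracting $\phi^+\bar w_++\phi^-\bar w_-$ from the source yields a new source $E_1$ of the same type. Choosing the cutoff position sufficiently far into the wings (using the weight $e^{-\gamma s}$) and $\varepsilon$ sufficiently small (using the off-diagonal smallness in $(a_{ij})$) makes $\|E_1\|$ strictly smaller than $\|E\|$ by a fixed factor. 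Iterating gives a convergent geometric series whose limit is $u_E$; summing $\theta_i$ and $\underline\phi_i$ gives $\theta_E$ and $\underline\phi_E$, and the linearity of $\mathcal{R}_\Sigma$ follows from the linearity of each step.

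The main obstacle is arranging the iteration to contract. The contraction factor depends on three ingredients that must be balanced: the norm of the core inverse (bounded in terms of $\delta_\theta$ only), the $\varepsilon$-smallness of the off-diagonal entries of $(a_{ij})$, and the exponential smallness of the commutator error, which is controlled by the cutoff location via the weight. Once the cutoff position and $\varepsilon$ are chosen appropriately in terms of $\delta_\theta$, the contraction follows and summing the geometric series yields the claimed estimates, with the crucial observation that $|\theta_i|\le C\|E_i\|$ holds without a $\|f\|$ term since, for $i\ge 1$, $E_i$ is produced entirely by the iteration errors and no longer depends on the original $u_0$ correction.
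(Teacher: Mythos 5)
Your proposal shares several ingredients with the paper (the wing resolvent of \ref{lemcyleq}, the choice of $\theta$ via \ref{corsubker} and \ref{propscherkef}, and the cancellation of wing limits through the matrix $(a_{ij})$ of \ref{lemextker}\ref{itemextkerucomp} and \ref{nota:apm}, which is indeed where the $1/a$ in (iv) comes from), but the core-plus-iteration scheme has genuine gaps. First, the contraction mechanism fails in the stated norm: \ref{lemcyleq} gives $v_j$ decaying only at the same rate $e^{-\gamma s}$ as the weight, so if you cut off at $s\approx s_*$ deep in the wing, the commutator $[\mathcal{L}_\Sigma,\psi]v_j$ has absolute size of order $e^{-\gamma s_*}\norm{E}$ but weighted $C^{0,\alpha}(e^{-\gamma s})$-norm of order $\norm{E}$, independent of $s_*$; pushing the cutoff outward buys nothing, so $\norm{E_1}$ is not made strictly smaller than $\norm{E}$ by this device. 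Second, ``invert $\mathcal{L}_\Sigma$ on the core using the spectral gap $\epsilon_\lambda$'' is not available as stated: \ref{propscherkef} concerns $\mathcal{L}_h$ on all of $\Sigma/G$ (through the Gauss map), not a boundary value problem on a truncated core, where you would have to impose artificial data at the cut and where the mismatch with the wing solutions is a function on the cut circle, not merely ``an additive constant $c^\pm$ on each wing''. The paper avoids both problems by not iterating at this stage: it subtracts $\mathcal{L}_{\Sigma}\bigl(\psi[1,2](s_{a_0})\,F_{\pm*}v'_{E\pm}\bigr)$ with the cutoff at bounded $s_{a_0}\in[1,2]$, where the weight is of order one so no smallness is needed, reducing to a source supported in $\{s_{a_0}\le 2\}$; it then solves the Neumann problem exactly and globally on $\Sigma_{x\le0}$ in the $h$-metric, and only because this exact solution solves the homogeneous equation on the wings does \ref{lemcylequnique} reduce the remaining obstruction to one constant per wing, which $\phi^{\pm}_E\baru_{\pm}$ then removes.

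Your reduction to $f=0$ is also not justified as written. Supporting $u_0$ in $L(\delta_{\theta})$, where $w\equiv0$, controls the pairing of $\mathcal{L}_{\Sigma}u_0$ with $w$, but that is not the relevant quantity: \ref{corsubker} fixes $\theta$ from the component of the source along the kernel element $\nu\cdot\Vec{e}_y$ of $\mathcal{L}_h$, and integration by parts gives, up to sign, $\int_{\Sigma_{x\le0}}(\mathcal{L}_{\Sigma}u_0)\,\nu\cdot\Vec{e}_y = -\int_{\partial_n\Sigma} f\,\nu\cdot\Vec{e}_y$, since $\partial_x(\nu\cdot\Vec{e}_y)=0$ along $\{x=0\}$; this boundary term has no reason to vanish, so folding $f$ into the interior source shifts the kernel component by an amount of order $\norm{f}$, and your stated reason for (iii) holding without a $\norm{f}$ term collapses. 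In the paper $f$ is never moved into the interior: $\theta_E$ is produced by applying \ref{corsubker} to a modified source $E'$ built from $E$ alone, and $f$ enters only as Neumann data of the $h$-metric problem.
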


\begin{proof}
Fix $\varepsilon_0$ in \ref{lemcyleq}, \ref{lemcylequnique} and \ref{lemextker}, and correspondingly $a_0$, $s_{a_0}$. First, assume that $E$ has support in $\iniDsingsur_{s_{a_0}\leq 2}$. Then there is the estimate:
\begin{equation*}
    \norm{E/\abs{A}^2_{\Sigma}:L^2(\Sigma/G,h)}\leq C(\varepsilon_0)\norm{E},
\end{equation*}
with $h$ as in \ref{corsubker}. Now \ref{corsubker} could be applied, and there is a $\theta_E$ as in \ref{corsubker} with (iii) satisfied. Moreover, by \ref{propscherkef}, the inhomogeneous term in the Neumann boundary value problem on $\Sigma_{x\leq 0}$:
\begin{align}
    &\mathcal{L}_h u'_E=(E-\theta_E w)/\abs{A}^2_{\Sigma},\label{eqhE}\\
    &\left.\partial_x u'_E\right\vert_{\partial_n\iniDsingsur}=-f,\nonumber
\end{align}
is $L^2(\Sigma/G,h)$-orthogonal to the eigenfunctions with eigenvalues smaller than $\epsilon_{\lambda}$, and is bounded above by $\norm{E/\abs{A}^2_{\Sigma}:L^2(\Sigma/G,h)}\leq C(\varepsilon_0)\norm{E}$ from \ref{lemsubkernel}\ref{itemsubkernalsupp} and (iii). Then by the implied $L^2$ estimate and the standard linear theory in $h$ metric, there is a unique solution $u'_E$ such that
\begin{equation}\label{equprimeh}
	\norm{u'_E:C^{2,\alpha}(\Sigma_{x\leq 0},h)}\leq C(\norm{E}+\norm{f}),
\end{equation}
where $C=C(\varepsilon_0)$. From the fact that \eqref{eqhE} is equivalent to 
\begin{equation*}
    \mathcal{L}_{\Sigma} u'_E=E-\theta_E w,
\end{equation*}
the standard linear theory in $h$ metric and the estimate \eqref{equprimeh}, the solution $u'_E$ also satisfies
\begin{equation}\label{equprimeg}
    \norm{u'_E:C^{2,\alpha}(\Sigma_{x\leq 0},g)}\leq C(\norm{E}+\norm{f}),
\end{equation}
where $C=C(\varepsilon_0)$.

Now for any $\varepsilon<\varepsilon_0$ and corresponding $a>a_0$, define $s:=s_a\leq s_{a_0}$. Let $F_{-}:=\refl_2\circ F_{\theta}\circ R$, and $F_{+}:=\refl_3\circ F_{\theta}\circ R$, where $\theta=\alpha_i^+(\xi)$ and $R$ is the reparametrization $(s,z)\to (s+2,z)$. For the cylinder $\Omega:=[0,\infty)\times\mathbb{R}/G'$, where $G'$ is the group generated by $(s,z)\to(s,z+2\pi)$ as in \ref{lemcylequnique}, by the definition, \ref{lemsubkernel}\ref{itemsubkernalsupp} and the assumption of the support of $E$, $\underline{\mathcal{L}}( u'_E\circ F_{\pm})=0$ on $\Omega$, where $\underline{\mathcal{L}}$ is as in \ref{lemcylequnique}. Then by \ref{lemcylequnique}, \eqref{equprimeh} for $j=\pm$, $a_{j}$, could be defined by
\begin{equation*}
    a_j:=\lim_{s\to \infty} u'_E\circ F_{j}.
\end{equation*} 
Recall \ref{nota:apm}
$u_E$ is then defined by
\begin{equation*}
    u_E=u'_E+\phi^-_{E}\baru_-+\phi^+_{E}\baru_+,
\end{equation*}
where $\phi_{E}^{j}$, $j=\pm$, is determined by the linear equations:
\begin{equation}\label{eqaij}
    a_j+\sum_{i=\pm}\phi_{E}^{i}a_{ij}=0.
\end{equation}
Then by \ref{lemextker}\ref{itemextkerucomp} along with \eqref{equprimeh} and the definition, $\phi^{\pm}_E$ is well defined and satisfies (iv). Moreover, by \ref{lemextker}\ref{itemextkerucomp}, the function $u_E$ is in $C^{2,\alpha}(\Sigma_{x\leq 0},h)$ with $\norm{u_E:C^{2,\alpha}(\Sigma_{x\leq 0},h)}<\infty$ and by \ref{lemextker}\ref{itemextkersupp}, \ref{lemextker}\ref{itemextkerucomp} and \eqref{eqaij} satisfies 
\begin{align*}
    \underline{\mathcal{L}} (u_E\circ F_j)=0,\quad
   \lim_{s\to \infty}u_E\circ F_j=0.
\end{align*}
Therefore, by (iv), \ref{lemextker}\ref{itemextkerubdd}, \eqref{equprimeg} the uniqueness and the estimate in \ref{lemcylequnique}, $u_E$ satisfies (v).

Now for general $E$, let the cylinder $\Omega$ as before, $F_-:=F_{-,a_0}:=\refl_2\circ F_{\theta,a_0}\circ R$, and $F_+:=F_{+,a_0}:=\refl_3\circ F_{\theta,a_0}\circ R$ but $R$ is the reparametrization $(s,z)\to (s+1,z)$. Then by \ref{lemcyleq}, there is a function $v'_{E\pm}$ such that $\underline{\mathcal{L}}v'_{E\pm}=E\circ F_{\pm}$. Moreover, 
\begin{equation}\label{eqEprime}
    \norm{v'_{E\pm}:C^{2,\alpha}(\Omega,g_0,e^{-\gamma s})}\leq C \norm{E\circ F_{\pm}:C^{0,\alpha}(\Omega,g_0,e^{-\gamma s})}.
\end{equation} 
Then $E'$ can be defined by 
\begin{equation*}
    E':=E-\mathcal{L}_{\iniDsingsur}(\psi[1,2](s_{a_0}) F_{\pm*}v'_{E\pm}).
\end{equation*}
Then $E'$ is supported on $\iniDsingsur_{s_{a_0}\leq 2}$, and the results above could be applied for $E'$ to get $(u_{E'},\theta_{E'},\underline{\phi}_{E'})=\mathcal{R}_{\Sigma}(E',f)$. Recall \ref{def:wing}, we can now define $u_E=u_{E'}$ on the core of $\Sigma_{x\leq 0}$ and $u_E=u_{E'}+(\psi[1,2](s_{a_0})) F_{\pm*} v'_{E\pm}$ on the $\pm$-wing of $\Sigma_{x\leq 0}$. And then $\mathcal{R}_{\Sigma}(E,f)$ is defined by
\begin{equation*}
    \mathcal{R}_{\Sigma}(E,f):=(u_{E},\theta_{E'},\underline{\phi}_{E'}).
\end{equation*}
The estimates follow by the definitions, \eqref{eqEprime} and \ref{propscherk}\ref{itemscherkdecay}.
\end{proof}

\subsection*{The linearized equation on $\mathcal{N}_j[\xi]$}\quad\\

In this subsection, an $\mathcal{N}=\mathcal{N}_j[\xi]$ is fixed. 

\begin{definition}[Norms on catenoidal annuli] 
\label{defLEN} 
	Given $u\in C^{r,\alpha}(\mathcal{N})$ $(r=0,2)$ define $\norm{u:\mathcal{N}}_r$ by
\begin{equation*}
    \norm{u:\mathcal{N}}_r:=\norm{u:C^{r,\alpha}(\mathcal{N})}.
\end{equation*}
Similarly define $\norm{u:\mathcal{N}}'_r$ by
\begin{equation*}
    \norm{u:\mathcal{N}}'_r:=\norm{u:C^{r,\alpha}(\mathcal{N}\setminus S[\xi])}.
\end{equation*}

\end{definition}

\begin{lemma}[The Jacobi equation on catenoidal annuli] 
\label{lemLEN} 
Let $\alpha\in (0,1)$, there exists a linear map 
\begin{align*}
    \mathcal{R}_{\mathcal{N}}=\mathcal{R}_{\mathcal{N},a}[j,\xi]:C_{\Grp}^{0,\alpha}(\mathcal{N}_j[\xi])\to C_{\Grp}^{2,\alpha}(\mathcal{N}_j[\xi]),
\end{align*}
and a constant $C$, such that for $E\in C_{\Grp}^{0,\alpha}(\mathcal{N}_j[\xi])$, 
if $u_E:=\mathcal{R}_{\mathcal{N}}E$, then $\mathcal{L}_{\mathcal{N}}u_E=E$ on $\mathcal{N}$, $u_E=0$ on $\partial\mathcal{N}$, and
\begin{equation*}
    \norm{u_E:\mathcal{N}}_2\leq C\norm{E:\mathcal{N}}_0.
\end{equation*}
\end{lemma}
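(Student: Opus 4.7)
The plan is to reduce Lemma \ref{lemLEN} to the Dirichlet spectral bound for $\mathcal{L}_{\mathcal{A}_j\Confarg}$ supplied by Proposition \ref{propconvconf}\ref{itemconfker}, via a comparison between $\mathcal{N}=\mathcal{N}_j[\xi]$ and a compact subdomain of $\mathcal{A}_j\Confarg$. Note that $\mathcal{N}$ is a compact surface with boundary and that its boundary circles lie in the interior of $M$ at $s=\underline{a}=8|\log\tau|$, well away from $\partial M$, so the modification $\mathring{X}_{\mathring{u}}$ leaves $\mathcal{N}$ unchanged and the problem is purely interior.

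First I would realize $\mathcal{N}$ as a $C^{2,\alpha}$-small normal graph over a compact subdomain $\mathcal{A}^*\subset\mathcal{A}_j\Confarg$ that exhausts $\mathcal{A}_j\Confarg$ minus a thin boundary collar of width $O(\tau)$. Indeed, the central piece $\mathcal{A}''_j$ is by the definition of $\mathcal{A}'_j[\xi]$ together with the parameter bounds in \ref{corini} a $C^\infty$-small perturbation of a subdomain of $\mathcal{A}_j\Confarg$; and the two wing pieces of $\mathcal{N}$ lying between $s=\underline{a}$ and $s=5\delta_s/\tau$ are, by Lemma \ref{lemOmega} and the exponential decay in \ref{propscherk}\ref{itemscherkdecay}, very close to their asymptotic catenoids, which in turn are close to the same ends of $\mathcal{A}_j\Confarg$. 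Pulling back $\mathcal{L}_\mathcal{N}$ under this graph parametrization gives an elliptic operator $\mathcal{L}^\ast:=\mathcal{L}_{\mathcal{A}^\ast}+\mathcal{E}$ on $\mathcal{A}^\ast$, where the error operator $\mathcal{E}$ has $C^{0,\alpha}$ coefficients of size $O(\tau)$.

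Next I would combine Proposition \ref{propconvconf}\ref{itemconfker} with a standard domain- and operator-perturbation argument. Because the collar between $\partial\mathcal{A}^\ast$ and $\partial\mathcal{A}_j\Confarg$ has width $O(\tau)$, extension-by-zero of test functions together with a direct Rayleigh-quotient comparison show that $\mathcal{L}_{\mathcal{A}^\ast}$ has no Dirichlet eigenvalues in $[-c/2,c/2]$ once $\tau$ is small enough in terms of $k$; the additional $O(\tau)$ coefficient perturbation $\mathcal{E}$ is then easily absorbed, giving that $\mathcal{L}^\ast$, and hence $\mathcal{L}_\mathcal{N}$ with zero Dirichlet data, is invertible on $L^2_\Grp$ with operator norm at most $4/c$. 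Given $E\in C^{0,\alpha}_\Grp(\mathcal{N})$ I would then define $u_E$ to be the unique Dirichlet $L^2_\Grp$ solution of $\mathcal{L}_\mathcal{N} u_E=E$; linearity in $E$ and $\Grp$-equivariance follow automatically from uniqueness and the $\Grp$-invariance of the operator. The upgrade from the $L^2$ bound to the $C^{2,\alpha}$ estimate in the statement is then standard interior and boundary Schauder regularity, since by \ref{lemXu} and \ref{corini} the metric and second fundamental form of $\mathcal{N}$ are uniformly $C^4$-bounded with constants depending on $k$ but independent of $\tau$.

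The one delicate point is the spectral stability in the second paragraph, since the domain change from $\mathcal{A}_j\Confarg$ to $\mathcal{A}^\ast$ cannot be absorbed into a coefficient perturbation on a fixed domain; I would handle this by cutting off $\mathcal{A}_j\Confarg$ via a $\tau$-regular cut-off supported away from the collar and invoking the standard Lyapunov-type continuity of Dirichlet spectra for uniformly elliptic operators on uniformly regular domains. Since the unperturbed gap $c=c(k)$ is a fixed positive constant while both perturbations have size $O(\tau)$, the full spectral stability follows for $\tau$ small enough in terms of $k$.
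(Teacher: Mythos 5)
Your proposal is correct and takes essentially the same route as the paper's own (very terse) proof: use the spectral gap of Proposition \ref{propconvconf}\ref{itemconfker} together with the fact that $\mathcal{N}_j[\xi]$ is a small perturbation of (a subdomain of) $\mathcal{A}_j\Confarg$ to exclude small Dirichlet eigenvalues of $\mathcal{L}_{\mathcal{N}}$, then conclude unique solvability and the $C^{2,\alpha}$ bound by standard linear theory with uniformly controlled geometry. Your write-up simply makes explicit the domain-monotonicity/continuity and $O(\tau)$ coefficient-perturbation details that the paper leaves implicit.
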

\begin{proof}
By \ref{propconvconf}\ref{itemconfker} and the fact that $\mathcal{N}$ is a small perturbation of $\mathcal{A}_j$, 
the Dirichlet problem for $\mathcal{L}_{\mathcal{N}}$ has no small eigenvalues. 
Thus there is a unique solution $u_E$ to $\mathcal{L}_{\mathcal{N}}u_E=E$, $u_E=0$ on $\partial \mathcal{N}$, 
and it satisfied the required estimate by the standard linear theory and the uniform control of the geometry.
\end{proof}

\subsection*{The linearized equation globally on the initial surfaces}

\begin{lemma} 
\label{lemopcompare}
Let $\beta=\beta_i(\xi)$, $\mathcal{S}:=\mathcal{S}_i[\xi]$, $\mathcal{Y}=\mathcal{Y}_i[\xi]$, $\Sigma:=\Scherkin$, 
and let $\mathcal{Y}_{*}$ be the push forward by $\mathcal{Y}$ of a function or tensor on $\Scherkin_{x\leq 0,s\leq 5\delta_s/\tau}$ 
to a function or tensor on $\mathcal{S}_i[\xi]$, the following results hold:
\begin{enumerate}[label={(\roman*)},ref={(\roman*)}]
\item $\norm{(\tau\sin{\beta})^{-2}g_{\mathcal{S}})-\mathcal{Y}_*g_{\Sigma}:C^3(\mathcal{S}, (\tau\sin{\beta})^{-2}g_{\mathcal{S}})}\leq C(\varepsilon)(1+\underline{c})\tau+C(\delta_s+\varepsilon)$.\label{itemopcompareg}
\item Suppose $u\in C^{2,\alpha}_G(\Sigma_{x\leq 0,s\leq 5\delta_s/\tau})$, then
    \begin{align*}
        &\norm{(\tau\sin{\beta})^2\mathcal{L}_M\circ\mathcal{Y}_{*}u-\mathcal{Y}_*\mathcal{L}_{\iniDsingsur}u:C^{0,\alpha}(\mathcal{S}, (\tau\sin{\beta})^{-2}g_{\mathcal{S}},e^{-\gamma s})}\\
        \leq&\left(C(\varepsilon)(1+\underline{c})\tau+C(\delta_s+\varepsilon)\right)\norm{u:C^{2,\alpha}(\mathcal{S}, (\tau\sin{\beta})^{-2}g_{\mathcal{S}},e^{-\gamma s})}.
    \end{align*}\label{itemopcompareL}
\item \label{itemopcomparebd} 
Suppose $u\in C_G^{1,\alpha}(\partial_n\iniDsingsur)$, then
    \begin{align*}
        &\norm{\tau\sin{\beta}\partial_{\rho}\circ \mathcal{Y}_{i*}u+\mathcal{Y}_*\partial_{x}u:C^{0,\alpha}(\partial M\cap\mathcal{S}, (\tau\sin{\beta})^{-2}g_{\mathcal{S}})}\\
        \leq& C\tau\norm{u:C^{1,\alpha}(\partial M\cap\mathcal{S}, (\tau\sin{\beta})^{-2}g_{\mathcal{S}})}.
    \end{align*} 
\end{enumerate}
\end{lemma}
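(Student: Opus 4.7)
The plan is to decompose the map $\mathcal{Y}=\mathring{X}_{\mathring{u}}\circ\mathcal{H}_i\circ\wrap\inDsingarg$ into its three natural pieces and estimate each factor separately: the wrapping map $\wrap$ (controlled by Corollary~\ref{lemZ}), the similarity $\mathcal{H}_i$, which is an exact homothety of ratio $\tau\sin\beta$ followed by a translation, and the near-boundary adjustment $\mathring{X}_{\mathring{u}}$ (controlled by Lemma~\ref{lemXu} and the estimate $\norm{\mathring{\Theta}}\leq C\tau$ from Lemma~\ref{lemPhi}(ii)). The parameter bounds from Corollary~\ref{corini}, namely $|\underline{\phi}|\leq(\underline{c}+C(\varepsilon))\tau$ and $|\alpha^+-\alpha^-|\leq 3\underline{c}\tau$, then convert the estimates into the stated form.

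For \ref{itemopcompareg}, I first observe that the pullback of $g_{\mathcal{S}}$ under $\mathring{X}_{\mathring{u}}\circ\mathcal{H}_i$ equals $(\tau\sin\beta)^2$ times the pullback of $g_{\mathcal{S}}$ under $\mathring{X}_{\mathring{u}}$ rescaled on the preimage. Lemma~\ref{lemXu}\ref{itemXug} gives $\norm{g\circ\mathring{X}_{\mathring{u}}-\mathring{g}:C^3(\mathring{\mathcal{S}},(\tau\sin\beta)^{-2}\mathring{g})}\leq C\tau^3$, which is absorbed into the error. It then remains to compare $\wrap^{*}g_{\mathring{\mathcal{S}}'}$ with $g_{\Sigma}$ on the Scherk side, and this is exactly Corollary~\ref{lemZ}\ref{itemZg}, which with the Corollary~\ref{corini} bounds reads $C(\varepsilon)(1+\underline{c})\tau+C(\delta_s+\varepsilon)$. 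Combining via the chain rule and the multiplicative property \eqref{E:norm:mult} gives \ref{itemopcompareg}.

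For \ref{itemopcompareL}, I write $\mathcal{L}_M=\Delta_{g_M}+|A_M|^2$ and use that under a conformal-like rescaling $\tilde g=\lambda^{-2}g$ one has $\lambda^2\Delta_g=\Delta_{\tilde g}$ plus lower order terms controlled by derivatives of $\lambda$; here $\lambda=\tau\sin\beta$ is constant on each $\mathcal{S}_i$, so the Laplacian transforms cleanly. Similarly $|A_M|^2$ scales as $\lambda^{-2}$, so $(\tau\sin\beta)^2|A_M|^2$ in the metric $(\tau\sin\beta)^{-2}g_{\mathcal{S}}$ matches $|A_\Sigma|^2$ up to errors governed by Corollary~\ref{lemZ}\ref{itemZA} and Lemma~\ref{lemXu}\ref{itemXuA}. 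Writing the difference of the two operators applied to $u$ as a sum of terms proportional to $(g_{\mathcal{S}}-\mathcal{Y}_*g_\Sigma)$, $\nabla(g_{\mathcal{S}}-\mathcal{Y}_*g_\Sigma)$, and $(|A|^2_{\mathcal{S}}-\mathcal{Y}_*|A|^2_\Sigma)$ acting on $u$ and its derivatives, and applying (\ref{E:norm:derivative}) and (\ref{E:norm:mult}) together with \ref{itemopcompareg} and the curvature estimates, yields the required bound.

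For \ref{itemopcomparebd}, the essential point is that near $\partial M\cap\mathcal{S}$ the map $\mathcal{Y}$ is, up to the adjustment $\mathring{X}_{\mathring{u}}$, the composition of $\mathcal{H}_i$ (which sends the conormal $\partial_x$ on $\partial_n\Sigma$ to $(\tau\sin\beta)^{-1}$ times the conormal on $\mathcal{H}_i(\partial_n\Sigma)$) and the deformation $\mathcal{D}[\beta,\tau]$ whose $C^4$ distance from the identity is $O(\tau\sin\beta)$ by Lemma~\ref{defdeform}\ref{itemdeformtau}. Crucially, $\mathring{X}_{\mathring{u}}$ was designed in \ref{defini} so that the resulting $\partial M$ meets $\Sph^2$ orthogonally (Proposition~\ref{propini}\ref{iteminibd}), which means that the true conormal $\partial_\rho$ and the pushforward of $(\tau\sin\beta)^{-1}\partial_x$ differ only by tangential components whose sizes are controlled by $\norm{\mathring\Theta}_{C^3}\leq C\tau$ (Lemma~\ref{lemPhi}(ii)). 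Multiplying by $\tau\sin\beta$ converts these into the stated $C\tau$ error on the operator. The main obstacle I anticipate is simply the careful bookkeeping in \ref{itemopcomparebd}: translating the orthogonality condition into an explicit expression for $\partial_\rho$ in terms of the conormal inherited from $\mathring{\mathcal{S}}$ and the tilt $\partial_{\mathring{\rho}}\mathring{u}|_{\partial\mathring{M}}=-\mathring{\Theta}/\sqrt{1-\mathring{\Theta}^2}$, and then verifying that the cross-terms are genuinely of order $\tau$ in the rescaled norm.
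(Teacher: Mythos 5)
Your proposal is correct and follows essentially the same route as the paper: part (i) by combining the scaling of $\mathcal{H}_i$, Corollary \ref{lemZ}\ref{itemZg}, Lemma \ref{lemXu}\ref{itemXug} and the parameter bounds coming from \ref{defXi} (via \ref{corini}); part (ii) by reducing the operator difference to the metric and second-fundamental-form comparisons in \ref{lemZ}\ref{itemZA}, \ref{lemXu}\ref{itemXuA} together with (i); and part (iii) by the smooth dependence in \ref{propZ} and the boundary construction (in particular $\partial_{\mathring{\rho}}\mathring{u}|_{\partial\mathring{M}}=-\mathring{\Theta}/\sqrt{1-\mathring{\Theta}^2}$ with $\norm{\mathring{\Theta}}\leq C\tau$ from \ref{lemPhi}). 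The paper's proof is just a terser citation of the same ingredients, so no gap to report.
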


\begin{proof}
(i) follows by \ref{lemZ}\ref{itemZg},  \ref{lemXu}\ref{itemXug}, \ref{defXi} and a scaling. 
(ii) follows by \ref{lemZ}\ref{itemZA}, \ref{lemXu}\ref{itemXuA} along with (i). 
(iii) follows by the smooth dependence in \ref{propZ}, \ref{lemXu}\ref{itemXug} and the construction. 
\end{proof}

\begin{definition}[Global norms on the initial surfaces {${M=M[\xi]}$}] 
\label{D:norm}
Given $u\in C^{r,\alpha}(M)$ $(r=0,2)$, $\norm{u}_r$ is defined to be the maximum of the following quantities, where $b_0=e^{-5\gamma\delta_s/\tau}$ and $b_2=b_0/\tau^{10}$:
\begin{enumerate}[label={(\roman*)},ref={(\roman*)}]
    \item For each $i=1,\dots,k$, and $\mathcal{S}_i:=\mathcal{S}_i[\xi]$,
    \begin{equation*}
        \tau^{1-r}\norm{u:C^{r,\alpha}(\mathcal{S}_i,(\tau\sin{\beta_i})^{-2}g_{\mathcal{S}_i},\max\{e^{-\gamma s},b_r\})}.
    \end{equation*}
    \item For each $j=1,\dots,k+1$, 
    \begin{equation*}
        b_r^{-1}\norm{u:\mathcal{N}_j[\xi]}'_r.
    \end{equation*}
\end{enumerate}
And given $f\in C^{1,\alpha}(\partial M)$, $\norm{f}_1$ is defined to be the maximum of the following quantities: for each $i=1,\dots,k$, consider the quantity
\begin{equation*}
    \norm{f:C^{1,\alpha}(\partial M\cap \mathcal{S}_i,(\tau\sin{\beta_i})^{-2}g_{\mathcal{S}_i})}.
\end{equation*}
\end{definition}

\begin{definition}[$W$ and the extended substitute kernel on the initial surfaces] 
\label{D:skernel} 
We define the \emph{extended substitute kernel} 
$\skernel=\skernel[\,M[\xi]\,] := W(\mathcal{V}') \subset C^{\infty}_{\Grp}(M) $ (recall \ref{N:G}),  
where 
$W:\mathcal{V}'\to C^{\infty}_{\Grp}(M)$ is the linear map 
(actually a linear isomorphism from $\mathcal{V}'$ onto $\skernel$) 
defined by 
(recall \ref{defXi}) 
\begin{equation*}
    W(\underline{\theta}',\underline{\varphi}')=\sum_{i=1}^k\frac{1}{\tau\sin\beta_i}\mathcal{Y}_{i*}(\theta'_iw+\phi'^-_i\barw_-+\phi'^+_i\barw_+),
\end{equation*}
where $\underline{\theta}'=\{\theta'_i\}_{i=1}^k\in \mathcal{V}_{\theta}$, $\underline{\varphi}'=\{\underline{\phi}'_i=\{\phi'^{-}_i,\phi'^{+}_i \} \}_{i=1}^k\in \mathcal{V}_{\phi}$, 
and $\mathcal{Y}_{i*}$ is the push forward by $\mathcal{Y}_{i}$ of a function on $\Scherkin$ to a function on $\mathcal{S}_i[\xi]$ extended to vanish on the rest of $M$.
\end{definition}

\begin{definition}
$\psi'\in C^{\infty}_{\Grp}(M)$ is defined by
\begin{equation*}
    \psi':=\psi[5\delta_s\tau^{-1},5\delta_s\tau^{-1}-1]\circ s
\end{equation*}
on $\mathcal{S}_i$ and $0$ on the rest of $M$.
Also $\psi'\in C^{\infty}_{\Grp}(M)$ is defined by
\begin{equation*}
    \psi'':=\psi[\underline{a},\underline{a}+1]\circ s.
\end{equation*}
\end{definition}

Now the linear map
\begin{align*}
    \mathcal{R}_{M,appr}: 
    &\{E\in C^{0,\alpha}_{\Grp}(M)|\norm{E}_0< \infty\}\times C_{\Grp}^{1,\alpha}(\partial M)\\
    &\to \{u\in C^{2,\alpha}_{\Grp}(M)|\norm{u}_2< \infty\}\times \mathcal{V}' \times \{E_1\in C^{0,\alpha}_{\Grp}(M)|\norm{E_1}_0< \infty\}\times C_{\Grp}^{1,\alpha}(\partial M)
\end{align*}
is constructed by the following. Given $E\in C^{0,\alpha}_{\Grp}(M)$, $f\in C^{1,\alpha}_{\Grp}(\partial M)$ with $\norm{E}_0<\infty$, $\norm{f}_1<\infty$, let $\iniDsingsur:=\Scherkin$, then
\begin{equation*}
    \psi' E|_{\mathcal{S}_i}\circ \mathcal{Y}_i \in  C^{0,\alpha}_{G}(\iniDsingsur,e^{-\gamma s}),\quad f\circ \mathcal{Y}_i\in  C^{1,\alpha}_{G}(\partial_n\iniDsingsur).
\end{equation*}
By \ref{propLES}, there is $(v_{Si},\theta_{1i},\underline{\phi}_{1i})\in C^{2,\alpha}(\iniDsingsur)\times \mathbb{R}\times\mathbb{R}^2$ such that
\begin{equation*}
(v_{Si},\theta_{1i},\underline{\phi}_{1i})=\mathcal{R}_{\iniDsingsur}(\tau\sin{\beta_i}\psi' E|_{\mathcal{S}_i}\circ \mathcal{Y}_i,f\circ\mathcal{Y}_i).    
\end{equation*}
Define $u_{1S}:=\tau \sin{\beta_i} \mathcal{Y}_{i*} v_{Si}$ on $\mathcal{S}_i$ and $0$ on the rest, then $u_{1S}\in C_{\Grp}^{2,\alpha}(S)$ with $\norm{u_{1S}}_2< \infty$. By \ref{lemLEN}, there is $u_{Nj}\in C_{\Grp}^{2,\alpha}(\mathcal{N}_j)$, such that 
\begin{equation*}
    u_{Nj}=\mathcal{R}_{\mathcal{N}}((1-\psi'^2)E-[\mathcal{L}_{M},\psi']u_{1S}).
\end{equation*}
Again, let $u_{1N}= u_{Nj}$ on $\mathcal{N}_j$ and 0 on the rest. 
\begin{definition}
$(u_1,\underline{\theta}_1,\underline{\varphi}_1,E_1,f_1)=\mathcal{R}_{M,appr}(E,f)$ are defined by the following.
\begin{enumerate}[label={(\roman*)},ref={(\roman*)}]
\item $u_1:=\psi' u_{1S}+\psi'' u_{1N}$.
\item $\underline{\theta}_1:=\{\theta_{1i}\}_{i=1}^k$.
\item $\underline{\varphi}_1:=\{\underline{\phi}_{1i}\}_{i=1}^k$.
\item $E_1:=\mathcal{L}_{M}u_1-E+W(\underline{\theta}_1,\underline{\varphi}_1)$.
\item $f_1:=\partial_{\rho} u_1 + u_1-f $
\end{enumerate}
\end{definition}

\begin{prop}[The Jacobi equation on the initial surfaces] 
\label{propLEM} 
For some small $\delta_s$, $\varepsilon$, when $\tau$ small enough, a linear map 
\begin{equation*}
    \mathcal{R}_{M}: 
    \{E\in C_{\Grp}^{0,\alpha}(M)|\norm{E}_0< \infty\} \to \{u\in C^{2,\alpha}_{\Grp}(M)|\norm{u}_2< \infty\}\times \mathcal{V}'
\end{equation*}
can be defined by 
\begin{equation*}
    \mathcal{R}_{M}E:=(u_E,\underline{\theta}_E,\underline{\varphi}_E):=\sum_{n=1}^{\infty}(u_n,\underline{\theta}_n,\underline{\varphi}_n)
\end{equation*}
for $E\in C^{0,\alpha}(M)$ with $\norm{E}_0<\infty$, where the sequence $\{(u_n,\underline{\theta}_n,\underline{\varphi}_n,E_n,f_n) \}_{n\in\mathbb{N}}$ is defined inductively for $n\in\mathbb{N}$ by
\begin{equation*}
    (u_n,\underline{\theta}_n,\underline{\varphi}_n,E_n,f_n):= -\mathcal{R}_{M,appr}(E_{n-1},f_{n-1}),\quad E_0:=-E,\quad f_0=0.
\end{equation*}
Moreover, 
\begin{align*}
    &\mathcal{L}_M u_E=E-W(\underline{\theta}_E,\underline{\varphi}_E),\quad (\partial_{\rho}+1) u_E|_{\partial M}=0,\\
    &\abs{\underline{\theta}_E}\leq C\norm{E}_0,\quad \abs{\underline{\varphi}_E}\leq C\norm{E}_0,\quad \norm{u_E}_2\leq C\norm{E}_0.
\end{align*}
\end{prop}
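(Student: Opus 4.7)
The plan is to show that $\mathcal{R}_{M,appr}$ is an approximate right inverse whose residual has operator norm $\mu<1$; then the sequence defining $\mathcal{R}_M$ converges geometrically and a telescoping sum yields the claimed solution.

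First I would bound $\mathcal{R}_{M,appr}$ itself, establishing
\[
\|u_1\|_2 + |\underline{\theta}_1| + |\underline{\varphi}_1| \;\le\; C\bigl(\|E\|_0 + \|f\|_1\bigr).
\]
The Scherk pieces $v_{Si},\theta_{1i},\underline{\phi}_{1i}$ are produced by Proposition \ref{propLES} on $\Scherkin_{x\le 0}$ with the exponential weight $e^{-\gamma s}$, and $u_{Nj}$ by Lemma \ref{lemLEN}. The scale factors $\tau\sin\beta_i$ appearing in $u_{1S}=\tau\sin\beta_i\,\mathcal{Y}_{i*}v_{Si}$ exactly match the powers of $\tau\sin\beta_i$ hidden in the scaled metrics and the $\tau^{1-r}$ prefactor of Definition \ref{D:norm}, so the Scherk/catenoid estimates transfer to $M$ with a universal constant.

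Next I would estimate the residual $(E_1,f_1)$, which has three sources. \emph{(a)} Cutoff commutators $[\mathcal{L}_M,\psi']u_{1S}$ and $[\mathcal{L}_M,\psi'']u_{1N}$, supported in the transition collars $s\in[5\delta_s/\tau-1,\,5\delta_s/\tau]$ and $s\in[\underline{a},\underline{a}+1]$ respectively; on these collars the Scherk weight satisfies $e^{-\gamma s}\le b_0=e^{-5\gamma\delta_s/\tau}$, so these contribute at most $Cb_0\|E\|_0$ to $\|E_1\|_0$. \emph{(b)} Operator-discrepancy terms of the form $\mathcal{L}_M\mathcal{Y}_{i*}v_{Si}-\mathcal{Y}_{i*}\mathcal{L}_{\iniDsingsur}v_{Si}$ on the Scherk patches, and the analogous difference on the catenoidal annuli; by Lemma \ref{lemopcompare}(ii) these are bounded by $\bigl(C(\varepsilon)(1+\underline{c})\tau+C(\delta_s+\varepsilon)\bigr)\|u_1\|_2$. \emph{(c)} The boundary residual $f_1=\partial_\rho u_1+u_1-f$: the piece $\partial_\rho u_1-f$ is controlled by Lemma \ref{lemopcompare}(iii), yielding $\le C\tau\|u_1\|_2$, while the term $u_1|_{\partial M}$ is already small in $\|\cdot\|_1$ thanks to the factor $\tau^{1-r}$ in Definition \ref{D:norm}. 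Adding (a)--(c) gives $\|E_1\|_0+\|f_1\|_1\le\mu\bigl(\|E\|_0+\|f\|_1\bigr)$. Choosing first $\varepsilon$ and $\delta_s$ small so that $C(\delta_s+\varepsilon)<\tfrac14$, and then $\tau$ small so that both $C(\varepsilon)\tau$ and $b_0$ are negligible, yields $\mu\le\tfrac12$. The same estimate applies at each iteration, so $\|E_n\|_0+\|f_n\|_1\le 2^{-n}\|E\|_0$, whence $u_E:=\sum u_n$ converges in $\|\cdot\|_2$ with $\|u_E\|_2\le C\|E\|_0$, and similarly $|\underline{\theta}_E|,|\underline{\varphi}_E|\le C\|E\|_0$. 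A telescoping calculation from the defining identities of $\mathcal{R}_{M,appr}$ gives
\[
\mathcal{L}_M\sum_{n=1}^N u_n \;=\; E - W\!\Bigl(\sum_{n=1}^N \underline{\theta}_n,\sum_{n=1}^N \underline{\varphi}_n\Bigr)+E_N,\qquad (\partial_\rho+1)\sum_{n=1}^N u_n\Big|_{\partial M}=-f_N,
\]
and sending $N\to\infty$ yields $\mathcal{L}_M u_E=E-W(\underline{\theta}_E,\underline{\varphi}_E)$ together with the Robin condition $(\partial_\rho+1)u_E|_{\partial M}=0$ (which encodes the free boundary condition via Lemma \ref{lemauu}(ii)).

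The main obstacle will be the calibration of constants just described: the factor $C(\delta_s+\varepsilon)$ from Lemma \ref{lemopcompare}(ii) must be driven below the contraction threshold before $\tau$ is fixed, yet the Scherk-side constant $C(\varepsilon)$ (which depends on $a=a(\varepsilon)$ in Proposition \ref{propscherk}) reappears multiplicatively in the estimates of both $\mathcal{R}_{M,appr}$ and its residual. So the ordering \emph{fix $\varepsilon$, then $\delta_s$, then $\tau$} must be respected, and one must verify that the geometric weight $b_0$ in Definition \ref{D:norm} still dominates the polynomial blow-up $\tau^{-10}$ hidden in $b_2$ after all choices are made, and that the parameter $\underline{c}$ in \ref{deftau} is compatible with the chosen thresholds.
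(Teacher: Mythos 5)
Your proposal follows essentially the same route as the paper's proof: bound $\mathcal{R}_{M,appr}$, show the residual $(E_1,f_1)$ contracts by splitting it into cutoff-commutator, operator-discrepancy (Lemma \ref{lemopcompare}) and boundary contributions, fix $\varepsilon$ and $\delta_s$ before $\tau$, and sum the resulting geometric series with a telescoping identity for $\mathcal{L}_M$ and the Robin condition. The only caveat---which you already flag at the end---is that after the norm conversions between the catenoidal norm and the weighted Scherk norm the commutator contribution is of size $b_0\tau^{-14}$ rather than $Cb_0$, but since $b_0=e^{-5\gamma\delta_s/\tau}$ decays exponentially in $1/\tau$ this does not affect the contraction, exactly as in the paper.
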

\begin{proof}
By the definitions, \ref{lemsubkernel}\ref{itemsubkernalsupp}, \ref{lemextker}\ref{itemextkersupp} and the facts $\psi''(1-\psi'^2)=1-\psi'^2$, $\psi''\nabla\psi'=\nabla\psi'$,
\begin{align*}
    &\mathcal{L}_Mu_1=-E_0-W(\underline{\theta}_1,\underline{\varphi}_1)+[\mathcal{L}_M,\psi'']u_{1N}+(\tau\sin\beta_i)^{-1}\psi'\sum_i((\tau\sin\beta_i)^2\mathcal{L}_M\circ \mathcal{Y}_{i*} v_{Si}-\mathcal{Y}_{i*}\mathcal{L}_{\Sigma}v_{Si}),\\
    &\partial_{\rho}u_1+u_1=-f_0+\sum_i\tau\sin\beta_i\mathcal{Y}_{i*}v_{Si}+\sum_i(\tau\sin\beta_i\partial_{\rho}\circ \mathcal{Y}_{i*} v_{Si}+ \mathcal{Y}_{i*}\partial_x v_{Si}),
\end{align*}
where $(u_1,\underline{\theta}_1,\underline{\varphi}_1,E_1,f_1):=-\mathcal{R}_{M,appr}(E_0,f_0)$. Therefore, by the definitions,
\begin{align*}
   E_1&=[\mathcal{L}_M,\psi'']u_{1N}+(\tau\sin\beta_i)^{-1}\psi'\sum_i((\tau\sin\beta_i)^2\mathcal{L}_M\circ \mathcal{Y}_{i*} v_{Si}-\mathcal{Y}_{i*}\mathcal{L}_{\Sigma}v_{Si}),\\
    f_1&=\sum_i\tau\sin\beta_iv_{Si}+\sum_i(\tau\sin\beta_i\partial_{\rho}\circ \mathcal{Y}_{i*} v_{Si}+ \mathcal{Y}_{i*}\partial_x v_{Si}).
\end{align*}

However, as $\supp\nabla\psi'\subset\left[\frac{5\delta_s}{\tau}-1,\frac{5\delta_s}{\tau}\right]$ 
and $\supp\nabla\psi''\subset [\underline{a},\underline{a}+1]=\left[8\log\frac{1}{\tau},8\log\frac{1}{\tau}+1\right]$, by \ref{lemLEN}, the fact that $\gamma\in(3/4,1)$ and the definitions
\begin{align*}
    \norm{[\mathcal{L}_M,\psi'']u_{1N}}_0&\leq\max_i \tau^{-1}\norm{u_{1N}:C^{2,\alpha}(\mathcal{S}_i\cap M_{s\in[\underline{a},\underline{a}+1] },(\tau\sin\beta_i)^{-2}g_{\mathcal{S}_i},e^{-\gamma s})}\\
    &\leq C\tau^{-9} \max_i \norm{u_{1N}:C^{2,\alpha}(\mathcal{S}_i\cap M_{s\in[\underline{a},\underline{a}+1] },(\tau\sin\beta_i)^{-2}g_{\mathcal{S}_i}}\\
    &\leq C\tau^{-9}\norm{u_{1N}:C^{2,\alpha}(M,g_M)}\leq C\tau^{-9}\norm{-(1-\psi'^2)E_0-[\mathcal{L}_{M},\psi']u_{1S}:C^{0,\alpha}(M,g_M)}\\
    &\leq C\tau^{-9}\left(\tau^{-1}\norm{E_0:C^{0,\alpha}(M_{s\geq\frac{5\delta_s}{\tau}-1},g_M)}+\tau^{-3}\norm{u_{1S}:C^{2,\alpha}(M_{s\in \left[\frac{5\delta_s}{\tau}-1,\frac{5\delta_s}{\tau}\right]},g_M)}\right).
\end{align*}
By the definitions,
\begin{align*}
    &\norm{E_0:C^{0,\alpha}(M_{s\geq\frac{5\delta_s}{\tau}-1},g_M)}\\
    \leq &\max_i Ce^{-5\gamma \delta_s/\tau}\norm{E_0:C^{0,\alpha}(\mathcal{S}_i,g_{\mathcal{S}_i},e^{-\gamma s})}+\max_j \norm{E_0:C^{0,\alpha}(\mathcal{N}_j)}'\\
    \leq &\max_i Ce^{-5\gamma \delta_s/\tau}/\tau\norm{E_0:C^{0,\alpha}(\mathcal{S}_i,(\tau\sin\beta_i)^{-2}g_{\mathcal{S}_i},e^{-\gamma s})}+\max_j \norm{E_0:C^{0,\alpha}(\mathcal{N}_j)}'\\
    \leq& Ce^{-5\gamma \delta_s/\tau}/\tau^2 \norm{E_0}_0.
\end{align*}
And by \ref{propLES} and \ref{lemopcompare}\ref{itemopcompareg},
\begin{align*}
    &\norm{u_{1S}:C^{2,\alpha}(M_{s\in \left[\frac{5\delta_s}{\tau}-1,\frac{5\delta_s}{\tau}\right]},g_M)}\leq Ce^{-5\gamma \delta_s/\tau}\norm{u_{1S}:C^{2,\alpha}(S,g_M,e^{-\gamma s})}\\
    \leq& \max_i Ce^{-5\gamma \delta_s/\tau}/\tau^3\norm{u_{1S}:C^{2,\alpha}(\mathcal{S}_i,(\tau\sin\beta_i)^{-2}g_{\mathcal{S}_i},e^{-\gamma s})}\\
    \leq & \max_i Ce^{-5\gamma \delta_s/\tau}/\tau^2\norm{v_{Si}:C^{2,\alpha}(\Sigma,g_{\Sigma},e^{-\gamma s})}\\
    \leq& \max_i Ce^{-5\gamma \delta_s/\tau}/\tau^2 \left( \tau \norm{E_0\circ \mathcal{Y}_i:C^{0,\alpha}(\Sigma_{x\leq 0},g_{\Sigma},e^{-\gamma s})}+\norm{f_0\circ \mathcal{Y}_i:C^{1,\alpha}(\partial_n\iniDsingsur,g_{\Sigma})}\right)\\
    \leq & Ce^{-5\gamma \delta_s/\tau}/\tau^2 \left(  \norm{E_0}_0+\norm{f_0}_1\right).
\end{align*}
Moreover, by \ref{lemopcompare}\ref{itemopcompareL} and the inequality $\norm{v_{Si}:C^{2,\alpha}(\Sigma_{x\leq 0}, g_{\Sigma},e^{-\gamma s})}\leq C\left( \norm{E_0}_0+\norm{f_0}_1\right)$ above
\begin{align*}
    &\norm{\psi'((\tau\sin\beta_i)^2\mathcal{L}_M\circ \mathcal{Y}_{i*} v_{Si}-\mathcal{Y}_{i*}\mathcal{L}_{\Sigma}v_{Si})}_0\\
    =&\tau\norm{\psi'((\tau\sin\beta_i)^2\mathcal{L}_M\circ \mathcal{Y}_{i*} v_{Si}-\mathcal{Y}_{i*}\mathcal{L}_{\Sigma}v_{Si}):C^{0,\alpha}(\mathcal{S}_i,(\tau\sin\beta_i)^{-2}g_{\mathcal{S}_i},e^{-\gamma s})}\\
    \leq& (C(\varepsilon)(1+\underline{c})\tau+C(\delta_s+\varepsilon))\tau\norm{v_{Si}:C^{2,\alpha}(\Sigma_{x\leq 0}, g_{\Sigma},e^{-\gamma s})}\\
    \leq& (C(\varepsilon)(1+\underline{c})\tau+C(\delta_s+\varepsilon))\tau\left( \norm{E_0}_0+\norm{f_0}_1\right).
\end{align*}
Therefore, 
\begin{equation*}
    \norm{E_1}_0\leq \left(Ce^{-5\gamma \delta_s/\tau}/\tau^{14}+(C(\varepsilon)(1+\underline{c})\tau+C(\delta_s+\varepsilon)) \right)\left(  \norm{E_0}_0+\norm{f_0}_1\right).
\end{equation*}
By choosing small $\delta_s$, $\varepsilon$ and then $\tau$ small enough, $\norm{E_1}_0\leq\frac{1}{2}\left( \norm{E_0}_0+\norm{f_0}_1\right)$.

Similarly, by \ref{lemopcompare}\ref{itemopcomparebd},
\begin{align*}
    \norm{f_1}_1\leq C\tau\norm{v_{Si}:C^{1,\alpha}(\partial_n\iniDsingsur),g_{\Sigma})}\leq C\tau\left( \norm{E_0}_0+\norm{f_0}_1\right).
\end{align*}
By choosing $\tau$ small enough, $\norm{f_1}_1\leq\frac{1}{2}\left( \norm{E}_0+\norm{f_0}_1\right)$.

Finally, from the inequalities above and the definitions,
\begin{align*}
    \norm{u_1}_2\leq C\left( \norm{E_0}_0+\norm{f_0}_1\right),\quad \abs{\underline{\theta}_1}\leq C\left( \norm{E_0}_0+\norm{f_0}_1\right),\quad \abs{\underline{\varphi}_1}\leq C\left( \norm{E_0}_0+\norm{f_0}_1\right).
\end{align*}
By using the estimates and iterating we conclude that the operator $\mathcal{R}_M$ is well defined and satisfies the requirements.
\end{proof}

From now on the small constants $\delta_s$, $\varepsilon$ are fixed as in the proposition.

\begin{cor}[The linearized equation for the mean curvature] 
\label{corLEmeancurv} 
There are $u_H\in C^{2,\alpha}_{\Grp}(M)$ and $(\underline{\theta}_H,\underline{\varphi}_H)\in \mathcal{V}'$ such that 
\begin{align*}
    &\mathcal{L}_M u_H=H-W(\underline{\theta}_H,\underline{\varphi}_H),\quad (\partial_{\rho}+1) u_H|_{\partial M}=0,\\
    &\abs{\underline{\theta}_H-\underline{\theta}}\leq C\tau,\quad \abs{\underline{\varphi}_H+\underline{\varphi}}\leq C\tau,\quad \abs{\underline{\sigma}_H-\underline{\sigma}}\leq C\tau,\quad \norm{u_H}_2\leq C\tau,
\end{align*}
where $H$ is the mean curvature of $M$, $M:=M[\xi]=M(\underline{\sigma},\underline{\varphi})$, $\underline{\theta}:=\{ \alpha^+_i\Confarg-\alpha^-_i\Confarg \}_{i=1}^k$, $\underline{\sigma}_H:=\left\{\frac{\theta_{Hi}}{\alpha_i^+\Confarg} \right\}_{i=1}^k\in \mathcal{V}_{\sigma}$.
\end{cor}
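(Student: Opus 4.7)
The plan is to set $(u_H,\underline{\theta}_H,\underline{\varphi}_H):=\mathcal{R}_M H$ using Proposition \ref{propLEM}; the equation $\mathcal{L}_M u_H=H-W(\underline{\theta}_H,\underline{\varphi}_H)$ and the Robin boundary condition $(\partial_\rho+1)u_H|_{\partial M}=0$ are then immediate. The rest of the argument consists of turning the naive bound $\norm{u_H}_2\le C\norm{H}_0$ into the sharper statement that $\underline{\theta}_H$ and $-\underline{\varphi}_H$ are $O(\tau)$-close to $\underline{\theta}$ and $\underline{\varphi}$. My approach is to write $H=W(\underline{\theta},-\underline{\varphi})+\tilde{E}$, verify directly that $\mathcal{R}_M W(\underline{\theta},-\underline{\varphi})=(0,\underline{\theta},-\underline{\varphi})$, and then estimate $\norm{\tilde{E}}_0\le C\tau$. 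The corollary will then follow from Proposition \ref{propLEM} applied to $\tilde{E}$ together with the linearity of $\mathcal{R}_M$.

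For the first claim, $w$ and $\barw_\pm$ are supported in $\iniDsingsur_{s\le 1}$ by Lemmas \ref{lemsubkernel}\ref{itemsubkernalsupp} and \ref{lemextker}\ref{itemextkersupp}, and for $\tau$ small $\psi'\equiv 1$ on this region. Hence on each $\mathcal{S}_i$ the rescaled input $\tau\sin\beta_i\,\psi'\,W(\underline{\theta},-\underline{\varphi})|_{\mathcal{S}_i}\circ\mathcal{Y}_i$ to $\mathcal{R}_\iniDsingsur$ is exactly $\theta_i w-\phi^-_i\barw_--\phi^+_i\barw_+$. The triple $(0,\theta_i,(-\phi^-_i,-\phi^+_i))$ satisfies the equation specified by Proposition \ref{propLES} with $f=0$; since the bounded kernel element of $\mathcal{L}_h$ from Proposition \ref{propscherkef} fails to decay along the wings, this triple is the unique solution in the target space of $\mathcal{R}_\iniDsingsur$. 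Walking through the definition of $\mathcal{R}_{M,appr}$ then gives $u_{1S}=u_{1N}=0$, $\underline{\theta}_1=\underline{\theta}$, $\underline{\varphi}_1=-\underline{\varphi}$, and crucially $E_1=f_1=0$, so the iteration in Proposition \ref{propLEM} terminates at the first step and $\mathcal{R}_M W(\underline{\theta},-\underline{\varphi})=(0,\underline{\theta},-\underline{\varphi})$ exactly.

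For the second claim, both $H$ and $W(\underline{\theta},-\underline{\varphi})$ vanish on each $\mathcal{A}''_j$: the former because $\mathcal{A}''_j$ lies on a catenoidal annulus or flat disk away from the support of $\mathring{u}$, the latter because the support of $W$ is confined to the cores of the $\mathcal{S}_i$. On each $\mathring{\mathcal{S}}_i$, Corollary \ref{corini} verifies the hypotheses of Proposition \ref{propmeancurvdesing}, giving
\begin{equation*}
H_{\preiniDsingsur}=(\alpha^+_i(\xi)-\alpha^-_i(\xi)-\tilde{\phi}^+_i+\tilde{\phi}^-_i)w+\tilde{\phi}^+_i\barw_++\tilde{\phi}^-_i\barw_-+O(\tau)
\end{equation*}
in the $C^{0,\alpha}(\preiniDsingsur,e^{-\gamma s})$ norm. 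Substituting $\alpha^+_i(\xi)-\alpha^-_i(\xi)=\theta_i+\phi^-_i-\phi^+_i$ from \eqref{eqalphaxi} and $\tilde{\phi}^\pm_i=-\phi^\pm_i+O(\tau)$ from Proposition \ref{proppreini}\ref{itempreiniphi} collapses the main term to $\theta_i w-\phi^+_i\barw_+-\phi^-_i\barw_-+O(\tau)$, matching exactly $(\tau\sin\beta_i)\mathcal{Y}_i^* W(\underline{\theta},-\underline{\varphi})|_{\mathcal{S}_i}$. Adding the $O(\tau)$ discrepancy between $H$ and $\mathring{H}$ from Lemma \ref{lemXu}\ref{itemXuH} on $\mathcal{S}_i$, and controlling the wings ($s\ge 1$) via Lemma \ref{lemHwing}, the $\tau$-prefactor in Definition \ref{D:norm} absorbs the $1/(\tau\sin\beta_i)$ scaling and yields $\norm{\tilde{E}}_0\le C\tau$. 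Proposition \ref{propLEM} then produces $\norm{u_H}_2+|\underline{\theta}_H-\underline{\theta}|+|\underline{\varphi}_H+\underline{\varphi}|\le C\tau$. Finally, Taylor expanding $\sigma_i=\log(\alpha^+_i\Confarg/\alpha^-_i\Confarg)=\theta_i/\alpha^+_i\Confarg+O(\tau^2)$ and using that $\alpha^+_i\Confarg$ is bounded below by Proposition \ref{propconvconf}\ref{itemconfalpha} gives $\sigma_{Hi}-\sigma_i=(\theta_{Hi}-\theta_i)/\alpha^+_i\Confarg+O(\tau^2)=O(\tau)$. The main technical obstacle is the scale bookkeeping: confirming that the three competing factors ($1/(\tau\sin\beta_i)$ in the definition of $W$, $\tau\sin\beta_i$ from the homothety $\mathcal{H}_i$, and $\tau$ in $\norm{\cdot}_0$) combine so that the a priori $O(\tau)$ error of Proposition \ref{propmeancurvdesing} translates into $O(\tau)$ in $\norm{\tilde{E}}_0$ rather than something worse.
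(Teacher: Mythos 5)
Your overall strategy differs from the paper's in one crucial respect, and that difference is where the gap lies. The paper never applies $\mathcal{R}_M$ to the $W$-part of $H$ at all: it writes $H=W(\underline{\theta}',\underline{\tilde{\varphi}})+E$ with $\norm{E}_0\le C\tau$ (this is your estimate of $\tilde E$, which is essentially correct, modulo replacing $(\underline{\theta},-\underline{\varphi})$ by $(\underline{\theta}',\underline{\tilde\varphi})$ built from $\alpha^\pm_i(\xi)$ and $\tilde\phi^\pm_i$), applies $\mathcal{R}_M$ only to the remainder $E$, and then \emph{defines} $u_H:=u_E$, $\underline{\theta}_H:=\underline{\theta}'+\underline{\theta}_E$, $\underline{\varphi}_H:=\underline{\tilde\varphi}+\underline{\varphi}_E$, using only the linearity of $W$. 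You instead set $(u_H,\underline{\theta}_H,\underline{\varphi}_H):=\mathcal{R}_M H$ and rely on the exact identity $\mathcal{R}_M W(\underline{\theta},-\underline{\varphi})=(0,\underline{\theta},-\underline{\varphi})$. That identity is not justified. Your uniqueness argument fails: the properties listed in Proposition \ref{propLES} do not determine the output triple. Indeed, take $E=0$, $f=0$ and set $u:=\phi^-\baru_-+\phi^+\baru_++\lambda\,\nu\cdot\Vec{e}_y$; since $\barw_\pm=-\mathcal{L}_\Sigma\baru_\pm$ and $\nu\cdot\Vec{e}_y$ is a $\refl_2$- and $G$-invariant bounded Jacobi field with nonzero limits $c_\pm$ along the two wings, one has $\mathcal{L}_\Sigma u=-\phi^-\barw_--\phi^+\barw_+$ and $\partial_x u=0$ on $\partial_n\iniDsingsur$, and the two linear conditions that the wing limits of $u$ vanish (a $2\times 3$ homogeneous system in $(\phi^-,\phi^+,\lambda)$, with the matrix $(a_{ij})$ essentially $a\,\delta_{ij}$ by \ref{lemextker}\ref{itemextkerucomp} and $c_\pm\ne0$) admit a nontrivial solution with $(\phi^-,\phi^+)\ne 0$; the resulting $u$ decays exponentially, so $(u,0,(\phi^-,\phi^+))$ is a nontrivial solution of the homogeneous problem in the target space. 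So the triple $(0,\theta_i,(-\phi^-_i,-\phi^+_i))$ is not the unique solution, and one cannot conclude $E_1=f_1=0$ or that the iteration in \ref{propLEM} terminates. Tracing the actual construction of $\mathcal{R}_\iniDsingsur$ confirms the problem: $\barw_\pm$ is supported where $s_a\le 1$, which in the $s_{a_0}$ coordinate lies near $a-a_0$, far outside the region $s_{a_0}\le 2$ handled by the core step, so your input is first processed by the cylinder solver $\mathcal{R}_\Omega$ and there is no reason the output reproduces $(0,\theta_i,(-\phi^-_i,-\phi^+_i))$ exactly.

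This is not merely a cosmetic issue. Without the exact identity, linearity of $\mathcal{R}_M$ only yields $\abs{\underline{\theta}_H-\underline{\theta}}\le C(\abs{\underline{\theta}}+\abs{\underline{\varphi}})+C\norm{\tilde E}_0\le C\underline{c}\tau$, i.e.\ a constant that grows with $\underline{c}$; in the fixed point argument of Theorem \ref{thm} the constant in Corollary \ref{corLEmeancurv} must not degrade in this way, since $\underline{c}$ is chosen afterwards large compared to it, so your version of the estimate would break the later argument. The fix is exactly the paper's device: keep $W(\underline{\theta}',\underline{\tilde\varphi})$ (or your $W(\underline{\theta},-\underline{\varphi})$, corrected by \ref{proppreini}\ref{itempreiniphi}) outside of $\mathcal{R}_M$, solve only for the $O(\tau)$ remainder, and absorb the main term into $(\underline{\theta}_H,\underline{\varphi}_H)$ by the linearity of $W$; your second paragraph, which matches the paper's derivation of \eqref{eqmeancurvM}, then completes the proof, together with the paper's final observation that $\abs{\underline{\sigma}_H-\underline{\sigma}}\le C\tau$ follows from the $\underline{\theta}_H$-estimate and the lower bound on $\alpha^+_i\Confarg$.
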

\begin{proof}
By \ref{propmeancurvdesing}, \ref{corini} and \ref{lemXu}\ref{itemXuH},
\begin{equation}\label{eqmeancurvM}
    \norm{H-W(\underline{\theta}',\underline{\tilde{\varphi}})}_0\leq C\tau,
\end{equation}
where $\underline{\theta}'=\{\theta'_i \}_{i=1}^k$, with
\begin{align*}
    \theta'_i&=\alpha_i^+(\xi)-\alpha_i^-(\xi)-\tilde{\phi}^+_i(\xi)+\tilde{\phi}^-_i(\xi)\\
    &=\alpha_i^+\Confarg-\alpha_i^-\Confarg-\phi^+_i+\phi^-_i-\tilde{\phi}^+_i(\xi)+\tilde{\phi}^-_i(\xi).
\end{align*}
Applying the proposition with $E:=H-W(\underline{\theta}',\underline{\tilde{\varphi}})$, there are $(u_E,\underline{\theta}_E,\underline{\varphi}_E):=\mathcal{R}_M E$. By the definitions $u_H=u_E$, $\underline{\theta}_H=\underline{\theta}'+\underline{\theta}_E$, $\underline{\varphi}_H=\underline{\tilde{\varphi}}+\underline{\varphi}_E$, all the estimates except the one for $\underline{\sigma}_H$ follow by \eqref{eqmeancurvM}, \ref{propLEM} and \ref{proppreini}\ref{itempreiniphi}.

From \ref{corini}\ref{iteminialpha} and \ref{corini}\ref{iteminidifalpha}, $\alpha_i^+\Confarg\in [30\delta, \frac{\pi}{2}- 30\delta]$ when $\tau$ small enough. Moreover, by the definition, $\sigma_i=\log\frac{\alpha_i^+\Confarg}{\alpha_i^-\Confarg}$. The result then follows by the estimate for $\underline{\theta}_H$.
\end{proof}

\section{The Main Results}

\begin{prop}[The nonlinear terms of the mean curvature {${H[\tilde{u}]}$}]
\label{propnonlinear}
Given $\tilde{u}\in C^{2,\alpha}_{\Grp}(M)$, with $\norm{\tilde{u}}_2$ small enough, then
\begin{equation*}
    \norm{H[\tilde{u}]-H-\mathcal{L}_Mu}_0\leq C\norm{u}^2_2,
\end{equation*}
where $H$ is the mean curvature of $M$, $H[\tilde{u}]$ is the mean curvature of $M_{\tilde{u}}$, $u=(\Omega\circ X)^{-1}\tilde{u}$ as in \ref{lemauu}.
\end{prop}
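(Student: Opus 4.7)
The plan is to derive the estimate from a standard second-order Taylor expansion of $\tilde u \mapsto H[\tilde u]$, combined with Lemma \ref{lemauu}(i) to rewrite the linearization as $\mathcal{L}_M u$. Specifically, setting
\begin{equation*}
R(\tilde u) := H[\tilde u] - H - \tilde{\mathcal{L}}\tilde u = \int_0^1 (1-t)\,\tfrac{d^2}{dt^2}H[t\tilde u]\, dt,
\end{equation*}
the identity $\tilde{\mathcal{L}}\tilde u = \mathcal{L}_M u$ from Lemma \ref{lemauu}(i) reduces the proposition to proving $\|R(\tilde u)\|_0 \leq C\|u\|_2^2$.

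The first step is a pointwise expansion on $M$. For each $P\in M$, the value $H[t\tilde u](P)$ depends smoothly on $t\tilde u(P)$, $t\nabla\tilde u(P)$, $t\nabla^2\tilde u(P)$ together with the first and second fundamental forms of $M$ and the Christoffel symbols of $g_A$ at $X(P)$. Working in the rescaled metrics $(\tau\sin\beta_i)^{-2}g_{\mathcal{S}_i}$ on each $\mathcal{S}_i$ and $g_{\mathcal{N}_j}$ on each $\mathcal{N}_j$, the relevant geometric quantities are uniformly bounded in $\tau$ by Lemma \ref{lemXu} and Proposition \ref{propconvconf}. A direct computation then yields the pointwise bound
\begin{equation*}
\left|\tfrac{d^2}{dt^2}H[t\tilde u](P)\right| \leq C\bigl(|\tilde u|^2 + |\nabla\tilde u|^2 + |\tilde u|\,|\nabla^2\tilde u|\bigr)(P)
\end{equation*}
for $t\in[0,1]$, provided $|\tilde u|$ and $|\nabla\tilde u|$ are pointwise small in the rescaled metrics, which is the case once $\|\tilde u\|_2$ is small enough.

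Next I would convert the pointwise bound to a global estimate in the norm $\|\cdot\|_0$. The relation $\tilde u = (\Omega\circ X) u$ with $\Omega\circ X$ smooth and bounded above and below uniformly in $\tau$ yields $\|\tilde u\|_r \leq C\|u\|_r$ for $r=0,2$. On each $\mathcal{S}_i$, applying the multiplicative property \eqref{E:norm:mult} with $w_r := \max\{e^{-\gamma s}, b_r\}$ gives
\begin{equation*}
\|R(\tilde u) : C^{0,\alpha}(\mathcal{S}_i,(\tau\sin\beta_i)^{-2}g_{\mathcal{S}_i}, w_2^2)\| \leq C\,\|\tilde u : C^{2,\alpha}(\mathcal{S}_i,(\tau\sin\beta_i)^{-2}g_{\mathcal{S}_i}, w_2)\|^2.
\end{equation*}
A case analysis using $b_2 = b_0/\tau^{10}$ and $b_0 = e^{-5\gamma\delta_s/\tau}$ shows $w_2^2 \leq w_0$ pointwise on $\mathcal{S}_i$, which combined with the prefactors $\tau^{1-r}$ in Definition \ref{D:norm} produces the required $\mathcal{S}_i$-contribution to $\|R(\tilde u)\|_0 \leq C\|u\|_2^2$. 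The $\mathcal{N}_j$-contributions are analogous but simpler since those norms carry no exponential weight and the catenoidal geometry is uniformly controlled. The main obstacle is purely bookkeeping: verifying compatibility of the $\tau$-powers, exponential weights, and rescaling factors across the transitions between the Scherk-like and catenoidal regions; once the pointwise Taylor bound is established, the remaining verification is routine and the quadratic estimate holds with considerable room.
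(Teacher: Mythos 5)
Your approach is essentially the paper's: the paper also argues region by region, applying the standard quadratic estimate for the mean curvature of graphs (cited as Lemma~B.1 of \cite{kapouleas:finite} rather than re-derived by Taylor expansion with integral remainder), in the rescaled metrics $(\tau\sin\beta_i)^{-2}g_{\mathcal{S}_i}$ on the $\mathcal{S}_i$ and $g_{\mathcal{A}}$ on the catenoidal pieces, together with \ref{defga}, \ref{lemauu}, the curvature bounds from \ref{lemXu}, \ref{lemZ}, \ref{propconvconf}, and the same weight comparison (the paper's inequality $e^{\gamma s}\le 2/(e^{-\gamma s}+b_2)^2$ plays the role of your $w_2^2\lesssim w_0$).

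One correction to your bookkeeping, though it does not sink the argument: your displayed estimate on $\mathcal{S}_i$ cannot hold with a constant independent of $\tau$. The quantities $H[\tilde u]-H-\mathcal{L}_M u$ and $\tilde u$ are at true (unrescaled) scale, while the norms use the metric blown up by $\lambda=(\tau\sin\beta_i)^{-1}$; under this rescaling the mean-curvature error scales like $\lambda^{-1}$ and the graph function like $\lambda$, so the correct constant in your inequality is $C\lambda^{3}\sim C\tau^{-3}$, not $C$. Consequently there is no ``considerable room'': after inserting the prefactors $\tau^{1-r}$ of Definition \ref{D:norm}, the powers of $\tau$ balance exactly --- this is precisely the paper's intermediate estimate \eqref{eqHS}, which carries $\tau$ on the left and $\tau^{-2}$ on the right --- and the smallness needed for the pointwise expansion is correspondingly $\tau^{-1}\norm{u:C^{2,\alpha}(\mathcal{S}_i,(\tau\sin\beta_i)^{-2}g_{\mathcal{S}_i},e^{-\gamma s}+b_2)}$ small, i.e.\ exactly $\norm{u}_2$ small, as in the paper. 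With that scaling factor inserted, your argument reproduces the paper's proof.
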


\begin{proof}
Let $\mathcal{S}:=\mathcal{S}_i[\xi]$, $\beta:=\beta_i(\xi)$, $i=1\dots,k$, by \ref{lemXu}\ref{itemXug}, \ref{lemXu}\ref{itemXuA} and \ref{lemZ}\ref{itemASigma}, $\norm{A_\mathcal{S}:C^3(\mathcal{S},(\tau\beta)^{-2}g_\mathcal{S})}\leq C$. Therefore, by the fact that when $0\leq s\leq \frac{5\delta_s}{\tau}$,
\begin{equation*}
    e^{\gamma s}\leq \frac{2}{(e^{-\gamma s}+b_2)^2}
\end{equation*}
if $\tau$ small enough, by \ref{defga}, \ref{lemauu}, and \cite[Lemma B.1]{kapouleas:finite}, 
\begin{equation} 
\label{eqHS}
    \tau\norm{H[\tilde{u}]-H-\mathcal{L}_Mu : 
C^{0,\alpha}\left(\mathcal{S},(\tau\beta)^{-2}g_\mathcal{S},e^{-\gamma s}\right)}\leq C\tau^{-2} \norm{u:C^{2,\alpha}\left(\mathcal{S},(\tau\beta)^{-2}g_\mathcal{S},e^{-\gamma s}+b_2\right)}^2,
\end{equation}
when $\tau^{-1} \norm{u:C^{2,\alpha}\left(\mathcal{S},(\tau\beta)^{-2}g_\mathcal{S},e^{-\gamma s}+b_2\right)}$ small enough.

On the other hand, let $\mathcal{A}:=\mathcal{N}_j[\xi]\setminus S[\xi]$, $j=1,\dots,k+1$, by \ref{propconvconf}\ref{itemconfA} and the construction, 
$\norm{A_{\mathcal{A}}:C^{3}(\mathcal{A},g_{\mathcal{A}})}\leq C$. 
Therefore, by \ref{defga}, \ref{lemauu}, and \cite[Lemma B.1]{kapouleas:finite}, 
\begin{align}\label{eqHA}
    \norm{H[\tilde{u}]-H-\mathcal{L}_Mu:C^{0,\alpha}\left(\mathcal{A},g_\mathcal{A}\right)}\leq C \norm{u:C^{2,\alpha}\left(\mathcal{A},g_\mathcal{A}\right)}^2,
\end{align}
when $\norm{u:C^{2,\alpha}\left(\mathcal{A},g_\mathcal{A}\right)}$ small enough. 
The result then follows by combining \eqref{eqHS} and \eqref{eqHA} 
and using the definitions.
\end{proof}

We combine now the results of the previous sections with the estimates above to prove the main theorem of this article.

\begin{theorem}
\label{thm}
Given $k\in \N$ large enough in absolute terms and $m\in\N$ large enough in terms of $k$, 
there is a compact embedded two-sided free boundary minimal smooth surface (FBMS) $\Mmin_{k,m}$ of genus zero and $km$ connected components, 
which is symmetric under the action of $\Grp=D_{2m}\times\Z_2$ (Definition \ref{D:Grp}) and is the graph of a small function $\tilde{v}$ 
in an auxiliary ambient metric $g_A$ (Definition \ref{defga}) over a smooth initial surface $M[\xi_0]$ (see \ref{defini} and \ref{propini}),  
where $\xi_0\in \Xi_{\mathcal{V}}$, the constant $\underline{c}>0$ in the definition of $\Xi_{\mathcal{V}}$ (Definition \ref{defXi}) depends only on $k$,  
and $\norm{\tilde{v}}_2\leq{C(k)}/{m}$ (recall Definition \ref{D:norm} for the definition of the norm). 
Moreover $\Mmin_{k,m}$ converges on compact subsets of the interior of $\B^3$ in all norms to the configuration $\Wcal\zConfarg$ as $m\to\infty$; 
the Hausdorff distance of $\Mmin_{k,m} $ from $\Wcal\zConfarg$ tends to $0$ as $m\to\infty$; 
and in turn the Hausdorff distance of $\Wcal\zConfarg$ from $\Sph^2=\partial\B^3$ tends to $0$ as $k\to\infty$.  
Finally $\lim_{k\to\infty}\lim_{m\to\infty} \Mmin_{k,m} = \Sph^2$ in the varifold sense. 
\end{theorem}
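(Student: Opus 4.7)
The plan is to solve the free boundary minimal surface equation perturbatively on each $M[\xi]$, $\xi\in\Xi_{\mathcal{V}}$, and then to run a fixed point argument in $\xi$ that kills the obstruction coming from the extended substitute kernel $\skernel$, producing a genuine FBMS at the fixed point. First I would fix $\xi=(\underline{\sigma},\underline{\varphi})\in\Xi_{\mathcal{V}}$ and seek $\tilde v\in C^{2,\alpha}_{\Grp}(M[\xi])$ so that $M_{\tilde v}$ is an FBMS. By Lemma \ref{lemauu}, setting $v:=(\Omega\circ X)^{-1}\tilde v$ converts the problem to $\mathcal{L}_M v=-H-Q_M(v)$ on $M$ together with $(\partial_{\rho}+1)v|_{\partial M}=0$, where the nonlinearity $Q_M(v):=H[\tilde v]-H-\mathcal{L}_M v$ obeys $\norm{Q_M(v)}_0\le C\norm{v}_2^2$ by Proposition \ref{propnonlinear}. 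Since $\mathcal{L}_M$ has cokernel $\skernel=W(\mathcal{V}')$ modulo the boundary condition, I would use the right inverse $\mathcal{R}_M$ of Proposition \ref{propLEM} combined with the linear solution $u_H$ of Corollary \ref{corLEmeancurv} (which already satisfies $\norm{u_H}_2\le C\tau$) to set up a contraction on a ball of radius $\sim\tau$ in $C^{2,\alpha}_{\Grp}(M[\xi])$. Since $Q_M$ is quadratic and $\tau$ is small, standard iteration converges to a unique $v(\xi)$ with $\norm{v(\xi)}_2\le C\tau$, together with continuous $(\underline{\theta}(\xi),\underline{\varphi}(\xi))\in\mathcal{V}'$ satisfying $\mathcal{L}_M v(\xi)=-H-Q_M(v(\xi))-W(\underline{\theta}(\xi),\underline{\varphi}(\xi))$ and $(\partial_{\rho}+1)v(\xi)|_{\partial M}=0$.

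The next step is to choose $\xi$ so that $(\underline{\theta}(\xi),\underline{\varphi}(\xi))=0$; at such a $\xi$ the associated $v(\xi)$ solves the genuine minimal surface equation. Corollary \ref{corLEmeancurv}, together with the quadratic bound on $Q_M$, gives $\abs{\underline{\sigma}(\xi)-\underline{\sigma}}\le C\tau$ and $\abs{\underline{\varphi}(\xi)+\underline{\varphi}}\le C\tau$, where $\underline{\sigma}(\xi)_i:=\theta_i(\xi)/\alpha_i^{+}\Confarg$. I would then define the continuous map $\mathcal{J}:\Xi_{\mathcal{V}}\to\mathcal{V}$ by
\[
\mathcal{J}(\underline{\sigma},\underline{\varphi}):=\bigl(\,\underline{\sigma}-\underline{\sigma}(\xi),\;\underline{\varphi}(\xi)+\underline{\varphi}\,\bigr),
\]
so that fixed points of $\mathcal{J}$ correspond exactly to $\underline{\sigma}(\xi)=0$ and $\underline{\varphi}(\xi)=0$, hence to $(\underline{\theta}(\xi),\underline{\varphi}(\xi))=0$. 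The above estimates give $\abs{\mathcal{J}(\xi)-\xi}\le C(k)\tau$; by choosing the constant $\underline{c}=\underline{c}(k)$ in Definition \ref{defXi} larger than $C(k)$ and then $\tau$ small enough consistent with \ref{deftau}, $\mathcal{J}$ maps $\Xi_{\mathcal{V}}$ continuously into itself. Since $\Xi_{\mathcal{V}}$ is a compact convex subset of a finite-dimensional vector space, Brouwer's theorem (the finite-dimensional case of Schauder) yields a fixed point $\xi_0$, and $\Mmin_{k,m}:=M_{\tilde v(\xi_0)}(\xi_0)$ with $\tilde v(\xi_0):=\Omega v(\xi_0)$ is the desired FBMS.

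Finally, the topology (genus zero, $km$ boundary components), the symmetry under $\Grp$, the embeddedness, and the orthogonal contact with $\Sph^2$ follow from Proposition \ref{propini} combined with the bound $\norm{\tilde v(\xi_0)}_2\le C(k)/m$ inherited from $\norm{v(\xi_0)}_2\le C\tau$. Convergence $\Mmin_{k,m}\to\Wcal\zConfarg$ on compact subsets of the interior of $\B^3$ as $m\to\infty$ follows because $\xi_0\to 0$ (forced by $\abs{\xi_0}\le \underline{c}\tau\to 0$) and each desingularizing neck has diameter $\sim\tau=1/m$, while the smooth graphical pieces over $M'$ converge to the catenoidal annuli and discs of $\Wcal\zConfarg$; the Hausdorff convergence is immediate, and the varifold statement $\lim_{k\to\infty}\lim_{m\to\infty}\Mmin_{k,m}=\Sph^2$ then follows from Corollary \ref{corbal}, since $\rr\zConfarg\to 1$ uniformly on $[0,\pi/2]$ as $k\to\infty$. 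The main obstacle in carrying out this plan is verifying the self-map condition $\mathcal{J}(\Xi_{\mathcal{V}})\subset\Xi_{\mathcal{V}}$: the $k$-dependent constants emerging from the iteration bounds in Proposition \ref{propLEM} and from Corollary \ref{corLEmeancurv} must be absorbed within the $\underline{c}\tau$ budget, and the quadratic error from $Q_M$ must be controlled uniformly in $\xi$; this is the technical heart of the argument and dictates how small $\tau$, equivalently how large $m$, must be taken relative to $k$.
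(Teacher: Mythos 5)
Your overall architecture is close to the paper's, but the fixed-point mechanics differ in a way that matters. The paper performs a \emph{single} Schauder fixed point on the product set $\Xi\subset\mathcal{V}\times\mathcal{X}$, where $\mathcal{X}=C^{2,\alpha'}(M[0])$ with $\alpha'<\alpha$ (so that the ball of radius $\underline{c}\tau$ in the $C^{2,\alpha}$-type norm is compact in $\mathcal{X}$) and all functions are pulled back to the fixed reference surface $M[0]$ by diffeomorphisms $D_\xi:M[0]\to M[\xi]$; the map $\mathcal{J}$ updates the parameters and the function simultaneously, using \ref{corLEmeancurv}, one application of $\mathcal{R}_M$ to the error $E=H[\tilde v]-H-\mathcal{L}_M v$, and the quadratic bound of \ref{propnonlinear}. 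You instead propose a nested scheme: for each fixed $\xi$, solve the nonlinear problem modulo $\skernel$ by a \emph{contraction} with a unique solution $v(\xi)$ depending continuously on $\xi$, and then run Brouwer in the finite-dimensional parameter $\xi$ to kill $(\underline{\theta}(\xi),\underline{\varphi}(\xi))$. The second (parameter) step is essentially the paper's argument and your handling of the self-map condition is fine (note only that the relevant bound is $\abs{\mathcal{J}(\xi)}\le C\tau$, read off directly from $\abs{\underline{\sigma}(\xi)-\underline{\sigma}}\le C\tau$ and $\abs{\underline{\varphi}(\xi)+\underline{\varphi}}\le C\tau$, not $\abs{\mathcal{J}(\xi)-\xi}\le C\tau$, which by itself would only give $\abs{\mathcal{J}(\xi)}\le(\underline{c}+C)\tau$).

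The genuine gap is in your first step. Proposition \ref{propnonlinear} supplies only the quadratic bound $\norm{H[\tilde u]-H-\mathcal{L}_M u}_0\le C\norm{u}_2^2$; it gives no Lipschitz estimate of the form $\norm{Q_M(v_1)-Q_M(v_2)}_0\le C(\norm{v_1}_2+\norm{v_2}_2)\norm{v_1-v_2}_2$, and without such an estimate the Banach iteration you invoke does not close, nor do you get uniqueness of $v(\xi)$ or its continuous dependence on $\xi$ --- and that continuity is exactly what your Brouwer step needs. (Continuity in $\xi$ also requires comparing solution operators on the varying surfaces $M[\xi]$, which you do not address; the paper sidesteps this by transplanting everything to $M[0]$ via $D_\xi$ and by never solving the nonlinear problem for fixed $\xi$ at all.) You could repair your plan either by proving the Lipschitz estimate (plausible by the same \cite[Lemma B.1]{kapouleas:finite}-type arguments, but it is an additional piece of work not contained in the paper), or by collapsing the two steps into one Schauder fixed point on the compact convex set $\Xi\subset\mathcal{V}\times C^{2,\alpha'}(M[0])$ as the paper does, where only continuity of $\mathcal{J}$ and the quadratic bound are needed. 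The concluding assertions (topology, symmetry, free boundary via \ref{lemauu} and \ref{propini}\ref{iteminibd}, convergence statements via \ref{corbal} and the smallness of $\tilde v$ and $\xi_0$) are handled as in the paper and are fine.
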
	

\begin{proof}
Let $\mathcal{X}$ be the Banach space $C^{2,\alpha'}(M[0])$, where $\alpha'\in (0,\alpha)$ is a fixed constant. Given a function $f\in\mathcal{X}$, the norm $\norm{f:\mathcal{X}}$ is defined by
\begin{equation*}
    \norm{f:\mathcal{X}}:=\norm{f:C^{2,\alpha'}(M[0])}.
\end{equation*}
There is a family of smooth diffeomorphisms $D_{\xi}:M[0]\to M[\xi]$ for $\xi\in \Xi_{\mathcal{V}}$, which depend smoothly on $\xi$ and satisfy the following: there is constant $C$ such that for $f\in C^{2,\alpha}(M[0])$ and $f'\in C^{2,\alpha}(M[\xi])$,
\begin{equation}\label{eqdiffmor}
    \norm{f\circ D_{\xi}^{-1}}_2\leq C\norm{f}_2, \quad \norm{f'\circ D_{\xi}}_2\leq C\norm{f'}_2.
\end{equation}
Let $\Xi:=\{(\xi,u)\in\mathcal{V}\times \mathcal{X}:\abs{\xi}\leq \underline{c} \tau, \norm{u}_2\leq  \underline{c} \tau \}$. By the definition, $\Xi\subset \mathcal{V}\times \mathcal{X}$ is compact and convex.

Given $(\xi,u)\in \Xi$. Let $v:=u\circ D_{\xi}^{-1}-u_H$, where $u_H\in C^{2,\alpha}(M[\xi])$ as in \ref{corLEmeancurv}. Then by \ref{corLEmeancurv} and \eqref{eqdiffmor}, 
\begin{equation}\label{eqvzeta} 
    \norm{v}_2\leq C(\underline{c}+1)\tau.
\end{equation}
By \ref{propnonlinear}, the surface $M_{\tilde{v}}$ is well defined and 
\begin{equation}\label{eqnonlinear} 
    \norm{H[\tilde{v}]-H-\mathcal{L}_Mv}_0\leq C(\underline{c}+1)^2\tau^2,
\end{equation}
where $\tilde{v}:=(\Omega\circ X) v$ as in \ref{lemauu}. Let $E:=H[\tilde{v}]-H-\mathcal{L}_Mv$, by \ref{propLEM}, there is $(v_E,\underline{\theta}_E,\underline{\varphi}_E)=\mathcal{R}_{M}E$. Let  $\underline{\sigma}_E:=\left\{\frac{\underline{\theta}_{Ei}}{\alpha_i^{+}\Confarg}\right\}_{i=1}^k$, by \ref{propLEM}, \ref{corLEmeancurv} and \eqref{eqnonlinear},
\begin{align}
&H[\tilde{v}]=\mathcal{L}_M(v_E+u\circ D_{\xi}^{-1})+W(\underline{\theta}_H+\underline{\theta}_E,\underline{\varphi}_H+\underline{\varphi}_E),\label{eqHv}\\ 
&\partial_{\rho}\tilde{v}=(\partial_{\rho}+1)u\circ D_{\xi}^{-1},\label{eqPhiv}\\
&\abs{\underline{\sigma}-\underline{\sigma}_E-\underline{\sigma}_H}\leq C\tau+C(\underline{c}+1)^2\tau^2,\label{eqsigmatau}\\ 
&\abs{\underline{\varphi}+\underline{\varphi}_E+\underline{\varphi}_H}\leq C\tau+C(\underline{c}+1)^2\tau^2,\label{eqvarphitau}\\ 
&\norm{v_E}_2\leq C(\underline{c}+1)^2\tau^2. \label{eqvtau}
\end{align}
Then define the map $\mathcal{J}:\Xi\to \mathcal{V}\times\mathcal{X}$ by 
\begin{equation}\label{eqJ}
    \mathcal{J}(\xi,u):=((\underline{\sigma}-\underline{\sigma}_E-\underline{\sigma}_H,\underline{\varphi}+\underline{\varphi}_E+\underline{\varphi}_H),-v_E\circ D_{\xi}).
\end{equation}

\eqref{eqsigmatau}, \eqref{eqvarphitau} and \eqref{eqvtau} imply that by choosing $\underline{c}$ large enough and $\tau$ small enough, 
$\mathcal{J}(\Xi)\subset\Xi$. 
As $\mathcal{J}$ is continuous by the construction, 
the Schauder fixed point theorem could be applied and there must be a fixed point $(\xi_0,u)$ of $\mathcal{J}$. 
By \eqref{eqHv} and \eqref{eqJ}, the corresponding $\Mmin_{k,m}:=M_{\tilde{v}}$ is a minimal surface. 
Moreover, by \ref{lemauu}, \ref{propLEM} and \eqref{eqPhiv}, it has free boundary in $\mathbb{B}^3$ and by \ref{propini}\ref{iteminibd}, 
there are $km$ boundary components. The estimate of $\tilde{v}$ follows by \eqref{eqvzeta} and the uniform boundedness of $\Omega$ in the Definition \ref{defga}. 
The smoothness of $\tilde{v}$ follows by the standard regularity theorem. 
The embeddedness follows by the embeddedness of $M[\xi_0]$ and the smallness of $\tilde{v}$. 
The symmetry of $\Mmin_{k,m}$ follows by the construction and \ref{propini}\ref{iteminisym}. 
The convergence of $\Mmin_{k,m}$ follows by \ref{corbal} and the smallness of $\tilde{v}$.
\end{proof}

\bibliographystyle{amsplain}
\bibliography{paper}
\end{document}